\newtheorem{Th}{Theorem}[section]
\newtheorem{Def}[Th]{Definition}
\newtheorem{Rq}[Th]{Remark}
\newtheorem{Pro}[Th]{Proposition}
\newtheorem{Cor}[Th]{Corollary}
\newtheorem{Lem}[Th]{Lemma}
\newcommand{\R}{\mathbb{R}}
\newcommand{\TT}{\mathbf{T}}
\newcommand{\E}{\mathbb{E}}
\newcommand{\Po}{\mathbb{P}}
\newcommand{\I}{\mathfrak{I}}
\newcommand{\Or}{\mathbb{O}}
\newcommand{\Sp}{\mathbb{S}}
\newcommand{\Si}{\overline{\Sigma}}
\newcommand{\Vv}{\mathbb{V}}
\newcommand{\V}{\mathbf{k}_1}
\newcommand{\K}{\widehat{\mathbb{P}}_0}
\newenvironment{proof}{\noindent\textit{Proof.~}}{\hfill$\square$\bigbreak} 
\newlength{\plarg}
\title{Asymptotic properties of the solutions to the Vlasov-Maxwell system in the exterior of a light cone}
\author{L\'eo Bigorgne\footnote{Laboratoire de Mathématiques, Univ. Paris-Sud, CNRS, Université Paris-Saclay, 91405 Orsay. E-mail address: leo.bigorgne@u-psud.fr.}}
\date{}
\begin{document}

\maketitle
    
\begin{abstract}

This paper is concerned with the asymptotic behavior of small data solutions to the three-dimensional Vlasov-Maxwell system in the exterior of a light cone. The plasma does not have to be neutral and no compact support assumptions are required on the data. In particular, the initial decay in the velocity variable of the particle density is optimal and we only require an $L^2$ bound on the electromagnetic field with no additional weight. We use vector field methods to derive improved decay estimates in null directions for the electromagnetic field, the particle density and their derivatives. In contrast with \cite{dim3}, where we studied the behavior of the solutions in the whole spacetime, the initial data have less decay and we do not need to modify the commutation vector fields of the relativistic transport operator. To control the solutions under these assumptions, we crucially use the strong decay satisfied by the particle density in the exterior of the light cone, null properties of the Vlasov equation and certain hierarchies in the energy norms.
\end{abstract}

    \tableofcontents
\section{Introduction}

In this article, we study the asymptotic properties of small data solutions of the Vlasov-Maxwell (VM) system in the exterior of a light cone $V_b := \{ (t,x) \in \R_+ \times \R^3 \hspace{1mm} / \hspace{1mm} |x| > t-b \}$, where, say, $b \leq -1$. The system, which is of particular importance in plasma physics, is given for one species of particles by\footnote{We choose to lighten the notations by considering only one species since the presence of other ones does not complicate the analysis.}$^{,}$\footnote{We will, throughout this article, use the Einstein summation convention so that $v^{i} \partial_{i} f = \sum_{i=1}^3 v^i \partial_{i} f$. A sum on latin letters starts from $1$ whereas a sum on greek letters starts from $0$.}
\begin{eqnarray}\label{VM1}
\TT_F(f) \hspace{1mm} := \hspace{1mm} v^0\partial_t f+v^i \partial_i f +ev^{\mu} {F_{\mu}}^j \partial_{v^j} f & = & 0, \\ \label{VM2}
\nabla^{\mu} F_{\mu \nu} & = &  J(f)_{\nu} \hspace{1mm} := \hspace{1mm} \int_{v \in \R^3} \frac{v_{\nu}}{v^0} f dv , \\ \label{VM3}
\nabla^{\mu} {}^* \!  F_{\mu \nu} & = & 0,
\end{eqnarray}
where
\begin{itemize}
\item $v^0 = \sqrt{m^2+|v|^2}$, $m>0$ is the mass of the particles and $e \neq 0$ their charge. For the remaining of this paper, we take $m=e=1$ and we denote $\sqrt{1+|v|^2}$ by $v^0$.
\item The function $f(t,x,v)$ is the particle density, the $2$-form $F(t,x)$ is the electromagnetic field and ${}^* \!  F(t,x)$ is its Hodge dual.
\end{itemize}

\subsection{Small data results for the VM system}

The study of the small data solutions of the VM system has been initiated in \cite{GSt} by Glassey-Strauss. Under a compact support assumption in space and in velocity on the initial data, they proved global existence and obtained the optimal decay rate on $\int_v f dv$. The compact support assumption in $v$ is replaced by Schaeffer in \cite{Sc} by a polynomial decay but the data still have to be compactly supported in space. Moreover, the optimal decay rate on $\int_v f dv$ is not obtained by this method. None of these results contain estimates on the derivatives of $\int_v f dv$ nor on the higher order derivatives of the electromagnetic field. 

In \cite{dim4}, we removed all compact support assumptions for the dimensions $d \geq 4$. For this, we used vector field methods, developped in \cite{CK} for the electromagnetic field and \cite{FJS} for relativistic transport equations. We then obtained almost optimal decay on the solutions of the system and their derivatives and we described precisely the behavior each null component of the electromagnetic field. We recently extended these results to the $3d$ case and we also relaxed the assumptions on the initial data, allowing in particular the presence of a non zero total charge. A better understanding of the null structure of the VM system as well as the use of modified vector fields\footnote{Modified Vector fields, which depend on the solution itself, were already used by \cite{FJS2} (respectively \cite{FJS3}) in the context of the Vlasov-Nordstr\"om (respectively the Einstein-Vlasov) system. They are built over the commutation vector fields of the relativistic transport operator $v^{\mu} \partial_{\mu}$ in order to compensate the worst source terms of the commuted Vlasov equation.} were the key for dealing with the slower decay rates of the solutions. We splitted the electromagnetic field into two parts. The chargeless one on which we could then propagate a weighted $L^2$ norm and the pure charge part, given by an explicit formula, which decays as $\epsilon r^{-2}$ despite of its infinite energy.

We also investigate the case where the particles are massless (i.e. $m=0$). First in \cite{dim4} for the high dimensions, where we proved that similar results to the massive case hold provided that the velocity support of the the particle density is bounded away from $0$. These extra hypothesis appears to be necessary since we also proved in \cite{dim4} that the VM system do not admit a local classical solution for certain smooth initial data which do not vanish for small velocities. Secondly, in our recent work \cite{massless}, we proved sharp asymptotics on the small data solutions and their derivatives to the massless VM system in $3d$. Contrary to the massive case, the proof does not require the use of modified vector fields but still necessitates a strong understanding of the null properties of the system.

In this article, we study the asymptotic properties of the solutions to the VM system in the exterior of a light cone under a smallness assumption but weaker decay near infinity. We obtain in particular almost optimal pointwise decay estimates on the velocity average of the Vlasov field as well as its derivatives. The hypotheses on the particle density in the variable $v$ are optimal in the sense that we merely suppose $f$ and its derivatives to be initially integrable in $v$, which is a necessary condition for the source term of the Maxwell equations \eqref{VM2} to be well defined. As $f$ strongly decay in the domain studied, our proof merely requires the boundedness of the $L^2$ norm of the electromagnetic field. This has to be compared with our proof in \cite{dim3}, where we study the same problem in the whole spacetime, which crucially relies on the propagation of a weighted energy norm of $F$. Another remarkable point, still related to the good behavior of $f$ in the region $V_b$, concerns the commutation vector fields used to study the Vlasov equation. Contrary to \cite{dim3}, we do not need to modify the commutation vector fields of the relativistic transport operator $v^{\mu} \partial_{\mu}$ in order to compensate the worst source terms of the commuted Vlasov equations and then close the energy estimates. This leads in particular to a much simpler proof.

Finally, let us mention the recent result \cite{Wang} of Wang concerning the small data solutions of the massive $3d$ VM system. Using both vector field methods and Fourier analysis, he proved optimal pointwise decay estimates on $\int_v f dv$ and its derivatives under strong polynomial decay hypotheses in $(x,v)$ on $f(t=0)$. In particular, the initial data does not have to be compactly supported.

\subsection{Previous works on Vlasov systems using vector field methods}

Results on the asymptotic behavior of solutions of several Vlasov systems were recently derived using vector field methods. Let us mention the pioneer work \cite{FJS} of Fajman-Joudioux-Smulevici on the Vlasov-Norström system (see also \cite{FJS3}) as well as the results of \cite{Poisson} on the Vlasov-Poisson system. The two different proofs, obtained independently by \cite{FJS2} and \cite{Lindblad}, of the stability of the Minkowski spacetime as a solution to the Einstein-Vlasov system constitute a culmination of these vector field methods.

\subsection{Statement of the main result}
In order to present our main theorem, we call initial data set for the VM system any ordered pair $(f_0,F_0)$ where $f_0 : \R^3_x \times \R^3_v \rightarrow \R$ and $F_0$ are both sufficiently regular and satisfy the constraint equations
$$\nabla^i (F_{0})_{i0} =- \int_{v \in \R^3} f_0 dv \hspace{10mm} \text{and} \hspace{10mm} \nabla^i {}^* \! (F_0)_{i0} =0.$$ 
We precise that $\tau_+=\sqrt{1+(t+r)^2}$, $\tau_-=\sqrt{1+(t-r)^2}$ and we refer to Section \ref{sec2} for the notations not yet defined.
\begin{Th}\label{theorem}
Let $N \geq 9$, $b \leq -1$, $0 < \delta < \frac{1}{8N}$, $\epsilon >0$, $(f_0,F_0)$ an initial data set for the Vlasov-Maxwell equations \eqref{VM1}-\eqref{VM3} satisfying\footnote{We could assume $N\geq 8$ and save almost $\frac{15}{4}$ powers of $x$ in the condition on the initial norm of $f_0$ with easy but cumbersome modifications of our proof (see Remarks \ref{rqdecayforintro}). Note also that following the strategy used in Subsection $17.2$ of \cite{FJS3} to derive $L^2$ estimates on the Vlasov field, we could avoid any hypotheses on the derivatives of order $N+1$ and $N+2$ of $F_0$.}
$$ \sum_{ |\beta|+|\kappa| \leq N+3} \int_{|x| \geq b} \int_{v \in \R^3} (1+|x|)^{\frac{N+14+|\beta|}{2}}(1+|v|)^{|\kappa|} \left| \partial_x^{\beta} \partial_v^{\kappa} f_0 \right| dv dx +\sum_{  |\gamma| \leq N+2} \int_{|x| \geq b} (1+|x|)^{2|\gamma|} \left| \nabla_{\partial^{\gamma}_x} F_0 \right|^2 dx \leq \epsilon$$
and $(f,F)$ be the unique classical solution of the system which satisfies $f(t=0)=f_0$ and $F(t=0)=F_0$. Then, there exists $C>0$ and $\epsilon_0>0$, depending only on $N$ and $\delta$, such that, if $0 \leq \epsilon \leq \epsilon_0$, $(f,F)$ is well defined in $V_b = \{ (t,x) \in \R_+ \times \R^3 \hspace{1mm} / \hspace{1mm} r > t-b \}$ and verifies the following estimates.
\begin{itemize}
\item Energy bound for the electromagnetic field $F$: $\forall$ $t \in \R_+$, 
$$ \sum_{0 \leq k \leq N} \sum_{ Z^{\gamma} \in \mathbb{K}^{k} } \int_{|x| \geq t-b}   \left| \mathcal{L}_{ Z^{\gamma}}(F) \right|^2dx \leq C\epsilon ,$$
\item Pointwise decay estimates for the null components of $\mathcal{L}_{Z^{\gamma}}(F)$: $\forall$ $|\gamma| \leq N-2$, $(t,x) \in V_b$,
$$ \left| \underline{\alpha} \left( \mathcal{L}_{Z^{\gamma}}(F) \right) \right|(t,x) \lesssim  \frac{\sqrt{\epsilon}}{\tau_+ \tau_-^{\frac{1}{2}} }, \hspace{12mm} \left| \alpha \left( \mathcal{L}_{Z^{\gamma}}(F) \right) \right|(t,x)+\left| \rho \left( \mathcal{L}_{Z^{\gamma}}(F) \right) \right|(t,x)+ \left| \sigma \left( \mathcal{L}_{Z^{\gamma}}(F) \right) \right|(t,x) \lesssim \frac{\sqrt{\epsilon}}{\tau_+^{\frac{3}{2}}}.$$
\item Energy bound for the particle density: $\forall$ $t \in \R_+$,
$$ \sum_{0 \leq k \leq N} \sum_{ \widehat{Z}^{\beta} \in \K^{k} } \int_{|x| \geq t-b} \int_{v \in \R^3} \left| \widehat{Z}^{\beta} f \right| dv dx \leq C\epsilon (1+t)^{\delta}.$$

\vspace{-5mm}

\item Pointwise decay estimates for the velocity averages of $\widehat{Z}^{\beta} f$: $\forall$ $|\beta| \leq N-4$, $(t,x) \in V_b$,
$$\int_{ v \in \R^3} \left| \widehat{Z}^{\beta} f \right| dv \lesssim \frac{\epsilon}{\tau_+^{2-\delta} \tau_-} \hspace{8mm} \text{and} \hspace{8mm} \forall \hspace{0.5mm} a \in \left[ 1,4 \right], \hspace{5mm} \int_{ v \in \R^3} \left| \widehat{Z}^{\beta} f \right| \frac{dv}{(v^0)^{2a}} \lesssim  \frac{\epsilon}{\tau_+^{3+a-\delta} }.$$
\end{itemize}
\end{Th}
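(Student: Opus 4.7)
The plan is a continuity/bootstrap argument on the maximal interval $[0,T^*)$ on which $(f,F)$ is defined in $V_b$. The bootstrap assumptions are essentially the energy bounds stated in the theorem with slightly inflated constants, say
\[
\sum_{|\gamma|\leq N}\|\mathcal{L}_{Z^\gamma}(F)\|_{L^2(\Sigma_t\cap V_b)}^2 \leq 3C\epsilon, \qquad \sum_{|\beta|\leq N}\int_{\Sigma_t\cap V_b}\int_v |\widehat{Z}^\beta f|\,dv\,dx \leq 3C\epsilon(1+t)^\delta,
\]
and the goal is to improve the constants to $2C$, after which a standard continuation criterion forces $T^*=+\infty$. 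Working inside $V_b$ is consistent with both equations by domain of dependence: for Maxwell by finite speed of propagation, for Vlasov by the fact that the characteristics of $\TT_F$ starting in $V_b\cap\{t=0\}$ cannot exit $V_{b-1}$ during the bootstrap window once $F$ is small enough.

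The first step is to convert the bootstrap energy bounds into pointwise decay via Klainerman-Sobolev inequalities. Applied to $\mathcal{L}_{Z^\gamma}(F)$ on $\Sigma_t\cap V_b$ for $|\gamma|\leq N-2$, these give the pointwise bound $|\mathcal{L}_{Z^\gamma}(F)|\lesssim \sqrt{\epsilon}/(\tau_+\tau_-^{1/2})$, with the improvement to $\sqrt{\epsilon}/\tau_+^{3/2}$ for the good null components $\alpha,\rho,\sigma$ coming, as in \cite{CK}, from the hierarchy in the commuted Maxwell equations that makes the good derivatives $\bar{\partial}$ gain a factor of $\tau_-/\tau_+$. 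For the velocity averages of $f$, the transport Klainerman-Sobolev inequality of \cite{FJS}, combined with the $L^1$ bootstrap, yields for $|\beta|\leq N-4$ the decay $\int |\widehat{Z}^\beta f|\,dv \lesssim \epsilon \tau_+^{-2+\delta}\tau_-^{-1}$; the variant with $(v^0)^{-2a}$ weight follows from weight estimates along characteristics together with the identity expressing $v^0-v\cdot\hat{x}$ in terms of $\tau_-^2/(\tau_+ v^0)$ valid on $V_b$.

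Next, I close the Maxwell energy. Commuting \eqref{VM2}-\eqref{VM3} with the conformal Killing fields $Z\in\mathbb{K}$ preserves the Maxwell structure modulo a source which schematically equals $J(\widehat{Z}^\gamma f)_\nu$. The standard divergence identity for the energy-momentum tensor of $\mathcal{L}_{Z^\gamma}(F)$, combined with Cauchy-Schwarz, gives
\[
\mathcal{E}_N(F)(t) \leq C\epsilon + C\int_0^t \sqrt{\mathcal{E}_N(F)(s)}\,\|J(\widehat{Z}^\gamma f)\|_{L^2(\Sigma_s\cap V_b)}\,ds.
\]
The $L^2$ norm of the source is bounded by interpolating the pointwise estimate on $\int|\widehat{Z}^\gamma f|\,dv$ with the $L^1$ bootstrap, producing a time integrand decaying fast enough to be integrable (uniformly in $t$). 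No weight on $F$ is needed: the strong decay of $f$ in the exterior plays exactly the role that the weighted energy norm played in the whole-spacetime treatment of \cite{dim3}.

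The final and most delicate step is closing the $L^1$ Vlasov energy. Applying $\widehat{Z}^\beta$ to \eqref{VM1} produces a schematic source
\[
\TT_F(\widehat{Z}^\beta f) = \sum_{|\gamma_1|+|\beta_1|\leq|\beta|} c_{\gamma_1\beta_1}\, v^\mu\, \mathcal{L}_{Z^{\gamma_1}}(F)_{\mu j}\, \partial_{v^j}(\widehat{Z}^{\beta_1} f) + \text{lower order},
\]
and after rewriting the $v$-derivatives in terms of complete lifts (with extra $\tau_+$ weights), the null decomposition of $v^\mu F_{\mu j}$ identifies the worst contribution as $\underline{\alpha}(\mathcal{L}_{Z^{\gamma_1}}F)$ multiplied by the small factor $v^0-v\cdot\hat{x}$. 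This is the main obstacle: in the full spacetime analysis \cite{dim3} it forces the use of modified vector fields to cancel the resonant $\underline{\alpha}$ contribution, but in the exterior the strict bound $\tau_-\geq 1+|b|$, the saturation $\tau_-\sim\tau_+$ far from the cone, and the large initial weight $(1+|x|)^{(N+14+|\beta|)/2}$ on $f_0$ (which propagates to time decay along the transport) together furnish enough extra decay that the un-modified lifts $\widehat{Z}\in\K$ suffice. The time integral of all source terms is then bounded by $C\epsilon^{3/2}(1+t)^\delta$, closing the Vlasov bootstrap for $\epsilon$ small and completing the proof.
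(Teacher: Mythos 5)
Your overall scaffolding---bootstrap, Klainerman--Sobolev for pointwise decay, close Maxwell, then close Vlasov---is the right skeleton and matches the paper at that level of abstraction. However, the proposal has serious gaps at precisely the places where the work actually happens.

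\textbf{The bootstrap norms are wrong.} You bootstrap on the plain $L^1$ Vlasov energy $\sum_{|\beta|\leq N}\|\widehat{Z}^\beta f\|_{L^1_{x,v}}$. This cannot close. Under the weak initial decay $|F_0|\lesssim r^{-3/2}$, one only gets $|\mathcal{L}_Z(F)|\lesssim\tau_+^{-1}\tau_-^{-1/2}$, and the first-order commutator satisfies the estimate \eqref{esticomutationintro}: the translation term carries a non-integrable weight $(1+|t-r|)^{-1/2}$. Nothing in your argument neutralizes it. The paper propagates instead the \emph{hierarchised, $z$-weighted} norms $\E^{N+9,\eta}_N[f]$ and $\E^{N+13,\eta}_{N-3}[f]$ (see \eqref{defE}--\eqref{defE2}), where each $\widehat{Z}^\beta f$ carries a weight $\sqrt{z}^{\,q-(1+2\eta)\beta_P}$ depending on the number $\beta_P$ of homogeneous vector fields. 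This hierarchy is what trades a bad derivative for a factor of $z$, and the exterior inequality $\tau_-\lesssim z$ (Lemma \ref{weights1}) is what turns that into integrable decay in $u$. Without these hierarchised norms there is no mechanism to close the Vlasov $L^1$ bound.

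\textbf{The Maxwell closure as stated produces a logarithmic divergence.} You pass to $\int_0^t\sqrt{\mathcal{E}_N}\,\|J\|_{L^2}\,ds$ and bound $\|J\|_{L^2}$ by interpolating the pointwise and $L^1$ bounds. But in the exterior $\tau_-$ is bounded below by a constant, so this interpolation only gives $\|\int_v|f|dv\|_{L^2}\lesssim\epsilon(1+t)^{-1+\delta}$, which is not integrable; the resulting $\mathcal{E}_N[F]$ would grow like $\log t$, contradicting the uniform bound you need. The paper avoids this by exploiting the full null structure of the source $F_{\mu 0}J^\mu$: expanding in null components,
\[
\left| F_{\mu 0} \int_{\R^3_v} \frac{v^{\mu}}{v^0} f\, dv\right|\lesssim |\rho(F)|\int_v |f|\,dv+(|\alpha(F)|+|\underline{\alpha}(F)|)\int_v\frac{|\slashed{v}|}{v^0}|f|\,dv,
\]
so every term contains either a good null component of $F$ or the good velocity component $\slashed{v}$. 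Converting these via $\tau_-\lesssim z$ and $\tau_+|v^A|/v^0\lesssim z$ and working on the null foliation $C_u$ is what makes the integral uniformly bounded (Section \ref{sec8}). You cannot skip this.

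\textbf{The $L^2$ estimate for high-order derivatives of $f$ is missing.} The interpolation you rely on requires a pointwise estimate on $\int_v|\widehat{Z}^\beta f|\,dv$, which only exists for $|\beta|\leq N-4$. But the Maxwell source involves $\widehat{Z}^\beta f$ for $|\beta|\leq N$. The paper devotes Section \ref{sec9} to deriving $\|\int_v z|\widehat{Z}^\beta f|\,dv\|_{L^2}\lesssim\epsilon(1+t)^{-3/4}$ for $N-2\leq|\beta|\leq N$, by rewriting the commuted transport equations as a hierarchised system $\TT_F(R)+AR=BW$, splitting $R=H+G$ with $H$ solving the homogeneous system, and decomposing the inhomogeneous part as $G=KY$. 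This is a substantial argument in its own right and does not follow from anything in your outline.

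\textbf{The improved decay for $\int_v |f|(v^0)^{-2a}\,dv$ is not a weight estimate along characteristics.} The rate $\tau_+^{-(3+a)+\delta}$ is one power better in $\tau_+$ near the light cone than what the standard Klainerman--Sobolev inequality gives, and it is obtained in the paper through a new Sobolev inequality on pieces of hyperboloids $r^2-s^2=a^2$ (Lemma \ref{Sobhyp}) combined with a careful energy estimate on the domain $\mathcal{D}_a(t)$ (Lemma \ref{energyhyp}). Your appeal to an identity for $v^0-v\cdot\hat{x}$ doesn't substitute for this. The statement that the required bound "follows from weight estimates along characteristics" would need substantiation.

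In summary, you have the plan right, but each of the four places where the proof requires a genuine idea (hierarchised $z$-weighted norms, null structure of the Maxwell current, the $L^2$ estimate for the top-order Vlasov derivatives, the hyperboloidal improved decay estimate) is either absent or replaced by a step that would fail quantitatively.
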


\vspace{-4mm}

\begin{Rq}
Note that we can study the solutions to the Vlasov-Maxwell equations in the exterior of a light cone, without any information on their behavior in the remaining part of the Minkowski space, by finite speed of propagation. Every inextendible past causal curves of such a region intersect the hypersurface $t=0$ once and only once, i.e. the region is globally hyperbolic. 
\end{Rq}
\begin{Rq}
By a time translation, one can prove a similar result for $b \in \R$ ($\epsilon_0$ would then also depends on $b$).
\end{Rq}
\begin{Rq}
The pointwise decay estimates on the electromagntic field are sharp. For the Vlasov field, we could derive the almost sharp decay estimate $\int_{\R^3_v} |f| \frac{dv}{|v^0|^{N+10}} \lesssim \epsilon \tau_+^{-3-\frac{N+8}{2}+\delta}$ (see Proposition \ref{Cordecayopti}) but we would have to propagate a less convenient energy norm.
\end{Rq}
\begin{Rq}
Assuming more decay on the electromagnetic field at $t=0$, one could propagate a stronger energy norm as in \cite{dim3} or \cite{massless}. We then could assume less decay decay in $x$ on $f_0$ and improve the decay rate of the null components of the electromagnetic field. Note however that if the total electromagnetic charge

\vspace{-1mm}

$$ Q(F)(t) \hspace{1mm} := \hspace{1mm} \lim_{r \rightarrow + \infty} \int_{\mathbb{S}_{t,r}} \frac{x^i}{r} F_{0i} d\mathbb{S}_{t,r} \hspace{1mm} = \hspace{1mm} - \lim_{r \rightarrow + \infty} \int_{\mathbb{S}_{t,r}} \rho (F) d\mathbb{S}_{t,r} \hspace{1mm} = \hspace{1mm} \int_{x \in \R^3} \int_{v \in \R^3} f dx dv, $$

\vspace{-1mm}

\noindent which is a conserved quantity in $t$, is non zero, we cannot obtain a better decay rate than $r^{-2}$ on $\rho (F)$ and assume that $\int_{\R^3} r |\rho(F)|^2 dx$ is initially finite. We point out that our hypotheses on the electromagnetic field are compatible with the presence of a non zero total charge.
\end{Rq}
\begin{Rq}
The results of \cite{FJS} and \cite{FJS2} are obtained using a hyperboloidal foliation and then require compactly supported initial data in space. These compact restrictions on the data could be removed by adapting the method used in this article to the Vlasov-Nordström system, provided that we could prove that a certain $L^1$ norm on a hyperboloidal slice of the particle density is small. For that purpose, the estimate on the energy norm of the Vlasov field given by Theorem \ref{theorem} is not good enough since it is not uniform in $t$. This issue can be solved by assuming a slightly stronger decay hypothesis on either the electromagnetic field or on the Vlasov field (see Subsection \ref{subsecuniformbound}).
\end{Rq}
Theorem \ref{theorem} immediately implies the following result, concerning solutions arising from large data.
\begin{Cor}
Let $N \geq 8$ and $(f_0,F_0)$ an initial data set for the Vlasov-Maxwell equations \eqref{VM1}-\eqref{VM3} satisfying
$$ \sum_{|\beta|+|\kappa| \leq N+3} \int_{x \in \R^3} \int_{v \in \R^3} (1+|x|)^{\frac{N+14+|\beta|}{2}}(1+|v|)^{|\kappa|} \left| \partial_x^{\beta} \partial^{\kappa}_v f_0 \right| dv dx + \sum_{|\gamma| \leq N} \int_{x \in \R} (1+|x|)^{2|\gamma|} |\nabla_{\partial^{\gamma}_x} F_0|^2 dx < + \infty$$
and $(f,F)$ be the unique local classical solution to the system which satisfies $f(t=0)=f_0$ and $F(t=0)=F_0$. Then, there exists $b \leq -1$ such that $(f,F)$ is well defined in $V_b$ and verifies similar estimates as those presented in Theorem \ref{theorem}.
\end{Cor}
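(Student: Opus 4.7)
The proposed proof is essentially a direct reduction to Theorem \ref{theorem}: the idea is to choose the parameter $b$ negative enough that the restriction of the initial data to the initial slice of $V_b$ becomes small, and then invoke the Theorem together with finite speed of propagation.

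The key observation is a softness argument. Since the weighted integrals in the hypothesis of the Corollary are finite over all of $\R^3$, the absolute continuity of the Lebesgue integral, equivalently dominated convergence applied to the indicators $\mathbf{1}_{\{|x| \geq -b\}}$, yields that the same integrals taken only over $\{|x| \geq -b\}$ (which is the initial slice of $V_b$ at $t=0$) tend to zero as $b \to -\infty$. Let $\epsilon_0$ be the smallness threshold produced by Theorem \ref{theorem}; we may therefore select $b \leq -1$ with $|b|$ large enough that this restricted norm is at most $\epsilon_0$.

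Next, because $(f_0,F_0)$ satisfies the Maxwell constraints on $\R^3$, the restriction to $\{|x| \geq -b\}$ also does. The region $V_b$ is globally hyperbolic with Cauchy hypersurface its initial slice (as highlighted in the first remark after Theorem \ref{theorem}), so finite speed of propagation ensures that the restriction of the unique local classical solution $(f,F)$ to $V_b$ depends only on this restricted initial data. Applying Theorem \ref{theorem} with the above choice of $b$ then yields global existence of $(f,F)$ on $V_b$ together with the announced energy and pointwise bounds.

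The only mild discrepancy to reconcile is that the Corollary assumes only $|\gamma| \leq N$ derivatives of $F_0$ with $N \geq 8$, while Theorem \ref{theorem} as stated requires $|\gamma| \leq N+2$ and $N \geq 9$. This gap is not a genuine obstacle: it is precisely the improvement advertised in the footnote to Theorem \ref{theorem}, which by following Subsection 17.2 of \cite{FJS3} removes the two top-order hypotheses on $F_0$ and weakens $N \geq 9$ to $N \geq 8$. Granting this technical strengthening of the Theorem, the Corollary is a one-step consequence, with no new estimate to derive; the soft localization at spatial infinity is the entire content of the argument.
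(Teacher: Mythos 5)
Your proof is correct and takes essentially the same approach as the paper's one-line proof, which simply observes that one can choose $b \leq -1$ so that $(f_0,F_0)$ satisfies the hypotheses of Theorem \ref{theorem} on $\{|x|\geq -b\}$. You make explicit the soft absolute-continuity argument, the finite-speed-of-propagation reduction (already flagged in the first remark after Theorem \ref{theorem}), and the reconciliation of $N\geq 8$ and $|\gamma|\leq N$ with the Theorem's hypotheses via its footnote, all of which the paper leaves implicit.
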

\begin{proof}
One only has to notice that there exists $b \leq -1$ such that $(f_0,F_0)$ satisfies the hypotheses of Theorem \ref{theorem}.
\end{proof}

\noindent Although global existence in the whole Minkowski spacetime for classical solutions to the VM system with large data still remains an open problem, several continuation critera are known (see for instance the recent results of \cite{LukStrain}, \cite{Kunze} and \cite{Patel}). For the weak solutions, the problem was solved in \cite{DL} and revisited in \cite{Rein}.

\subsection{Main ingredients of the proof}

We present here the strategy of the proof and the main differences with our study in \cite{dim3} of small data solutions to the VM system in the whole Minkowski spacetime. The proof of the main result of this paper is based on vector field methods and then essentially relies on bounding sufficiently well the spacetime integrals of the source terms of the commuted equations. For instance, at first order, if $Z$ is a Killing vector field and $\widehat{Z}$ its complete lift,
\begin{equation}\label{comutationinto}
[\TT_F,\widehat{Z}](f) \hspace{1mm} = \hspace{1mm} - v^{\mu} {\mathcal{L}_Z(F)_{\mu}}^i \partial_{v^i} f.
\end{equation}
Since the electromagnetic field decay initially as $r^{-\frac{3}{2}}$, one cannot expect a better estimate than $|\mathcal{L}_Z(F)|(t,x) \lesssim (1+t+r)^{-1}(1+|t-r|)^{-\frac{1}{2}}$. Moreover, since $v^0\partial_{v^i}f$ behaves as $(t+r)\partial_{t,x}f+\widehat{\Omega}f$, where $\widehat{\Omega}$ is part of the commutation vector fields $\widehat{\mathbb{P}}_0$ of the transport equation, we have the following naive inequality
\begin{equation}\label{esticomutationintro}
\left| [\TT_F,\widehat{Z}](f)\right| \hspace{1mm} \lesssim \hspace{1mm} \frac{1}{(1+|t-r|)^{\frac{1}{2}}}|\partial_{t,x}f |+\frac{1}{(1+t+r)(1+|t-r|)^{\frac{1}{2}}}\sum_{\widehat{\Omega} \in \widehat{\mathbb{P}}_0 }|\widehat{\Omega}f |.
\end{equation}
We observe that there are two obstructions for closing the energy estimates by performing, say, a Grönwall inequality from \eqref{esticomutationintro}.
\begin{enumerate}
\item The decay rate $(1+|t-r|)^{-\frac{1}{2}}$ is non integrable.
\item The decay rates of both terms in the right hand side of \eqref{esticomutationintro} degenerate near the light cone $t-r=0$. We will deal with this issue by taking advantage of the null structure of the system.
\end{enumerate}

\subsubsection{Dealing with non integrable decay rates}

The problem of using the decay in $t-r$ or transforming it into a decay in $t+r$ will be adressed later and we only focus here on the first problem. In \cite{dim3}, our proof required to propagate a stronger energy norm on the electromagnetic field, leading, by neglecting a logarithmal growth, to the estimate $|\mathcal{L}_Z(F)|(t,x) \lesssim (1+t+r)^{-1}(1+|t-r|)^{-1}$. Because of the presence of a charge\footnote{Note that even in the case of a neutral plasma, the slow decay rate of the Vlasov field in the interior region would prevent us to improve this pointwise estimate with our method.}, this inequality is optimal for large $r$. This gave us a better, but still not satisfying, naive estimate than \eqref{esticomutationintro}. To deal with this issue, we considered modifications of the commutation vector fields of the form $Y=\widehat{Z}+\Phi^{\nu} \partial_{\nu}$. The coefficients $\Phi$ depend on the solution itself, growth logarithmically and are constructed in order to compensate the worst error terms in \eqref{comutationinto}. In the exterior of the light cone $V_0$, the solutions to the Vlasov equation behave better than in the interior region. One can already see that with the following estimate (see Lemma \ref{weights1}), for $g$ a solution to the free transport equation $v^{\mu} \partial_{\mu} g=0 $,

\begin{equation}\label{extradecayintro}
 \forall \hspace{0.5mm} |x| \geq t, \quad \int_{v \in \R^3} |g|(t,x,v) dv \hspace{2mm} \lesssim \hspace{2mm} \sum_{|\beta|+|\kappa| \leq 3} \frac{\| (v^0)^{|\kappa|+2k} (1+r)^{|\beta|+k+q} \partial^{\beta}_{t,x} \partial_v^{\kappa} g \|_{L^1_{x,v}}(t=0)}{(1+t+r)^{2+k}(1+|t-r|)^{1+q}},
\end{equation}
whereas
\begin{equation}\label{extradecayintro2}
 \forall \hspace{0.5mm} |x| \leq t, \quad \int_{v \in \R^3} |g|(t,x,v) dv \hspace{2mm} \lesssim \hspace{2mm} \frac{(1+|t-r|)^{k-1}}{(1+t+r)^{2+k}} \sum_{|\beta|+|\kappa| \leq 3} \| (v^0)^{|\kappa|+2k} (1+r)^{|\beta|+k} \partial_{t,x}^{\beta} \partial_{v}^{\kappa} g \|_{L^1_{x,v}}(t=0).
\end{equation}
One can then observe in \eqref{extradecayintro2} that, in the interior of the light cone, $ \int_{v \in \R^3} |g|(t,x,v) dv$ has a stronger decay rate along null rays, where $t-r=constant$, than along timelike curves. In, say, the region $|x| \leq \frac{t}{2}$, one cannot expect a better decay rate than $(1+t)^{-3}$. The situation is completely different in the exterior of the light cone since one can obtain arbitrary decay through \eqref{extradecayintro} provided that the initial data decay sufficiently fast. The idea is to capture this phenomenon by considering and controlling weighted $L^1$ norms adapted to our problem and by identifying several hierarchies in the commuted equations.
\begin{Rq}
Contrary to \cite{dim3}, this strong decay allows us to avoid the use of modified vector fields. This is also what allows us to assume less decay on the electromagnetic field and then to avoid any difficulty due to the presence of a non zero total charge.
\end{Rq}
Let us illustrate how we will deal with the weak decay rate of $F$ and which kind of hierarchies we will use.
\begin{itemize}
\item The translations provide extra decay in $t-r$. More precisely, $|\mathcal{L}_{\partial_{x^{\mu}}}(F)| \lesssim (1+t+r)^{-1}(1+|t-r|)^{-\frac{3}{2}}$ and the estimate \eqref{esticomutationintro} can be improved if $Z=\partial_{x^{\mu}}$. As all the error terms have an integrable decay rate in that case, one can expect to prove that $\| \partial_{t,x} f \|_{L^1_{x,v}}$ is uniformly bounded in time.
\item If $Z \neq \partial_{x^{\mu}}$, the worst error terms only contain standard derivatives $\partial_{t,x} f$ of the Vlasov field. This makes in some sens the system of the commuted equations triangular. However, at this point, we cannot expect to prove a better bound than $\| \widehat{Z} f \|_{L^1_{x,v}}(t) \lesssim \sqrt{1+t}$.
\item We know that there exists $z$, which is a combination of weights preserved by $v^{\mu} \partial_{\mu}$, such that $1+|t-r| \lesssim z$ (see Subsection \ref{sectionweights}). Hence, if $g$ is a sufficiently regular solution to $v^{\mu} \partial_{\mu} g=0$, $\| z g \|_{L^1_{x,v}}$ is constant. This indicates that we should be able to bound sufficiently well $\| z \partial_{t,x} f \|_{L^1_{x,v}}$ and then $\| \widehat{Z} f \|_{L^1_{x,v}}$ since
$$ \left| [\TT_F,\widehat{Z}](f)\right| \hspace{1mm} \lesssim \hspace{1mm} \frac{1}{(1+|t-r|)^{\frac{3}{2}}}|z\partial_{t,x}f |+\frac{1}{(1+t+r)(1+|t-r|)^{\frac{1}{2}}}|\widehat{\Omega}f |.$$
\item We will then consider energy norms of the form $\| z^{Q-\beta_P} \widehat{Z}^{\beta} f \|_{L^1_{x,v}}$ where $\beta_P$ is the number of homogeneous vector fields composing $\widehat{Z}^{\beta}$.
\end{itemize}
\begin{Rq}\label{rqforintroduction}
Note that
\begin{itemize}
\item a gain of decay such as $(1+|t-r|)^{\frac{1}{2}+\eta}$, $\eta >0$ is sufficient.
\item Certain a priori good error terms become problematic. For instance, in order to control $\| z^{Q} \partial_{t,x} f \|_{L^1_{x,v}} $, we will have to deal with $|\mathcal{L}_{\partial_{t,x}}(F)| z^Q|\widehat{\Omega} f|$. Since we merely control $z^{Q-1}|\widehat{\Omega} f|$, we will have to estimate $z^Q$ by $(1+t+r)z^{Q-1}$. In our case, these terms could be handled but in different settings, they could prevent us to close the energy estimates (see Remark \ref{Rqintronull} for further explanations).
\end{itemize} 
We will then rather propagate the norms $\| z^{Q-(\frac{1}{2}+\eta)\beta_P} \widehat{Z}^{\beta} f \|_{L^1_{x,v}}$, which also have the advantage to require less decay in $x$ on the Vlasov field.
\end{Rq}
\begin{Rq}
The inequality $1+|t-r| \lesssim z$, proved in Lemma \ref{weights1}, does not hold in the interior of the light cone. However, our method can be used in order to deal with non integrable decay rates for solutions to massless Vlasov systems in the whole spacetime since there exists $z_0$ which is preserved by the flow of $|v|\partial_t+v^i \partial_{i}$ and such that $1+|t-r| \lesssim z_0$.
\end{Rq}
\subsubsection{Null structure of the VM system}

As we start with optimal decay in $v$, we cannot fully use \eqref{extradecayintro}. In particular, no extra decay in the $t+r$ direction can be obtained in that way. Moreover, since the initial data are not compactly supported in $v$, a problem arises from large velocities, for which $v^0 \sim |v|$, so that the characteristics of the transport equation ultimately approach those of the Maxwell equations. The consequence is that, in a product such as $\mathcal{L}_{Z}(F) \cdot \partial_{t,x}f$, one cannot, in view of support considerations, transform a $|t-r|$ decay in a $t+r$ one anymore. To circumvent this difficulty, we take advantage of the null structure of the non linearities such as $v^{\mu} {\mathcal{L}_{Z}(F)_{\mu}}^{ i} \partial_{v^i}  f$ by expanding them with respect to a null frame $(L:= \partial_t+\partial_r, \underline{L}:= \partial_t-\partial_r, e_1,e_2)$, where $(e_1,e_2)$ is an orthonormal basis on the $2$-spheres. 

In order to simplify the presentation, we introduce $G$ a smooth solution to the free Maxwell equations, $g$ a smooth solution to the relativistic transport equation $v^{\mu} \partial_{\mu} g=0$ and we study $v^{\mu}{G_{\mu}}^i \partial_{v^i} g$ as a model of the error terms arising in the commuted Vlasov equation \eqref{comutationinto}. 
\begin{enumerate}
\item We know from \cite{dim4} that $|\frac{x^i}{r} \partial_{v^i} g|\lesssim |t-r| |\partial_{t,x} g|+\sum_{\widehat{\Omega}} |\widehat{\Omega} g|$ behaves better than $|\partial_{v^j} g|$. We also observed that the component $v^{\underline{L}}$ allows us to exploit the decay in $t-r$. Indeed, denoting by $C_u$ the null cone $t-|x|=u$, there holds
$$ \sup_{u \in \R} \int_{C_u} \int_{\R^3_v} \frac{v^{\underline{L}}}{v^0} |g|dvdC_u  \lesssim \| g(0 , \cdot , \cdot ) \|_{L^1_{x,v}}, \qquad \text{so that} \qquad \int_{t=0}^{+\infty} \int_{\R^3_x} \int_{\R^3_v}  \frac{v^{\underline{L}}}{v^0} \frac{|g| \hspace{2mm} dv dx dt}{(1+|t-r|)^{1+\eta}}  < +\infty.$$
The good behavior of the angular component $\slashed{v}$ of the velocity vector $v$ is reflected by $|\slashed{v}| \lesssim \sqrt{v^0 v^{\underline{L}}}$. Additional useful properties concerning $v^{\underline{L}}$ and $\slashed{v}$ are proved in Lemma \ref{weights1}.
\item We know from \cite{CK} that if $|G| \lesssim r^{-\frac{5}{2}}$ holds initially, then
\begin{equation}\label{decayCK}
|\alpha_i| := |G_{e_i L}| \lesssim (1+t+r)^{-\frac{5}{2}}, \quad |\rho|+|\sigma| := \frac{1}{2}|G_{L \underline{L}}|+|G_{e_1e_2}| \lesssim (1+t+r)^{-2}(1+|t-r|)^{-\frac{1}{2}},
\end{equation}
whereas $ |\underline{\alpha}_i| := |G_{e_i L}| \lesssim (1+t+r)^{-1}(1+|t-r|)^{-\frac{3}{2}}$.
\end{enumerate} 
Using the notation $R_1 \prec R_2$ if $R_2$ is expected to behave better than $R_2$ and denoting by $\nabla_v g$ the quantity $(0,\partial_{v^1}g, \partial_{v^2} g , \partial_{v^3} g)$, we have
\begin{itemize}
\item $\underline{\alpha} \prec \rho \sim \sigma \prec \alpha$,
\item $v^L \prec \slashed{v} \prec v^{\underline{L}}$,
\item $\left( \nabla_v g \right)^{e_i} \prec \left( \nabla_v g \right)^L=\left( \nabla_v g \right)^{\underline{L}}$.
\end{itemize}
Our non-linearity $v^{\mu}{G_{\mu}}^i \partial_{v^i} g$ is bounded by
\begin{equation}\label{calculGintro}
 \left( |\rho|+\frac{|\slashed{v}|}{v^0} |\underline{\alpha}|  \right)  \left( |t-r| \left| \partial_{t,x} g\right|+\sum_{\widehat{Z} \in \K} \left| \widehat{Z} g \right| \right) + \left(  |\alpha|+\frac{|\slashed{v}|}{v^0}|\sigma|+\frac{v^{\underline{L}}}{v^0} |\underline{\alpha}|  \right)  \left( (t+r) \left| \partial_{t,x} g\right|+\sum_{\widehat{Z} \in \K} \left| \widehat{Z} g \right| \right) .
 \end{equation}
Note now that each term contains either two good factors, $\alpha$, which is the best null component of $G$, or $v^{\underline{L}}$, the best null component of $v$. Another important element, that we have already discussed and which is crucial in the proof of Theorem \ref{theorem}, is that the translations $\partial_{t,x}$ are the only commutation vector fields hit by the weights $t-r$ and $t+r$.

Apart from the last feature mentionned, we had to use the full null structure of the massless VM system in \cite{massless} in order to prove that the norms $\| \widehat{Z}^{\beta} f \|_{L^1_{x,v}}$ are uniformly bounded in time. In \cite{dim3}, where we studied small data solutions to the massive VM system in the whole Minkowski spacetime, the electromagnetic field merely decay as $r^{-2}$ initially, so that the component $\alpha$, $\rho$ and $\sigma$ decay as $(1+t+r)^{-2}$ and we could not exploit all the null properties of the equations. Note however that $\alpha$ still have in this setting a better behavior when $F$ is estimated in $L^2$ since we control
\begin{equation}\label{Cucone}
\sup_{u \in \R}\int_{C_u} (1+t+r) |\alpha|^2+(1+|t-r|)(|\rho|^2+|\sigma|^2)dC_u .
\end{equation}
In this paper, we start with weak decay assumptions on the electromagnetic field, $|F| \lesssim r^{-\frac{3}{2}}$, and the good components $\alpha$, $\rho$ and $\sigma$ merely decay as\footnote{Note however that the null components of  $\mathcal{L}_{\partial_{t,x}} (F)$ satisfy \eqref{decayCK} (c.f. Proposition \ref{extradecayderiv} and Remark \ref{rqforintroo}).} $(1+t+r)^{-\frac{3}{2}}$. Moreover, no weighted $L^2$ bound such as \eqref{Cucone} can be propagated. As a consequence, we will unify the analysis of the terms containg one of the good component $\alpha$, $\rho$ and $\sigma$ as a factor, i.e. we will bound $|t-r| |\rho|$ by $(t+r)|\rho|$ and $\frac{|\slashed{v}|}{v^0}|\sigma|$ by $|\sigma|$.

\begin{Rq}\label{Rqintronull}
We can understand now why it could be necessary to consider energy norms of the form $\| z^{Q-\kappa \beta_P} \widehat{Z}^{\beta} f \|_{L^1_{x,v}}$, with $\kappa <1$. Indeed, as we can see in Remark \ref{rqforintroduction}, we would have to deal with terms of the form $(|\rho|+\frac{|\slashed{v}|}{v^0} |\underline{\alpha}|) (1+t+r)^{\kappa}z^{Q-\kappa (\xi_P-1)} \widehat{Z}^{\xi} f$. However, in view of \eqref{calculGintro}, the bad terms involving $|\rho|$ and $\frac{|\slashed{v}|}{v^0}|\underline{\alpha}|$ carry a $1+|t-r|$ weight. In some sens, taking $\kappa=1$ implies to forget about one part of the null structure of the system, which could be problematic in a different situation or for an other Vlasov system (e.g. when the solution to the wave equation merely decay in the $t+r$ direction as $(1+t+r)^{-1+\delta}$, $\delta >0$).
\end{Rq}

We will also make fundamental use of the fact that the linear transport operator $v^{\mu} \partial_{\mu}$ is of degree $1$ in $v$ whereas the non-linearity $v^{\mu} {F_{\mu}}^i \partial_{v^i} $ is of degree $0$. In the energy inequalities, this is reflected through the schematic estimate
$$ \| \widehat{Z} f (t, \cdot , \cdot ) \|_{L^1_{x_v}} \hspace{1mm} \lesssim \hspace{1mm} \| \widehat{Z} f (0, \cdot , \cdot ) \|_{L^1_{x_v}} + \int_0^t \int_{_x} \int_{\R^3_v} |\mathcal{L}_Z(F)| \frac{|\widehat{Z} f|}{v^0} dv dx ds.$$
For massless particles, $v^0=|v|$ and this leads to additional difficulties (see \cite{dim4} and \cite{massless}). In our case, this allows us to use the inequality $1 \lesssim \sqrt{v^0 v^{\underline{L}}}$ and then to take advantage of the good properties of the component $v^{\underline{L}}$. We point out that without this property, we would not be able to control sufficiently well the error term $(t+r)|\alpha| \lesssim (1+t+r)^{-\frac{1}{2}}$. Indeed, without any of the components $\slashed{v}$ or $v^{\underline{L}}$, one cannot use the extra decay in $t-r$ provided by our hierarchised energy norms.

Finally, in order to prove that the $L^2$ norm of the electromagnetic field and its derivatives is uniformly bounded in time, which is crucial in our proof, we will have use the full null structure of the Maxwell equations. Indeed, 
$$ \int_{|x| \geq t-b} |F(t,x)|^2 dx \hspace{1mm} \lesssim \hspace{1mm} \int_{|x| \geq -b} |F(0,x)|^2 dx + \int_0^t \int_{|x| \geq s-b} \left| F_{\mu 0} \int_{\R^3_v} \frac{v^{\mu}}{v^0} f dv \right| dx ds$$
and with our weak assumptions in $v$ on the Vlasov field, one cannot expect $\| \int_v f dv \|_{L^2_x}$ to decay faster than $(1+t)^{-1}$. In order to avoid a small growth on $\int_{\R^3} |F|dx$, we will then take advantage of the fact that 
$$\left| F_{\mu 0} \int_{\R^3_v} \frac{v^{\mu}}{v^0} f dv \right| \hspace{1mm} \lesssim \hspace{1mm} |\rho ( F)| \int_{\R^3_v} |f| dv +(|\alpha (F)|+|\underline{\alpha} (F) | ) \int_{\R^3_v} \frac{|\slashed{v}|}{v^0} |f| dv $$
so that each term contain at least either a good component of the Maxwell field or $\slashed{v}$.

\subsubsection{Sharp pointwise decay estimates in the exterior of the light cone}\label{subsubsec}

The problem of the estimates \eqref{extradecayintro}-\eqref{extradecayintro2}, which are given by a Klainerman-Sobolev inequality (see Proposition \ref{KS1}), is that they do not provide a sharp pointwise decay estimate on $\int_{\R^3_v} |g| \frac{dv}{(v^0)^2}$ near the light cone, for $g$ a smooth solution to the free relativistic massive transport equation $v^{\mu} \partial_{\mu} (g)=0$. Indeed, if one merely assume that $\sum_{|\beta|+|\kappa| \leq 3} \||v^0|^{|\kappa|}(1+r)^{|\beta|} \partial_{t,x}^{\beta} \partial_v^{\kappa} g \|_{L^1_{x,v}}$ is initially finite, we cannot fully use \eqref{extradecayintro}, which requires the finiteness of a stronger energy norm, so that, for $u \in \R$,
\begin{equation*}
\int_{\R^3_v} |g|(t,t-u,v) \frac{dv}{|v^0|^2}  \hspace{2mm} \lesssim \hspace{2mm} \int_{\R^3_v} |g|(t,t-u,v) dv  \hspace{2mm} \lesssim \hspace{2mm} \frac{1}{(1+t)^2} \sum_{|\beta|+|\kappa| \leq 3} \| (v^0)^{|\kappa|} (1+r)^{|\beta|} \partial^{\beta}_{t,x} \partial_v^{\kappa} g \|_{L^1_{x,v}}(t=0)
\end{equation*}
In \cite{dim4}, we proved that 
\begin{equation}\label{eq:improvementdecayestiintro} \int_{\R^3_v} |g|(t,t-u,v) \frac{dv}{|v^0|^2}  \hspace{2mm} \lesssim \hspace{2mm}  \frac{1}{(1+t)^3} \sum_{|\beta|+|\kappa| \leq n} \| (v^0)^{|\kappa|} (1+r)^{|\beta|} \partial^{\beta}_{t,x} \partial_v^{\kappa} g \|_{L^1_{x,v}}(t=0)
\end{equation}
holds in the interior region, i.e. for all $u \geq 0$, and with $n=3$. In this article, we prove a functional inequality which implies that \eqref{eq:improvementdecayestiintro} also holds in the exterior of the light cone $u < 0$ and with $n=4$.
\begin{Rq}
The improved decay estimates on the velocity average of the Vlasov field given in Theorem \ref{theorem} are obtained by applying this functional inequality. However, we do not need it in order to prove that the solution of the system is global in time.
\end{Rq}

\subsection{Structure of the paper}

Section \ref{sec2} contains most of the notations used in this article. The vector fields used in this paper and the commuted equations are presented in Subsection \ref{subsecvector}. In Subsection \ref{sectionweights}, fundamental properties of the null components of the velocity vector are proved. The energy norms used to study the Vlasov-Maxwell system are introduced in Section \ref{sec4}. During this section, we also prove approximate conservation laws as well as Klainerman-Sobolev type inequalities in order to control these norms and derive pointwise decay estimates from them. Our new decay estimate for velocity averages is proved in Subsection \ref{subsecnewdecay}. In section \ref{sec6}, we set up the bootstrap assumptions, present their immediate consequences and describe the strategy of the proof of our main result. Sections \ref{sec7} (respectively \ref{sec8}) concerns the improvement of the energy bounds on the particle density (respectively the electromagnetic field). Finally, we prove in Section \ref{sec9} $L^2$ estimates for the velocity averages of the higher order derivatives of the Vlasov field.

\subsection{Acknowledgements}

I would like to express my gratitude to my advisor Jacques Smulevici for suggesting me to study this problem and for his valuable comments. Part of this work was funded by the European Research Council under the European Union's Horizon 2020 research and innovation program (project GEOWAKI, grant agreement 714408).

\section{Preliminaries}\label{sec2}

\subsection{Basic notations}

In this article we work on the $3+1$ dimensional Minkowski spacetime $(\R^{3+1},\widetilde{\eta})$ and we will use two sets of coordinates. The Cartesian coordinates $(x^0=t,x^1,x^2,x^3)$ and null coordinates $(\underline{u},u,\omega_1,\omega_2)$, where
$$\underline{u}=t+r, \hspace{10mm} u=t-r$$
and $(\omega_1,\omega_2)$ are spherical variables, which are spherical coordinates on the spheres $(t,r)=constant$. Apart from $r=0$ and the usual degeneration of spherical coordinates, these coordinates are defined globally on the Minkowski space. We will also use the following classical weights,
$$\tau_+:= \sqrt{1+\underline{u}^2} \hspace{10mm} \text{and} \hspace{10mm} \tau_-:= \sqrt{1+u^2}.$$

\begin{Rq}
In this paper, we exclusively work in regions where $ 1+t \leq \tau_+(t,x) \lesssim |x|$.
\end{Rq}
We denote by $\slashed{\nabla}$ the intrinsic covariant differentiation on the spheres $(t,r)=constant$ and by $(e_1,e_2)$ an orthonormal basis on them. Capital Roman indices such as $A$ or $B$ will always correspond to spherical variables. The null derivatives are defined by
$$L=\partial_t+\partial_r \hspace{4mm} \text{and} \hspace{4mm} \underline{L}=\partial_t-\partial_r, \hspace{4mm} \text{so that} \hspace{4mm} L(\underline{u})=2, \hspace{3mm} L(u)=0, \hspace{3mm} \underline{L}( \underline{u})=0 \hspace{3mm} \text{and} \hspace{3mm} \underline{L}(u)=2.$$
The velocity vector $(v^{\mu})_{0 \leq \mu \leq 3}$ is parametrized by $(v^i)_{1 \leq i \leq 3}$ and $v^0=\sqrt{1+|v|^2}$ since we normalize the mass of the particles to $m=1$. Let $\TT$ be the operator defined by
$$\TT : f \mapsto v^{\mu} \partial_{\mu} f,$$
for all sufficiently regular function $f : [0,T[ \times \R^3_x \times \R^3_v$. We will raise and lower indices using the metric $\widetilde{\eta}$. For instance, $v_0 = v^{\mu} \widetilde{\eta}_{\mu 0} = - v^0$ and $x^1 = x_{\mu} \widetilde{\eta}^{ \mu 1}=x_1$. Finally, we will use the notation $Q \lesssim R$ for an inequality of the form $Q \leq C R$, where $C>0$ is a constant independent of the solutions but which could depend on $N$, the maximal number of derivatives, or on fixed parameters ($\delta$ and $\eta$).

\subsection{A null foliation}\label{subsets}

We start by presenting various subsets of the Minkowski space which depends on $t \in \mathbb{R}_+$, $r \in \R_+$, $u \in \mathbb{R}$ or $b \in \R$. Let $\Sp_{t,r}$, $\Si^b_t$, $C_u(t)$ and $V_u(t)$, be the sets defined as
\begin{flalign*}
& \hspace{0.3cm} \Sp_{t,r}:= \{ (s,y) \in \R_+ \times \R^3 \hspace{1mm} / \hspace{1mm} (s,|y|)=(t,r) \}, \hspace{1.8cm} C_u(t):= \{(s,y)  \in \mathbb{R}_+ \times \mathbb{R}^3 / \hspace{1mm} s < t, \hspace{1mm} s-|y|=u \}, & \\
& \hspace{3mm} \Si^b_t := \{ (s,y) \in \R_+ \times \R^3 \hspace{1mm} / \hspace{1mm} s=t, \hspace{1mm} |y| > s-b \}, \hspace{1.5cm} V_u(t) := \{ (s,y) \in \mathbb{R}_+ \times \mathbb{R}^3 / \hspace{1mm} s < t, \hspace{1mm} s-|y| < u \}. &
\end{flalign*}
The volume form on $C_u(t)$ is given by $dC_u(t)=\sqrt{2}^{-1}r^{2}d\underline{u}d \mathbb{S}^{2}$, where $ d \mathbb{S}^{2}$ is the standard metric on the $2$ dimensional unit sphere.
\vspace{5mm}

\begin{tikzpicture}
\draw [-{Straight Barb[angle'=60,scale=3.5]}] (0,-0.3)--(0,5);
\fill[color=gray!35] (2,0)--(5,3)--(9.8,3)--(9.8,0)--(1,0);
\node[align=center,font=\bfseries, yshift=-2em] (title) 
    at (current bounding box.south)
    {The sets $\Si^{u}_t$, $C_u(t)$ and $V_u(t)$};
\draw (5,3)--(9.8,3) node[scale=1.5,right]{$\Si^u_t$};
\draw (2,0.2)--(2,-0.2);
\draw [-{Straight Barb[angle'=60,scale=3.5]}] (0,0)--(9.8,0);
\draw[densely dashed, blue] (2,0)--(5,3) node[scale=1.5,left, midway] {$C_u(t)$};
\draw (6,1.5) node[ color=black!100, scale=1.5] {$V_u(t)$}; 
\draw (0,-0.5) node[scale=1.5]{$r=0$};
\draw (2,-0.5) node[scale=1.5]{$-u$};
\draw (-0.5,4.7) node[scale=1.5]{$t$};
\draw (9.5,-0.5) node[scale=1.5]{$r$};   
\end{tikzpicture}

The following lemma illustrates that we can foliate $V_b(T)$ by $(\Si^b_s)_{0 \leq s < T}$ or $(C_u(T))_{u< b}$ and will be used several times during this article.
\begin{Lem}\label{foliationexpli}
Let $T>0$, $b \in \R$ and $g \in L^1(V_b(T))$. Then
$$ \int_{V_b(T)} g dV_b(T) \hspace{2mm} = \hspace{2mm} \int_0^T \int_{\Si^b_s} g dx ds \hspace{2mm} = \hspace{2mm} \int_{u=-\infty}^b \int_{C_u(T)} g dC_u(T) \frac{du}{\sqrt{2}} .$$
\end{Lem}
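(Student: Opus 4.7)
Both identities are standard applications of Fubini's theorem; the second additionally uses the change to null coordinates. I would proceed as follows.

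\emph{First equality.} The plan is to observe that, unpacking the definitions, $(s,y) \in V_b(T)$ if and only if $0 \leq s < T$ and $|y| > s-b$; equivalently, $(s,y)$ lies on the slice $\{s\} \times \Si^b_s$ for some $s \in [0,T)$. Writing $dV_b(T) = ds \, dy$ and applying Fubini in Cartesian coordinates then yields directly
\[
 \int_{V_b(T)} g \, dV_b(T) \; = \; \int_0^T \int_{\{ |y| > s - b\}} g(s,y) \, dy \, ds \; = \; \int_0^T \int_{\Si^b_s} g \, dx \, ds.
\]

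\emph{Second equality.} Here I would use spherical coordinates $dy = r^2 \, dr \, d\mathbb{S}^2$ off the axis $r=0$ (which has zero Lebesgue measure and thus contributes nothing), and then pass to the null coordinates defined by $u = s-r$, $\underline{u} = s+r$. A direct Jacobian computation
\[
 \left| \det \frac{\partial(s,r)}{\partial(\underline{u},u)} \right| \; = \; \left| \det \begin{pmatrix} 1/2 & 1/2 \\ 1/2 & -1/2 \end{pmatrix} \right| \; = \; \frac{1}{2}
\]
gives $ds \, dy = \tfrac{r^2}{2} \, du \, d\underline{u} \, d\mathbb{S}^2$. Comparing with the definition $dC_u(T) = \sqrt{2}^{-1} r^2 \, d\underline{u} \, d\mathbb{S}^2$ recalled in Subsection \ref{subsets}, one obtains the key identity $ds \, dy = \tfrac{du}{\sqrt{2}} \, dC_u(T)$.

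To conclude, I would check that the defining constraints $s - |y| < b$, $0 \leq s < T$ of $V_b(T)$ translate in the new coordinates to $u < b$, together with $\max(u,-u) \leq \underline{u} < 2T-u$; and that this last range is exactly the $(\underline{u},\omega_1,\omega_2)$-parametrisation of $C_u(T)$ at fixed $u$. A second application of Fubini then yields
\[
 \int_{V_b(T)} g \, dV_b(T) \; = \; \int_{u = -\infty}^{b} \int_{C_u(T)} g \, dC_u(T) \, \frac{du}{\sqrt{2}},
\]
as claimed.

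\emph{Main obstacle.} None of real substance; the only points requiring care are the bookkeeping of the integration domain after the null change of variables, and the matching of the Jacobian $1/2$ with the $\sqrt{2}^{-1}$ built into the definition of $dC_u(T)$.
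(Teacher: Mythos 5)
Your proof is correct. The paper states this lemma without a written proof, evidently regarding it as a standard foliation/change-of-variables fact, so there is no paper argument to compare against literally; your route (Fubini for the first equality, passage to null coordinates $u=s-r$, $\underline{u}=s+r$ with Jacobian $1/2$ and matching against $dC_u(T)=\sqrt{2}^{-1}r^2\,d\underline{u}\,d\mathbb{S}^2$ for the second) is exactly the natural one the reader is expected to supply. The bookkeeping is right: $ds\,dy = \frac{1}{2}r^2\,du\,d\underline{u}\,d\mathbb{S}^2 = \frac{du}{\sqrt 2}\,dC_u(T)$, and the constraints $0\le s<T$, $s-|y|<b$, $r\ge 0$ translate to $u<b$ and $|u|\le\underline{u}<2T-u$, which is precisely the $(\underline{u},\omega)$-range describing $C_u(T)$ at fixed $u$.
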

We will use the second foliation in order to take advantage of decay in the $t-r$ direction as $\| \tau_-^{-1} \|_{L^{\infty}(C_u(t))} = (1+u^2)^{-\frac{1}{2}}$ whereas $\| \tau_-^{-1} \|_{L^{\infty}(\Si^b_s)} = 1$.

\subsection{The commutation vector fields}\label{subsecvector}

The aim of this subsection is to introduce the commutation vector fields for the Maxwell equations, those for the relativistic transport operator and certain of their basic properties. Let $\Po$ be the generators of the Poincaré algebra, i.e. the set containing
\begin{itemize}
\item the translations \hspace{12mm} $\partial_{\mu}:= \partial_{x^{\mu}}$, \hspace{2mm} $0 \leq \mu \leq 3$.
\item the rotations \hspace{16.5mm} $\Omega_{ij}=x^i\partial_{j}-x^j \partial_i$, \hspace{2mm} $1 \leq i < j \leq 3$.
\item the Lorentz boosts \hspace{7.5mm} $\Omega_{0k}=t\partial_{k}+x^k \partial_t$, \hspace{2mm} $1 \leq k \leq 3$.
\end{itemize}
Let also $\Or := \{ \Omega_{12}, \hspace{1mm} \Omega_{13}, \hspace{1mm} \Omega_{23} \}$ be the set of the rotational vector fields and $\mathbb{K}:= \Po \cup \{ S \}$, where $S=x^{\mu} \partial_{\mu}$ is the scaling vector field. We will use the vector fields of $\mathbb{K}$ for commuting the Maxwell equations. To commute the operator $\TT=v^{\mu} \partial_{\mu}$, we will rather use the complete lifts of the vector fields of $\Po$.

\begin{Def}
Let $V$ be a vector field of the form $V^{\beta} \partial_{\beta}$. Then, the complete lift $\widehat{V}$ of $V$ is defined by
$$\widehat{V}=V^{\beta} \partial_{\beta}+v^{\gamma} \frac{\partial V^i}{\partial x^{\gamma}} \partial_{v^i}.$$
\end{Def}
Consequently, for all $ \mu \in \llbracket 0, 3 \rrbracket$, $1 \leq i < j \leq 3$ and $k \in \llbracket 1, 3 \rrbracket$, $$ \widehat{\partial_{\mu}}= \partial_{\mu},  \hspace{12mm} \widehat{\Omega}_{ij}=x^i \partial_j-x^j \partial_i+v^i \partial_{v^j}-v^j \partial_{v^i} \hspace{12mm} \text{and} \hspace{12mm} \widehat{\Omega}_{0k} = t\partial_k+x^k \partial_t+v^0 \partial_{v^k}.$$
Since $[\TT,\widehat{Z}]=0$ for all $Z \in \Po$ and $[\TT,S]=\TT$, we consider, as \cite{FJS},  the following set
$$\K := \{ \widehat{Z} \hspace{1mm} / \hspace{1mm} Z \in \Po \} \cup \{ S \}.$$
For simplicity, we denote by $\widehat{Z}$ an arbitrary vector field of $\K$, even if $S$ is not a complete lift. Note that the vectorial space generated by each of these sets is an algebra. More precisely, if $\mathbb{L}$ is either $\mathbb{K}$, $\Po$ or $\Or$, then for all $(Z_1,Z_2) \in \mathbb{L}^2$, $[Z_1,Z_2]$ is a linear combination of vector fields of $\mathbb{L}$. We also consider an ordering on each of the sets $\mathbb{O}$, $\mathbb{P}$, $\mathbb{K}$ and $\widehat{\mathbb{P}}_0$, such that, if $\mathbb{P}= \{ Z^i / \hspace{2mm} 1 \leq i \leq |\mathbb{P}| \}$, then $\mathbb{K}= \{ Z^i / \hspace{2mm} 1 \leq i \leq |\mathbb{K}| \}$, with $Z^{|\mathbb{K}|}=S$, and
$$ \K= \left\{ \widehat{Z}^i / \hspace{2mm} 1 \leq i \leq |\K| \right\}, \hspace{2mm} \text{with} \hspace{2mm} \left( \widehat{Z}^i \right)_{ 1 \leq i \leq |\Po|}=\left( \widehat{Z^i} \right)_{ 1 \leq i \leq |\Po|} \hspace{2mm} \text{and} \hspace{2mm} \widehat{Z}^{|\K|}=S  .$$
If $\mathbb{L}$ denotes $\mathbb{O}$, $\mathbb{P}$, $\mathbb{K}$ or $\widehat{\mathbb{P}}_0$, and  $\beta \in \{1, ..., |\mathbb{L}| \}^q$, with $q \in \mathbb{N}^*$, we will denote the differential operator $V^{\beta_1}...V^{\beta_r} \in \mathbb{L}^{|\beta|}$ by $V^{\beta}$. For a vector field $X$, we denote by $\mathcal{L}_X$ the Lie derivative with respect to $X$ and if $Z^{\gamma} \in \mathbb{K}^{q}$, we will write $\mathcal{L}_{Z^{\gamma}}$ for $\mathcal{L}_{Z^{\gamma_1}}...\mathcal{L}_{Z^{\gamma_q}}$. We denote moreover the number of translations composing $V^{\beta}$ by $\beta_T$ and the number of homogeneous vector fields by $\beta_P$, so that $|\beta|= \beta_T+\beta_P$.

Let us recall, by the following classical result, that the derivatives tangential to the cone behave better than others.

\begin{Lem}\label{goodderiv}
The following relations hold,
$$(t-r)\underline{L}=S-\frac{x^i}{r}\Omega_{0i}, \hspace{6mm} (t+r)L=S+\frac{x^i}{r}\Omega_{0i} \hspace{6mm} \text{and} \hspace{6mm} re_A=\sum_{1 \leq i < j \leq 3} C^{i,j}_A \Omega_{ij},$$
where the $C^{i,j}_A$ are uniformly bounded and depend only on spherical variables. Similarly, we have
$$(t-r)\partial_t =\frac{t}{t+r}S-\frac{x^i}{t+r}\Omega_{0i} \hspace{6mm} \text{and} \hspace{6mm} (t-r) \partial_i = \frac{t}{t+r} \Omega_{0i}- \frac{x^i}{t+r}S- \frac{x^j}{t+r} \Omega_{ij}.$$
\end{Lem}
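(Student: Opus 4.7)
The six relations are algebraic identities between first-order differential operators on Minkowski space, and my plan is to establish them by direct Cartesian computation starting from $S = t\partial_t + x^i\partial_i$, $\Omega_{0i} = t\partial_i + x^i\partial_t$, $\Omega_{ij} = x^i\partial_j - x^j\partial_i$, together with the elementary relations $\frac{x^i}{r}\partial_i = \partial_r$ and $\frac{x^i x^i}{r}=r$.

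The two identities on the first line would follow from a single computation:
\[
S \pm \tfrac{x^i}{r}\Omega_{0i} \;=\; t\partial_t + r\partial_r \pm (t\partial_r + r\partial_t) \;=\; (t\pm r)(\partial_t \pm \partial_r),
\]
which reads as $(t+r)L$ and $(t-r)\underline{L}$ respectively. For the angular identity $re_A = \sum_{i<j} C^{i,j}_A \Omega_{ij}$, I would use that each $\Omega_{ij}$ annihilates both $t$ and $r^2 = |x|^2$ and is therefore tangent to the spheres $\mathbb{S}_{t,r}$; at any point $x$ with $|x|=r$ the three vectors $\Omega_{12}|_x, \Omega_{13}|_x, \Omega_{23}|_x$ have norm of order $r$ and span the two-dimensional tangent plane, so inverting the resulting linear relation expresses each $e_A$ as $r^{-1}$ times a combination of the $\Omega_{ij}$ whose coefficients depend only on the angular variables $(\omega_1,\omega_2)$ and are uniformly bounded.

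For the last two identities I would invert the first-line system to get $L = (t+r)^{-1}(S+\tfrac{x^i}{r}\Omega_{0i})$ and $\underline{L} = (t-r)^{-1}(S-\tfrac{x^i}{r}\Omega_{0i})$, and then compute
\[
(t-r)\partial_t \;=\; \tfrac12(t-r)(L+\underline{L}) \;=\; \tfrac{t-r}{2(t+r)}\Bigl(S+\tfrac{x^i}{r}\Omega_{0i}\Bigr) + \tfrac12\Bigl(S-\tfrac{x^i}{r}\Omega_{0i}\Bigr) \;=\; \tfrac{t}{t+r}S - \tfrac{x^i}{t+r}\Omega_{0i}.
\]
For $(t-r)\partial_i$ the shortest route uses $\Omega_{0i} - x^i\partial_t = t\partial_i$, which yields $t(t-r)\partial_i = (t-r)\Omega_{0i} - x^i(t-r)\partial_t$. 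Dividing by $t$ would introduce a spurious $1/t$ singularity, so instead I would decompose $\partial_i = \frac{x^i}{r}\partial_r + \slashed{\partial}_i$, rewrite the radial coefficient via the just-obtained $(t-r)\partial_t$ formula and the identity for $(t-r)\underline{L}$, and replace the angular piece $r\slashed{\partial}_i$ by a linear combination of the $\Omega_{jk}$ through the third identity; using $x^i r\partial_r = x^i \sum_j x^j \partial_j$ to reassemble the $S$ and $\Omega_{ij}$ contributions yields a representation whose only denominator is $t+r$.

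The main technical point is precisely this last step: the answer must be expressed in terms of $(t+r)^{-1}$ alone to be globally regular, and reaching this form requires exploiting the angular decomposition carefully and paying close attention to index conventions and signs in the $\Omega_{ij}$ term. Once this arrangement is done, the cancellations between the $\partial_t$ contributions and the radial parts of the $\Omega_{0i}$ and $S$ contributions are forced and give exactly the stated expression.
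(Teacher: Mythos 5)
The paper states this lemma as a classical fact and gives no proof, so there is nothing to compare against; your direct Cartesian computation is the natural argument and what one expects a reader to reconstruct. The first four identities are established correctly: $S\pm\tfrac{x^i}{r}\Omega_{0i}=(t\pm r)(\partial_t\pm\partial_r)$ is immediate; your spanning-plus-uniform-boundedness argument for $re_A=C^{i,j}_A\Omega_{ij}$ is right (the $\Omega_{ij}$ annihilate $t$ and $r$, have length comparable to $r$, and span the tangent plane of $\mathbb{S}_{t,r}$); and your computation of $(t-r)\partial_t$ via $\partial_t=\tfrac12(L+\underline{L})$ checks out exactly. The one weak point is $(t-r)\partial_i$: you describe a plan through the radial/angular splitting $\partial_i=\tfrac{x^i}{r}\partial_r+\slashed{\partial}_i$ but never carry it out, and the route is more roundabout than it needs to be. A shorter derivation: from $\Omega_{0i}=t\partial_i+x^i\partial_t$ one has $t^2\partial_i=t\,\Omega_{0i}-tx^i\partial_t$, and from $x^j\Omega_{ij}=x^i(x^j\partial_j)-(x^j x^j)\partial_i=x^i r\partial_r-r^2\partial_i$ one has $r^2\partial_i=x^i r\partial_r-x^j\Omega_{ij}$; subtracting gives $(t^2-r^2)\partial_i=t\,\Omega_{0i}-x^i S+x^j\Omega_{ij}$, and dividing by $t+r$ gives the stated decomposition with no spurious singularity. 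Carrying this through also reveals that the sign in front of $\tfrac{x^j}{t+r}\Omega_{ij}$ in the lemma as printed should be $+$, not $-$ (test it at $x=(r,0,0)$ with $i=2$: the printed formula gives $\tfrac{t^2+r^2}{t+r}\partial_2$ instead of $(t-r)\partial_2$). You would have caught this typo had you completed the computation; it is harmless for the paper since the lemma is used only to extract a $\lesssim$ bound, but it should be flagged.
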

We introduce now the notation $\nabla_v g := (0,\partial_{v^1}g, \partial_{v^2}g,\partial_{v^3}g)$, so that \eqref{VM1} can be rewritten $$\TT_F(f) := v^{\mu} \partial_{\mu} f +F \left( v, \nabla_v f \right) =0.$$
In order to commute the Vlasov-Maxwell system, we recall the following result (see Lemma $2.8$ of \cite{massless} for a proof) where the Kronecker symbol is extended to vector fields, i.e. $\delta_{X, Y}=1$ if $X=Y$ and $\delta_{X, Y}=0$ otherwise.
\begin{Lem}\label{comumax1}
Let $G$ be a $2$-form and $g$ a function both sufficiently regular. Then, for all $\widehat{Z} \in \K$,
$$ \widehat{Z} \left( G \left( v, \nabla_v g \right) \right) = \mathcal{L}_Z(G) \left( v, \nabla_v g \right)+G \left( v, \nabla_v \widehat{Z} g \right)-2 \delta_{\widehat{Z},S} G \left( v, \nabla_v g \right).$$
If $G$ and $g$ satisfy $\nabla^{\mu} G_{\mu \nu} = J(g)_{\nu}$ and $\nabla^{\mu} {}^* \! G_{\mu \nu} = 0$, then
$$\forall \hspace{0.5mm} Z \in \mathbb{K}, \hspace{6mm} \nabla^{\mu} \mathcal{L}_Z(G)_{\mu \nu} = J(\widehat{Z}g)_{\nu}+3 \delta_{Z,S} J(g)_{\nu} \hspace{15mm} \text{and} \hspace{15mm} \nabla^{\mu} {}^* \! \mathcal{L}_Z(G)_{\mu \nu} = 0.$$
\end{Lem}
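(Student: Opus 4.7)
The proof proceeds identity by identity, and case by case on $\widehat{Z} \in \K$. For the first identity, I would write $G(v, \nabla_v g) = v^\mu {G_\mu}^i \partial_{v^i} g$ and expand via Leibniz. When $Z = \partial_\mu$, the lift satisfies $\widehat{\partial_\mu} = \partial_\mu$ and $\mathcal{L}_{\partial_\mu} G = \partial_\mu G$ component-wise, so both sides trivially agree. For $Z \in \{\Omega_{ij}, \Omega_{0k}\}$, one uses the formula
$$\mathcal{L}_Z G_{\mu\nu} = Z(G_{\mu\nu}) + (\partial_\mu Z^\alpha) G_{\alpha\nu} + (\partial_\nu Z^\alpha) G_{\mu\alpha}$$
together with $\widehat{Z} = Z + v^\gamma (\partial_\gamma Z^i) \partial_{v^i}$, and checks that the two terms produced by the velocity part acting on $v^\mu$ and on $\partial_{v^i} g$ exactly account for the difference between $Z(G_{\mu i}) v^\mu \partial_{v^i} g$ and $\mathcal{L}_Z(G)_{\mu i} v^\mu \partial_{v^i} g$ — this is precisely the design principle of the complete lift. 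For $Z = S$, one has $\widehat{S} = S$ (no velocity part), $\mathcal{L}_S G_{\mu\nu} = S(G_{\mu\nu}) + 2 G_{\mu\nu}$, and $S$ commutes with each $\partial_{v^i}$. Expanding $S(G_{\mu i} v^\mu \partial_{v^i} g)$ by Leibniz and substituting $S(G_{\mu i}) = \mathcal{L}_S(G)_{\mu i} - 2 G_{\mu i}$ produces the $-2 G(v, \nabla_v g) = -2 \delta_{\widehat{Z}, S} G(v, \nabla_v g)$ correction.

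For the second identity, when $Z \in \Po$ is Killing, I would invoke the classical fact that $\mathcal{L}_Z$ commutes with the covariant divergence and with Hodge duality. Applied to $\nabla^\mu G_{\mu\nu} = J(g)_\nu$ and $\nabla^\mu {}^*G_{\mu\nu} = 0$, this gives $\nabla^\mu \mathcal{L}_Z G_{\mu\nu} = \mathcal{L}_Z J(g)_\nu$ and $\nabla^\mu {}^*\mathcal{L}_Z G_{\mu\nu} = 0$. It remains to show $\mathcal{L}_Z J(g)_\nu = J(\widehat{Z}g)_\nu$. Expanding both sides,
\begin{align*}
\mathcal{L}_Z J(g)_\nu &= Z(J(g)_\nu) + (\partial_\nu Z^\alpha) J(g)_\alpha, \\
J(\widehat{Z}g)_\nu &= Z(J(g)_\nu) + \int_v \frac{v_\nu v^\gamma}{v^0} (\partial_\gamma Z^i) \partial_{v^i} g \, dv
\end{align*}
(the first terms agree because $Z = Z^\alpha \partial_\alpha$ commutes with integration in $v$), and integrating by parts in $v^i$ in the remaining integral, one reduces to the pointwise algebraic identity $(\partial_\nu Z^\alpha) v_\alpha = -(\partial_\gamma Z^i) v^0 \partial_{v^i}\bigl(\tfrac{v_\nu v^\gamma}{v^0}\bigr)$. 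This identity follows from the antisymmetry of $\partial_\mu Z_\nu$ for Killing $Z$ together with $v^0 = \sqrt{1+|v|^2}$ (equivalently the mass-shell constraint $v^\mu v_\mu = -1$), by a direct case check on $\nu$ and $\gamma$ being $0$ or spatial.

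For $Z = S$, since $S$ is conformal Killing but not Killing, I would compute directly. Using $\mathcal{L}_S G_{\mu\nu} = S(G_{\mu\nu}) + 2 G_{\mu\nu}$ and the commutator $[\partial^\mu, S] = \partial^\mu$ (which follows in Cartesian coordinates from $\partial^\mu x^\alpha = \eta^{\mu\alpha}$), one obtains
$$\nabla^\mu \mathcal{L}_S(G)_{\mu\nu} = \partial^\mu S(G_{\mu\nu}) + 2 \nabla^\mu G_{\mu\nu} = S(\nabla^\mu G_{\mu\nu}) + 3 \nabla^\mu G_{\mu\nu} = S(J(g)_\nu) + 3 J(g)_\nu,$$
and $S(J(g)_\nu) = J(Sg)_\nu$ since $S = x^\alpha \partial_\alpha$ commutes with $\int_v$, giving the stated $3 \delta_{Z,S}$ coefficient. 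The same commutator manipulation applied to the sourceless equation yields $\nabla^\mu {}^*\mathcal{L}_S(G)_{\mu\nu} = 0$. The main subtlety I anticipate is the algebraic verification in the Killing case for the boosts, where the lift contains $v^0 \partial_{v^k}$ and the integration by parts interacts nontrivially with $\partial_{v^i}(v^0) = v^i/v^0$; it is the Killing antisymmetry of $\partial_\mu Z_\nu$ that makes all contributions recombine to exactly $(\partial_\nu Z^\alpha) J(g)_\alpha$.
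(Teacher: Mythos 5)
The paper does not prove this lemma in-text: it cites Lemma~$2.8$ of \cite{massless} and moves on. So there is no proof here to compare against; I can only assess your argument on its own merits.

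Your proof is correct and is the natural one. The first identity for $Z=\partial_\mu$ and $Z=S$ is routine, as you say, and the case of rotations and boosts is, as you correctly identify, exactly the design of the complete lift: the two extra terms generated by $v^\gamma(\partial_\gamma Z^i)\partial_{v^i}$ acting on the $v^\mu$ factor and on $\partial_{v^i}g$ reproduce the difference $\mathcal{L}_Z(G)_{\mu i} - Z(G_{\mu i})$ contracted with $v^\mu\partial_{v^i}g$. For the second identity, your reduction of $\mathcal{L}_Z J(g)_\nu = J(\widehat{Z}g)_\nu$ (for Killing $Z$) to the pointwise identity
$$ (\partial_\nu Z^\alpha)\frac{v_\alpha}{v^0} \;=\; -(\partial_\gamma Z^i)\,\partial_{v^i}\!\left(\frac{v_\nu v^\gamma}{v^0}\right) $$
after integration by parts is exactly right, and the identity does hold via the antisymmetry of $\partial_\mu Z_\nu$ and $\partial_{v^i}(v^0)=v^i/v^0$ --- I checked it for a boost ($\Omega_{01}$) and a rotation ($\Omega_{12}$). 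Your scaling computation, including $[\partial^\mu,S]=\partial^\mu$ and the resulting coefficient $2+1=3$, is also correct.

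The one place you gloss slightly is the sourceless equation for $Z=S$: you invoke ``the same commutator manipulation'' but you should also note that ${}^*\mathcal{L}_S(G) = \mathcal{L}_S({}^*G)$, which you explicitly used for Killing $Z$ but did not flag for $S$. This does hold --- either because the Hodge star on $2$-forms in $3+1$ dimensions is conformally invariant and $S$ is conformal Killing, or more simply by observing that $\mathcal{L}_S G_{\mu\nu}=S(G_{\mu\nu})+2G_{\mu\nu}$ implies $\mathcal{L}_S G^{\lambda\sigma}=S(G^{\lambda\sigma})+2G^{\lambda\sigma}$ after raising with the constant $\eta$, so that
$$ {}^*\mathcal{L}_S(G)_{\mu\nu} = \tfrac12\bigl(S(G^{\lambda\sigma})+2G^{\lambda\sigma}\bigr)\varepsilon_{\lambda\sigma\mu\nu} = S({}^*G_{\mu\nu})+2\,{}^*G_{\mu\nu} = \mathcal{L}_S({}^*G)_{\mu\nu}. $$
With this line filled in, the proof is complete.
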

We then deduce the form of the source terms of the commuted Vlasov-Maxwell equations.
\begin{Pro}\label{Maxcom}
Let $(f,F)$ be a sufficiently regular solution to the VM system \eqref{VM1}-\eqref{VM3} and $Z^{\kappa} \in \mathbb{K}^{|\kappa|}$. There exists integers $n^{\kappa}_{\gamma, \beta}$ and $m^{\kappa}_{\xi}$ such that
\begin{eqnarray}
\nonumber [\TT_F, \widehat{Z}^{\kappa} ](f) \hspace{2mm} = \hspace{2mm} \TT_F \left( \widehat{Z}^{\kappa} f \right) & = & \sum_{\begin{subarray}{} |\gamma|+|\beta| \leq |\kappa| \\ \hspace{1mm} |\beta| \leq |\kappa|-1 \end{subarray}} n^{\kappa}_{\gamma , \beta} \mathcal{L}_{Z^{\gamma}}(F) \left( v , \nabla_v \widehat{Z}^{\beta} (f) \right) , \\ \nonumber
\nabla^{\mu} \mathcal{L}_{Z^{\kappa}}(F)_{\mu \nu} & = & \sum_{|\xi| \leq |\kappa|} m^{\kappa}_{\xi} J \left( \widehat{Z}^{\xi} f \right)_{\nu}, \\ \nonumber
\nabla^{\mu} {}^* \! \mathcal{L}_{Z^{\kappa}}(F)_{\mu \nu} & = & 0 .
\end{eqnarray}
Moreover, the number of homogeneous vector fields $\beta_P$ of $\widehat{Z}^{\beta}$ satisfies the following condition.
\begin{itemize}
\item Either $\beta_P < \kappa_P$
\item or $\beta_P = \kappa_P$ and $\gamma_T \geq 1$.
\end{itemize}
\end{Pro}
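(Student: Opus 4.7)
The plan is to proceed by induction on $|\kappa|$, reducing everything to repeated application of the one-step identities provided by Lemma \ref{comumax1}. First, since $\TT_F(f)=0$, the commutator collapses to $[\TT_F,\widehat{Z}^{\kappa}](f) = \TT_F(\widehat{Z}^{\kappa}f)$, so the first identity amounts to expressing $\TT_F(\widehat{Z}^{\kappa}f)$ in the claimed normal form. Combining Lemma \ref{comumax1} with $[\TT,\widehat{Z}] = \delta_{\widehat{Z},S}\TT$ (which uses $[\TT,S]=\TT$ and $[\TT,\widehat{Z}]=0$ for $Z\in \Po$) gives the one-step formula
$$\TT_F(\widehat{Y}h) = \widehat{Y}(\TT_F h) + \delta_{\widehat{Y},S}\bigl(\TT_F h + F(v,\nabla_v h)\bigr) - \mathcal{L}_Y(F)(v,\nabla_v h),$$
which yields the base case $|\kappa|=1$ when applied to $h=f$.

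For the inductive step, I would write $\widehat{Z}^{\kappa} = \widehat{Y}\widehat{Z}^{\kappa'}$ with $|\kappa'|=|\kappa|-1$, apply the displayed formula to $h=\widehat{Z}^{\kappa'}f$, and then push $\widehat{Y}$ through each term of the induction hypothesis $\TT_F(\widehat{Z}^{\kappa'}f)=\sum n^{\kappa'}_{\gamma',\beta'}\mathcal{L}_{Z^{\gamma'}}(F)(v,\nabla_v\widehat{Z}^{\beta'}f)$ by another invocation of Lemma \ref{comumax1}. Each summand splits into exactly two pieces: one where $\widehat{Y}$ gets absorbed as $\mathcal{L}_Y$ into the Faraday tensor (producing $\mathcal{L}_{Z^{\gamma}}(F)$ with $|\gamma|=|\gamma'|+1$, $|\beta|=|\beta'|$) and one where $\widehat{Y}$ is pushed inside the Vlasov argument (producing $\widehat{Z}^{\beta}=\widehat{Y}\widehat{Z}^{\beta'}$, $|\gamma|=|\gamma'|$). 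The count $|\gamma|+|\beta|\leq|\kappa|$, $|\beta|\leq|\kappa|-1$ is immediate since each step adds exactly one vector field and all genuine $\widehat{Y}$ contributions carry either a $\mathcal{L}_Y(F)$ factor or land inside $\widehat{Z}^{\beta}$, never both.

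The main obstacle is the combinatorial tracking of the dichotomy $\bigl(\beta_P<\kappa_P\bigr)$ or $\bigl(\beta_P=\kappa_P$ and $\gamma_T\geq 1\bigr)$ at each recursion. This reduces to a short case analysis. If $Y$ is a translation, $\kappa_P=\kappa'_P$; in either branch $\beta_P=\beta'_P$, and $\gamma_T$ is unchanged or incremented by one, so the dichotomy transfers directly from the induction hypothesis. If $Y$ is homogeneous (rotation, boost, or $S$), then $\kappa_P=\kappa'_P+1$; in the branch where $Y$ joins $Z^{\gamma}$ one gets $\beta_P=\beta'_P\leq \kappa'_P<\kappa_P$, while in the branch where $Y$ joins $\widehat{Z}^{\beta}$ one gets $\beta_P=\beta'_P+1$ and $\kappa_P=\kappa'_P+1$ so the inequalities or the equality-plus-$\gamma_T\geq 1$ clause pass through unchanged. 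The extra $\delta_{\widehat{Y},S}$ contributions $\TT_F(\widehat{Z}^{\kappa'}f)$ and $F(v,\nabla_v\widehat{Z}^{\kappa'}f)$ occur only when $Y=S$ is homogeneous, so they automatically satisfy $\beta_P\leq \kappa'_P<\kappa_P$.

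The divergence identities are easier and parallel in spirit. Iterating $\nabla^{\mu}\mathcal{L}_{Z}(G)_{\mu\nu}=J(\widehat{Z}g)_{\nu}+3\delta_{Z,S}J(g)_{\nu}$ from Lemma \ref{comumax1}, and using that $(f,F)$ solves \eqref{VM2}, immediately produces by induction on $|\kappa|$ a sum of source terms $J(\widehat{Z}^{\xi}f)_{\nu}$ with $|\xi|\leq|\kappa|$ (the terms with $|\xi|<|\kappa|$ arising every time a scaling vector field is encountered). The Bianchi equation $\nabla^{\mu}{}^{*}\!\mathcal{L}_{Z^{\kappa}}(F)_{\mu\nu}=0$ is preserved at every step since Lemma \ref{comumax1} provides no source in its dual form, so a trivial induction closes the argument.
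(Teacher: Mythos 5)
Your proof is correct, and it takes the same route the paper intends: the paper simply states Proposition~\ref{Maxcom} as a direct deduction from Lemma~\ref{comumax1} and the commutation facts $[\TT,\widehat Z]=0$ ($Z\in\Po$), $[\TT,S]=\TT$, leaving exactly the induction you carried out to the reader. Two small expository inaccuracies worth noting, neither of which breaks the argument: when you push $\widehat Y$ through a summand $\mathcal{L}_{Z^{\gamma'}}(F)(v,\nabla_v\widehat Z^{\beta'}f)$ via Lemma~\ref{comumax1}, the split produces \emph{three} pieces rather than two, the third being $-2\delta_{\widehat Y,S}\mathcal{L}_{Z^{\gamma'}}(F)(v,\nabla_v\widehat Z^{\beta'}f)$; this is nonzero only when $Y=S$, in which case $\kappa_P=\kappa'_P+1$ and $\beta'_P\le\kappa'_P<\kappa_P$, so it falls into the first branch of the dichotomy exactly as your discussion of the other $\delta_{\widehat Y,S}$ leftovers shows. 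Second, the term $-\mathcal{L}_Y(F)(v,\nabla_v\widehat Z^{\kappa'}f)$ produced directly by your displayed one-step formula is not one of the two (three) pieces obtained by applying Lemma~\ref{comumax1} to the IH summands and should be checked separately in the inductive step: here $\gamma=(Y)$, $\beta=\kappa'$, so $\beta_P=\kappa'_P$, and either $Y$ is a translation (then $\kappa_P=\kappa'_P=\beta_P$ with $\gamma_T=1$) or $Y$ is homogeneous (then $\kappa_P=\kappa'_P+1>\beta_P$) — precisely the base-case reasoning transplanted, which you did verify for $|\kappa|=1$. With these two clarifications, the combinatorics you laid out for the dichotomy is complete and the divergence identities follow by the linearity-in-$g$ observation you used.
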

Note that the structure of the non-linearity $F(v,\nabla_v f)$ as well as the one of $J(f)$ is preserved by commutation, which reflects the null properties of the system. This is crucial for us since, as mentioned earlier, if the source terms of the Vlasov equation (respectively the Maxwell equations) behaved as $v^0 |F| |\partial_v f|$ (respectively $\int_v |f| dv$), we would not be able to close the energy estimates for the Vlasov field (respectively the electromagnetic field). 
\begin{Rq}\label{hierar}
Let us explain why we count the number of the homogeneous vector fields in the source terms of the Vlasov equation. As $\partial_v f \sim \tau_+ \partial_{t,x} f+\widehat{Z} f$, the decay rate of the solutions will not be strong enough for us to close the energy estimates without using a hierarchy on the derivatives of $f$. If $\gamma_T \geq 1$, Lemma \ref{goodderiv} will give us an extra decay in the $u$ direction. Otherwise, the worst source terms to control in order to bound $\| \widehat{Z}^{\kappa} f \|_{L^1_{x,v}}$ will only involve $\widehat{Z}^{\beta} f$, with $\beta_P < \kappa_P$, making, in some sens, the system constituted by the commuted Vlasov equations triangular.
\end{Rq}

\subsection{The null components of the velocity vector and the weights preserved by $\TT$}\label{sectionweights}

We denote by $(v^L,v^{\underline{L}},v^{e_1},v^{e_2})$ the null components of the velocity vector $v$, so that
$$v=v^L L+ v^{\underline{L}} \underline{L}+v^{e_A}e_A, \hspace{6mm} v^L=\frac{v^0+v^r}{2} \hspace{6mm} \text{and} \hspace{6mm} v^{\underline{L}}=\frac{v^0-v^r}{2}.$$
If there is no ambiguity, we will write $v^{A}$ for $v^{e_A}$. Let $\V$ and $z$ be defined as
$$ \V := \left\{\frac{v^{\mu}}{v^0} \hspace{1mm} \Big/ \hspace{1mm} 0 \leq \mu \leq 3 \right\} \cup \left\{ z_{\mu \nu} \hspace{1mm} \Big/ \hspace{1mm} \mu \neq \nu \right\}, \hspace{4mm} \text{where} \hspace{4mm} z_{\mu \nu} := x^{\mu}\frac{v^{\nu}}{v^0}-x^{\nu}\frac{v^{\mu}}{v^0}, \hspace{0.9cm} \text{and} \hspace{0.9cm} z^2 :=  \sum_{w \in \V} w^2 .$$
Because of regularity issues, we will rather work with $z$ than with the elements of $\V$. We point out that, since $1 \in \V$, $\frac{|v^{\mu}|}{v^0} \leq 1$ and $|z_{\mu\nu}| \leq 2(t+r)$, we have
\begin{equation}\label{boundz}
1 \hspace{1mm} \lesssim \hspace{1mm} z \hspace{1mm} \lesssim \hspace{1mm} \tau_+.
\end{equation} 
Two fundamental properties of these weights is that they are preserved by the flow of $\mathbf{T}$ and by the action of $\K$.
\begin{Lem}\label{weights}
For all $\widehat{Z} \in \K$ and $a \in \R$, we have
$$ \mathbf{T}(z)=0 \hspace{10mm} \text{and} \hspace{10mm} \left| \widehat{Z} (z^a) \right| \hspace{0.5mm} \lesssim \hspace{0.5mm} |a|z^a.$$
\end{Lem}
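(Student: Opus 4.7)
The starting observation is that both assertions are quadratic in $z$, so it is natural to work with $z^2 = \sum_{w \in \V} w^2$ and then divide by $z$ (which is licit since $1 \in \V$ implies $z \geq 1$). Writing $\mathbf{T}(z^2) = 2 \sum_{w \in \V} w \, \mathbf{T}(w)$ and $\widehat{Z}(z^2) = 2 \sum_{w \in \V} w \, \widehat{Z}(w)$, the entire lemma reduces to verifying, for each generator $w \in \V$, the two claims
\[
\mathbf{T}(w) = 0 \qquad \text{and} \qquad \bigl| \widehat{Z}(w) \bigr| \lesssim z.
\]

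\textbf{First part.} For $w = v^\mu / v^0$ the identity $\mathbf{T}(w)=0$ is immediate since $w$ depends only on $v$. For $w = z_{\mu\nu} = x^\mu v^\nu / v^0 - x^\nu v^\mu / v^0$, use $\mathbf{T}(x^\alpha) = v^\alpha$ to get
\[
\mathbf{T}(z_{\mu\nu}) \;=\; v^\mu \cdot \tfrac{v^\nu}{v^0} \;-\; v^\nu \cdot \tfrac{v^\mu}{v^0} \;=\; 0.
\]
Hence $\mathbf{T}(z^2)=0$, and since $z \geq 1$, dividing by $2z$ yields $\mathbf{T}(z) = 0$.

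\textbf{Second part.} Once $|\widehat{Z}(w)| \lesssim z$ is proved for every $w \in \V$, the bound on $z^2$ follows from $|w| \leq z$, and then $|\widehat{Z}(z)| = |\widehat{Z}(z^2)|/(2z) \lesssim z$. The general estimate $|\widehat{Z}(z^a)| = |a| z^{a-1} |\widehat{Z}(z)| \lesssim |a| z^a$ follows from the chain rule (legitimate because $z \geq 1 > 0$). I therefore only need to check the pointwise estimate $|\widehat{Z}(w)| \lesssim z$ case by case.

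\textbf{Case analysis for $\widehat{Z}(w)$.} For $\widehat{Z} = \partial_\alpha$: $\partial_\alpha(v^\nu/v^0) = 0$ and $\partial_\alpha z_{\mu\nu} = \delta_\alpha^\mu v^\nu/v^0 - \delta_\alpha^\nu v^\mu/v^0$, both bounded by a constant, hence by $z$. For $\widehat{Z} = S = x^\alpha \partial_\alpha$: $S(v^\nu/v^0) = 0$ and $S(z_{\mu\nu}) = z_{\mu\nu}$. For the rotation lifts $\widehat{\Omega}_{ij}$, one computes $\widehat{\Omega}_{ij}(v^\mu/v^0)$ using $\partial_{v^k}(v^\mu/v^0) = \delta^k_\mu/v^0 - v^\mu v^k/(v^0)^3$; the $1/(v^0)^3$ contributions cancel pairwise and the result is a bounded combination of $v^\alpha/v^0$. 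A short direct computation shows that $\widehat{\Omega}_{ij}(z_{\mu\nu})$ is, up to sign, another $z_{\alpha\beta}$: e.g.\ $\widehat{\Omega}_{12}(z_{13}) = -z_{23}$. For the boost lifts $\widehat{\Omega}_{0k} = t\partial_k + x^k \partial_t + v^0 \partial_{v^k}$, the same structure persists: the careful cancellation
\[
\widehat{\Omega}_{0k}(z_{0i}) \;=\; x^k \tfrac{v^i}{v^0} \;-\; t\tfrac{v^i v^k}{(v^0)^2} \;=\; -\tfrac{v^i}{v^0}\, z_{0k}
\]
is typical: every output is either an element of $\V$ or a product of $v^\mu/v^0$ with an element of $\V$. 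Since $|v^\mu/v^0| \leq 1$ and $|w'| \leq z$ for $w' \in \V$, each such product is bounded by $z$.

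\textbf{Main obstacle.} No single step is deep; the real work is the bookkeeping of the rotation/boost case applied to $z_{\mu\nu}$, where one must verify that all $(v^0)^{-3}$ remainders and $x^\alpha$-weighted terms recombine into $\V$-elements or products $(v^\mu/v^0)\cdot z_{\alpha\beta}$. This expresses the fact that $\V$ generates an algebra essentially stable under $\mathbf{k}_1$, and is the same computation that underlies the familiar commutation identities $[\widehat{Z}, \mathbf{T}] = 0$ for $Z \in \mathbb{P}$.
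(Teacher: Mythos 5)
Your argument is correct and, at its core, takes the same route as the paper: verify $\mathbf{T}(w)=0$ and $|\widehat{Z}(w)| \lesssim z$ for each generator $w \in \V$, then conclude by the chain rule on $z^a = (\sum_w w^2)^{a/2}$, which is legitimate since $1 \in \V$ forces $z \geq 1$. The one genuine difference is how the estimate $|\widehat{Z}(w)| \lesssim z$ is established. You differentiate $w$ directly and must track the cancellation of the $(v^0)^{-3}$ remainders arising from $\partial_{v^k}(v^\mu/v^0)$ — the bookkeeping you correctly identify as the crux. The paper instead applies $\widehat{Z}$ to the polynomial $v^0 w$ (e.g.\ $v^0 z_{\mu\nu} = x^\mu v^\nu - x^\nu v^\mu$), where the computation is a denominator-free Lie-algebra exercise, and observes that $\widehat{Z}(v^0 w) \in v^0\V \cup \{0\}$. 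Dividing by $v^0$ via Leibniz, $\widehat{Z}(w) = \frac{1}{v^0}\widehat{Z}(v^0 w) - \frac{\widehat{Z}(v^0)}{v^0}\,w$, and using that $\widehat{Z}(v^0)/v^0 \in \{0\} \cup \{v^k/v^0\}$ is bounded, then yields exactly the conclusion you reach: $\widehat{Z}(w)$ is a linear combination of $\V$-elements with coefficients in $\{ \pm 1, \pm v^\mu/v^0\}$. Both are sound; the paper's version buys you cleaner intermediate expressions and makes the "stability of $\V$ under $\K$" manifest without needing to watch the $(v^0)^{-3}$ terms cancel.
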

\begin{proof}
Let $w \in \V$. By straightforward computations, one can prove that
$$ \mathbf{T}(w)=0 \hspace{1cm} \text{and} \hspace{1cm} \widehat{Z}(v^0 w) \in v^0 \V \cup \{0 \}, \hspace{1cm} \text{so that} \hspace{1cm} \left| \widehat{Z} (w) \right| \lesssim \sum_{w_0 \in \V} |w_0|.$$ Indeed, considering for instance $tv^1-x^1v^0$, $x^1v^2-x^2v^1$, $\widehat{\Omega}_{12}$ and $S$, we have
\begin{eqnarray}
\nonumber \widehat{\Omega}_{12}(tv^1-x^1v^0) & = & -tv^2+x^2v^0, \hspace{24mm} \widehat{\Omega}_{12}(x^1v^2-x^2v^1 ) \hspace{2mm} = \hspace{2mm} 0,\\ \nonumber
S(tv^1-x^1v^0) & = & tv^1-x^1v^0 \hspace{12.9mm} \text{and} \hspace{12.9mm} S(x^1v^2-x^2v^1 ) \hspace{2mm} = \hspace{2mm} x^1v^2-x^2v^1.
\end{eqnarray}
Then, 
$$\mathbf{T}(z)= \sum_{w \in \V} \frac{w}{z} \mathbf{T}(w)=0 \hspace{1cm} \text{and} \hspace{1cm} \left|\widehat{Z}(z^a)\right| = \left| az^{a-1}\sum_{w \in \V} \frac{w}{z} \widehat{Z}(w) \right| \lesssim |a|z^{a-1} \sum_{w_0 \in \V} |w_0|\lesssim |a| z^a .$$
\end{proof}
Recall that if $\mathbf{k}_0 := \mathbf{k}_1 \cup \{ x^{\mu} v_{\mu} \}$, then $\tau_- v^L+\tau_+v^{\underline{L}} \lesssim \sum_{w \in \mathbf{k}_0} |w|$. Unfortunately, the weight $x^{\mu} v_{\mu}$ is not preserved by\footnote{Note however that $x^{\mu} v_{\mu}$ is preserved by the massless relativistic transport operator $|v| \partial_t+v^i \partial_i$.} $\mathbf{T}$ so we will not be able to take advantage of this inequality during this paper. In the following lemma, which reflects the good behavior of the components $v^{\underline{L}}$ and $v^A$ of the velocity vector, we prove a similar inequality specific to the exterior of the light cone and adapted to the study of massive particles.
\begin{Lem}\label{weights1}
We have, for all $|x| \geq t$,
$$ 1 \leq 4v^0v^{\underline{L}}, \hspace{8mm} |v^A| \lesssim \sqrt{v^0v^{\underline{L}}} \hspace{8mm} \text{and} \hspace{8mm} \tau_-+(1+r)\frac{v^{\underline{L}}}{v^0}+(1+r)\frac{|v^A|}{v^0} \lesssim z .$$
This implies in particular $|v^A| \lesssim v^0 v^{\underline{L}}$.
\end{Lem}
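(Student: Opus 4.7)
The proof I have in mind rests on two ingredients: the mass-shell relation $(v^0)^2=1+|v|^2$, which ties together the null components $v^L$, $v^{\underline{L}}$ and $\slashed{v}$, and the explicit computation of suitable linear combinations of the weights $z_{\mu\nu}\in\V$.

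First I would establish the algebraic identity $4v^Lv^{\underline{L}}=(v^0)^2-(v^r)^2=1+|\slashed{v}|^2$, where $|\slashed{v}|^2=|v|^2-(v^r)^2=\sum_A(v^A)^2$. Since $v^L=(v^0+v^r)/2\leq v^0$ and $v^{\underline{L}}\geq 0$, this immediately gives $4v^0v^{\underline{L}}\geq 4v^Lv^{\underline{L}}\geq 1$, which is the first inequality. The same identity yields $|v^A|^2\leq|\slashed{v}|^2\leq 4v^Lv^{\underline{L}}\leq 4v^0v^{\underline{L}}$, whence $|v^A|\lesssim\sqrt{v^0v^{\underline{L}}}$; the final assertion $|v^A|\lesssim v^0v^{\underline{L}}$ then follows from the first inequality, since $v^0v^{\underline{L}}\geq 1/4$ implies $\sqrt{v^0v^{\underline{L}}}\leq 2v^0v^{\underline{L}}$. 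None of these steps requires $|x|\geq t$.

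For the angular estimate $(1+r)|v^A|/v^0\lesssim z$, I would compute
\[
\sum_{1\leq i<j\leq 3}z_{ij}^2=\frac{1}{(v^0)^2}\sum_{i<j}(x^iv^j-x^jv^i)^2=\frac{|x|^2|v|^2-(x\cdot v)^2}{(v^0)^2}=\frac{r^2|\slashed{v}|^2}{(v^0)^2},
\]
so that $r|v^A|/v^0\leq r|\slashed{v}|/v^0\lesssim z$; together with $|v^A|/v^0\leq 1\lesssim z$, this gives the bound. This is again valid everywhere.

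The exterior hypothesis enters only for the remaining two bounds. The key observation is that $\sum_i(x^i/r)z_{0i}=tv^r/v^0-r$, which is $\lesssim z$ in absolute value. Since $v^r\leq v^0$, the quantity $r-tv^r/v^0$ is non-negative on $\{|x|\geq t\}$, and one has the decomposition
\[
r-t\,\frac{v^r}{v^0}\;=\;2t\,\frac{v^{\underline{L}}}{v^0}+(r-t),
\]
both terms being non-negative in the exterior. Extracting the second one gives $r-t\leq r-tv^r/v^0\lesssim z$, hence $\tau_-\leq 1+(r-t)\lesssim z$. For $(1+r)v^{\underline{L}}/v^0$, it suffices (using $v^{\underline{L}}/v^0\leq 1\lesssim z$) to bound $rv^{\underline{L}}/v^0$, which I rewrite as
\[
2r\,\frac{v^{\underline{L}}}{v^0}\;=\;r-r\,\frac{v^r}{v^0}\;=\;\left(r-t\,\frac{v^r}{v^0}\right)-(r-t)\,\frac{v^r}{v^0}.
\]
The first term on the right is $\lesssim z$ by the $z_{0i}$ computation, and the second is bounded in absolute value by $r-t\lesssim z$ thanks to the $\tau_-$ estimate just proved, which closes the argument.

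The main subtlety is the last step: one cannot simply discard $(r-t)v^r/v^0$ by sign considerations because $v^r$ can be negative, so the correct route is to first establish $\tau_-\lesssim z$ and then reuse this control to absorb the potentially negative contribution. Everything else is essentially algebra on the mass shell together with the definition of the weights $z_{\mu\nu}$.
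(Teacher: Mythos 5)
Your proof is correct and follows essentially the same route as the paper: the first inequality via $v^L\leq v^0$, the exterior bounds via the combination $\sum_i\frac{x^i}{r}z_{0i}=\frac{tv^r}{v^0}-r$, and the same decomposition of $2rv^{\underline{L}}/v^0$ into a $z_{0i}$-piece plus a $(r-t)$-piece whose sign you cannot control, so you first prove $\tau_-\lesssim z$ and reuse it — exactly as the paper does. The one genuine (minor) simplification is your treatment of $|v^A|\lesssim\sqrt{v^0 v^{\underline{L}}}$: you read it directly off the mass-shell identity $4v^Lv^{\underline{L}}=1+|\slashed{v}|^2$, whereas the paper passes through the inequality $4r^2v^Lv^{\underline{L}}\geq\sum_{k<l}|v^0 z_{kl}|^2$ and the relation $rv_A=v^0C^{i,j}_Az_{ij}$; your version is cleaner and avoids the detour through the weights $z_{ij}$ for this particular bound, though those weights are of course still needed for the factor $(1+r)|v^A|/v^0\lesssim z$, which you handle via the Lagrange identity just as the paper does implicitly.
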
 
\begin{proof}
Note first that $4r^2v^Lv^{\underline{L}} \geq r^2 +\sum_{k < l} |v^0z_{kl}|^2 $. Indeed, as we study massive particles, we have $v^0 =\sqrt{1+ |v|^2}$, so that
\begin{eqnarray}
\nonumber 4r^2v^Lv^{\underline{L}} & = & (rv^0)^2-\left(x^i v_i \right)^2 \hspace{2mm} = \hspace{2mm} r^2+ \sum_{i=1}^3(r^2 -|x^i|^2)|v_i|^2-2\sum_{1 \leq k < l \leq 3} x^kx^lv_kv_l, \\ \nonumber 
 \sum_{1 \leq k < l \leq 3} |v^0 z_{kl}|^2 & = & \sum_{1 \leq k < l \leq 3} |x^k|^2 |v_l|^2+|x^l|^2 |v_k|^2-2x^kx^lv_kv_l \hspace{2mm} = \hspace{2mm} \sum_{i=1}^3 \sum_{j \neq i} |x^j|^2 |v_i|^2 -2\sum_{1 \leq k < l \leq n} x^kx^lv_kv_l.
\end{eqnarray}
The first inequality then comes from $v^L \leq v^0$. The second one and $(1+r)\frac{|v^A|}{v^0} \lesssim z$ then ensue from $rv_A = v^0 C_A^{i,j} z_{ij}$, where $C^{i,j}_A$ are bounded functions depending only on the spherical variables such as $re_A= C^{i,j}_A \Omega_{ij}$. The last part of the third inequality is specific to the exterior of the light cone. Recall that $x^i-t\frac{v^i}{v^0}  \in \V$. Then, $\tau_- \lesssim z$ follows from $1 \leq z$ (see \eqref{boundz}) and
\begin{align}
(r-t) \leq r-t\frac{|v|}{v^0} \leq \left| x-t\frac{v}{v^0} \right| \leq \sum_{i=1}^3 \left| x^i-t\frac{v^i}{v^0} \right|=\sum_{i=1}^3 |z_{0i}| & \leq z. \label{tauminus}
\end{align}
Finally, remark first that $v^{\underline{L}} \leq v^0$, which treats the case $|x| \leq 1$. If $|x| \geq \max(t,1)$, note that
$$2r v^{\underline{L}} = r v^0-r\frac{x^i}{r}v_i=rv^0+(t-r)\frac{x^i}{r}v_i-\frac{x^i}{r}v^0 \left( t \frac{v_i}{v^0}-x_i \right)-rv^0 =(t-r)v_i\frac{x^i}{r}-v^0 \frac{x^i}{r}z_{0i} \hspace{5mm} \text{and use \eqref{tauminus}}.$$
\end{proof}
\begin{Rq}\label{hierar2}
In the cases where no extra decay can be obtained from the electromagnetic field, i.e. when $\gamma_T =0$ (see Remark \ref{hierar}), the hierarchies in the commuted Vlasov equation and a well-choosen energy norm will allow us to gain decay from Lemma \ref{weights1}.
\end{Rq}
\subsection{The null decomposition of the electromagnetic field}\label{basicelec}

In order to capture its geometric properties, the electromagnetic field will be represented all along this paper by a $2$-form. Let $G$ be a $2$-form defined on $[0,T[ \times \R^3_x$. Its Hodge dual ${}^* \! G$ is the $2$-form given by
$${}^* \! G_{\mu \nu} = \frac{1}{2} G^{\lambda \sigma} \varepsilon_{ \lambda \sigma \mu \nu},$$
where $\varepsilon$ is the Levi-Civita symbol, and its energy-momentum tensor is
$$T[G]_{\mu \nu} :=   G_{\mu \beta} {G_{\nu}}^{\beta}- \frac{1}{4}\eta_{\mu \nu} G_{\rho \sigma} G^{\rho \sigma}.$$
Note that $T[G]_{\mu \nu}$ is symmetric, i.e. $T[G]_{\mu \nu}=T[G]_{\nu \mu}$. The null decomposition of $G$, $(\alpha(G), \underline{\alpha}(G), \rho(G), \sigma (G))$, introduced by \cite{CK}, is defined by
$$\alpha_A(G) = G_{AL}, \hspace{5mm} \underline{\alpha}_A(G)= G_{A \underline{L}}, \hspace{5mm} \rho(G)= \frac{1}{2} G_{L \underline{L} } \hspace{5mm} \text{and} \hspace{5mm} \sigma(G) =G_{e_1 e_2},$$
so that the null components of $T[G]$ are then given by
\begin{equation}\label{tensorcompo}
T[G]_{L L}=|\alpha(G)|^2, \hspace{6mm} T[G]_{\underline{L} \hspace{0.5mm} \underline{L}}=|\underline{\alpha}(G)|^2 \hspace{6mm} \text{and} \hspace{6mm} T[G]_{L \underline{L}}=|\rho(G)|^2+|\sigma(G)|^2.
\end{equation}
For a proof of the following classical results, we refer to Section $3$ of \cite{CK} or to Subsection $2.3$ of \cite{massless}.
\begin{Lem}\label{maxwellbis}
Let $G$ be a $2$-form and $J$ be a $1$-form both sufficiently regular and such that
\begin{eqnarray}
\nonumber \nabla^{\mu} G_{\mu \nu} & = & J_{\nu} \\ \nonumber
\nabla^{\mu} {}^* \! G_{\mu \nu} & = & 0.
\end{eqnarray}
Then, $\nabla^{\mu} T[G]_{\mu \nu}=G_{\nu \lambda} J^{\lambda}$ and, denoting by $(\alpha, \underline{\alpha}, \rho, \sigma)$ the null decomposition of $G$,
\begin{eqnarray}
\nabla_{\underline{L}} \rho - \frac{2}{r} \rho + \slashed{\nabla}^A (\underline{\alpha})_A & = & J_{\underline{L}}, \label{nullrho} \\
\nabla_{\underline{L}} \sigma - \frac{2}{r} \sigma + \varepsilon^{AB} \slashed{\nabla}_A (\underline{\alpha})_B & = & 0, \label{nullsigma} \\
\nabla_{\underline{L}} (\alpha)_A-\nabla_L ( \underline{\alpha})_A +2\slashed{\nabla}_{e_A} \rho - \frac{1}{r}\alpha_A-\frac{1}{r} \underline{\alpha}_A & = & 0 . \label{nullalpha}
\end{eqnarray}
\end{Lem}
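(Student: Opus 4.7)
The plan is to prove the two parts of the lemma separately, both by direct tensor-algebraic computation, the first in coordinate-free form and the second after decomposition in the null frame $(L, \underline{L}, e_1, e_2)$. I would first observe that the assumption $\nabla^{\mu} {}^{*}G_{\mu\nu} = 0$ is equivalent, in four dimensions, to the algebraic Bianchi identity $\nabla_{[\lambda} G_{\mu\nu]} = 0$, i.e.\ $\nabla_{\lambda} G_{\mu\nu} + \nabla_{\mu} G_{\nu\lambda} + \nabla_{\nu} G_{\lambda\mu} = 0$. This reformulation is what will be really used throughout.

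For the energy-momentum identity, I would compute
\[
\nabla^{\mu} T[G]_{\mu\nu} = \nabla^{\mu}\bigl(G_{\mu\beta} G_{\nu}{}^{\beta}\bigr) - \tfrac{1}{4}\nabla_{\nu}\bigl(G_{\rho\sigma} G^{\rho\sigma}\bigr).
\]
Expanding by the product rule, the first piece yields $(\nabla^{\mu} G_{\mu\beta}) G_{\nu}{}^{\beta} + G_{\mu\beta} \nabla^{\mu} G_{\nu}{}^{\beta} = J_{\beta} G_{\nu}{}^{\beta} + G_{\mu\beta} \nabla^{\mu} G_{\nu}{}^{\beta}$ by \eqref{VM2}. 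The second piece equals $\tfrac{1}{2} G^{\rho\sigma} \nabla_{\nu} G_{\rho\sigma}$; I would then use the algebraic Bianchi identity to rewrite $\nabla_{\nu} G_{\rho\sigma} = -\nabla_{\rho} G_{\sigma\nu} - \nabla_{\sigma} G_{\nu\rho}$ and, after relabeling, identify this with $G_{\mu\beta}\nabla^{\mu} G_{\nu}{}^{\beta}$, cancelling the second piece of the first term. What remains is $G_{\nu\lambda} J^{\lambda}$, after lowering the free index on $G$ using antisymmetry.

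For the null decomposition equations, I would write \eqref{VM2} in the null frame $(L,\underline{L},e_A)$. Contracting $\nabla^{\mu} G_{\mu\nu}=J_{\nu}$ with $\underline{L}$ produces $-\tfrac{1}{2}\nabla_L G_{\underline{L}\,\underline{L}} - \tfrac{1}{2}\nabla_{\underline{L}} G_{L\underline{L}} + \nabla^{e_A} G_{e_A \underline{L}} = J_{\underline{L}}$; the first term vanishes by antisymmetry, while the connection coefficients for the null frame in Minkowski space, namely $\slashed{\nabla}_{e_A} e_B = r^{-1}\delta_{AB}(L-\underline{L})/2$ (modulo terms orthogonal to the sphere) together with $\nabla_{e_A} L = r^{-1} e_A$ and $\nabla_{e_A}\underline{L} = -r^{-1}e_A$, produce the $-2r^{-1}\rho$ contribution and reorganize the angular term into the covariant spherical divergence $\slashed{\nabla}^A \underline{\alpha}_A$; this yields \eqref{nullrho}. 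An analogous computation, starting this time from the Hodge dual relation $\nabla^{\mu}{}^{*}G_{\mu\underline{L}} = 0$ and using ${}^{*}G_{LA}$, ${}^{*}G_{\underline{L} A}$, ${}^{*}G_{L\underline{L}}$ expressed in terms of $(\alpha,\underline{\alpha},\rho,\sigma)$ via $\varepsilon$, gives \eqref{nullsigma}. For \eqref{nullalpha}, I would use the algebraic Bianchi identity $\nabla_{\underline{L}} G_{L e_A} + \nabla_L G_{e_A \underline{L}} + \nabla_{e_A} G_{\underline{L} L} = 0$, expand each covariant derivative (replacing $\nabla$ by the sum of a frame derivative and connection coefficients, being careful with $\nabla_L e_A$ and $\nabla_{\underline{L}} e_A$ which contribute the $-r^{-1}\alpha_A - r^{-1}\underline{\alpha}_A$ terms), and read off the identity.

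The main obstacle is purely bookkeeping: tracking the connection coefficients for the null frame, which requires knowing $\nabla_L e_A$, $\nabla_{\underline{L}} e_A$, $\nabla_{e_A} L$, $\nabla_{e_A} \underline{L}$ and $\nabla_{e_A} e_B$ in terms of $r^{-1}$ and the intrinsic sphere derivative $\slashed{\nabla}$, so that the scalar equations match exactly the coefficients $-\tfrac{2}{r}$ appearing in \eqref{nullrho}--\eqref{nullsigma} and $-\tfrac{1}{r}$ in \eqref{nullalpha}. Once these frame identities are recorded, each of the three equations follows from contracting \eqref{VM2} or the Bianchi identity with the appropriate frame vectors and collecting terms; there is no analytic content beyond this.
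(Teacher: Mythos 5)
Your plan is correct and, for the one identity the paper actually proves (\eqref{nullalpha}), follows exactly the paper's route: rewrite $\nabla^{\mu}\,{}^{*}G_{\mu\nu}=0$ as the cyclic identity $\nabla_{[\lambda}G_{\mu\nu]}=0$, contract with $(L,\underline{L},e_A)$, and use the Minkowski null-frame connection coefficients $\nabla_L\underline{L}=\nabla_{\underline{L}}L=0$, $\nabla_{e_A}L=-\nabla_{e_A}\underline{L}=r^{-1}e_A$. For \eqref{nullrho}--\eqref{nullsigma} and the energy-momentum identity the paper simply cites \cite{CK}/\cite{massless}, and your direct derivations are the standard ones. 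One bookkeeping slip to fix before carrying it out: the displayed frame relation should read $\nabla_{e_A}e_B=\slashed{\nabla}_{e_A}e_B-\tfrac{1}{2r}\delta_{AB}(L-\underline{L})$ — it is the ambient $\nabla_{e_A}e_B$, not the intrinsic $\slashed{\nabla}_{e_A}e_B$, whose normal part is $-\tfrac{1}{2r}\delta_{AB}(L-\underline{L})$; likewise, killing the $\nabla_L G_{\underline{L}\,\underline{L}}$ term uses $\nabla_L\underline{L}=0$ in addition to the antisymmetry of $G$.
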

\begin{proof}
As the equation \eqref{nullalpha} is less classical, we give its proof here. Recall that $\nabla^{\mu} {}^* \! G_{\mu \nu} = 0$ implies the tensorial equation $\nabla_{[\lambda} G_{\mu \nu ] }=0$ (see for instance Subsection $2.3$ of \cite{massless} for a proof). Taking the $(L,\underline{L},A)$ component of this equation, we get, as $\nabla_L \underline{L}=\nabla_{\underline{L}} L=0$ and $\nabla_{e_A} L=-\nabla_{e_A} \underline{L} = \frac{1}{r}e_A$,
\begin{eqnarray}
\nonumber \nabla_{[L} G_{\underline{L} A ] } =0 & \Leftrightarrow & \nabla_L (G)(\underline{L} , e_A)+\nabla_{\underline{L}} (G)(e_A,L)+\nabla_{e_A} (G)(L,\underline{L}) =0 \\ \nonumber
& \Leftrightarrow & -\nabla_L (\underline{\alpha})( e_A)+\nabla_{\underline{L}} (\alpha)(e_A)+2 \nabla_{e_A} \rho-G( \nabla_{e_A} L, \underline{L} )-G(L, \nabla_{e_A} \underline{L} ) =0 \\ \nonumber
& \Leftrightarrow & -\nabla_L (\underline{\alpha})( e_A)+\nabla_{\underline{L}} (\alpha)(e_A)+2 \slashed{\nabla}_{e_A} \rho-\frac{1}{r}\underline{\alpha}( e_A )-\frac{1}{r} \alpha (e_A) =0,
\end{eqnarray}
which implies \eqref{nullalpha}.
\end{proof}

\section{Energy and pointwise decay estimates}\label{sec4}

We recall here classical energy estimates for both the Vlasov field and the electromagnetic field and how obtain pointwise decay estimates from them. For all this section, we define $T>0$ and $b \leq -1$. The energies defined below are adapted to the study of the Vlasov-Maxwell system in the exterior of the light cone $u \geq b$.

\subsection{Estimates for velocity averages}\label{energy}

For the Vlasov field, we will use the following approximate conservation law.
\begin{Pro}\label{energyf}
Let $H : V_b(T) \times \R^3_v \rightarrow \R $ and $g_0 : \Si^b_0 \times \R^3_v \rightarrow \R$ be two sufficiently regular functions and $F$ a sufficiently regular $2$-form. Then, $g$, the unique classical solution of 
\begin{eqnarray}
\nonumber \TT_F(g)&=&H \\ \nonumber
g(0,.,.)&=&g_0,
\end{eqnarray}
satisfies, for all $t \in [0,T[$, the following estimates,
$$ \left\| \int_{v \in \R^3} |g|dv \right\|_{L^1(\overline{\Sigma}^b_t)}+ \sup_{u < b} \left\| \int_{v \in \R^3} \frac{v^{\underline{L}}}{v^0} |g| dv \right\|_{L^1(C_u(t))} \hspace{1mm} \leq \hspace{1mm}   2 \left\| \int_{v \in \R^3} |g_0| dv \right\|_{L^1(\Si^b_0)} + 2\int_0^t \int_{\Si^b_s}  \int_{v \in \R^3}  |H|\frac{dv}{v^0} dxds.$$
\end{Pro}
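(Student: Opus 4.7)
The strategy is to derive a divergence identity for the current $J^{\mu}(g):= \int_{\R^3_v} \frac{v^{\mu}}{v^0}|g|\,dv$ and then apply the divergence theorem on the domain $V_u(T)$ for $u \leq b$.

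\smallskip

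\emph{Step 1: continuity equation.} Dividing the relation $\mathbf T_F(g)=H$ by $v^0$ and rewriting the Lorentz term as a $v$-divergence, I first check that
$$ \partial_{v^j}\!\left(\frac{v^{\mu}{F_{\mu}}^j}{v^0}\right) \;=\; \frac{F_i{}^i}{v^0}\;-\;\frac{F_{ij}v^iv^j}{(v^0)^3}\;=\;0,$$
which vanishes because $F_{ij}$ is antisymmetric. Hence, working with a smooth regularization $\sqrt{g^2+\varepsilon^2}$ and letting $\varepsilon\to 0$ (or simply decomposing $g=g_+-g_-$), one obtains the distributional identity
$$\partial_t\!\int_v|g|\,dv \;+\;\partial_i\!\int_v\frac{v^i}{v^0}|g|\,dv\;=\;\int_v\mathrm{sgn}(g)\,\frac{H}{v^0}\,dv \;=:\; \partial_{\mu}J^{\mu}(g),$$
whose right-hand side is pointwise bounded in absolute value by $\int_v|H|/v^0\,dv$.

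\smallskip

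\emph{Step 2: divergence theorem on $V_u(T)$.} Fix $u \leq b$ and $t \in [0,T[$. The boundary of $V_u(t)$ consists of $\Si^u_0$ (bottom), $\Si^u_t$ (top) and $C_u(t)$ (the outgoing null piece). The Euclidean outward conormal to $C_u(t)$ is $\tfrac{1}{\sqrt 2}(1,-x^i/r)$, so a direct computation gives
$$ J^0-\frac{x^i}{r}J^i \;=\;\int_v\frac{v^0-v^r}{v^0}|g|\,dv\;=\;2\int_v\frac{v^{\underline{L}}}{v^0}|g|\,dv.$$
Since the paper's volume form $dC_u(t)=\sqrt{2}^{-1}r^2d\underline u\,d\mathbb S^2$ coincides with the Euclidean surface measure on the cone, the divergence theorem yields
$$ \int_{\Si^u_t}\!\int_v|g|\,dv\,dx \;+\;\sqrt{2}\int_{C_u(t)}\!\int_v\frac{v^{\underline L}}{v^0}|g|\,dv\,dC_u(t) \;\leq\;\int_{\Si^u_0}\!\int_v|g_0|\,dv\,dx\;+\;\int_{V_u(t)}\!\int_v\frac{|H|}{v^0}\,dv\,dx\,ds.$$

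\smallskip

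\emph{Step 3: extracting the two pieces of the stated inequality.} Observe that whenever $u\leq b$ one has the monotonicity properties $\Si^u_0\subset\Si^b_0$ and $V_u(t)\subset V_b(t)$, so the right-hand side above is bounded by the $\Si^b_0$ initial norm plus the source integral over $V_b(t)$, which by Lemma \ref{foliationexpli} equals $\int_0^t\int_{\Si^b_s}\int_v|H|/v^0\,dv\,dx\,ds$. Applying this inequality with $u=b$ gives the $\Si^b_t$ part of the estimate (throwing away the nonnegative cone term). Applying it with a general $u<b$ and throwing away the $\Si^u_t$ term (which is nonnegative), then taking $\sup_{u<b}$, controls $\sqrt{2}\sup_{u<b}\bigl\|\int_v v^{\underline L}/v^0\,|g|\,dv\bigr\|_{L^1(C_u(t))}$ by the same right-hand side. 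Summing the two bounds and discarding the harmless constant $\sqrt{2}\geq 1$ produces the factor $2$ appearing in front of the two terms of the right-hand side of the proposition.

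\smallskip

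\emph{Main obstacle.} None of the steps is deep: the key computation is the vanishing of $\partial_{v^j}(v^{\mu}{F_{\mu}}^j/v^0)$, which relies crucially on the antisymmetry of $F$ and on $m\neq 0$ appearing only through $v^0=\sqrt{1+|v|^2}$, not in the differentiation rule. The one point deserving care is the identification of the null flux on $C_u(t)$ with $v^{\underline L}/v^0$ using the correct normalization of $dC_u(t)$ specified after the figure in Subsection \ref{subsets}; the appearance of the constant $2$ on the right-hand side is explained by the fact that the estimate is obtained by combining the flux identity for $u=b$ (controlling $\Si^b_t$) with the one for $u<b$ (controlling the cones), each contributing a copy of the data and source terms.
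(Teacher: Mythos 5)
Your proof is correct and takes essentially the same route as the paper: you show that $\partial_{\mu}\int_v|g|\frac{v^{\mu}}{v^0}dv$ reduces to the $H$-source (the Lorentz term integrating to zero by antisymmetry of $F$), apply the Euclidean divergence theorem on $V_u(t)$ with the cone flux $\sqrt{2}\int_{C_u(t)}\int_v\frac{v^{\underline{L}}}{v^0}|g|\,dv\,dC_u(t)$, and then combine the $\Si^b_t$ bound and the $\sup_{u<b}$ cone bound (each controlled by the same right-hand side) to produce the factor $2$.
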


\begin{proof}
As $\TT(|g|)=\frac{g}{|g|}H-\frac{g}{|g|} F ( v, \nabla_v g)$ and since $F$ is a $2$-form, integration by parts in $v$ gives us
\begin{equation}\label{div}
 \partial_{\mu} \int_v |g| \frac{v^{\mu}}{v^0}dv = \int_v \left( \frac{g}{|g|}\frac{H}{v^0}-\frac{g}{|g|} F\left( \frac{v}{v^0}, \nabla_v g \right) \right) dv = \int_v \left( \frac{g}{|g|}\frac{H}{v^0}- \frac{v^{j}v^i}{(v^0)^3}F_{j i}|g| \right) dv= \int_v  \frac{g}{|g|}H\frac{dv}{v^0} .
 \end{equation}
Apply now the divergence theorem to $\int_v |g| \frac{v^{\mu}}{v^0}dv$ in the region $V_u(t)$, for $u < b$, in order to get
$$ \int_{\Si^u_t} \int_v  |g| dvdx+\sqrt{2} \int_{C_u(t)} \int_v \frac{v^{\underline{L}}}{v^0} |g| dv dC_u(t)  \hspace{2mm} = \hspace{2mm}  \int_{\Si^u_0} \int_v  |g| dvdx+\int_0^t \int_{\Si^u_s}  \int_v  \frac{g}{|g|}H \frac{dv}{v^0} dx ds .$$
We then deduce that
 \begin{eqnarray}
\nonumber \int_{\Si^b_t}  \int_v  |g| dv dx \hspace{2mm} = \hspace{2mm} \sup_{u < b} \int_{\Si^{u}_t}  \int_v  |g| dv dx  & \leq & \int_{\Si^{b}_0}  \int_v  |g| dv dx+\int_0^t \int_{\Si^{b}_s }   \int_v  |H| \frac{dv}{v^0} dx ds, \\
\nonumber \sup_{u < b} \int_{C_{u}(t)} \int_{v } \frac{v^{\underline{L}}}{v^0} |g| dv dC_{u}(t) & \leq & \int_{\Si^{b}_0}  \int_{v } |g| dv dx+\int_0^t \int_{\Si^{b}_s }   \int_v  |H| \frac{dv}{v^0} dx ds, 
 \end{eqnarray}
which allows us to conclude the proof.
\end{proof}

In view of Remarks \ref{hierar}-\ref{hierar2} and the previous proposition, we then define hierarchised energy norms. For $(Q,\lambda) \in \mathbb{N}  \times [0,\frac{1}{2}]$ and $q \in [Q,+\infty[$, let
\begin{eqnarray}\label{defE}
 \E_b[g](t) & := &\left\| \int_{v \in \R^3} |g|dv \right\|_{L^1(\overline{\Sigma}^b_t)}+ \sup_{u < b} \left\| \int_{v \in \R^3} \frac{v^{\underline{L}}}{v^0} |g| dv \right\|_{L^1(C_u(t))}, \\ \label{defE2}
  \E^{q,\eta}_{Q,b}[g](t) & := & \sum_{0 \leq j \leq Q} \hspace{0.5mm} \sum_{ \widehat{Z}^{\beta} \in \K^j } \hspace{0.5mm}   \E_b[ \sqrt{z}^{ \hspace{0.5mm} q-(1+2\eta) \beta_P} \widehat{Z}^{\beta} g ](t).
\end{eqnarray}

\begin{Rq}\label{Rq32}
As $z \geq 1$ by \eqref{boundz}, we have $\E[\sqrt{z}^a \widehat{Z}^{\beta} g ](t) \leq  \E^{q,\eta}_{Q,b}[g](t)$ for all $0 \leq a \leq q-(1+2\eta) \beta_P$.
\end{Rq}

The remainder of this subsection is devoted to the proof of a Klainerman-Sobolev type inequality. The constants hidden by $\lesssim$ will here depend on $|a|$. We start with a commutation property between the vector fields of $\mathbb{K}$ and the averaging in $v$.
\begin{Lem}\label{lift}
Let $g : V_b(T) \times \R^3_v \rightarrow \mathbb{R} $ be a sufficiently regular function and $a \in \R$. We have, almost everywhere,
$$\forall \hspace{0.5mm} Z \in \mathbb{K}, \hspace{10mm} \left|Z\left( \int_{v \in \R^3 } z^a|g| dv \right) \right| \hspace{1.5mm} \lesssim \hspace{1.5mm} \int_{v \in \R^3 } z^a |g | dv+\int_{v \in \R^3 } z^a | \widehat{Z} g | dv .$$
\end{Lem}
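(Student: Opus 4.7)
\textbf{Proof plan for Lemma~\ref{lift}.}
The plan is to push the spacetime vector field $Z$ through the velocity integral, replace it by $\widehat{Z}$ modulo a $v$-derivative, and then integrate by parts in $v$. Concretely, for any $Z \in \mathbb{K}$ I write $Z = \widehat{Z} - V$, where $V$ is a vector field acting only on $v$: explicitly, $V = 0$ if $Z = \partial_\mu$ or $Z = S$; $V = v^i \partial_{v^j} - v^j \partial_{v^i}$ if $Z = \Omega_{ij}$; and $V = v^0 \partial_{v^k}$ if $Z = \Omega_{0k}$. Since $Z$ differentiates only in $(t,x)$, I can bring it under the $v$-integral:
\[
 Z\!\left( \int_{v} z^a |g|\, dv \right) \;=\; \int_{v} Z(z^a |g|)\, dv \;=\; \int_{v} \widehat{Z}(z^a |g|)\, dv \;-\; \int_{v} V(z^a |g|)\, dv.
\]

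For the first integral I use Leibniz and the bound $|\widehat{Z}(z^a)| \lesssim |a|\,z^a$ proved in Lemma~\ref{weights}, together with the pointwise inequality $|\widehat{Z}(|g|)| \leq |\widehat{Z}(g)|$ (valid almost everywhere, and rigorously obtained by approximating $|g|$ with $\sqrt{g^2 + \varepsilon}$ and letting $\varepsilon \to 0$, since $V_b(T)$ has finite local measure in $v$ against the decaying factor $|\widehat{Z}g|$). This yields
\[
 \Bigl| \int_{v} \widehat{Z}(z^a |g|)\, dv \Bigr| \;\lesssim\; \int_{v} z^a |g|\, dv + \int_{v} z^a |\widehat{Z} g|\, dv,
\]
which is of the desired form.

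For the second integral, I integrate by parts in $v$; the boundary terms vanish because $g$ (and hence $|g|$) is assumed sufficiently regular and has enough decay in $v$ for all the integrals considered to make sense. The three cases are handled separately. If $V = 0$ there is nothing to do. In the rotation case, the antisymmetry gives
\[
 \int_v \bigl(v^i \partial_{v^j} - v^j \partial_{v^i}\bigr)(z^a |g|)\,dv \;=\; -\int_v \bigl(\partial_{v^j} v^i - \partial_{v^i} v^j\bigr) z^a |g|\,dv \;=\; 0.
\]
In the boost case, using $\partial_{v^k} v^0 = v^k/v^0$ and $|v^k/v^0| \leq 1$,
\[
 \Bigl| \int_v v^0 \partial_{v^k}(z^a |g|)\,dv \Bigr| \;=\; \Bigl| \int_v \frac{v^k}{v^0}\, z^a |g|\,dv \Bigr| \;\leq\; \int_v z^a |g|\,dv,
\]
which again fits into the right-hand side of the claimed inequality.

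The only mildly delicate point, and the one I expect to be the main obstacle, is the lack of smoothness of $|g|$: the Leibniz rule and the integration by parts have to be justified carefully. The standard fix is to replace $|g|$ by $\sqrt{g^2 + \varepsilon}$, carry out the computation above with a smooth function, control every term uniformly in $\varepsilon$ using the already-established bounds on $\widehat{Z}(z^a)$ and on $V(z^a)$ together with the integrability of $z^a|\widehat{Z}g|$ and $z^a|g|$ in $v$, and then pass to the limit $\varepsilon \to 0$ by dominated convergence. This produces the inequality almost everywhere on $V_b(T)$, which is what the statement claims.
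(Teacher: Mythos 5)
Your proof is correct and follows essentially the same route as the paper's: write $Z = \widehat{Z} - V$ with $V$ the pure $v$-part, handle $\widehat{Z}$ by Leibniz and Lemma~\ref{weights}, and kill $V$ by integration by parts in $v$ (which produces nothing for $\partial_\mu$ and $S$, vanishes by antisymmetry for rotations, and yields the bounded factor $v^k/v^0$ for boosts). The paper only works out the boost case $Z=\Omega_{01}$ and lets the reader infer the rest, whereas you spell out all three cases and add the $\sqrt{g^2+\varepsilon}$ regularization to justify Leibniz on $|g|$; these are welcome additions but not a different argument.
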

\begin{proof}
Consider for instance the case where $Z=\Omega_{01}=t \partial_1+x^1 \partial_t$. We have, almost everywhere,
\begin{eqnarray}
\nonumber \left| Z\left( \int_{v \in \R^3 } |z^ag| dv \right) \right| & = & \left| \int_{v \in \R^3 } \widehat{\Omega}_{01} \left( |z^ag|  \right)  dv -\int_{v \in \R^3 } v^0 \partial_{v^1} \left( |z^ag|  \right)  dv \right| \\ \nonumber
& = & \left| \int_{v \in \R^3 } \frac{z^a g}{|z^a g|} \widehat{\Omega}_{01} \left( z^ag \right)  dv +\int_{v \in \R^3 } \frac{v^1}{v^0} |z^ag|   dv \right| \\ \nonumber 
& \leq & \int_{v \in \R^3 } \left| \widehat{\Omega}_{01} \left( z^a g \right) \right| dv+\int_v \left|  z^a g \right| dv.
\end{eqnarray}
It then remains to use $|\widehat{\Omega}_{01} \left( z^a \right) | \lesssim |a| z^a$.
\end{proof}
Before presenting the Klainerman-Sobolev inequality used in this article, we prove the following estimate.
\begin{Lem}\label{Sobshere}
Let $h : \mathbb{S}^2 \times \R^3_v \rightarrow \R$ be a sufficiently regular function and $a \in \R$. Then,
$$\forall \hspace{0.5mm} \omega \in \mathbb{S}^2, \hspace{8mm} \int_{v \in \R^3} z^a |h|(\omega,v) dv \hspace{2mm} \lesssim \hspace{2mm} \sum_{0 \leq j \leq 2} \hspace{0.5mm} \sum_{ \Omega^{\beta} \in \Or^j } \hspace{0.5mm} \left\| \int_{v \in \R^3}  z^a \left| \widehat{\Omega}^{\beta} h \right| dv \right\|_{L^1(\mathbb{S}^2)} .$$
\end{Lem}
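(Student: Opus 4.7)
The plan is to reduce this to a standard Klainerman-Sobolev style argument: prove an $L^\infty$-bound on $\mathbb{S}^2$ via the borderline Sobolev embedding, and then use Lemma \ref{lift} to commute the rotations past the $v$-integration. Set $u(\omega) := \int_{v \in \R^3} z^a |h|(\omega,v)\, dv$. The whole problem is to show
\[
\|u\|_{L^\infty(\mathbb{S}^2)} \lesssim \sum_{0 \le j \le 2}\sum_{\Omega^\beta \in \Or^j}\left\|\int_v z^a |\widehat{\Omega}^\beta h|\, dv\right\|_{L^1(\mathbb{S}^2)}.
\]

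The first step is the Sobolev embedding $W^{2,1}(\mathbb{S}^2) \hookrightarrow L^\infty(\mathbb{S}^2)$, applied to $u$. This is the critical case $kp = n$ with $k=2, p=1, n=2$, and it does hold: I would work in local coordinate charts $(\theta,\phi)$ on $\mathbb{S}^2$, write
\[
 u(\theta_0,\phi_0) \;=\; u(\theta,\phi) \;-\; \int_{\theta}^{\theta_0}\partial_\theta u\, d\theta' \;-\; \int_{\phi}^{\phi_0}\partial_\phi u\, d\phi' \;+\; \int\!\!\int \partial_\theta\partial_\phi u\, d\theta' d\phi',
\]
then average in $(\theta,\phi)$ over the chart and patch finitely many such charts, thereby controlling $\|u\|_{L^\infty(\mathbb{S}^2)}$ by $\|u\|_{L^1(\mathbb{S}^2)}$ plus $L^1$-norms of tangential derivatives up to order two. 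Since the three rotations $\Omega_{ij}$ span the tangent space of $\mathbb{S}^2$ at every point (any two of them provide a basis near a given $\omega$, after suitable normalization), the tangential derivatives $\partial_\theta,\partial_\phi$ can be rewritten as bounded linear combinations of the $\Omega_{ij}$'s, yielding
\[
\|u\|_{L^\infty(\mathbb{S}^2)}\;\lesssim\;\sum_{0\le j\le 2}\sum_{\Omega^\beta\in\Or^j}\|\Omega^\beta u\|_{L^1(\mathbb{S}^2)}.
\]

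The second step is to transfer the rotations onto $h$. Lemma \ref{lift} is stated for functions on $V_b(T)\times\R^3_v$, but its proof only uses that $\widehat{Z}$ differs from $Z$ by $v$-derivative terms that disappear after integration by parts in $v$, together with $|\widehat{Z}(z^a)|\lesssim |a|z^a$ (Lemma \ref{weights}). This works equally well on $\mathbb{S}^2\times \R^3_v$ for the angular rotations, which act tangentially to the sphere. Iterating it gives the pointwise bound
\[
|\Omega^\beta u|(\omega) \;\lesssim\; \sum_{|\gamma|\le |\beta|} \int_{v\in\R^3} z^a \bigl|\widehat{\Omega}^\gamma h\bigr|(\omega,v)\, dv.
\]
Integrating over $\mathbb{S}^2$ and combining with the Sobolev inequality above closes the estimate.

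The only genuine obstacle is the borderline nature of the embedding at step one, but it is handled by the explicit representation formula above; every other ingredient is either linear algebra on $\mathbb{S}^2$ or an iteration of the already-established Lemma \ref{lift}.
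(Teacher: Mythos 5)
Your overall strategy (a $W^{2,1}(\mathbb{S}^2)\hookrightarrow L^\infty(\mathbb{S}^2)$ Sobolev argument combined with Lemma~\ref{lift} to transfer rotations past the $v$-integral) is in the same spirit as the paper's proof, which is itself just an iterated one-dimensional Sobolev inequality. However, there is a genuine gap in your second step. You assert that ``iterating Lemma~\ref{lift}'' yields the pointwise bound $|\Omega^\beta u|(\omega)\lesssim\sum_{|\gamma|\le|\beta|}\int_v z^a|\widehat\Omega^\gamma h|(\omega,v)\,dv$ for $|\beta|\le 2$. This does not follow. Lemma~\ref{lift} gives only an \emph{inequality} $|\Omega u|\lesssim\int_v z^a|h|\,dv+\int_v z^a|\widehat\Omega h|\,dv$, and an inequality cannot be differentiated again. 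If instead you track the exact identity $\Omega_1\Omega_2 u=\int_v\widehat\Omega_1\widehat\Omega_2(z^a|h|)\,dv$, the product rule produces the term $z^a\,\widehat\Omega_1\widehat\Omega_2(|h|)$, i.e.\ a second distributional derivative of $|h|$. This object generically has a singular (measure-valued) part concentrated on $\{h=0\}$, which is \emph{not} controlled by $\int_v z^a|\widehat\Omega^\gamma h|\,dv$; take for instance $h(\omega,v)=a(\omega)b(v)$ with $b\ge0$ and $a$ vanishing transversally. So both the claimed pointwise bound and, as a consequence, the membership $u\in W^{2,1}(\mathbb{S}^2)$ that your Sobolev step presupposes are unjustified.

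The paper sidesteps precisely this issue by interleaving the one-dimensional Sobolev step and Lemma~\ref{lift}: it first applies the one-dimensional Sobolev inequality in the variable $\theta$, uses Lemma~\ref{lift} \emph{once} to bound $|\partial_\theta u|$ by $\int_v z^a|h|\,dv+\sum_{\Omega\in\Or}\int_v z^a|\widehat\Omega h|\,dv$ --- crucially, these new quantities are again of the canonical form $\int_v z^a|g|\,dv$ --- and only then applies the one-dimensional Sobolev inequality in the second spherical variable to each of them, invoking Lemma~\ref{lift} once more. Because the absolute value is re-taken after each single application of a rotation, one never differentiates $|h|$ twice. That re-absolute-valuing is not a stylistic choice but the mechanism that makes the two-derivative count legitimate; your direct two-dimensional embedding skips it and hence requires as input exactly the $L^1(\mathbb{S}^2)$ control of $\Omega^\beta u$ for $|\beta|=2$ that you have not established. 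To repair your argument, replace the claimed two-step iteration by the paper's interleaved version (equivalently, use the one-dimensional $W^{1,1}\hookrightarrow L^\infty$ embedding twice, with a fresh application of Lemma~\ref{lift} between the two).
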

\begin{proof}
Let $\omega \in \mathbb{S}^2$ and $(\theta, \varphi )$ a local coordinate map in a neighborhood of $w$. By the symmetry of the sphere we can suppose that $\theta$ and $\varphi$ take their values in an interval of a size independent of $\omega$. Using a one dimensional Sobolev inequality, that $|\partial_{\theta} u| \lesssim \sum_{\Omega \in \Or} |\Omega u |$ and Lemma \ref{lift}, we have,
\begin{eqnarray}
\nonumber \int_v z^a |h|(\omega_1,\omega_2,v) dv & \lesssim & \int_{\theta} \left| \int_v z^a |h|(\theta,\omega_2,v) dv \right|+\left| \partial_{\theta} \int_v z^a |h|(\theta,\omega_2,v) dv \right| d \theta \\ \nonumber
& \lesssim & \int_{\theta} \int_v z^a | h|(\theta,\omega_2,v) dv  d \theta+ \sum_{  \Omega \in \Or } \int_{\theta} \int_v z^a | \widehat{\Omega} h|(\theta,\omega_2,v) dv  d \theta.
\end{eqnarray}
Doing the same for the second spherical variable of $\int_v z^a | \widehat{\Omega}^{\beta} h|(\theta,\omega_2,v) dv$, we obtain the result.
\end{proof}
\begin{Pro}\label{KS1}
Let $g : V_b(T) \times \R^3_v \rightarrow \R$ be a sufficiently regular function and $a \in \R$. Then,
$$\forall \hspace{0.5mm} (t,x) \in V_b(T), \hspace{1cm} \int_{v \in \R^3} z^a |g|(t,x,v) dv \hspace{2mm} \lesssim \hspace{2mm} \frac{1}{\tau_+^2 \tau_-} \sum_{0 \leq j \leq 3} \hspace{0.5mm} \sum_{ \widehat{Z}^{\beta} \in \K^j } \hspace{0.5mm} \int_{|y| \geq |x| } \int_{v \in \R^3} z^a \left| \widehat{Z}^{\beta} g \right| dv dx.$$
\end{Pro}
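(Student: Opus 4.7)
The plan is to combine a spherical Sobolev inequality, which will supply the $\tau_+^2$ factor, with a weighted radial fundamental theorem of calculus, which will supply the $\tau_-$ factor. Fix $(t,x)\in V_b(T)$ and set $r=|x|$, $\omega = x/r$. Since $V_b$ satisfies $r\geq t+1\gtrsim \tau_+(t,x)$, it will be enough to establish
$$ r^{2}\,\tau_-(t,r)\int_{v\in\R^3} z^a|g|(t,r\omega,v)\,dv \;\lesssim\; \sum_{0\leq j\leq 3}\sum_{\widehat{Z}^\gamma\in\K^j}\int_{|y|\geq r}\int_{v\in\R^3} z^a\bigl|\widehat{Z}^\gamma g\bigr|\,dv\,dy. $$

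First, I would apply Lemma \ref{Sobshere} on $\mathbb{S}^2$ to the function $(\omega',v)\mapsto g(t,r\omega',v)$; after multiplying by $r^{2}$ and rewriting $r^{2}\,d\mathbb{S}^{2}$ as the area element $dA$ on the sphere of radius $r$, this produces
$$ r^{2}\int_{v\in\R^3}z^a|g|(t,r\omega,v)\,dv \;\lesssim\; \sum_{|\beta|\leq 2}M_\beta(t,r),\qquad M_\beta(t,\rho):=\int_{|y|=\rho}\int_{v\in\R^3} z^a\bigl|\widehat{\Omega}^\beta g\bigr|\,dv\,dA. $$
The problem is thus reduced to bounding $\tau_-(t,r)\,M_\beta(t,r)$ by the target spacetime integral.

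Next, introducing $K(t,y):=\int_{v}z^a|\widehat{\Omega}^\beta g|(t,y,v)\,dv$, so that $M_\beta(t,\rho)=\int_{\mathbb{S}^2}\rho^{2}K(t,\rho\omega')\,d\mathbb{S}^{2}$, I would apply the fundamental theorem of calculus in $\rho$ on $[r,+\infty)$ to $\tau_-(t,\rho)M_\beta(t,\rho)$. Assuming the right-hand side of the target inequality is finite (otherwise the claim is vacuous), $\tau_- M_\beta$ admits $0$ as a limit inferior at infinity, yielding
$$ \tau_-(t,r)M_\beta(t,r)\;\leq\;\int_r^{+\infty}\!\!\int_{\mathbb{S}^2}\Bigl|\partial_\rho\bigl[\tau_-(t,\rho)\,\rho^{2}K(t,\rho\omega')\bigr]\Bigr|\,d\mathbb{S}^{2}\,d\rho. $$
The derivative expands into three contributions: $(\partial_\rho\tau_-)\rho^{2}K$, $2\rho\tau_- K$ and $\tau_-\rho^{2}\partial_\rho K$. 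The bound $|\partial_\rho\tau_-|\leq 1$ and the exterior estimate $\tau_-(t,\rho)=\sqrt{1+(\rho-t)^{2}}\lesssim \rho$ (which holds since $\rho\geq t+|b|\geq 1$) handle the first two by $\rho^{2}K$.

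The heart of the argument is the third term, where the $\tau_-$ weight must be converted into an additional admissible commutation vector field. Writing $\partial_\rho K=\omega'^{i}\partial_i K$, Lemma \ref{goodderiv} expresses $(t-\rho)\partial_i$ as a linear combination of elements of $\mathbb{K}$ with bounded coefficients, so
$$ \tau_-(t,\rho)|\partial_\rho K|\;\lesssim\;(1+|t-\rho|)\sum_i |\partial_i K|\;\lesssim\;|K|+\sum_{Z\in\mathbb{K}}|ZK|. $$
Lemma \ref{lift} then commutes each $Z\in\mathbb{K}$ through the $v$-integral defining $K$, generating $\int_v z^a|\widehat{Z}^\gamma g|\,dv$ with $|\gamma|\leq |\beta|+1\leq 3$. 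After integration, the identification $\rho^{2}\,d\rho\,d\mathbb{S}^{2}=dy$ on $\{|y|\geq r\}$ reassembles the target spacetime integral. The main obstacle is precisely this step: extracting the extra $\tau_-$ factor while leaving the integrand free of any weight, which is made possible only by the joint use of the null decomposition of translations from Lemma \ref{goodderiv} and the velocity-integration identity of Lemma \ref{lift}.
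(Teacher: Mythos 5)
Your proof is correct and rests on the same three ingredients as the paper's (the radial fundamental theorem of calculus, Lemma \ref{goodderiv} to absorb the factor $\tau_-$ into a vector field of $\mathbb{K}$ plus Lemma \ref{lift} to push it through the $v$-integral, and the spherical Sobolev inequality of Lemma \ref{Sobshere}); the only real difference is the order of operations. The paper first writes $|x|^2\tau_-\int_v z^a|g|\,dv = -|x|^2\int_{|x|}^{\infty}\partial_r\bigl(\tau_-\int_v z^a|g|\,dv\bigr)\,dr$, bounds the integrand using Lemmas \ref{goodderiv} and \ref{lift}, \emph{then} moves the external $|x|^2$ inside using $|x|\leq r$, and only at the very end applies the spherical Sobolev inequality. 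Because $|x|^2$ is kept outside the $\partial_r$, no term of the form $\partial_r(r^2)$ ever appears. You instead apply the spherical Sobolev inequality first, packaging the $r^2$ into $M_\beta$, and then differentiate $\tau_-\rho^2 K$, which produces the extra term $2\rho\,\tau_- K$. You handle it correctly via the exterior bound $\tau_-(t,\rho)\lesssim\rho$, which holds because $\rho > t-b\geq 1$ in $V_b(T)$, but this is an extra verification the paper's bookkeeping makes unnecessary. Both orderings produce exactly three commutations ($1$ radial $+\,2$ spherical), matching the statement, and your care about the $\liminf$ at radial infinity is a legitimate refinement of the decay the paper implicitly assumes for "sufficiently regular" $g$.
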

\begin{proof}
Let $(t,x)=(t,|x| \omega) \in V_b(T)$. One has, using successively Lemmas \ref{goodderiv} and \ref{lift},
\begin{align}
\nonumber |x|^2 \tau_- \int_v z^a |g|(t,|x| \omega,v) dv \hspace{1mm} & = \hspace{1mm}  -|x|^2 \int_{r=|x|}^{+ \infty} \partial_r \left( \tau_- \int_v z^a |g|(t,r \omega,v) dv \right) dr\\ \nonumber
& \lesssim \hspace{1mm} |x|^2 \sum_{Z \in \mathbb{K}} \int_{r=|x|}^{+ \infty} \left( \int_v z^a |g|(t,r \omega,v) dv + Z \left( \int_v z^a |g|(t,r \omega ,v) dv \right) \right) dr \\ 
& \lesssim \hspace{1mm} \int_{r=|x|}^{+ \infty}  \int_v z^a |g|(t,r \omega ,v) dv r^2 dr+ \sum_{ \widehat{Z} \in \K} \int_{r=|x|}^{+ \infty}  \int_v z^a |\widehat{Z}^{\kappa} g|(t,r \omega ,v) dv r^2 dr. \label{eq:tmoinsr}
\end{align}
It then remains to apply the previous lemma and to recall that $\tau_+ \lesssim r$ in $V_b(T)$.
\end{proof}
We can improve the decay rate in the $\underline{u}$ direction if we pay the price in terms of weights in $v^0$ and $z$. The decay in $u$ can be improved without any loss in $v^0$. More precisely, by Lemma \ref{weights1}, we have $|v^0|^{-2} \lesssim \frac{v^{\underline{L}}}{v^0} \lesssim \frac{ z}{1+r}$ and $1 \lesssim \frac{z}{\tau_-}$, so that, for any $(k,q) \in \R_+^2$,
\begin{eqnarray}
 \forall \hspace{0.5mm} (t,x) \in V_b(T), \hspace{1cm} \int_{v \in \R^3} |g|(t,x,v) \frac{dv}{(v^0)^{2k}} & \lesssim &  \frac{1}{\tau_+^k\tau_-^q}\int_{v \in \R^3} z^{k+q} |g|(t,x,v) dv \label{decaytr33} \\ & \lesssim & \frac{1}{\tau_+^{2+k} \tau_-^{q+1}} \sum_{0 \leq j \leq 3} \hspace{0.5mm}  \sum_{ \widehat{Z}^{\beta} \in \K^j } \hspace{0.5mm} \left\| \int_{v \in \R^3} z^{k+q} \left|  \widehat{Z}^{\beta} g \right| dv \right\|_{L^1(\Si^b_t)}. \label{improvementdecay}
\end{eqnarray}
Note that one can deduce the estimate \eqref{extradecayintro} from this inequality and Proposition \ref{energyf}.

\subsection{Improved decay estimates for velocity averages in the exterior of the light cone}\label{subsecnewdecay}

As mentionned during the introduction, the problem of the Klainerman-Sobolev inequality of Proposition \ref{KS1}, applied with $a=0$, is that it does not provide a sharp pointwise decay estimate on $\int_{\R^3_v} |g| \frac{dv}{(v^0)^2}$ near the light cone, for $g$ a smooth solution to the free relativistic massive transport equation $v^{\mu} \partial_{\mu} (g)=0$. The aim of this section is to prove the following functional inequalities.

\begin{Pro}\label{Propdecayopti}
Let $u \leq 0$ and $f : V_u(T) \times \R^3_v \rightarrow \R$ be a sufficiently regular function such that
$$ \sum_{|\beta| \leq 3} \int_{\overline{\Sigma}^u_0} \int_{\R^3_v} \left| \widehat{Z}^{\beta} f \right|  dv dx \hspace{2mm} < \hspace{2mm} + \infty.$$
Then, for all $(t,x) \in V_u(T)$,
\begin{equation}\label{kevatalenn:opti}
 \int_{\R^3_v} |f|(t,x,v) \frac{dv}{(v^0)^2} \hspace{1mm} \lesssim \hspace{1mm} \frac{1}{(1+t+r)^3}\sum_{|\beta| \leq 4} \left(  \int_{\overline{\Sigma}^u_0} \int_{\R^3_v} \left| \widehat{Z}^{\beta} f \right| dv dx+ \int_0^t\int_{\overline{\Sigma}^u_s} \int_{\R^3_v} \left| \TT_F ( \widehat{Z}^{\beta} f ) \right| \frac{dv}{v^0} dx ds \right) \hspace{-1mm}.
 \end{equation}
Moreover, if $f$ is also defined and regular on $[0,T[ \times \R^3_x \times \R^3_v$, we have, for all $(t,x) \in [0,T[ \times \R^3_x$,
$$ \int_{\R^3_v} |f|(t,x,v)  \frac{dv}{(v^0)^2} \hspace{0.5mm} \lesssim \hspace{0.5mm} \frac{1}{(1+t+r)^3}\sum_{|\beta| \leq 4} \left(  \int_{\R^3_x} \int_{\R^3_v} \left| \widehat{Z}^{\beta} f \right| (0,x,v) dv dx+ \int_0^t\int_{\{s \} \times \R^3_x} \int_{\R^3_v} \left| \TT_F ( \widehat{Z}^{\beta} f ) \right| \frac{dv}{v^0} dx ds \right) \hspace{-1.1mm}.$$
\end{Pro}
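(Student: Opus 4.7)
The plan is a Klainerman-Sobolev-type pointwise inequality adapted to the exterior region, which gains an extra power of $(1+t+r)^{-1}$ compared with Proposition \ref{KS1}. The gain comes from the pointwise bound $(v^0)^{-2} \leq 4\, v^{\underline{L}}/v^0$ of Lemma \ref{weights1}: the weight $(v^0)^{-2}$ behaves like a good null component of the velocity, so one can hope to bound pointwise values by $L^1$ data-plus-source with improved $r^{-3}$ decay, at the cost of one additional commutation over the standard estimate.

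Concretely, I would first apply an angular Sobolev inequality as in Lemma \ref{Sobshere} to the density $(v^0)^{-2}|f|$, using that each rotation $\widehat\Omega\in\widehat{\mathbb O}$ satisfies $\widehat\Omega(v^0)=0$ and therefore preserves $(v^0)^{-2}$; this contributes two rotational commutations and reduces the problem to estimating the angular-averaged density
$$
K_\kappa(t,\rho)\,:=\, \int_{\mathbb S^2}\int_{\R^3_v}\frac{|\widehat\Omega^\kappa f|(t,\rho\omega,v)}{(v^0)^2}\,dv\,dS^2_\omega, \qquad |\kappa|\leq 2,
$$
at $\rho = |x|$. Next I would perform a cubic-weighted radial integration outward: the identity
$$
|x|^3 K_\kappa(t,|x|) \,=\, -\int_{|x|}^{+\infty}\partial_\rho\bigl[\rho^3 K_\kappa(t,\rho)\bigr]\, d\rho \,\leq\, 3\int_{|x|}^{+\infty}\rho^2 K_\kappa\,d\rho \,+\, \int_{|x|}^{+\infty}\rho^3|\partial_\rho K_\kappa|\,d\rho,
$$
valid because the integrability hypotheses force $\rho^3 K_\kappa(t,\rho)\to 0$ as $\rho\to+\infty$, produces a first $L^1$-spatial term (already in the desired form on $\overline\Sigma^u_t\cap\{|y|\geq|x|\}$) and a remainder to be handled. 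For the remainder I would use $\rho\partial_\rho = S - t\partial_t$ acting on functions of $(t,y)$, and then rewrite $t\partial_t$ through the Lorentz-boost identity $r^2\partial_t = x^k\widehat\Omega_{0k} - tx^k\partial_k - x^k v^0\partial_{v^k}$, eliminating the last piece by an integration by parts in $v$ and absorbing its harmless $(v^0)^{-2}|\widehat\Omega^\kappa f|$ residue. The inequality $t\leq r$ in $V_u(T)$ prevents the geometric coefficients $x^k,t$ from growing faster than $\rho$, which preserves the gain of one extra power of $\rho$ and costs one further commutation, bringing the total to $|\beta|\leq 4$.

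Once the bound $(v^0)^{-2}\leq 1$ is used to drop the velocity weight on the right-hand side, one obtains $|x|^3 \int |f|/(v^0)^2\, dv \lesssim \sum_{|\beta|\leq 4}\|\widehat Z^\beta f\|_{L^1(\overline\Sigma^u_t\times\R^3_v)}$. Applying Proposition \ref{energyf} with $b=u$ to each $g=\widehat Z^\beta f$ then yields the right-hand side of \eqref{kevatalenn:opti}, using $r\gtrsim 1+t+r$ on $V_u(T)$ (the bounded region $|x|\leq 1$ being handled separately via Proposition \ref{KS1}). The global inequality on $[0,T[\times\R^3_x$ follows from the same radial argument with $\overline\Sigma^u$ replaced by the full slice $\{s\}\times\R^3_x$ and the standard $\R^3$ energy estimate in place of Proposition \ref{energyf}, combined with the interior analog proved in \cite{dim4} for points $|x|<t$.

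The main technical obstacle is the commutation in the $\rho^3 \partial_\rho K_\kappa$ step: neither $\partial_\rho$ at fixed $t$ nor $t\partial_t$ is a Killing derivative, so both must be rewritten through the chain of identities from Lemma \ref{goodderiv}, and the resulting $v^0\partial_{v^k}$ term under the $v$-integral must be disposed of by integration by parts in $v$. The crucial geometric input is that $t\leq r$ holds throughout $V_u(T)$, which is precisely what keeps the boost coefficients bounded by $r$ and preserves the gain of one extra power of $\rho$ over the standard Klainerman-Sobolev inequality.
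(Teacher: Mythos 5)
Your approach does not match the paper's and contains a genuine gap in the region $|x|\sim t$ with $t$ large. The key problematic step is the cubic-weighted radial integration: after writing $\rho\partial_\rho = S - t\partial_t$ and eliminating $t\partial_t$ through the boost identity $r^2\partial_t = x^k\widehat\Omega_{0k} - tx^k\partial_k - x^kv^0\partial_{v^k}$, you are left with a term of the form
$$
\frac{t^2 x^k}{r^2}\,\partial_k \hspace{2mm} = \hspace{2mm} \frac{t^2}{r}\cdot\frac{x^k}{r}\,\partial_k,
$$
whose scalar coefficient has magnitude $\frac{t^2}{r}$. In $V_u(T)$ with $r$ close to $t$ and $t$ large, this is of order $t \sim r \sim \tau_+$; it is \emph{not} bounded. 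Your phrase ``$t\leq r$ prevents the geometric coefficients from growing faster than $\rho$'' is precisely the issue: it gives at best $\frac{t^2}{r}\lesssim\rho$, so the term contributes $\rho^2\cdot\rho\,|\partial_k K_\kappa|$, i.e.\ a \emph{cubic}-weighted $L^1$ norm of a translation, which is not controlled by $\sum_{|\beta|\leq 4}\|\widehat Z^\beta f\|_{L^1}$. Concretely, your argument bounds the remainder by $t\cdot\|\partial_k\widehat\Omega^\kappa f\|_{L^1(\overline\Sigma^u_t)}$, which is off by an uncontrolled factor of $t$; the extra commutation only costs a derivative, it does not remove this factor. Equivalently, $\tau_-\partial_r$ is a bounded combination of vector fields in $\mathbb K$ (Lemma \ref{goodderiv}), but $r\partial_r$ is not, since the decomposition always carries a factor $\tau_+/\tau_-$ which degenerates near the light cone — this is the same obstruction that defeats a direct improvement of Proposition \ref{KS1}.

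The paper's proof avoids this obstruction by a completely different mechanism. It splits $V_u(T)$ into three zones. The bounded zone $t<|x|\leq 2$ and the zone $|x|\geq\max(2,2t)$ (where $\tau_+\lesssim\tau_-$) are handled by a flat Sobolev inequality and by Proposition \ref{KS1} respectively, exactly as you propose. But in the critical zone $\min(2,t)<|x|\leq 2t$, instead of any radial commutation, the paper applies the divergence theorem to $\int_v |g|\frac{v^\mu}{v^0}dv$ on the domain $\mathcal D_a(t)$ bounded in particular by the piece $\mathcal H_a(t)$ of the hyperboloid $|y|^2-s^2=a^2$. The $(v^0)^{-2}$ gain then appears geometrically: the contraction of the normal to $\mathcal H_a(t)$ with $v^\mu/v^0$ produces exactly the factor $(v^0)^{-2}$ (Lemma \ref{energyhyp}), and a one-dimensional Sobolev inequality along hyperboloidal rays (Lemma \ref{Sobhyp}) gives the $|x|^{-3}$ decay. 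The loss of one extra derivative (to $|\beta|\leq 4$) arises because the outward normal to $\mathcal H_a(t)$ is spacelike, forcing the truncation and the auxiliary estimate \eqref{conditiondomaine} — see Remark \ref{diffi}. Your opening observation that $(v^0)^{-2}\lesssim v^{\underline L}/v^0$ is sound, but it is exploited in the paper through the hyperboloidal/null flux terms rather than through a pointwise radial commutation, which cannot close.
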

This improves in particular Theorem $5.1$ of \cite{dim4}, where $\int_{\R^3_v} |f|(t,x,v)  \frac{dv}{(v^0)^2}$ is estimated by norms with an additional weight $z$. 
\begin{Rq}
Our estimate requires more regularity on $f$ than a standard $L^{\infty}-L^1$ Sobolev-type inequality. To understand why, we refer to Remark \ref{diffi} below. Note that Georgiev proved in \cite{Georgiev} a similar estimate for the solutions of the Klein-Gordon equation $-\partial^2_t \phi+\Delta \phi =G$ which also requires to loose one derivative. Indeed, the solution $\phi$ is controled pointwise by certain $L^2$ norms of derivatives of $G$ up to third order.
\end{Rq}
We point out that, for the interior region $|x| \leq t$, the second estimate of the proposition is given by the proof of Theorem $5.1$ (the weight $z$ was only used in order to deal with the exterior of the light cone $|x| > t$). The remainder of this subsection is then devoted to the proof of \eqref{kevatalenn:opti}. In order to lighten the notation, we do it for $u=0$ but one can check that the proof could be adapted, without any additional argument, to the cases $u<0$. The strategy consists in splitting $V_0(T)$ into three parts.
\begin{enumerate}
\item The bounded subset $t < |x| \leq 2$. Here, we use a standard Sobolev inequality for velocity averages but since we want the domain of integration to be included in $V_0(T)$, we need to be careful. For this, we denote by $\varepsilon^i$ the sign of $x^i$, for any $i \in \llbracket 1 , 3 \rrbracket$ and we remark that $$\{t \} \times \prod_{1 \leq i \leq 3} ] \min ( \varepsilon^i \infty , x^i), \min (x^i, \varepsilon^i \infty) [ \hspace{2mm} \subset \hspace{2mm} V_0(T).$$ 
Hence, applying successively three times the fundamental theorem of calculus and the inequality $|\partial_i (|h|)| \leq |\partial_i h| $, which holds for all sufficiently regular function $h$, we have
\begin{align*}
\int_{\R^3_v} |f| (t,x,v) dv  & =  -\int_{y^1=x^1}^{\varepsilon^1 \infty} \partial_{y_1} \int_{\R^3_v} |f| (t,y^1,x^2,x^3,v) dv d y^1 \leq \left| \int_{y^1=x^1}^{\varepsilon^1 \infty} \int_{\R^3_v} |\partial_1 f| (t,y^1,x^2,x^3,v) dv d y^1 \right| \\ & \leq  \left| \int_{y^2=x^2}^{\varepsilon^2 \infty}\int_{y^1=x^1}^{\varepsilon^1 \infty} \int_{\R^3_v} |\partial_2\partial_1 f| (t,y^1,y^2,x^3,v) dv d y^1 d y^2 \right| \\ & \leq \left| \int_{y^3=x^3}^{\varepsilon^3 \infty} \int_{y^2=x^2}^{\varepsilon^2 \infty}\int_{y^1=x^1}^{\varepsilon^1 \infty} \int_{\R^3_v} |\partial_3\partial_2\partial_1 f| (t,y,v) dv d y^1 d y^2 d y^3 \right| \leq \int_{\overline{\Sigma}^0_t} \int_{\R^3_v} |\partial_3\partial_2\partial_1 f| dv dx.
\end{align*}
It then remains to apply the energy inequality of Proposition \ref{energyf} to $\partial_3\partial_2\partial_1 f$.
\item The region $|x| \geq \max (2, 2t)$, on which $\tau_+ \leq 10 \tau_-$, so that the result is implied by the Klainerman-Sobolev inequality of Proposition \ref{KS1} and Proposition \ref{energyf}.
\item The subset of $V_0(T)$ where $\min(2,t) < |x| \leq 2t$, on which a further analysis is required. The decay rate $|x|^{-3}$ will be obtained through a Sobolev inequality on $\mathcal{H}_a(t)$, defined in \eqref{boundary} below and which is a piece of the hyperboloid $r^2-s^2=a^2$. Hence, the second step consists in controlling the $L^1$ norm of $\int_{\R^3_v} |\widehat{Z}^{\beta} f | dv$ on $\mathcal{H}_a(t)$. To that end, we apply the divergence theorem to $\int_{\R^3_v} |\widehat{Z}^{\beta} f | \frac{v^{\mu}}{v^0}dv$ in a well-choosen domain $\mathcal{D}_a(t) \subset V_0(T)$. More precisely, with $a:=\sqrt{|x|^2-t^2}$ and $q :=\frac{1}{2}t- \sqrt{a^2+\frac{1}{4}t^2}$, we define
\begin{equation}\label{Da}
\mathcal{D}_a(t) := \left\{ (s,x) \in V_0(T) \hspace{1mm} / \hspace{1mm} s \leq \frac{t}{2}, \hspace{2mm} s-|x| \leq q \right\} \cup \left\{ (s,x) \in V_0(T) \hspace{1mm} / \hspace{1mm} \frac{t}{2} \leq s \leq t, \hspace{2mm} |x|^2-s^2 \geq a^2 \right\} 
\end{equation}
and we observe that the boundary of $\mathcal{D}_a(t)$ is constituted by
\begin{equation}\label{boundary}
\overline{\Sigma}^q_0, \quad \overline{\Sigma}^{t-|x|}_t, \quad C_q \left( \frac{t}{2} \right), \quad \mathcal{H}_a(t):= \left\{ (s,x) \in V_0(T) \hspace{1mm} / \hspace{1mm} \frac{t}{2} \leq s \leq t, \hspace{2mm} |x|^2-s^2 = a^2 \right\}.
\end{equation}
\begin{tikzpicture}

\fill [color=gray!35] 
(0,0)
--(1.605,0)--({sqrt(13)},2)
-- plot [domain=4:5] (\x, {sqrt(\x*\x-9)})
--(9,4)--(9,0)
--cycle;
\draw[color=black!100] (6,2) node[scale=1.5]{$\mathcal{D}_a(t)$};
\draw[color=black!100] (1.5,1) node[scale=1.3]{$C_q(\frac{t}{2})$};
\draw[color=black!100] (3.4,3.1) node[scale=1.3]{$\mathcal{H}_a(t)$};
\draw[color=black!100] (10,4) node[scale=1.5]{$\overline{\Sigma}^{t-|x|}_t$};
\draw (0,4) node[scale=1.5,left]{$t$};
\draw (3.3,-0.5) node[scale=1.5,left]{$a$};
\draw (0,2) node[scale=1.5,left]{$\frac{t}{2}$};
\draw (0,0)--(0,{sqrt(7)});
\draw [-{Straight Barb[angle'=60,scale=3.5]}] (0,{sqrt(7)})--(0,5);
\draw (5,4)--(9,4);
\draw (1.605,0)--({sqrt(13)},2);
\draw (0,0)--(9,0) node[scale=1.5,right]{$t=0$};
\draw [domain=sqrt(13):5] plot (\x, {sqrt(\x*\x-9)});
\draw [dashed,domain=3:sqrt(13)] plot (\x, {sqrt(\x*\x-9)});
\draw (-0.1,4)--(0,4);
\draw (-0.1,2)--(0,2);
\draw (3,-0.1)--(3,0);
\draw (0,-0.5) node[scale=1.5]{$r=0$};
\node[align=center,font=\bfseries, yshift=-1em] (title) 
    at (current bounding box.south)
    {The set $\mathcal{D}_a(T)$ and its boundary};
\end{tikzpicture}

\begin{Rq}\label{diffi}
 Note that the reason why we do not include in $\mathcal{H}_a(t)$ the part of the hyperboloid contained in $\{(s,x) \in V_0(T) \hspace{1mm} / \hspace{1mm} s \leq \frac{t}{2} \}$ is technical and will appear in \eqref{conditiondomaine} below. It will permit us to circumvent a difficulty, related to the fact that the outward unit normal vector to $\mathcal{H}_a(t)$ is spacelike, by loosing regularity. Let us mention that we do not have to deal with such a problem for the region $\frac{|x|}{2} \leq t \leq |x|$ (see Section $5$ of \cite{dim4} and in particular Lemma $5.7$).
\end{Rq}
\end{enumerate}

We will obtain the pointwise decay estimate in the region $\min(2,t) < |x| \leq 2t$ from the following Sobolev inequality on a hyperboloid.
\begin{Lem}\label{Sobhyp}
Let $g : V_0(T) \times \R^3_v \rightarrow \R$ be a sufficiently regular function. Then, there exists an absolute constant $C>0$ such that, for all $(t,x) \in V_0(T)$ satisfying $|x| \leq 2t$,
$$ \int_{\R^3_v} |g|(t,x,v) dv \hspace{2mm} \leq \hspace{2mm} \frac{C}{|x|^3}\sum_{|\beta| \leq 3} \int_{\sqrt{a^2+\frac{1}{4}t^2 } \leq |y| \leq |x|} \int_{\R^3_v} \left| \widehat{Z}^{\beta} g \right| \left( \sqrt{|y|^2-a^2},y ,v \right) dvdy, \quad a:= \sqrt{|x|^2-t^2}.$$
We point out that since $(t,x) \in V_0(T)$, $|x| \neq 0$. 
\end{Lem}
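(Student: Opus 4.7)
The approach combines a Sobolev inequality on $\mathbb{S}^2$ with a one-dimensional weighted Sobolev inequality along the radial curves tangent to the hyperboloid $\{|y|^2 - s^2 = a^2\}$, where $a := \sqrt{|x|^2 - t^2}$. Set $\rho_0 := \sqrt{a^2 + t^2/4}$ (the value of $|y|$ at which $s = t/2$), and for each multi-index $\kappa$ define
$$\widetilde{h}_\kappa(\rho, \omega) := \int_{\R^3_v} \left| \widehat{\Omega}^\kappa g \right|(\sqrt{\rho^2-a^2}, \rho\omega, v)\, dv.$$
First I would apply Lemma \ref{Sobshere} with $a=0$, evaluating $g$ on the sphere of radius $|x|$ at time $t$, to reduce the problem to estimating $\sum_{|\kappa|\leq 2} \int_{\mathbb{S}^2} \widetilde{h}_\kappa(|x|, \omega)\, d\omega$.

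Next, for fixed $\omega$ and $\kappa$, I would introduce a smooth cutoff $\phi : [\rho_0, |x|] \to [0,1]$ with $\phi(\rho_0) = 0$, $\phi(|x|) = 1$ and $\|\phi'\|_\infty \lesssim 1/(|x| - \rho_0)$, and write by the fundamental theorem of calculus
$$\widetilde{h}_\kappa(|x|, \omega) = \int_{\rho_0}^{|x|} \left( \phi'(\rho) \widetilde{h}_\kappa(\rho, \omega) + \phi(\rho)\, \partial_\rho \widetilde{h}_\kappa(\rho, \omega) \right) d\rho.$$
The key geometric identity is that the tangent vector to the curve $\rho \mapsto (\sqrt{\rho^2-a^2}, \rho\omega)$ equals $s^{-1}(\rho\,\partial_t + s\,\partial_r)$, and $\rho\,\partial_t + s\,\partial_r = (y^i/|y|)\,\Omega_{0i}$ at $(s,y) = (\sqrt{\rho^2-a^2}, \rho\omega)$. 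Combined with Lemma \ref{lift}, which commutes vector fields of $\mathbb{K}$ with $v$-averaging, this yields
$$|\partial_\rho \widetilde{h}_\kappa(\rho, \omega)| \; \lesssim \; \frac{1}{s} \sum_{|\beta| \leq |\kappa|+1} \int_{\R^3_v} |\widehat{Z}^\beta g|(s, y, v)\, dv.$$

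Finally, I would change variables $y = \rho\omega$, so that $d\rho\, d\omega = |y|^{-2} dy$, and use the following elementary bounds valid under the assumption $t < |x| \leq 2t$ on the region $\{\rho_0 \leq |y| \leq |x|,\; s \geq t/2\}$: both $s \geq t/2$ and $|y| \geq \rho_0 \geq t/2$ are $\gtrsim |x|$; and $|x|^2 - \rho_0^2 = 3t^2/4$, hence $|x| - \rho_0 = (3t^2/4)/(|x| + \rho_0) \gtrsim |x|$. These imply $|y|^{-2} \lesssim |x|^{-2}$ while both $\|\phi'\|_\infty$ and $s^{-1}$ are $\lesssim |x|^{-1}$, producing the advertised factor $|x|^{-3}$.

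The most delicate point is the lower bound $|x| - \rho_0 \gtrsim |x|$: it uses crucially the choice $\rho_0 = \sqrt{a^2 + t^2/4}$ rather than $\rho_0 = a$ (the latter would cause $|x| - \rho_0$ to collapse as $|x| \to t^+$). This is precisely why the domain $\mathcal{D}_a(t)$ in \eqref{Da} is split at the time slice $s = t/2$, and is consistent with Remark \ref{diffi} above explaining that only the portion of the hyperboloid with $s \geq t/2$ is retained.
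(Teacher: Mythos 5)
Your proposal is correct and takes essentially the same approach as the paper's own proof: both identify the tangent to the hyperboloidal radial curve with $s^{-1}\Omega_{0r}$ and convert it to $\Omega_{0i}$'s via Lemma \ref{lift}, both use the condition $|x|\leq 2t$ together with the choice $\rho_0 = \sqrt{a^2 + t^2/4}$ to ensure the interval $[\rho_0,|x|]$ has length $\gtrsim |x|$ and that $s\geq t/2\gtrsim |x|$ on it, and both combine a radial one-dimensional Sobolev-type estimate with the spherical Sobolev inequality of Lemma \ref{Sobshere}. The only cosmetic differences are that you apply the spherical Sobolev first rather than last, and you implement the radial step via an explicit cutoff $\phi$ rather than the paper's rescaled 1D Sobolev on the fixed interval $[\sqrt{15}/4,1]$; these are interchangeable.
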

\begin{proof}
Let $(t,x)=(t,|x| \omega) \in V_0(T)$ such that $|x| \leq 2t$ and introduce
$$  h : r \mapsto  \int_{\R^3_v} |g| \left( \sqrt{r^2-a^2},r \omega ,v \right) dv, \qquad a^2=|x|^2-t^2. $$
It will be convenient to consider $c:= \left| \frac{15}{16} \right|^{\frac{1}{2}} < 1$, which satisfies, as $|x| \leq 2t$,
\begin{equation}\label{condidomain}
c^2|x|^2 =  a^2+t^2-\frac{|x|^2}{16} \geq a^2 + \frac{t^2}{4},
\end{equation}
so that $h$ is well defined on $[c|x|,+\infty[$. Applying a one-dimensional Sobolev inequality to the function $s \mapsto h(|x|s)$ and then making the change of variable $r=|x|s$, we obtain
\begin{eqnarray*}
\int_{\R^3_v} |g|(t,x,v) dv & = & h(|x|)  \hspace{2mm} \lesssim \hspace{2mm}  \int_{s=c}^1 |h|(|x|s) ds+\int_{s=c}^1 |x| |\partial_r(h)|(|x|s) ds \\ & = &\frac{1}{|x|} \int_{r=c|x|}^{|x|} |h|(r) dr+ \frac{1}{|x|} \int_{c|x|}^{|x|} |x| |\partial_r(h)|(r) dr . 
\end{eqnarray*}
Since $|x| \leq \frac{r}{c}$ on the domain of integration of the two integrals on the right hand side of the previous inequality, we get
\begin{equation}\label{eq:partsob0}
 \int_{\R^3_v} |g|(t,x,v) dv \hspace{2mm} \lesssim \hspace{2mm} \frac{1}{|x|^3} \int_{r=c|x|}^{|x|} |h|(r) r^2 dr+ \frac{1}{|x|^3} \int_{r=c|x|}^{|x|}  |r\partial_r(h)|(r) r^2dr.
\end{equation}
Note now that, with $\Omega_{0r}= r \partial_t+t\partial_r$,
\begin{eqnarray*}
 r\partial_r(h)(r) & = &  \int_{\R^3_v} \frac{r^2}{\sqrt{r^2-a^2}} \partial_t (|g|) \left( \sqrt{r^2-a^2},r \omega ,v \right) dv +\int_{\R^3_v}r \partial_r (|g|) \left( \sqrt{r^2-a^2},r \omega ,v \right) dv \\ & = & \frac{r}{\sqrt{r^2-a^2}} \int_{\R^3_v} \Omega_{0r} (|g|) \left( \sqrt{r^2-a^2},r \omega ,v \right) dv. 
\end{eqnarray*}
Consequently, using that $\Omega_{0r}=\frac{x^i}{r}\Omega_{0i}$ and applying Lemma \ref{lift},
\begin{eqnarray}
\nonumber \left| r\partial_r(h)(r)\right| & \lesssim  & \frac{r}{\sqrt{r^2-a^2}} \left| \frac{x^i}{r} \int_{\R^3_v} \Omega_{0i} (|g|) \left( \sqrt{r^2-a^2},r \omega ,v \right) dv \right| \\ & \lesssim & \frac{r}{\sqrt{r^2-a^2}} \sum_{|\beta| \leq 1} \int_{\R^3_v} | \widehat{Z}^{\beta}g| \left( \sqrt{r^2-a^2},r \omega ,v \right) dv. \label{eq:partsob1}
\end{eqnarray}
Since $|x| \leq 2t$, one has, using \eqref{condidomain},
\begin{equation}\label{eq:partsob2}
\forall \hspace{0.5mm} r \in [c |x|, |x| ], \qquad \frac{r}{\sqrt{r^2-a^2}} \leq \frac{2t}{\sqrt{c^2|x|^2-a^2}} \leq 4.
\end{equation}
Combining first \eqref{eq:partsob0} with \eqref{eq:partsob1}-\eqref{eq:partsob2} and then using again \eqref{condidomain}, we obtain
\begin{eqnarray*}
\int_{\R^3_v} |g|(t,x,v) dv & \lesssim & \frac{1}{|x|^3} \sum_{|\beta| \leq 1} \int_{r=c|x|}^{|x|} \int_{\R^3_v} \left| \widehat{Z}^{\beta} g \right|  \left( \sqrt{r^2-a^2},r \omega ,v \right) dv r^2 dr \\
& \leq & \frac{1}{|x|^3} \sum_{|\beta| \leq 1} \int_{r=\sqrt{a^2+\frac{1}{4}t^2}}^{|x|} \int_{\R^3_v} \left| \widehat{Z}^{\beta} g \right|  \left( \sqrt{r^2-a^2},r \omega ,v \right) dv r^2 dr.
\end{eqnarray*}
The result then follows from this inequality and from
$$ \int_{\R^3_v} \left| \widehat{Z}^{\beta} g \right|  \left( \sqrt{r^2-a^2},r \omega ,v \right) dv \hspace{2mm} \lesssim \hspace{2mm} \sum_{|\xi| \leq 2} \int_{\theta \in \mathbb{S}^2}  \left| \widehat{\Omega}^{\xi} \widehat{Z}^{\beta} g \right|  \left( \sqrt{r^2-a^2},r \theta ,v \right) dv d \mathbb{S}^2,$$
where $\widehat{\Omega}^{\xi} \in \{ \widehat{\Omega}_{12}, \widehat{\Omega}_{13}, \widehat{\Omega}_{23} \}^{|\xi|}$. To prove this last estimate, apply Lemma \ref{Sobshere} to the function $j:(\theta, v) \mapsto k(r\theta , v)$, where $k(r \theta,v) :=  \widehat{Z}^{\beta} f  \left( \sqrt{r^2-a^2},r \theta ,v \right)$ and remark that, as $\widehat{\Omega}_{ij} (r)=0$ and since $\widehat{\Omega}_{ij}$ are homogeneous vector field,
$$\widehat{\Omega}^{\xi} (j)(\theta,v) \hspace{2mm} = \hspace{2mm} \widehat{\Omega}^{\xi} (k)(r \theta, v) \hspace{2mm} = \hspace{2mm} \widehat{\Omega}^{\xi}  \left( \widehat{Z}^{\beta} f  \right)  \left( \sqrt{r^2-a^2},r \theta ,v \right).$$
\end{proof}

We are led to prove  the following energy inequality.
\begin{Lem}\label{energyhyp}
Let $g : V_0(T) \times \R^3_v \rightarrow \R $ be a sufficiently regular function and $(t,x) \in V_0(t)$ such that $|x| \leq 2t$. Then, with $a = \sqrt{|x|^2-t^2}$, we have
\begin{eqnarray*}
\int_{ \sqrt{a^2+\frac{1}{4}t^2} \leq |y| \leq |x|} \int_{\R^3_v} |g| \left( \sqrt{|y|^2-a^2}, y , v \right)  \frac{dv}{(v^0)^2} dy & \lesssim & \sum_{|\beta| \leq 1} \int_{\overline{\Sigma}^0_0} \int_{\R^3_v} |\widehat{Z}^{\beta} g | dx dv \\ & &+ \sum_{|\beta| \leq 1} \int_0^t \int_{\overline{\Sigma}^0_s} \int_{\R^3_v} \left| \TT_F\left( \widehat{Z}^{\beta} g \right) \right| \frac{dv}{v^0} dx ds .
\end{eqnarray*}
\end{Lem}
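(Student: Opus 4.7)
The plan is to apply the Euclidean divergence theorem to the vector field $A^\mu := \int_{v \in \R^3} |g|\, v^\mu/v^0\, dv$ on the domain $\mathcal{D}_a(t)$. Exactly as in the proof of Proposition \ref{energyf}, the $F$-term of $\TT_F(g)$ integrates to zero in $v$ by antisymmetry, so $\partial_\mu A^\mu = \int_v \operatorname{sgn}(g)\,\TT_F(g)/v^0\, dv$ and consequently
$$\left|\int_{\mathcal{D}_a(t)} \partial_\mu A^\mu\, dV\right| \;\leq\; \int_0^t\!\!\int_{\overline{\Sigma}^0_s}\!\!\int_{v \in \R^3} \frac{|\TT_F(g)|}{v^0}\, dv\, dx\, ds,$$
which matches the source term on the right-hand side of the lemma. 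The boundary $\partial\mathcal{D}_a(t)$ consists of the four pieces in \eqref{boundary}: the fluxes on $\overline{\Sigma}^q_0$, on $\overline{\Sigma}^{t-|x|}_t$, and on $C_q(t/2)$ are standard, and are controlled by initial data together with Proposition \ref{energyf} applied to $g$ — in particular, the cone contribution yields a positive multiple of $\int_{C_q(t/2)}\!\int v^{\underline{L}}|g|\,dv\,dC_q(t/2)/v^0$.

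The remaining flux, on $\mathcal{H}_a(t)$, is where the analysis is delicate. With outward Euclidean unit co-normal $(s,-y)/\sqrt{s^2+r^2}$ and surface element $r^2\sqrt{s^2+r^2}/s\, dr\, d\omega$, the flux simplifies to
$$\int_{\sqrt{a^2+t^2/4}}^{|x|}\!\!\int_{\mathbb{S}^2}\!\!\int_{v \in \R^3} |g|\,\frac{sv^0-rv^r}{sv^0}\, dv\, r^2\, dr\, d\omega, \qquad v^r := \frac{y\cdot v}{|y|}.$$
Because the outward normal to $\mathcal{H}_a(t)$ is spacelike, this integrand is sign-indefinite: this is the hard part. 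To connect it with the desired quantity, I would use the inequality $1/(v^0)^2\lesssim v^{\underline{L}}/v^0$ on $V_0$ from Lemma \ref{weights1} combined with the algebraic identity
$$\frac{sv^0-rv^r}{sv^0} \;=\; \frac{2v^{\underline{L}}}{v^0}\;-\;\frac{(r-s)v^r}{sv^0}.$$
The weight $(r-s)/s=a^2/(s(r+s))$ is uniformly bounded on $\mathcal{D}_a(t)$ thanks to $s\geq t/2$, $a\leq|x|\leq 2t$ (this is precisely the role of the condition in Remark \ref{diffi}), so the remaining task reduces to absorbing the sign-indefinite correction term $\int_v (r-s)v^r|g|/(sv^0)\,dv$.

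The main obstacle is the non-positivity of this correction, and it is handled by integrating by parts in $v$ at the cost of one derivative. Using $v^r/v^0=\omega^i\partial_{v^i}(v^0)$ with $\omega:=y/r$, together with the complete-lift identity $v^0\partial_{v^i}=\widehat{\Omega}_{0i}-y^i\partial_t-s\partial_{y^i}$, one obtains
$$\int_v \frac{v^r}{v^0}|g|\,dv \;=\; \Omega_{0r}\!\!\left(\int_v |g|\,dv\right) \;-\; \omega^i\!\!\int_v \operatorname{sgn}(g)\,\widehat{\Omega}_{0i}g\,dv.$$
The $|\widehat{\Omega}_{0i}g|$ piece produces an $L^1$ hyperboloidal integral of $|\widehat{Z}^\beta g|$ with $|\beta|=1$, which is controlled by re-running the divergence-theorem argument with $\widehat{\Omega}_{0i}g$ in place of $g$. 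The remaining $\Omega_{0r}$-piece is handled via Lemma \ref{lift}, which supplies the pointwise estimate $|\Omega_{0r}(\int_v |g|\,dv)|\lesssim \sum_{i}\int_v(|g|+|\widehat{\Omega}_{0i}g|)\,dv$; integrating against the bounded weight $(r-s)r^2/s$ on $\mathcal{H}_a(t)$ and appealing once more to the divergence-theorem argument for $g$ and for $\widehat{\Omega}_{0i}g$ closes the estimate. Assembling the bounds produces exactly the inequality of the lemma, and the $|\beta|\leq 1$ on the right-hand side is the single derivative lost in this $v$-integration by parts, as announced in Remark \ref{diffi}.
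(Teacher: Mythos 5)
Your setup matches the paper's: apply the Euclidean divergence theorem on $\mathcal{D}_a(t)$, drop the nonnegative fluxes on $C_q(\tfrac{t}{2})$ and $\overline{\Sigma}^{t-|x|}_t$, and extract the good $(v^0)^{-2}$ part of the sign-indefinite hyperboloidal flux. Your algebraic identity $\frac{sv^0-rv^r}{sv^0}=\frac{2v^{\underline{L}}}{v^0}-\frac{(r-s)v^r}{sv^0}$ plays the same role as the paper's lower bound $sv^0-y^iv_i\geq(s-|y|)v^0+\tfrac{|y|}{2v^0}$, and indeed $2v^{\underline{L}}/v^0\geq\tfrac{1}{2}(v^0)^{-2}$ by Lemma \ref{weights1}. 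The gap lies in the treatment of the correction term $\int_{\mathcal{H}_a}\frac{r-s}{s}\int_v\frac{v^r}{v^0}|g|\,dv\,dy$. The proposed integration by parts in $v$ gains no decay in $v$: it replaces $v^r/v^0$, which is already bounded by $1$, by $\widehat{\Omega}_{0i}g$ and $\Omega_{0r}\int_v|g|\,dv$, and after applying Lemma \ref{lift} to the latter you arrive at $\bigl|\int_v\frac{v^r}{v^0}|g|\,dv\bigr|\lesssim\int_v|g|\,dv+\sum_i\int_v|\widehat{\Omega}_{0i}g|\,dv$, which is strictly weaker than the trivial bound $\int_v|g|\,dv$. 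You are then left needing to control $\sum_{|\beta|\leq1}\int_{\mathcal{H}_a}\int_v|\widehat{Z}^{\beta}g|\,dv\,dy$, an $L^1_v$ hyperboloidal integral \emph{with no $v^0$ weight}, and ``re-running the divergence theorem'' cannot produce it: for any function $h$, that argument only yields $\int_{\mathcal{H}_a}\int_v|h|\,\frac{v\cdot n}{v^0}\,dv\,dy$, whose usable part after the same lower-bound extraction is $\int_{\mathcal{H}_a}\int_v|h|\,(v^0)^{-2}\,dv\,dy$ --- a strictly smaller quantity than the one you need. The argument is circular.

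The paper closes this loop by a genuinely different mechanism, and crucially it does \emph{not} discard $(r-s)/s$ as a bounded constant. It bounds $\tfrac{1}{s}\leq\tfrac{2}{t}$ on $\mathcal{H}_a(t)$, observes that $r-s(r)\leq\tau_-(s(r),r)$, and then invokes the one-dimensional radial Sobolev estimate \eqref{eq:tmoinsr} (from the proof of Proposition \ref{KS1}), which for fixed $\omega$ gives $r^2\tau_-\int_v|g|(s(r),r\omega,v)\,dv\lesssim\sum_{|\beta|\leq1}\int_{\rho\geq r}\int_v|\widehat{Z}^{\beta}g|(s(r),\rho\omega,v)\,\rho^2\,d\rho\,dv$. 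Integrating in $\omega$ and $r$ converts the problematic hyperboloidal integral into time-slice norms $\|\int_v|\widehat{Z}^{\beta}g|\,dv\|_{L^1(\overline{\Sigma}^0_{s(r)})}$, which are controlled by Proposition \ref{energyf}. The single derivative lost in the lemma enters at that radial-Sobolev step, not through an integration by parts in $v$, and retaining the actual factor $r-s$ (rather than merely its boundedness) is what makes the $\tau_-$ weight of \eqref{eq:tmoinsr} available.
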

\begin{proof}
Recall from \eqref{div} that $\partial_{\mu} \int_v |g| \frac{v^{\mu}}{v^0}dv = \int_v \TT_F(g)\frac{g}{|g|} dv$. Applying the euclidian divergence theorem to $ \int_v |g| \frac{v^{\mu}}{v^0}dv$ in the domain $\mathcal{D}_a(t)$ defined in \eqref{Da} and which boundary is given in \eqref{boundary}, we get
\begin{multline*}
 \int_{C_q(\frac{t}{2})} \int_{\R^3_v}\frac{v^{\underline{L}}}{v^0} |g| dv d C_u(\frac{t}{2}) + \int_{\mathcal{H}_a(t)} \int_{\R^3_v} |g| \frac{v\cdot n}{v^0} dv d \mathcal{H}_a(t)+\int_{\overline{\Sigma}^{t-|x|}_t} \int_{\R^3_v} |g| dv dx - \int_{\overline{\Sigma}^q_0} \int_{\R^3_v} |g| dv dx \\
  = \int_{\mathcal{D}_a(t)} \int_{\R^3_v} \frac{g}{|g|} \TT_F(g) dv d \mathcal{D}_a(t) ,
  \end{multline*}
where $n$ is the outward pointing normal unit vector field to $\mathcal{H}_a(t)$ and $d \mathcal{H}_a(t)$ (respectively $d \mathcal{D}_a(t)$) are the volume form on $\mathcal{H}_a(t)$ (respectively $\mathcal{D}_a(t)$). If $\mathcal{H}_a(t)$ is parmeterized by $y \mapsto ( \sqrt{|y|^2-a^2},y)$, we have, with $I_3=diag(1,1,1)$ and denoting by ${}^ty$ the transpose of $y$,
$$ n = \frac{1}{\sqrt{2|y|^2-a^2}}( \sqrt{|y|^2-a^2},-y), \qquad d \mathcal{H}_a(t) = \sqrt{ \det \left( I_3+ \frac{1}{\sqrt{|y|^2-a^2}} {}^t y y \right)} = \frac{\sqrt{2|y|^2-a^2}}{\sqrt{|y|^2-a^2}}.$$
Since
$$\int_{C_q(\frac{t}{2})} \int_{\R^3_v}\frac{v^{\underline{L}}}{v^0} |g| dv \geq 0, \qquad  \int_{\overline{\Sigma}^{t-|x|}_t} \int_{\R^3_v} |g| dv dx \geq 0,$$
we obtain
\begin{multline*}
 \int_{\sqrt{a^2+\frac{1}{4}t^2} \leq |y| \leq |x|} \int_{\R^3_v} |g| \left( \sqrt{ |y|^2-a^2},y,v \right) \frac{\sqrt{|y|^2-a^2}v^0-y^i v_i}{\sqrt{|y|^2-a^2}v^0} dv dy \\ \leq \hspace{2mm} \int_{\overline{\Sigma}^q_0} \int_{\R^3_v} |g| dv dx+\int_{\mathcal{D}_a(t)} \int_{\R^3_v} | \TT_F(g)| dv d \mathcal{D}_a(t).
 \end{multline*}
The problem here is that the integrand in the left hand side of the previous inequality is not nonnegative. Then, observe that, for $s= \sqrt{|y|^2-a^2}$ and since $v^0=\sqrt{1+|v|^2}$,
$$ sv^0-y^i v_i \geq sv^0-|y| |v| = (s-|y|)v^0 +|y|(v^0-|v|)= (s-|y|)v^0 +|y|\frac{(v^0)^2-|v|^2}{v^0+|v|} \geq (s-|y|)v^0 +\frac{|y|}{2v^0}.$$
Consequently, using also $\mathcal{D}_a(t) \subset V_0(t)$, $\Si^q_0 \subset \Si^0_0$ and introducing the notation $s(|y|) :=  \sqrt{ |y|^2-a^2}$, we get
\begin{align}
\nonumber \int_{\sqrt{a^2+\frac{1}{4}t^2} \leq |y| \leq |x|} \int_{\R^3_v} |g| \left(s(|y|),y,v \right) \frac{|y|}{s(|y|)} \frac{dv}{(v^0)^2} dy \hspace{1mm} & \leq \hspace{1mm} 2\int_{\sqrt{a^2+\frac{1}{4}t^2} \leq |y| \leq |x|} \frac{|y|-s(|y|)}{s(|y|)} \int_{\R^3_v} |g| \left(s(|y|), y,v \right)  dv dy \\ &  \quad \hspace{1mm} 2\int_{\overline{\Sigma}^0_0} \int_{\R^3_v} |g| dv dx + 2\int_{0}^t \int_{\overline{\Sigma}^0_s} \int_{\R^3_v} | \TT_F(g)| dv d xds. \label{eq:partie1}
\end{align}
Since 
$$ \sqrt{a^2+\frac{1}{4}t^2} \leq |y| \leq |x| \quad \Rightarrow \quad \frac{|y|}{s(|y|)} =\frac{|y|}{\sqrt{|y|^2-a^2}} \geq \frac{\sqrt{a^2+\frac{1}{4}t^2}}{\sqrt{|x|^2-a^2}} \geq \frac{1}{2},$$
we have
\begin{equation}\label{eq:partie2}
\int_{\sqrt{a^2+\frac{1}{4}t^2} \leq |y| \leq |x|} \int_{\R^3_v} |g| \left(s(|y|), y,v \right) \frac{|y|}{s(|y|)} \frac{dv}{(v^0)^2} dy \hspace{1mm} \geq \hspace{1mm} \frac{1}{2}\int_{\sqrt{a^2+\frac{1}{4}t^2} \leq |y| \leq |x|} \int_{\R^3_v} |g| \left(\sqrt{|y|^2-a^2},y,v \right)  \frac{dv}{(v^0)^2} dy.
\end{equation}
Now, as $\frac{1}{s(|y|)} \leq \frac{2}{t}$ if $|y| \geq \sqrt{a^2+\frac{1}{4}t^2}$, one gets
\begin{align}\label{conditiondomaine}
\nonumber & \int_{\sqrt{a^2+\frac{1}{4}t^2} \leq |y| \leq |x|} \frac{|y|-s(|y|)}{s(|y|)} \int_{\R^3_v} |g| \left(s(|y|), y,v \right)  dv dy \\ & \qquad \leq \frac{2}{t} \int_{\sqrt{a^2+\frac{1}{4}t^2} \leq r \leq |x|} r^2 \left( r-s(r) \right) \int_{\omega \in \mathbb{S}^2} \int_{\R^3_v} |g| \left(s(r), r \omega,v \right)  dv  d \mathbb{S}^2  dr.
\end{align}
Applying the inequality \eqref{eq:tmoinsr} to the function $(s,y,v) \mapsto  g \left(s, |y| \omega,v \right)  $, we then obtain
$$ \int_{\sqrt{a^2+\frac{1}{4}t^2} \leq |y| \leq |x|} \frac{|y|-s(|y|)}{s(|y|)} \int_{\R^3_v} |g| \left(s(|y|), y,v \right)  dv dy \lesssim \frac{1}{t} \sum_{|\beta| \leq 1} \int_{\sqrt{a^2+\frac{1}{4}t^2} \leq r \leq |x|} \left\| \int_{\R^3_v} \left| \widehat{Z}^{\beta} g \right| dv \right\|_{L^1(\overline{\Sigma}^0_{s(r)})} dr.$$
Note now that, using $|x| \leq 2t$ and the energy inequality of Proposition \ref{energyf},
\begin{eqnarray*}
 \frac{1}{t} \int_{\sqrt{a^2+\frac{1}{4}t^2} \leq r \leq |x|} \left\| \int_{\R^3_v} \left| \widehat{Z}^{\beta} g \right| dv \right\|_{L^1(\overline{\Sigma}^0_{s(r)})} dr & \leq & \frac{|x|}{t} \sup_{a \leq r \leq |x|}  \left\| \int_{\R^3_v} \left| \widehat{Z}^{\beta} g \right| dv \right\|_{L^1(\overline{\Sigma}^0_{s(r)})} \\ & \lesssim & \left\| \int_{\R^3_v} \left| \widehat{Z}^{\beta} g \right| dv \right\|_{L^1(\overline{\Sigma}^0_0)}+\int_0 \int_{\overline{\Sigma}^0_s} \int_{\R^3_v} \left| \TT_F \left( \widehat{Z}^{\beta} g \right) \right| \frac{dv}{v^0} dy ds.
 \end{eqnarray*}
 The result then follows from \eqref{eq:partie1}, \eqref{eq:partie2} and the last two inequalities.
\end{proof}
The estimate \eqref{kevatalenn:opti} in the region $\min (2,t) < |x| \leq 2t$ then ensues from Lemmas \ref{Sobhyp} and \ref{energyhyp}, applied respectively to $f$ and to $\widehat{Z}^{\xi}f$ for all $|\xi| \leq 3$. This concludes the proof of Proposition \ref{Propdecayopti}.

Adapting Lemmas \ref{Sobhyp} and \ref{energyhyp} to norms carrying weights in\footnote{For this, note that since $\TT_0(z)=0$, $\TT_F(z)=F(v,\nabla_v z)$. The other additional arguments are similar to those used in Lemmas \ref{lift}-\ref{Sobshere} and Proposition \ref{KS1} in order to deal with the weight $z^a$.} $z$ and applying \eqref{decaytr33} to $\frac{1}{|v^0|^2}f$, one can prove the following estimate.
\begin{Pro}\label{Cordecayopti}
Let $u \leq 0$, $(k,q) \in \R_+^2$ and $f : V_u(T) \times \R^3_v \rightarrow \R$ be a sufficiently regular function. For all $(t,x) \in V_u(T)$,
\begin{multline*}
 \int_{\R^3_v} |f|(t,x,v) \frac{dv}{(v^0)^{2+2k}} \hspace{1mm}  \lesssim \hspace{1mm}  \frac{1}{\tau_+^{3+k}\tau_-^q}\sum_{|\beta| \leq 4} \Bigg(  \int_{\overline{\Sigma}^u_0} \int_{\R^3_v} z^{k+q}\left| \widehat{Z}^{\beta} f \right| dv dx \\
 + \int_0^t\int_{\overline{\Sigma}^u_s} \int_{\R^3_v} z^{k+q}\left| \TT_F ( \widehat{Z}^{\beta} f ) \right| \frac{dv}{v^0} dx ds+\int_0^t\int_{\overline{\Sigma}^u_s} \int_{\R^3_v} z^{k+q-1}\left| F(v,\nabla_v z) \right| |\widehat{Z}^{\beta} f | \frac{dv}{v^0} dx ds \Bigg).
\end{multline*}
\end{Pro}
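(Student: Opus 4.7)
The plan is to reduce to a $z$-weighted analog of Proposition \ref{Propdecayopti}. Applying \eqref{decaytr33} with $g$ replaced by $(v^0)^{-2}f$ yields the pointwise inequality
$$\int_{\R^3_v} |f|(t,x,v) \frac{dv}{(v^0)^{2+2k}} \hspace{2mm} \lesssim \hspace{2mm} \frac{1}{\tau_+^k \tau_-^q} \int_{\R^3_v} z^{k+q} |f|(t,x,v) \frac{dv}{(v^0)^2},$$
so it suffices to prove that this $z^{k+q}$-weighted velocity average decays as $\tau_+^{-3}$ times the right-hand side of the stated proposition. I would carry the factor $z^{k+q}$ through the three-region decomposition used in the proof of Proposition \ref{Propdecayopti}: on the bounded part $|x| \leq 2$, an iterated fundamental theorem of calculus applied to $z^{k+q}|f|$ followed by Proposition \ref{energyf}; on the far exterior $|x| \geq \max(2,2t)$, Proposition \ref{KS1} applied directly with weight $z^{k+q}$, since $\tau_+ \sim \tau_-$ there; and on the intermediate region, weighted analogs of Lemmas \ref{Sobhyp} and \ref{energyhyp}.

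The weighted Sobolev estimate on the hyperboloid is obtained by repeating the proof of Lemma \ref{Sobhyp} with the factor $z^{k+q}$ inserted into every velocity average. The only new ingredient is the commutator bound $|\widehat{Z}(z^{k+q})| \lesssim z^{k+q}$ from Lemma \ref{weights}, which allows Lemma \ref{lift} and the spherical Sobolev inequality of Lemma \ref{Sobshere} to be applied directly to $z^{k+q}|g|$; since $\widehat{\Omega}_{ij}(r) = 0$, the parameterization of the hyperboloid by $y \mapsto (\sqrt{|y|^2 - a^2}, y)$ is preserved by the rotations, so the proof goes through unchanged.

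The weighted analog of Lemma \ref{energyhyp} is where the new contribution appears. Recomputing the divergence identity \eqref{div} for $\int_v z^{k+q} |g| v^\mu/v^0 \, dv$ uses $\mathbf{T}(z) = 0$ from Lemma \ref{weights}; however, the integration by parts in $v$ that exploits the antisymmetry of $F$ now leaves a residual term because the $v$-derivative also falls on $z^{k+q}$. A direct computation yields an extra source of the form $(k+q)\int_v z^{k+q-1} F(v, \nabla_v z) |g| / v^0 \, dv$, which after applying the euclidean divergence theorem on $\mathcal{D}_a(t)$ and discarding the non-negative boundary terms produces precisely the third contribution on the right-hand side of the proposition. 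The remaining geometric facts used in Lemma \ref{energyhyp}, namely the lower bound $s v^0 - y^i v_i \geq (s-|y|) v^0 + |y|/(2v^0)$, the ratio estimate \eqref{conditiondomaine}, and the application of Proposition \ref{energyf} to commuted quantities, carry over verbatim. The main obstacle is purely bookkeeping: tracking the weight $z^{k+q}$ through the divergence identity and absorbing the resulting $F(v, \nabla_v z)$ source with the correct structure. Once the two weighted lemmas are in place, combining them on the intermediate region exactly as in the proof of Proposition \ref{Propdecayopti} and summing the three regional contributions yields the claim.
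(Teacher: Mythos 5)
Your proposal matches the paper's (very terse) argument: the paper simply says to adapt Lemmas \ref{Sobhyp} and \ref{energyhyp} to $z$-weighted norms, to use $\TT_F(z)=F(v,\nabla_v z)$ (since $\TT_0(z)=0$), and to apply \eqref{decaytr33} to $f/(v^0)^2$, with the footnote pointing to Lemmas \ref{lift}--\ref{Sobshere} and Proposition \ref{KS1} for handling the weight. You have correctly identified all of these ingredients, including the origin of the $z^{k+q-1}|F(v,\nabla_v z)|$ source term from the divergence identity, so this is the same route with the details filled in.
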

\subsection{Estimates for the electromagnetic field}

In this subsection, we introduce first the energy norm used in this paper to study the electromagnetic field and, secondly, we derive pointwise decay estimates from it through Klainerman-Sobolev inequalities. We consider, for the remaining of this section, $G$ a sufficiently regular $2$-form defined on $V_b(T)$ and we denote by $(\alpha,\underline{\alpha},\rho,\sigma)$ its null decomposition. We suppose that $G$ satisfies
\begin{eqnarray}
\nonumber \nabla^{\mu} G_{\mu \nu} & = & J_{\nu} \\ \nonumber
\nabla^{\mu} {}^* \! G_{\mu \nu} & = & 0,
\end{eqnarray}
with $J$ a sufficiently regular $1$-form defined on $V_b(T)$.
\begin{Def}\label{defMax1}
Let $N \in \mathbb{N}$. We define, for $t \in [0,T[$,
\begin{eqnarray}
\nonumber \mathcal{E}^{b}[G](t) & := & \int_{\Si^b_t}\left( |\alpha|^2+|\underline{\alpha}|^2+2|\rho|^2+2|\sigma|^2 \right)dx +\sup_{u < b} \int_{C_u(t)} \left( |\alpha|^2+|\rho|^2+|\sigma|^2 \right) dC_u(t), \\ \nonumber
 \mathcal{E}^b_N[G](t) & := & \sum_{0 \leq k \leq N} \hspace{0.5mm} \sum_{  Z^{\gamma} \in \mathbb{K}^k } \hspace{0.5mm} \mathcal{E}^{b}[\mathcal{L}_{ Z^{\gamma}}(G)](t).
\end{eqnarray}
\end{Def}

\begin{Pro}\label{energyMax1}
We have, for all $ t \in [0,T[$,
$$ \mathcal{E}^{b}[G](t) \hspace{2mm} \leq \hspace{2mm} 2\mathcal{E}^{b}[G](0) + 8\int_0^t \int_{\Si^b_s} |G_{\mu 0} J^{\mu}| dx ds.$$
\end{Pro}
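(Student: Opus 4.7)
The proof is a standard energy identity. By Lemma \ref{maxwellbis} we have $\nabla^\mu T[G]_{\mu\nu}=G_{\nu\lambda}J^\lambda$, and since the deformation tensor of the Killing vector $\partial_t$ vanishes, the momentum vector $P^\mu := T[G]^{\mu\nu}(\partial_t)_\nu$ satisfies $\nabla_\mu P^\mu = G_{0\lambda}J^\lambda$. The plan is to apply the divergence theorem to $P^\mu$ in $V_u(t)$ for each $u<b$, whose boundary consists of $\Si^u_0$, $\Si^u_t$ and the outgoing null cone $C_u(t)$.

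To identify the boundary integrands, I use the null decomposition. From $\partial_t = \frac{1}{2}(L+\underline L)$ and \eqref{tensorcompo} one gets $T[G](\partial_t,\partial_t) = \tfrac{1}{4}(|\alpha|^2+|\underline\alpha|^2+2|\rho|^2+2|\sigma|^2)$ on the spacelike slices, and $T[G](\partial_t,L)= \tfrac{1}{2}(|\alpha|^2+|\rho|^2+|\sigma|^2)$ on $C_u(t)$; the null flux picks up an additional $\sqrt{2}$ from the volume form $dC_u(t)=r^2 d\underline{u}d\mathbb{S}^2/\sqrt{2}$, exactly as in the proof of Proposition \ref{energyf}. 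The key point is that both boundary integrands are pointwise nonnegative (this is the Maxwell analogue of the dominant energy condition). After multiplying by $4$, the divergence theorem yields
\begin{equation*}
\int_{\Si^u_t}(|\alpha|^2+|\underline\alpha|^2+2|\rho|^2+2|\sigma|^2)\,dx + \sqrt{2}\int_{C_u(t)}(|\alpha|^2+|\rho|^2+|\sigma|^2)\,dC_u(t) \leq \int_{\Si^u_0}(|\alpha|^2+|\underline\alpha|^2+2|\rho|^2+2|\sigma|^2)\,dx + 4\int_{V_u(t)} |G_{\mu 0}J^\mu|\,dV_u(t).
\end{equation*}

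From this single identity I would extract two separate bounds. First, dropping the (nonnegative) null flux on the left and sending $u\nearrow b$ by monotone convergence produces $\int_{\Si^b_t}(\ldots)\,dx \leq \int_{\Si^b_0}(\ldots)\,dx + 4\int_{V_b(t)} |G_{\mu 0}J^\mu|$. Second, dropping instead the (nonnegative) $\Si^u_t$-flux, enlarging the right-hand side to its $V_b(t)$-analogue, using $\sqrt{2}\geq 1$, and finally taking the supremum over $u<b$ gives $\sup_{u<b}\int_{C_u(t)}(\ldots)\,dC_u(t) \leq \int_{\Si^b_0}(\ldots)\,dx + 4\int_{V_b(t)} |G_{\mu 0}J^\mu|$. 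Adding these two inequalities produces exactly the advertised constants $2$ and $8$, using Lemma \ref{foliationexpli} to rewrite $\int_{V_b(t)}$ as $\int_0^t \int_{\Si^b_s}\,dx\,ds$ and noting that $\mathcal{E}^b[G](0) = \int_{\Si^b_0}(\ldots)\,dx$ since $C_u(0)$ is empty. The only slightly delicate step is the careful bookkeeping of signs and of the $\sqrt{2}$ factor in the null divergence theorem; nothing conceptually difficult arises, and splitting the identity into two bounds (rather than using it once) is what forces the factor of $2$ on the initial data and the $8$ on the source term.
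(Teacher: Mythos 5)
Your proposal is correct and follows essentially the same route as the paper: apply the divergence theorem for $T[G]_{\mu 0}$ (equivalently $T[G]^{\mu\nu}(\partial_t)_\nu$) in $V_u(t)$ using Lemma \ref{maxwellbis}, identify the boundary fluxes via \eqref{tensorcompo} as $4T[G]_{00}=|\alpha|^2+|\underline\alpha|^2+2|\rho|^2+2|\sigma|^2$ and $2T[G]_{L0}=|\alpha|^2+|\rho|^2+|\sigma|^2$, and then extract the two bounds (one for the spacelike slice, one for the null cones) whose sum gives the constants $2$ and $8$. Your bookkeeping of the $\sqrt{2}$ from $dC_u(t)$ and the observation that $C_u(0)=\emptyset$ so $\mathcal{E}^b[G](0)$ reduces to the $\Si^b_0$ integral match the paper's argument.
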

\begin{proof}
Recall from Lemma \ref{maxwellbis} that $\nabla^{\mu} T[G]_{\mu 0}=G_{0 \nu} J^{\nu}$. Hence, applying the divergence theorem in $V_{u}(t)$, for $u < b $, we get
\begin{equation}\label{eq:1}
 \int_{\Si^{u}_t}  T[G]_{00}dx + \frac{1}{\sqrt{2}} \int_{C_{u}(t)} T[G]_{L0}dC_{u}(t) = \int_{\Si^{u}_0}  T[G]_{00}dx-\int_0^t \int_{\Si^{u}_s }   G_{0 \nu} J^{\nu} dx ds.
 \end{equation}
We then obtain
 \begin{eqnarray}
 \nonumber \sup_{u < b} \int_{C_{u}(t)} T[G]_{L0}dC_{u}(t) & \leq & \int_{\Si^{b}_0}  |T[G]_{00}|dx+\int_0^t \int_{\Si^{b}_s }   \left| G_{0 \nu} J^{\nu} \right| dx ds, \\ \nonumber
\int_{\Si^b_t} T[G]_{00} dx \hspace{2mm} = \hspace{2mm} \sup_{u < b} \int_{\Si^{u}_t}  T[G]_{00}dx  & \leq & \int_{\Si^{b}_0}  |T[G]_{00}|dx+\int_0^t \int_{\Si^{b}_s }   \left| G_{0 \nu} J^{\nu} \right| dx ds.
 \end{eqnarray}
It then remains to add the previous two inequalities and to notice, using \eqref{tensorcompo}, that
$$ 4T[G]_{00} = |\alpha|^2+|\underline{\alpha}|^2+2|\rho|^2+2|\sigma|^2  \hspace{1cm} \text{and} \hspace{1cm} 2 T[G]_{L0}=  |\alpha|^2+|\rho|^2+|\sigma|^2 .$$
\end{proof}

In order to prove pointwise decay estimates on $G$, we will use the following three Lemmas. The first one, which is proved in Appendix $D$ of \cite{massless}, extends the results of Lemma \ref{goodderiv} for the null components of a $2$-form.

\begin{Lem}\label{null}
We have, denoting by $\zeta$ any of the null component $\alpha$, $\underline{\alpha}$, $\rho$ or $\sigma$,
$$\tau_- \left| \nabla_{\underline{L}} \zeta (G)\right|+\tau_+ \left| \nabla_{L} \zeta (G) \right| \lesssim \sum_{|\gamma| \leq 1} \left|  \zeta \left( \mathcal{L}_{Z^{\gamma}}(G) \right) \right|, \hspace{15mm} (1+r)\left| \slashed{\nabla} \zeta (G) \right| \lesssim |\zeta(G)|+\sum_{ \Omega \in \Or} \left|  \zeta \left( \mathcal{L}_{\Omega}(G) \right) \right|. $$
\end{Lem}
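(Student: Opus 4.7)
The plan is to reduce both inequalities to the Lie derivative identity
\begin{equation*}
\mathcal{L}_Z(G)(X,Y) = Z(G(X,Y)) - G([Z,X],Y) - G(X,[Z,Y]),
\end{equation*}
valid for any vector field $Z$ and any $2$-form $G$, and then exploit the decomposition of the null frame $(L,\underline{L},e_A)$ in terms of Killing fields provided by Lemma \ref{goodderiv}. Rearranged, the identity says that $Z$ applied to a null component $\zeta(G) = G(X,Y)$ equals the corresponding null component of $\mathcal{L}_Z G$ up to correction terms $G([Z,X],Y)+G(X,[Z,Y])$. The first task is therefore to compute the commutators $[Z,X]$ for $Z\in\mathbb{K}$ and $X\in\{L,\underline{L},e_A\}$ and check that each one is a linear combination of the frame vectors with \emph{uniformly bounded} coefficients: one verifies directly that $[S,L]=-L$, $[S,\underline{L}]=-\underline{L}$, $[S,e_A]=-e_A$; that the rotations $\Omega_{ij}$ commute with $L,\underline{L}$ and act as a bounded rotation on the angular frame; and that the boost commutators $[\Omega_{0k},L]$, $[\Omega_{0k},\underline{L}]$, $[\Omega_{0k},e_A]$ are also bounded combinations of frame vectors, which can be seen most cheaply via $\tfrac{x^i}{r}\Omega_{0i}=\tfrac{1}{2}\bigl((t+r)L-(t-r)\underline{L}\bigr)$.

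Granted this commutator calculation, one obtains the pointwise bound
\begin{equation*}
\bigl|Z\bigl(\zeta(G)\bigr)\bigr| \;\lesssim\; \bigl|\zeta(\mathcal{L}_Z G)\bigr| + \sum_{\zeta'} |\zeta'(G)|,
\end{equation*}
for every $Z\in\mathbb{K}$ and every null component $\zeta$, where the sum on the right ranges over the four null components. For the first inequality of the lemma, I would combine this with Lemma \ref{goodderiv}: the identity $(t+r)L=S+\frac{x^i}{r}\Omega_{0i}$ yields $(t+r)|L\zeta(G)| \lesssim \sum_{|\gamma|\le 1}|\zeta(\mathcal{L}_{Z^\gamma}G)|$, and since $\tau_+\lesssim 1+t+r$ while $L=\partial_t+\frac{x^i}{r}\partial_i$ has $O(1)$ coefficients in translations (which themselves belong to $\mathbb{K}$), the residual $L\zeta(G)$ piece is absorbed. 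The analogous argument with $(t-r)\underline{L}=S-\frac{x^i}{r}\Omega_{0i}$ handles the $\nabla_{\underline{L}}$ term. For the second inequality, the identity $re_A=\sum_{i<j}C^{ij}_A \Omega_{ij}$ gives $r|\slashed{\nabla}\zeta(G)|\lesssim \sum_{\Omega\in\Or}|\zeta(\mathcal{L}_\Omega G)|+|\zeta(G)|$, and the coefficient $1+r$ (rather than $r$) is justified by noting that for $r\le 1$ the left-hand side is already controlled by translations applied to $\zeta(G)$.

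The step I expect to be the genuine obstacle is not the commutator computation itself but the careful interpretation of $\nabla_X\zeta$ when $\zeta$ is one of the non-scalar components $\alpha$ or $\underline{\alpha}$, i.e.\ a section of the cotangent bundle of the $2$-sphere. Then $\nabla_{\underline{L}}\alpha_A$ is \emph{not} simply $\underline{L}(\alpha_A)$; it also involves $\nabla_{\underline{L}}e_A$ (and similarly for $\slashed{\nabla}$, which involves the induced connection). One must therefore verify that the relevant connection coefficients $\nabla_X e_A$ in the null frame are themselves uniformly bounded linear combinations of frame vectors, so that the additional Christoffel-type contributions produce terms of the form $|\zeta(G)|$ only, which are already allowed on the right-hand side at the order $|\gamma|=0$. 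Once this bookkeeping is done, the two estimates follow from the structural identities above without any further genuine analysis.
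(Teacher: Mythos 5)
The paper does not contain a proof of this lemma; it cites Appendix~D of \cite{massless}, so there is no in-paper argument to compare against. Judged on its own merits, your plan — use the identity $\mathcal{L}_Z G(X,Y)=Z(G(X,Y))-G([Z,X],Y)-G(X,[Z,Y])$, compute the commutators of $\mathbb{K}$ with the null frame, and then feed in the decomposition $(t\pm r)L,\,(t\pm r)\underline{L},\,re_A$ from Lemma~\ref{goodderiv} — is the right one, and your identification of the connection-coefficient issue for the non-scalar components $\alpha,\underline{\alpha}$ is accurate.

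However, there is a genuine gap between your stated intermediate bound and the lemma's conclusion. You claim $|Z(\zeta(G))|\lesssim|\zeta(\mathcal{L}_Z G)|+\sum_{\zeta'}|\zeta'(G)|$ for each $Z\in\mathbb{K}$, with the sum running over \emph{all four} null components; summing this over $Z\in\{S,\Omega_{01},\Omega_{02},\Omega_{03}\}$ with coefficients $1,\tfrac{x^1}{r},\tfrac{x^2}{r},\tfrac{x^3}{r}$ then only yields
\[
(t+r)\,|L\zeta(G)|\ \lesssim\ \sum_{|\gamma|\leq 1}\left|\zeta\bigl(\mathcal{L}_{Z^{\gamma}}(G)\bigr)\right|\;+\;\sum_{\zeta'}|\zeta'(G)|,
\]
which is strictly weaker than the lemma, whose right-hand side keeps only the component $\zeta$ (including the $|\gamma|=0$ term $|\zeta(G)|$ alone, not $|\alpha(G)|+|\underline{\alpha}(G)|+|\rho(G)|+|\sigma(G)|$). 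And the extra terms are really there at the per-$Z$ level: for instance $[\Omega_{0i},L]=-\tfrac{x^i}{r}L+\bigl(\tfrac{t}{r}-1\bigr)\tfrac{x^j}{r^2}\Omega_{ji}$, whose angular part produces a genuine $\underline{\alpha}$-contribution in $G([\Omega_{0i},L],\underline{L})$, so $\Omega_{0i}(\rho(G))$ does contain $O(1)(\alpha+\underline{\alpha})$ correction terms. What rescues the lemma is a cancellation specific to the combination $\tfrac{x^i}{r}\Omega_{0i}$: the off-diagonal pieces are proportional to $\tfrac{x^j}{r^2}\Omega_{ji}$, which is antisymmetric in $(i,j)$, so contracting with the symmetric weight $\tfrac{x^i}{r}$ kills them identically. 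Equivalently, $W:=\tfrac{x^i}{r}\Omega_{0i}$ satisfies $[W,L]=-L$, $[W,\underline{L}]=\underline{L}$, $[W,e_A]=-\tfrac{t}{r}e_A$, i.e.\ it preserves the null frame directions, which is false for each $\Omega_{0i}$ taken separately. Your argument coarsens to the per-$Z$ bound and therefore discards exactly this cancellation, so it proves only the weaker inequality (which, incidentally, would still suffice for the application in Proposition~\ref{decayMaxellext}, but is not the statement of the lemma). To close the gap you must work with $S$ and $W$ directly — or with $(t\pm r)L,(t\pm r)\underline{L}$ themselves — and verify by the commutator computations above that their correction terms involve only the component $\zeta$, rather than invoking a bound valid for each $Z\in\mathbb{K}$ one at a time.
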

The following result, also proved in Appendix $D$ of \cite{massless}, presents commutation properties between $\mathcal{L}_{\Omega}$, $\nabla_{\partial_r}$, $\nabla_{L}$ or $\nabla_{\underline{L}}$ and the null decomposition of $G$.
\begin{Lem}\label{randrotcom}
Let $\Omega \in \Or$. Then, denoting by $\zeta$ any of the null component $\alpha$, $\underline{\alpha}$, $\rho$ or $\sigma$,
$$ [\mathcal{L}_{\Omega}, \nabla_{\partial_r}]( G)=0, \hspace{1.2cm} \mathcal{L}_{\Omega}(\zeta(G))= \zeta ( \mathcal{L}_{\Omega}(G) ) \hspace{1.2cm} \text{and} \hspace{1.2cm} \nabla_{\partial_r}(\zeta(G))= \zeta ( \nabla_{\partial_r}(G) ) .$$
Similar results hold for $\mathcal{L}_{\Omega}$ and $\nabla_{\partial_t}$, $\nabla_L$ or $\nabla_{\underline{L}}$. For instance, $\nabla_{L}(\zeta(G))= \zeta ( \nabla_{L}(G) )$.
\end{Lem}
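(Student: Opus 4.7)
The plan is to reduce all three identities to two algebraic facts about how rotations and radial/temporal translations interact with the null frame: namely, that $\Omega$ is a Killing vector field for the Minkowski metric, and that its action preserves the orthogonal splitting $TM = \mathrm{span}(L,\underline{L}) \oplus T\mathbb{S}_{t,r}$.

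First, I would record the bracket relations $[\Omega, \partial_t] = [\Omega, \partial_r] = 0$, which follow from a direct computation using $\Omega(r)=\Omega(t)=0$ and $[\Omega_{ij},\partial_k] = -\delta_{ik}\partial_j+\delta_{jk}\partial_i$. These immediately give $[\Omega, L]=[\Omega,\underline{L}]=0$. Moreover, $\Omega$ is tangent to each sphere $\mathbb{S}_{t,r}$, so $[\Omega,e_A]$ is tangent to the spheres and can be written as $[\Omega,e_A]= \lambda_A^B \, e_B$ for smooth coefficients $\lambda_A^B$. Next, I would record the Minkowski connection identities $\nabla_{\partial_r} L=\nabla_{\partial_r}\underline{L}=0$ and $\nabla_{\partial_t} L=\nabla_{\partial_t}\underline{L}=0$; together with the standard fact that one can choose $(e_1,e_2)$ to be an orthonormal frame that is parallel-transported along $\partial_r$ and $\partial_t$, so that $\nabla_{\partial_r}e_A=\nabla_{\partial_t}e_A=0$.

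For the first identity $[\mathcal{L}_\Omega,\nabla_{\partial_r}](G)=0$, I would use that $\Omega$ is Killing, which yields the general formula $\mathcal{L}_\Omega \nabla_Y T = \nabla_{[\Omega,Y]} T + \nabla_Y \mathcal{L}_\Omega T$ for any tensor $T$. Applied with $Y=\partial_r$ and $T=G$ and combined with $[\Omega,\partial_r]=0$, this gives the claim; the case of $\partial_t$, $L$ or $\underline{L}$ is identical. For the second identity $\mathcal{L}_\Omega(\zeta(G))=\zeta(\mathcal{L}_\Omega G)$, I would treat the two representative cases: for the scalar component $\rho=\tfrac12 G(L,\underline{L})$, the Leibniz rule gives $\mathcal{L}_\Omega G(L,\underline{L}) = \Omega(G(L,\underline{L})) - G([\Omega,L],\underline{L}) - G(L,[\Omega,\underline{L}])$, and the last two terms vanish by the bracket relations above; for the angular component $\alpha_A=G(e_A,L)$, the same expansion produces exactly the correction $-G([\Omega,e_A],L)=-\lambda_A^B \alpha_B$, which matches $\mathcal{L}_\Omega \alpha(e_A) = \Omega(\alpha_A) - \alpha([\Omega,e_A])$. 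The cases of $\underline\alpha$ and $\sigma$ are analogous.

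Finally, for the third identity $\nabla_{\partial_r}(\zeta(G))=\zeta(\nabla_{\partial_r}G)$, I would expand, say, $\partial_r(\alpha_A) = \partial_r G(e_A,L) = (\nabla_{\partial_r}G)(e_A,L) + G(\nabla_{\partial_r}e_A,L) + G(e_A,\nabla_{\partial_r}L)$, and use $\nabla_{\partial_r}L=\nabla_{\partial_r}e_A=0$ to obtain $\partial_r \alpha_A = (\nabla_{\partial_r}G)(e_A,L) = \alpha_A(\nabla_{\partial_r}G)$; the other components and the case of $\partial_t$, $L$, $\underline{L}$ are identical. The only genuinely delicate point in the whole argument is the use of the parallel-transported angular frame for the third identity: any other frame would introduce Christoffel terms of the form $\nabla_{\partial_r}e_A = \mu_A^B e_B$, which would force a corresponding correction on both sides and thus leave the identity intact; this is the place where one must be careful about conventions rather than genuinely compute.
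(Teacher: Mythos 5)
Your proof is correct. The paper itself does not give an inline argument for this lemma; it refers to Appendix~D of the companion paper \cite{massless} (by the same author), so there is no proof here to compare word-for-word. That said, your route is the natural one and almost certainly matches the referenced argument in spirit: the bracket relations $[\Omega,\partial_r]=[\Omega,\partial_t]=0$ plus the Killing identity $\mathcal{L}_\Omega\nabla_Y = \nabla_{[\Omega,Y]} + \nabla_Y\mathcal{L}_\Omega$ dispose of the first claim; the Leibniz rule for the Lie derivative of $G$ evaluated on frame vectors, together with $[\Omega,L]=[\Omega,\underline{L}]=0$ and the fact that $[\Omega,e_A]$ stays tangent to the spheres, gives the second; and $\nabla_{\partial_r}L=\nabla_{\partial_r}\underline{L}=0$ together with the parallel-propagated frame ($\nabla_{\partial_r}e_A=0$, which holds for the standard normalized angular frame $e_1=\tfrac{1}{r}\partial_\theta$, $e_2=\tfrac{1}{r\sin\theta}\partial_\varphi$) gives the third.

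One small stylistic remark: the closing paragraph about "other frames introducing Christoffel terms on both sides" is not needed and slightly undercuts the argument. Once you commit to the parallel-propagated frame (which is the one implicitly used throughout this paper, cf.\ the proof of Proposition \ref{extradecayderiv} where $\nabla_\mu e_B$ is expanded with $O(r^{-1})$ coefficients), the proof is complete; the frame-invariance observation is true but has to be phrased at the level of $\mathbb{S}^2$-tangent tensors rather than of component lists to be precise, and it is better simply to fix the frame.
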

We now recall the Sobolev inequalities which will be used to prove the pointwise decay estimates on the null components of the electromagnetic field. For this, we introduce $|U(y)|^2_{\mathbb{O},k} := \sum_{|\beta| \leq k} \left| \mathcal{L}_{\Omega^{\beta}}(U) \right|^2$, where $\Omega^{\beta} \in \Or^{|\beta|}$.
\begin{Lem}\label{Sob}
Let $U$ be a sufficiently regular tensor field defined on $\R^3$. Then,
$$\forall \hspace{0.5mm} x \neq 0, \hspace{8mm} |U(x)| \lesssim \frac{1}{|x|^{\frac{3}{2}}} \left( \int_{|y| \geq |x|} |U(y)|^2_{\mathbb{O},2}+|y|^2|\nabla_{\partial_r} U(y) |^2_{\mathbb{O},1} dy \right)^{\frac{1}{2}}.$$
If $t \in \R_+$ and $|x| \geq t-b$, we have
$$\forall \hspace{0.5mm} x \neq 0, \hspace{8mm} |U(x)| \lesssim \frac{1}{|x|\tau_-^{\frac{1}{2}}} \left( \int_{|y| \geq t-b} |U(y)|^2_{\mathbb{O},2}+\tau_-^2|\nabla_{\partial_r} U(y) |^2_{\mathbb{O},1} dy \right)^{\frac{1}{2}}.$$
\end{Lem}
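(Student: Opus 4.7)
The proof is a Klainerman--Sobolev argument combining a one-dimensional Sobolev embedding in the radial direction with a two-dimensional Sobolev embedding on the spheres of constant $r$, exploiting the fact that the rotations in $\mathbb{O}$ span the tangent bundle of each sphere so that $\mathcal{L}_\Omega$ substitutes for angular derivatives. The commutation $[\mathcal{L}_\Omega, \nabla_{\partial_r}]=0$ from Lemma \ref{randrotcom} will be used throughout to move rotational Lie derivatives freely past the radial covariant derivative.

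For the first inequality, I would fix $x = R\omega$ with $R > 0$, $\omega \in \mathbb{S}^2$, and set
$$h(r) := \sum_{|\beta|\leq 2}\int_{\mathbb{S}^2} \left|\mathcal{L}_{\Omega^\beta} U\right|^2(r\sigma)\,d\sigma.$$
Since the rotational vector fields are Killing on $\mathbb{S}^2$ and span its tangent bundle, the Sobolev embedding $H^2(\mathbb{S}^2) \hookrightarrow L^\infty(\mathbb{S}^2)$ applied pointwise in $r$ gives $|U(R\omega)|^2 \lesssim h(R)$. Assuming enough decay of $U$ at infinity (justified by a density argument), the fundamental theorem of calculus with radial weight $r^3$ yields
$$R^3 h(R) \;\leq\; \int_R^{+\infty} \bigl(3r^2 h(r) + r^3 |\partial_r h(r)|\bigr)\,dr.$$
The non-derivative term equals $\int_{|y|\geq R}|U|^2_{\mathbb{O},2}\,dy$ up to a constant. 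For the derivative term, I would expand $\partial_r h$, use Lemma \ref{randrotcom} to replace $\partial_r \mathcal{L}_{\Omega^\beta} U$ by $\mathcal{L}_{\Omega^\beta}\nabla_{\partial_r} U$, and apply Cauchy--Schwarz in the form $2r^3 |a||b| \leq r^2 a^2 + r^4 b^2$ to bound it by $\int_{|y|\geq R}(|U|^2_{\mathbb{O},2} + |y|^2 |\nabla_{\partial_r} U|^2_{\mathbb{O},2})\,dy$. The second inequality follows by exactly the same procedure with the radial weight $r^2\tau_-$ in place of $r^3$: on $\{|y|\geq t-b\}$ with $b \leq -1$, we have $|y|-t\geq -b \geq 1$, so $\tau_-\lesssim |y|$ and $|\partial_r \tau_-| \leq 1$, and Cauchy--Schwarz in the form $2r^2\tau_- |a||b|\leq r^2 a^2 + r^2 \tau_-^2 b^2$ produces the claimed $\tau_-^{-1/2}$ gain in the prefactor with weight $\tau_-^2$ on the derivative term.

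The subtle step is sharpening the derivative count on $\nabla_{\partial_r} U$ from $|\beta|\leq 2$, which the direct Cauchy--Schwarz above yields, to $|\beta|\leq 1$ as stated. This requires treating the top-order cross term $\int_{\mathbb{S}^2}\mathcal{L}_{\Omega^\beta} U \cdot \mathcal{L}_{\Omega^\beta} \nabla_{\partial_r} U\,d\sigma$ with $|\beta|=2$ by integration by parts on the sphere, exploiting that each $\Omega$ is a divergence-free Killing field on the compact manifold $\mathbb{S}^2$, so that $\mathcal{L}_\Omega$ is antisymmetric with respect to the spherical inner product, and then redistributing one rotation between the two factors. Verifying this redistribution in the tensorial setting---and in particular checking that $\mathcal{L}_\Omega$ is compatible with the contractions defining $|U|$ so that the product rule holds under the integral---is the main technical point; everything else is a routine variant of the standard Klainerman--Sobolev scheme.
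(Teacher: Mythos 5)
The paper does not actually prove this lemma: it is dispatched by citation to Lemma~$2.3$ of \cite{CK}, so there is no internal argument to compare against. Your overall framework (a one-dimensional radial Sobolev inequality interleaved with a two-dimensional spherical one, plus the commutation $[\mathcal{L}_\Omega,\nabla_{\partial_r}]=0$) is indeed the right skeleton, and the first three steps you describe are sound and do yield
$$ R^3 h(R) \lesssim \int_{|y|\geq R}\bigl(|U|^2_{\mathbb{O},2} + |y|^2 |\nabla_{\partial_r}U|^2_{\mathbb{O},2}\bigr)\,dy, $$
which is a valid but strictly weaker statement.

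The gap lies in the proposed sharpening to $|\nabla_{\partial_r}U|^2_{\mathbb{O},1}$, and it is not merely a technical verification. Integration by parts on $\mathbb{S}^2$ preserves the \emph{total} number of rotational derivatives distributed across the two factors. With your choice $h(r)=\sum_{|\beta|\leq 2}\int_{\mathbb{S}^2}|\mathcal{L}_{\Omega^\beta}U|^2\,d\sigma$, the top-order contribution to $\partial_r h$ is
$$ \int_{\mathbb{S}^2}\mathcal{L}_{\Omega_a}\mathcal{L}_{\Omega_b}U\cdot\mathcal{L}_{\Omega_a}\mathcal{L}_{\Omega_b}\nabla_{\partial_r}U\,d\sigma, $$
which carries a total of four rotations. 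Integrating by parts in $\Omega_a$ produces
$$ -\int_{\mathbb{S}^2}\mathcal{L}_{\Omega_b}U\cdot\mathcal{L}_{\Omega_a}^2\mathcal{L}_{\Omega_b}\nabla_{\partial_r}U\,d\sigma \quad\text{or}\quad -\int_{\mathbb{S}^2}\mathcal{L}_{\Omega_a}^2\mathcal{L}_{\Omega_b}U\cdot\mathcal{L}_{\Omega_b}\nabla_{\partial_r}U\,d\sigma, $$
i.e.\ a $(1,3)$ or $(3,1)$ split; the stated bound $|U|^2_{\mathbb{O},2}+|y|^2|\nabla_{\partial_r}U|^2_{\mathbb{O},1}$ only allows a total of three rotations. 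No amount of redistribution can remove a rotation, so the "subtle step" you identify cannot be closed by integration by parts within the $h$-based scheme you set up. To obtain the derivative count actually stated (and used in Proposition~\ref{decayMaxellext}, which needs exactly two Lie derivatives of $G$ and hence relies on $|\nabla_{\partial_r}U|_{\mathbb{O},1}$, not $|\nabla_{\partial_r}U|_{\mathbb{O},2}$), one must restructure the argument so that the cross term carries at most two angular derivatives in total \emph{before} any angular Sobolev embedding is applied, which is precisely the point where the proof in \cite{CK} departs from what you wrote. As it stands, your proof establishes a correct but weaker inequality, not the lemma.
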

\begin{proof}
The first inequality is proved in Lemma $2.3$ of \cite{CK} and the second one can be proved similarly as inequality $(ii)$ of Lemma $2.3$ of \cite{CK}.
\end{proof}

We now prove the pointwise decay estimates used in this article.
\begin{Pro}\label{decayMaxellext}
For all $(t,x) \in V_b(T)$, we have
$$|\underline{\alpha}|(t,x)+|\rho|(t,x)   \hspace{1mm}  \lesssim \hspace{1mm} \frac{ \sqrt{\mathcal{E}^b_2[G](t)}}{\tau_+\tau_-^{\frac{1}{2}}}, \qquad |\alpha|(t,x) + |\sigma|(t,x)  \hspace{1mm}  \lesssim \hspace{1mm} \frac{ \sqrt{\mathcal{E}^b_2[G](t)}}{\tau_+^{\frac{3}{2}}}.$$
\end{Pro}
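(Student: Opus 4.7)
The plan is to apply the two Sobolev inequalities of Lemma \ref{Sob} directly to each null component of $G$ on the slice $\overline{\Sigma}^b_t$, distinguishing the ``bad'' components $\underline{\alpha}, \rho$ (second inequality) from the ``good'' components $\alpha, \sigma$ (first inequality). In $V_b(T)$ with $b \leq -1$, we have $r \geq -b \geq 1$ and $t+r < 2r-1$, so $\tau_+ \sim r$; it therefore suffices to produce the estimates with $|x|$ in place of $\tau_+$ in the denominators. Lemma \ref{randrotcom} allows us to identify $\mathcal{L}_{\Omega^{\beta}}(\zeta(G))$ with $\zeta(\mathcal{L}_{\Omega^{\beta}} G)$ and to commute $\nabla_{\partial_r}$ past rotational Lie derivatives, so the $|\cdot|_{\mathbb{O},k}$ norms appearing in Lemma \ref{Sob} translate into sums of null components of $\mathcal{L}_{Z^{\gamma}}G$ for $|\gamma| \leq k$ together with their $\nabla_{\partial_r}$-derivatives.

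For $\zeta \in \{\underline{\alpha}, \rho\}$, I apply the second inequality of Lemma \ref{Sob}, which yields $|\zeta|(t,x) \lesssim |x|^{-1}\tau_-^{-1/2}$ times the square root of $\int_{|y| \geq t-b} |\zeta|^2_{\mathbb{O},2} + \tau_-^2|\nabla_{\partial_r}\zeta|^2_{\mathbb{O},1}\, dy$. Writing $2\nabla_{\partial_r} = \nabla_{L} - \nabla_{\underline{L}}$ and invoking Lemma \ref{null}, one has $\tau_- |\nabla_{L} \zeta| \leq (\tau_-/\tau_+)\sum_{|\gamma|\leq 1}|\zeta(\mathcal{L}_{Z^{\gamma}} G)| \lesssim \sum|\zeta(\mathcal{L}_{Z^{\gamma}} G)|$ and $\tau_- |\nabla_{\underline{L}}\zeta| \lesssim \sum|\zeta(\mathcal{L}_{Z^{\gamma}} G)|$. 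Combined with the rotational commutation, the whole integrand is pointwise bounded by $\sum_{|\beta|\leq 2}|\zeta(\mathcal{L}_{Z^{\beta}} G)|^2$, and integration over $\overline{\Sigma}^b_t$ yields the desired control by $\mathcal{E}^b_2[G](t)$.

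The harder case is $\zeta \in \{\alpha, \sigma\}$, where the sharper $|x|^{-3/2}$ decay is needed. The first Sobolev inequality requires controlling $\int_{|y|\geq |x|}|y|^2|\nabla_{\partial_r}\zeta|^2_{\mathbb{O},1}\, dy$, and Lemma \ref{null} alone only gives $\tau_-|\nabla_{\underline{L}}\zeta| \lesssim \sum|\zeta(\mathcal{L}_{Z^{\gamma}}G)|$, leaving the factor $r/\tau_-$ unbounded near the light cone. This is the main obstacle. I overcome it by invoking the Bianchi-type equations \eqref{nullalpha} and \eqref{nullsigma} of Lemma \ref{maxwellbis}: these come purely from the homogeneous equation $\nabla^{\mu} {}^*G_{\mu \nu}=0$, which is insensitive to the source $J$ and preserved under Lie differentiation by the rotations. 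They allow us to rewrite
\[
\nabla_{\underline{L}}\alpha_A = \nabla_{L}\underline{\alpha}_A - 2\slashed{\nabla}_{e_A}\rho + \tfrac{1}{r}(\alpha_A+\underline{\alpha}_A), \qquad \nabla_{\underline{L}}\sigma = \tfrac{2}{r}\sigma - \varepsilon^{AB}\slashed{\nabla}_{A}\underline{\alpha}_B.
\]
Lemma \ref{null} then gives $r|\nabla_{L}\underline{\alpha}| \leq (r/\tau_+)\sum|\underline{\alpha}(\mathcal{L}_{Z^{\gamma}}G)| \lesssim \sum|\underline{\alpha}(\mathcal{L}_{Z^{\gamma}}G)|$ together with $r|\slashed{\nabla}\rho|+r|\slashed{\nabla}\underline{\alpha}| \lesssim |\rho|+|\underline{\alpha}|+\sum_{\Omega \in \Or}(|\rho(\mathcal{L}_{\Omega}G)|+|\underline{\alpha}(\mathcal{L}_{\Omega}G)|)$, so that $r|\nabla_{\underline{L}}\zeta|$ is finally majorized by a sum of null components of $\mathcal{L}_{Z^{\gamma}}G$ for $|\gamma|\leq 1$. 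After accounting for the extra rotational Lie derivative introduced by Lemma \ref{Sob}, the integral is controlled by $\mathcal{E}^b_2[G](t)$, which together with $|x|^{-3/2} \lesssim \tau_+^{-3/2}$ and $\{|y|\geq |x|\}\subset \overline{\Sigma}^b_t$ completes the argument.
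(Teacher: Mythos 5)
Your proof is correct and follows essentially the same route as the paper: apply the two Sobolev inequalities of Lemma \ref{Sob} to the respective groups of null components, commute rotations and $\nabla_{\partial_r}$ with the null decomposition via Lemma \ref{randrotcom}, split $2\nabla_{\partial_r} = \nabla_L - \nabla_{\underline{L}}$, control the good derivatives by Lemma \ref{null}, and — the key step — use the source-free Bianchi equations \eqref{nullsigma}--\eqref{nullalpha} applied to $\mathcal{L}_{\Omega^{\beta}}(G)$ to trade $\nabla_{\underline{L}}\alpha$ and $\nabla_{\underline{L}}\sigma$ for tangential and $\nabla_L$ derivatives of $\underline{\alpha}$ and $\rho$. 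The one addition you make is pointing out explicitly that \eqref{nullsigma}--\eqref{nullalpha} come from the homogeneous equation $\nabla^{\mu}{}^*G_{\mu\nu}=0$ and are therefore insensitive to $J$ and preserved under rotational Lie differentiation; the paper uses this fact implicitly via Lemma \ref{comumax1} without stating the reason.
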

\begin{Rq}\label{rhodecayagain}
Following the proof of the estimates on $\alpha$ or $\sigma$, we could also prove
$$|\rho|(t,x)  \hspace{1mm}  \lesssim \hspace{1mm} \frac{ \sqrt{\mathcal{E}^b_2[G](t)}+\sum_{|\beta| \leq 1}\|r \mathcal{L}_{Z^{\beta}}(J)^L \|_{L^2(\overline{\Sigma}^b_t)}}{\tau_+^{\frac{3}{2}}}.$$
During the proof of Theorem \ref{theorem}, $\|r \mathcal{L}_{Z^{\beta}}(J)^L \|_{L^2(\overline{\Sigma}^b_t)}$ will not be uniformly bounded in time so we will rather use the result of Proposition \ref{decayrhoalter}.
\end{Rq}
\begin{proof}
Let $(t,x) \in V_b(T)$. In this proof, $\Omega^{\beta}$ will always be in $\Or^{|\beta|}$ and $Z^{\gamma}$ in $\mathbb{K}^{|\gamma|}$. Let $\theta$ be any of the null components $\alpha$, $\rho$, $\sigma$ or $\underline{\alpha}$ and $\zeta$ be either $\alpha$ or $ \sigma$. As $\nabla_{\partial_r}$ and $\mathcal{L}_{\Omega}$ commute with the null decomposition (see Lemma \ref{randrotcom}), Lemma \ref{Sob} gives us
\begin{align}
\nonumber r^2 \tau_- |\theta|^2 & \lesssim \int_{ |y| \geq t-b} | \theta |^2_{\mathbb{O},2}+\tau_-^2|\nabla_{\partial_r} ( \theta) |_{\mathbb{O},1}^2 dy \lesssim \sum_{ |\beta| \leq 1 } \sum_{|\gamma| \leq 2} \int_{ |y| \geq t-b} | \theta ( \mathcal{L}_{Z^{\gamma}} (G) |^2+\tau_-^2| \theta ( \mathcal{L}_{\Omega^{\beta}} (\nabla_{\partial_r} G)) |^2 dy, \\
r^3 |\zeta|^2 & \lesssim \int_{ |y| \geq t-b} | \zeta |^2_{\mathbb{O},2}+r^2|\nabla_{\partial_r} ( \zeta) |_{\mathbb{O},1}^2 dy \lesssim \sum_{ |\beta| \leq 1 } \sum_{|\gamma| \leq 2} \int_{ |y| \geq t-b} | \zeta ( \mathcal{L}_{Z^{\gamma}} (G) |^2+r^2|  \zeta ( \mathcal{L}_{\Omega^{\beta}} (\nabla_{\partial_r} G)) |^2 dy. \label{eq:zetabbb}
\end{align}
Since $\nabla_{\partial_r}$ commute with $\mathcal{L}_{\Omega}$ and the null decomposition (see Lemma \ref{randrotcom}), we get, using $2\partial_r= L-\underline{L}$,
\begin{eqnarray}
\nonumber \tau_-|  \theta ( \mathcal{L}_{\Omega^{\beta}} (\nabla_{\partial_r} G)) | & = & \tau_- |  \nabla_{\partial_r} \theta ( \mathcal{L}_{\Omega^{\beta}} (G)) | \hspace{2mm} \leq \hspace{2mm} \tau_-|  \nabla_L \theta ( \mathcal{L}_{\Omega^{\beta}} (G)) |+ \tau_-|  \nabla_{\underline{L}} \theta ( \mathcal{L}_{\Omega^{\beta}} (G)) |, \\
 r|  \zeta ( \mathcal{L}_{\Omega^{\beta}} (\nabla_{\partial_r} G)) | & = & r|  \nabla_{\partial_r} \zeta ( \mathcal{L}_{\Omega^{\beta}} (G)) | \hspace{2mm} \leq \hspace{2mm} r|  \nabla_L \zeta ( \mathcal{L}_{\Omega^{\beta}} (G))|+ r|  \nabla_{\underline{L}} \zeta ( \mathcal{L}_{\Omega^{\beta}} (G) )| . \label{eq:zetabbbb}
 \end{eqnarray}
We start by dealing with $\theta$. According to Lemma \ref{null},
$$ \tau_-|  \nabla_L \theta ( \mathcal{L}_{\Omega^{\beta}} (G)) |+ \tau_-|  \nabla_{\underline{L}} \theta ( \mathcal{L}_{\Omega^{\beta}} (G)) | \hspace{2mm} \lesssim \hspace{2mm} \sum_{ |\gamma| \leq |\beta|+1} | \theta ( \mathcal{L}_{Z^{\gamma}} (G) |,$$
so that, as $\tau_+ \lesssim r $ in $V_b(T)$,
$$\tau_+^2 \tau_- |\theta|^2 \hspace{2mm} \lesssim \hspace{2mm} \sum_{|\gamma| \leq 2} \int_{ |y| \geq t-b} | \theta ( \mathcal{L}_{Z^{\gamma}} (G) |^2 dx \hspace{2mm} \lesssim \hspace{2mm} \mathcal{E}^b_2[G](t).$$
In order to improve the decay rate near the light cone for the components $\alpha$ and $\sigma$, note that we obtain from the equations \eqref{nullsigma}-\eqref{nullalpha}, applied to $\mathcal{L}_{\Omega^{\beta}}(G)$,
$$
 r|  \nabla_{\underline{L}} \alpha ( \mathcal{L}_{\Omega^{\beta}} (G)) |+r|  \nabla_{\underline{L}} \sigma ( \mathcal{L}_{\Omega^{\beta}} (G)) |  \hspace{2mm} \lesssim \hspace{2mm} r|  \nabla_{L} \underline{\alpha} ( \mathcal{L}_{\Omega^{\beta}} (G) )|+r|  \slashed{\nabla} \underline{\alpha} ( \mathcal{L}_{\Omega^{\beta}} (G)) |+r|  \slashed{\nabla} \rho ( \mathcal{L}_{\Omega^{\beta}} (G)) |+|\mathcal{L}_{\Omega^{\beta}} (G) |.$$
Applying Lemma \ref{null}, we then get
\begin{equation}\label{nullalphasigma}
r|  \nabla_L \alpha ( \mathcal{L}_{\Omega^{\beta}} (G))|+ r|  \nabla_{\underline{L}} \alpha ( \mathcal{L}_{\Omega^{\beta}} (G) )|+ r|  \nabla_L \sigma ( \mathcal{L}_{\Omega^{\beta}} (G))|+ r|  \nabla_{\underline{L}} \sigma ( \mathcal{L}_{\Omega^{\beta}} (G) )| \hspace{2mm} \lesssim \hspace{2mm}  \sum_{ |\gamma| \leq |\beta|+1} | \mathcal{L}_{Z^{\gamma}} (G) |,
\end{equation}
which implies, using \eqref{eq:zetabbb}-\eqref{eq:zetabbbb} and since $\tau_+ \lesssim r $ in $V_b(T)$,
$$\tau_+^3 (|\alpha|^2+|\sigma|^2)  \hspace{2mm} \lesssim \hspace{2mm} \sum_{|\gamma| \leq 2} \int_{ |y| \geq t-b} |  \mathcal{L}_{Z^{\gamma}} (G) |^2 dx  \hspace{2mm} \lesssim \hspace{2mm} \mathcal{E}^b_2[G](t).$$
This concludes the proof.
\end{proof}
The optimal pointwise decay estimate on the component $\rho$ will be obtained by using the following result, which requires to control one derivative more of the electromagnetic field in $L^2$.
\begin{Pro}\label{decayrhoalter}
Let $C>0$ and assume that 
\begin{equation}\label{eq:imprdecay} \forall \hspace{0.5mm} (t,x) \in V_b(T), \hspace{1cm} \sum_{|\gamma| \leq 1} |\mathcal{L}_{Z^{\gamma}}(G)|(t,x) \hspace{2mm} \leq \hspace{2mm} \frac{C}{\tau_+ \tau_-^{\frac{1}{2}}}, \quad |J^L|(t,x) \hspace{2mm} \leq \hspace{2mm} \frac{C}{\tau_+^{\frac{7}{4}} \tau_-^{\frac{3}{4}} } .
\end{equation}
Then, we have
\begin{equation}\label{eq:imprdecay2} \forall \hspace{0.5mm} (t,x) \in V_b(T), \hspace{1cm} |\rho|(t,x) \hspace{2mm} \lesssim \hspace{2mm} \frac{C}{\tau_+^\frac{3}{2}}.
\end{equation}
\end{Pro}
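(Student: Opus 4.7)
The plan is to integrate the Maxwell equation \eqref{nullrho} for $\rho$ along the flow of $-\underline{L}$, starting from the point of interest and running down to the initial hypersurface. Using $\underline{L}(r)=-1$, the identity $r^{-2}\nabla_{\underline{L}}(r^2\rho)=\nabla_{\underline{L}}\rho - \tfrac{2}{r}\rho$ allows one to rewrite equation \eqref{nullrho} as
$$\nabla_{\underline{L}}(r^2\rho) \;=\; r^2 J_{\underline{L}} \,-\, r^2 \slashed{\nabla}^A \underline{\alpha}_A.$$
Fixing $(t,x) \in V_b(T)$, $\omega := x/|x|$ and $u := t-|x| \leq b \leq -1$, I would then consider the null curve $\gamma(s) := (t-s,(|x|+s)\omega)$ for $s \in [0,t]$. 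One checks that $\gamma'(s) = -\underline{L}|_{\gamma(s)}$, $r_\gamma(s)=|x|+s$, $t_\gamma(s)+r_\gamma(s)=|x|+t$ is constant, and $t_\gamma(s)-r_\gamma(s) = u-2s \leq b$. Hence $\gamma([0,t]) \subset V_b(T)$, the endpoint $\gamma(t)$ lies on $\overline{\Sigma}^b_0$, $\tau_+(\gamma(s)) \sim |x|+t$, and $\tau_-(\gamma(s)) \sim 1+|u|+2s$ grows linearly in $s$.

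Integrating the displayed identity along $\gamma$ from $s=0$ to $s=t$, using $r_\gamma(s) \leq |x|+t$, then yields
$$|x|^2|\rho(t,x)| \;\leq\; (|x|+t)^2|\rho(0,(|x|+t)\omega)| + \int_0^t r_\gamma(s)^2 \bigl( |J_{\underline{L}}| + |\slashed{\nabla}^A \underline{\alpha}_A| \bigr)(\gamma(s))\,ds.$$
The boundary term is estimated using $|\rho(0,y)| \leq |G(0,y)| \leq C(1+|y|)^{-3/2}$ from the first part of \eqref{eq:imprdecay}, producing a contribution of order $C(|x|+t)^{1/2}$. For the angular derivative term, Lemma \ref{null} provides $r|\slashed{\nabla}\underline{\alpha}| \lesssim \sum_{|\gamma|\leq 1}|\mathcal{L}_{Z^{\gamma}}(G)|$, so that the integrand is bounded by $C(1+|u|+2s)^{-1/2}$ and its integral is $\lesssim C(|x|+t)^{1/2}$. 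For the source term, the null-frame identity $J_{\underline{L}} = -2 J^L$ combined with the hypothesis on $|J^L|$ gives $r_\gamma(s)^2 |J_{\underline{L}}|(\gamma(s)) \lesssim C(|x|+t)^{1/4}(1+|u|+2s)^{-3/4}$, whose integral over $[0,t]$ is again $\lesssim C(|x|+t)^{1/2}$.

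Combining the three contributions and using $|x|+t \lesssim |x| \sim \tau_+(t,x)$ in $V_b(T)$ (which follows from $t \leq |x|+b \leq |x|$), one obtains $|x|^2 |\rho(t,x)| \lesssim C(|x|+t)^{1/2}$ and hence $|\rho|(t,x) \lesssim C\tau_+^{-3/2}$, proving \eqref{eq:imprdecay2}.

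The main delicate point is the integral involving $J_{\underline{L}}$: after factoring out $r_\gamma^2 \lesssim (|x|+t)^2$, the $\tau_+^{-7/4}$ factor in the hypothesis leaves a prefactor of order $(|x|+t)^{1/4}$, and integrating $(1+|u|+2s)^{-3/4}$ over $[0,t]$ produces a further gain $(1+|u|+2t)^{1/4} \sim (|x|+t)^{1/4}$. These two factors combine to give exactly the $(|x|+t)^{1/2}$ needed to balance the division by $|x|^2$, so that the assumed exponents $\tau_+^{-7/4}\tau_-^{-3/4}$ on $|J^L|$ are sharply calibrated to the desired $\tau_+^{-3/2}$ decay; any weakening of either exponent would destroy this balance.
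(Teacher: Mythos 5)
Your proof is correct and follows essentially the same route as the paper: integrate the $\underline{L}$-transport equation \eqref{nullrho} for $\rho$ along incoming null rays from $(t,x)$ back to the initial slice, estimate the boundary term and the $J_{\underline{L}}$ and $\slashed{\nabla}\underline{\alpha}$ contributions using the hypotheses \eqref{eq:imprdecay} and Lemma \ref{null}, and exploit the gain from integrating $\tau_-^{-3/4}$ over the ray. The only cosmetic difference is that you absorb the $\frac{2}{r}\rho$ term into the exact derivative $\nabla_{\underline{L}}(r^2\rho)$, whereas the paper treats $\frac{2}{r}|\rho|\lesssim\tau_+^{-1}|G|$ as an error of the same strength as the other terms; both lead to the identical final bound.
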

\begin{proof}
Let $(t,x)=(t,r \omega) \in V_b(T)$ and recall that $\tau_+ \lesssim r$ in this region. Using first \eqref{nullrho} as well as Lemma \ref{null} and then \eqref{eq:imprdecay}, we obtain
\begin{equation}\label{nullrhorho}
|\nabla_{\underline{L}} \hspace{0.5mm} \rho|(t,x) \hspace{2mm} \lesssim \hspace{2mm} \frac{1}{\tau_+} \sum_{|\gamma| \leq 1} |\mathcal{L}_{Z^{\gamma}}(G)|(t,x)+|J^L|(t,x) \hspace{2mm} \lesssim \hspace{2mm} \frac{C}{\tau_+^{\frac{7}{4}} \tau_-^{\frac{3}{4}}}.
 \end{equation}
Let us now introduce the function $$\varphi ( \underline{u}, u) := \rho \left( \frac{\underline{u}+u}{2}, \frac{\underline{u}-u}{2} \omega \right), \quad \text{which satisfies}, \quad   |\nabla_{\underline{L}} \varphi |(\underline{u},u) \lesssim \frac{C}{(1+\underline{u})^{\frac{7}{4}}(1+|u|)^{\frac{3}{4}}}.$$
Hence, since $|\rho|(0,x) \leq C(1+|x|)^{-\frac{3}{2}}$ by assumption,
\begin{eqnarray}
\nonumber |\rho|(t,x) & = & |\varphi|(t+r,t-r) \hspace{2mm} \leq \hspace{2mm} \int_{u=-t-r}^{t-r} |\nabla_{\underline{L}} \varphi |(t+r,u) du + |\varphi|(t+r,-t-r) \\ \nonumber
& = & \int_{u=-t-r}^{t-r} |\nabla_{\underline{L}} \varphi |(t+r,u) du + |\rho|(0,(t+r)\omega) \\ \nonumber
& \lesssim &  \frac{ C}{(1+r+r)^{\frac{7}{4}}} \int_{u=-t-r}^{b} \frac{du}{(1+|u|)^{\frac{3}{4}}} + \frac{C}{(1+t+r)^{\frac{3}{2}}} \hspace{2mm} \lesssim \hspace{2mm} \frac{C}{(1+t+r)^{\frac{3}{2}}} .
\end{eqnarray}
\end{proof}
Improved decay estimates can be obtained when translations are applied to the electromagnetic field.
\begin{Pro}\label{extradecayderiv}
The null components of $\mathcal{L}_{\partial_{\mu}}(G)$, for $\mu \in \llbracket 0, 3 \rrbracket$, satisfy the following estimates on $ V_b(T)$,
$$\tau_+|\alpha (\mathcal{L}_{\partial_{\mu}} G)|+\tau_+|\sigma (\mathcal{L}_{\partial_{\mu}} G)|+\tau_-|\underline{\alpha} (\mathcal{L}_{\partial_{\mu}} G)| \lesssim  \sum_{|\gamma| \leq 1} |\mathcal{L}_{Z^{\gamma}}( G)|, \qquad \tau_+|\rho (\mathcal{L}_{\partial_{\mu}} G)| \lesssim \tau_+|J^L|+ \sum_{|\gamma| \leq 1} |\mathcal{L}_{Z^{\gamma}}( G)|.$$
\end{Pro}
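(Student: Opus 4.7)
The plan is to combine the null decomposition estimates of Lemma \ref{null} with the null form of Maxwell's equations \eqref{nullrho}--\eqref{nullalpha} from Lemma \ref{maxwellbis}. The key point is that applying a translation $\partial_\mu$ to a null component of $G$ produces $\nabla_L$, $\nabla_{\underline{L}}$ and $\slashed{\nabla}$ derivatives of that component; of these only the $\nabla_{\underline{L}}$ piece is delicate, and Maxwell's equations will allow us to trade it for quantities that enjoy $\tau_+^{-1}$ decay.

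More precisely, I would first note that since $\partial_\mu$ is covariantly constant we have $\mathcal{L}_{\partial_\mu} G = \nabla_{\partial_\mu} G$, and the decomposition $\partial_\mu = a L + b \underline{L} + c^A e_A$ holds with bounded coefficients (with $\partial_t = \tfrac{1}{2}(L + \underline{L})$ and $\partial_i = \frac{x^i}{r}\partial_r + \text{angular}$). Combining this with the commutator bounds $[\partial_\mu, L], [\partial_\mu, \underline{L}], [\partial_\mu, e_A] = O(r^{-1})$ (and zero when $\mu = 0$, since $\partial_t$ commutes with the radial and angular frame fields), each null component of $\mathcal{L}_{\partial_\mu} G$ splits as
$$ \zeta(\mathcal{L}_{\partial_\mu} G) = a \nabla_L \zeta(G) + b \nabla_{\underline{L}} \zeta(G) + c^A \slashed{\nabla}_{e_A} \zeta(G) + O\!\left( r^{-1}|G| \right). $$
Since $r \gtrsim \tau_+$ in $V_b$, the frame-derivative error $r^{-1}|G|$ is absorbed by the target right-hand sides, and Lemma \ref{null} immediately gives $\tau_+ |\nabla_L \zeta(G)| + r |\slashed{\nabla} \zeta(G)| \lesssim \sum_{|\gamma| \leq 1} |\mathcal{L}_{Z^\gamma} G|$. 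Hence the whole task reduces to estimating $\nabla_{\underline{L}} \zeta(G)$ for each of $\zeta \in \{\alpha, \underline{\alpha}, \rho, \sigma\}$.

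For $\underline{\alpha}$, the direct bound $\tau_- |\nabla_{\underline{L}} \underline{\alpha}| \lesssim \sum |\underline{\alpha}(\mathcal{L}_{Z^\gamma} G)|$ from Lemma \ref{null} already yields the stated $\tau_-^{-1}$ estimate, without using any Maxwell equation. For $\alpha$ and $\sigma$ the naive Lemma \ref{null} bound loses a factor $\tau_+/\tau_-$, so I would instead invoke \eqref{nullalpha} and \eqref{nullsigma} respectively to rewrite $\nabla_{\underline{L}} \alpha$ and $\nabla_{\underline{L}} \sigma$ in terms of $\nabla_L \underline{\alpha}$, $\slashed{\nabla}\rho$, $\slashed{\nabla}\underline{\alpha}$ and $r^{-1}$ algebraic corrections; every one of these terms is then $\tau_+^{-1}$ by Lemma \ref{null}, delivering the claimed estimates. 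For $\rho$, the analogous manipulation via \eqref{nullrho} introduces a source $J_{\underline{L}}$ proportional to $J^L$, which accounts precisely for the $\tau_+ |J^L|$ contribution appearing in the statement.

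The substantive step is the use of Maxwell's null equations to convert a ``bad'' $\nabla_{\underline{L}}$ derivative of a good component into a ``good'' $\nabla_L$ derivative of a possibly worse component plus angular and algebraic pieces --- this is the point where the null structure of the Maxwell system enters, in the same spirit as in the rest of the paper. The remainder is bookkeeping of bounded coefficients, $O(r^{-1})$ frame commutators, and the dominant behaviour $\tau_+ \lesssim r$ in the exterior region $V_b$.
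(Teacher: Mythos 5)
Your proposal is correct and follows essentially the same route as the paper's own proof: decompose $\mathcal{L}_{\partial_\mu} G = \nabla_{\partial_\mu} G$ into frame derivatives with $O(r^{-1})$ commutator errors (the paper tracks these via $\nabla_\mu L, \nabla_\mu \underline{L}, \nabla_\mu e_B = O(r^{-1})$), absorb the $r^{-1}|G|$ errors using $\tau_+ \lesssim r$ on $V_b$, bound the $\nabla_L$ and $\slashed{\nabla}$ pieces by Lemma~\ref{null}, treat $\nabla_{\underline{L}}\underline{\alpha}$ directly from Lemma~\ref{null}, and upgrade $\nabla_{\underline{L}}\alpha$, $\nabla_{\underline{L}}\sigma$, $\nabla_{\underline{L}}\rho$ via the null Maxwell identities \eqref{nullalpha}, \eqref{nullsigma}, \eqref{nullrho}, with the $J_{\underline{L}}\propto J^L$ source accounting for the $\tau_+|J^L|$ term. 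The only presentational difference is that you decompose the vector field $\partial_\mu$ into the null frame first, whereas the paper works with each $\nabla_\mu$ and then expands $|\nabla_\mu\zeta(G)| \lesssim \sum_\xi|\nabla_\xi\zeta(G)|$ over the frame; these are the same argument.
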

\begin{proof}
During the first step of the proof, we will use that $\nabla_{0} L =-\nabla_{0} \underline{L}=0$ and $\nabla_i L=-\nabla_i \underline{L} = \frac{x^j}{r^3}\Omega_{ji}=C^B\frac{e_B}{r}$, where $C^B$ are bounded functions. Hence, as $\alpha_A = G_{AL}$ and $\mathcal{L}_{\partial_{\mu}}=\nabla_{\partial_{\mu}}$, we have
$$ |\alpha (\mathcal{L}_{\partial_{\mu}} G)_A|=|\alpha (\nabla_{\partial_{\mu}} G)_A|= |\nabla_{\mu} \left( \alpha (G) \right)_A -G (A,\nabla_{\mu} L)| \lesssim |\nabla_{\mu} \alpha (G) |+\frac{1}{r}|\sigma (G)|.$$
Since $\rho=\frac{1}{2}G_{L \underline{L}}$,
$$ |\rho (\mathcal{L}_{\partial_{\mu}} G)|=|\nabla_{\mu} \rho(G) -\frac{1}{2}G (\nabla_{\mu} L, \underline{L})   -\frac{1}{2}G (L,\nabla_{\mu} \underline{L})| \lesssim |\nabla_{\mu} \rho (G) |+\frac{1}{r}|G|.$$
One can prove similarly, using for the component $\sigma$ that $\nabla_{\mu} e_B = \frac{1}{r}(C^L_{\mu}L+C^{\underline{L}}_{\mu} \underline{L}+C^D_{\mu} e_D)$, where $C^{\nu}_{\mu}$ are bounded functions, that
$$ |\underline{\alpha} (\mathcal{L}_{\partial_{\mu}} G)| \lesssim |\nabla_{\mu} \underline{\alpha} (G) |+\frac{1}{r}|\sigma (G)|, \qquad |\sigma (\mathcal{L}_{\partial_{\mu}} G)| \lesssim |\nabla_{\mu} \sigma (G) |+\frac{1}{r}|G|.$$
As $\tau_-\leq \tau_+ \lesssim r$ on $V_b(T)$, it remains us to bound $|\nabla_{\mu} \zeta (G)|$ for any null component $\zeta \in \{ \alpha, \rho , \sigma , \underline{\alpha} \}$. The starting point is the inequality 
$$ |\nabla_{\mu} \zeta (G)| \lesssim |\nabla_{\underline{L}} \zeta (G)|+|\nabla_{L} \zeta (G)|+|\nabla_{e_1} \zeta (G)|+|\nabla_{e_2} \zeta (G)|.$$
Let $(dt,dr,de^1,de^2)$ be the dual basis of $(\partial_t,\partial_r,e_1,e_2)$. Note that if $\phi$ is a function and $\Phi=\Phi_D de^D$ a $1$-form tangential to the $2$-spheres,
$$ |\nabla_{e_A} \phi | \leq |\slashed{\nabla} \phi|, \qquad |\nabla_{e_A} \Phi| = |\slashed{\nabla}_{e_A} \Phi+\Gamma^D_{Ar}\Phi_D dr| \leq |\slashed{\nabla} \Phi|+\frac{1}{r}|\Phi|,$$
where $\Gamma$ are the Christofel symbols in Minkowski spacetime in the nonholonomic basis $(\partial_t, \partial_r, e_1,e_2)$, verifying in particular $|\Gamma^D_{Ar}| \lesssim r^{-1}$. Applying this to $\phi = \rho, \sigma$ as well as $\Phi=\alpha,\underline{\alpha}$ and using Lemma \ref{null}, we get
$$ \forall \hspace{0.5mm} (t,x) \in V_b(T), \qquad |\nabla_{L} \zeta (G)|(t,x)+|\nabla_{e_1} \zeta (G)|(t,x)+|\nabla_{e_2} \zeta (G)|(t,x) \lesssim \frac{1}{\tau_+}\sum_{|\gamma| \leq 1} |\zeta ( \mathcal{L}_{Z^{\gamma}}(G))|(t,x).$$
Finally, one can bound $|\nabla_{\underline{L}} \underline{\alpha} (G)|$ by applying Lemma \ref{null}, $|\nabla_{\underline{L}} \alpha (G)|+|\nabla_{\underline{L}} \sigma (G)|$ using \eqref{nullalphasigma} and $|\nabla_{\underline{L}} \rho (G)|$ using the first inequality of \eqref{nullrhorho}.
\end{proof}
\begin{Rq}\label{rqforintroo}
Following the proof of the previous Proposition and estimating $|\nabla_{\underline{L}} \alpha (G)|$ by using the relation of Lemma $D.3$ of \cite{massless} instead of \eqref{nullalphasigma}, one could improve the decay estimate on $\alpha( \mathcal{L}_{\partial_{\mu}} G)$. More precisely, there holds
$$\tau_+|\alpha (\mathcal{L}_{\partial_{\mu}} G)| \lesssim \tau_+|\slashed{J}|+ \sum_{|\gamma| \leq 1} |\alpha (\mathcal{L}_{Z^{\gamma}} (G))|+|\rho (\mathcal{L}_{Z^{\gamma}} (G))|+|\sigma (\mathcal{L}_{Z^{\gamma}} (G))|, \qquad \slashed{J}=(J^{e_1},J^{e_2}).$$
\end{Rq}
\subsection{Null properties of the Vlasov equation}\label{sec5}

In order to take advantage of the null structure of the commuted transport equation, we will expand quantities such as $\mathcal{L}_{Z^{\gamma}}(F) \left( v,\nabla_v g \right)$, with $g$ a regular function, in null coordinates. We will then use the following lemma.

\begin{Lem}\label{calculF}
Let $G$ be a sufficiently regular $2$-form, $(\alpha, \underline{\alpha}, \rho, \sigma)$ its null components and $g$ a sufficiently regular function. Then,
\begin{equation*}
\left| G \left( v, \nabla_v g \right) \right| \hspace{2mm} \lesssim \hspace{2mm} \left( |\alpha|+|\rho|+|\sigma|+\frac{v^{\underline{L}}+|v^A|}{v^0} |\underline{\alpha}| \right) \hspace{-1mm} \left( \tau_+\left| \nabla_{t,x} g\right|+ \sum_{\widehat{Z} \in \K} \left| \widehat{Z} g \right| \right) . 
\end{equation*}
\end{Lem}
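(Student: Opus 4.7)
The plan is to expand $G(v,\nabla_v g)$ in the null frame and then to control separately the velocity factor and the $\nabla_v g$ factor. Writing
$$v = v^L L + v^{\underline{L}} \underline{L} + v^A e_A, \qquad \nabla_v g = W^L L + W^{\underline{L}} \underline{L} + W^B e_B,$$
the antisymmetry of $G$ together with the definitions $\alpha_A = G(e_A, L)$, $\underline{\alpha}_A = G(e_A, \underline{L})$, $\rho = \tfrac{1}{2} G(L, \underline{L})$ and $\sigma = G(e_1, e_2)$ recalled in Subsection \ref{basicelec} give the identity
$$G(v, \nabla_v g) = 2\rho \bigl( v^L W^{\underline{L}} - v^{\underline{L}} W^L \bigr) + \alpha_A \bigl( v^A W^L - v^L W^A \bigr) + \underline{\alpha}_A \bigl( v^A W^{\underline{L}} - v^{\underline{L}} W^A \bigr) + \sigma \, \varepsilon_{AB} v^A W^B.$$

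The second step is to bound $v^0 |\nabla_v g|$ by means of the commutation-vector-field identity for Lorentz boosts. Rearranging $\widehat{\Omega}_{0k} = t \partial_k + x^k \partial_t + v^0 \partial_{v^k}$ yields $v^0 \partial_{v^k} g = \widehat{\Omega}_{0k} g - t \partial_k g - x^k \partial_t g$, and, since $t + |x^k| \lesssim \tau_+$ in the region $V_b(T)$ under consideration,
$$v^0 |\nabla_v g| \;\lesssim\; \tau_+ |\nabla_{t,x} g| + \sum_{\widehat{Z} \in \K} \bigl| \widehat{Z} g \bigr|.$$
In particular, each of $v^0 |W^L|$, $v^0 |W^{\underline{L}}|$ and $v^0 |W^B|$ is controlled by the same right-hand side.

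The conclusion follows by reading off the expansion. For the $\alpha$, $\rho$ and $\sigma$ contributions, the trivial bounds $|v^L|, v^{\underline{L}}, |v^A| \leq v^0$ (the last one is just $|v| \leq v^0$) combined with the previous display produce the desired estimate with coefficient $|\alpha| + |\rho| + |\sigma|$. The essential observation is that every summand in the expansion which carries $\underline{\alpha}$ is automatically paired with either $v^{\underline{L}}$ or $v^A$, so factoring these out gives
$$\bigl| \underline{\alpha}_A (v^A W^{\underline{L}} - v^{\underline{L}} W^A) \bigr| \;\leq\; \frac{v^{\underline{L}} + |v^A|}{v^0} |\underline{\alpha}| \cdot v^0 |\nabla_v g|,$$
which is responsible for the improved coefficient in front of $|\underline{\alpha}|$ in the statement. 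This factor is precisely what will later allow the use of the strong properties of $v^{\underline{L}}$ and $v^A$ established in Lemma \ref{weights1}. The proof has no analytical difficulty; the only delicate point is the algebraic null-frame bookkeeping, which is exactly what encodes the null structure of the Lorentz force term $F(v, \nabla_v g)$.
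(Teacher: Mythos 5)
Your proof is correct and follows essentially the same route as the paper's: expand $G(v,\nabla_v g)$ in the null frame (your expansion agrees with the paper's display \eqref{eq:calculF}), then control $v^0|\nabla_v g|$ via $v^0\partial_{v^k}=\widehat{\Omega}_{0k}-t\partial_k-x^k\partial_t$, noting that the only terms carrying $\underline{\alpha}$ come multiplied by $v^{\underline{L}}$ or $v^A$. The only difference is that you spell out the bookkeeping that the paper leaves implicit (which velocity components pair with which null component of $G$); also, the bound $t+|x^k|\leq t+r\leq\tau_+$ holds everywhere, not only in $V_b(T)$, so that restriction is unnecessary but harmless.
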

\begin{proof}
Expanding $G(v, \nabla_v g )$ with null components, we get
\begin{eqnarray}
\nonumber G(v, \nabla_v g ) & = & 2 \rho\left( v^L \left( \nabla_v g \right)^{\underline{L}}-v^{\underline{L}} \left( \nabla_v g \right)^L \right)+v^B \varepsilon_{BA} \sigma \left( \nabla_v g \right)^A-v^L \alpha_A \left( \nabla_v g \right)^A+v^A \alpha_A \left( \nabla_v g \right)^L \\ \label{eq:calculF}
& & -v^{\underline{L}} \underline{\alpha}_A \left( \nabla_v g \right)^A+v^A \underline{\alpha}_A \left( \nabla_v g \right)^{\underline{L}}.
\end{eqnarray}
It then remains to bound the components of $\nabla_v g$ using $v^0 \partial_{v^i} = \widehat{\Omega}_{0i}-t \partial_i-x^i \partial_t$. Note that in order to prove the inequality \eqref{calculGintro} written in the introduction, one has to notice that the radial component $\left( \nabla_v g \right)^r=\frac{1}{2}\left( \nabla_v g \right)^L=-\frac{1}{2}\left( \nabla_v g \right)^{\underline{L}}$ has a better behavior than $\nabla_v g$ since
\begin{equation*}
v^0\left( \nabla_v g \right)^r = \frac{x^i}{r} v^0\partial_{v^i} g=\frac{x^i}{r} \widehat{\Omega}_{0i}g-Sg+(t-r) \underline{L} g.
\end{equation*}
\end{proof}

\section{Bootstrap assumptions and strategy of the proof}\label{sec6}

Let $N \geq 9$, $b \leq -1$ and $(\delta, \eta) \in \R_+^2$ be two constants such that $0 < 2 \delta \leq \eta < \frac{1}{4N}$. From now, we drop the dependance in $b$ of all the quantities defined previously (such as the energy norms $\E_b$ and $\E^{q,\eta}_{Q,b}$ defined in \eqref{defE} and \eqref{defE2} or $\mathcal{E}_N^{b}$). Let $(f_0,F_0)$ be an initial data set satisfying the assumptions of Theorem \ref{theorem}. Then, by a local well-posedness argument, there exists a unique maximal solution to the Vlasov-Maxwell system defined in $V_{b}(T^*)$, with $T^* \in \R_+^* \cup \{ + \infty \}$. Let $T \in ]0,T^*]$ be the largest time such that\footnote{Note that $T>0$ by continuity. Remark also that, considering if necessary $\epsilon_1 = C_1 \epsilon$, with $C_1$ a constant depending only on $N$, we can suppose without loss of generality that the energy norms are initially smaller than $\epsilon$. We refer to Appendix $B$ of \cite{massless} for the details of the computations for similar energy norms.}, for all $t \in [0,T[$,
\begin{eqnarray}\label{bootf1}
 \E^{N+13,\eta}_{N-3}[f](t) &  \leq & 4 \epsilon (1+t)^{\delta} , \\ \label{bootf2}
 \E^{N+9,\eta}_{N}[f](t) & \leq & 4 \epsilon (1+t)^{\delta}, \\ \label{bootF1}
 \mathcal{E}_{N}[F](t) & \leq & 4 \epsilon .
\end{eqnarray}
The remainder of this paper is devoted to the improvement of these inequalities which will prove that $T=T^*$ and then $T^* = + \infty$, implying Theorem \ref{theorem}. Let us expose the immediate consequences of the bootstrap assumptions. Using the Klainerman-Sobolev inequality of Proposition \ref{KS1} and \eqref{bootf1} (respectively \eqref{bootf2}), one has, since $4-3(1+2\eta) \geq 0$,
\begin{eqnarray}\label{decayf}
\forall \hspace{0.5mm} (t,x) \in V_b(T), \hspace{3mm} |\beta| \leq N-6, \hspace{15mm} \int_v  \sqrt{z}^{\hspace{0.5mm} N+9-(1+2\eta)\beta_P} \left|\widehat{Z}^{\beta} f \right| dv & \lesssim & \epsilon \frac{(1+t)^{\delta}}{\tau_+^{2} \tau_-}, \\ 
\forall \hspace{0.5mm} (t,x) \in V_b(T), \hspace{3mm} |\beta| \leq N-3, \hspace{15mm} \int_v  \sqrt{z}^{\hspace{0.5mm} N+5-(1+2\eta)\beta_P} \left| \widehat{Z}^{\beta} f \right| dv & \lesssim & \epsilon \frac{(1+t)^{\delta}}{\tau_+^{2} \tau_-}. \label{decayf2}
\end{eqnarray}
For the Maxwell field, we obtain from the bootstrap assumptions the following pointwise decay estimates.
\begin{Pro}\label{decayMax}
For all $(t,x) \in V_b(T)$ and $|\gamma| \leq N-3$,
\begin{equation*}
 \left| \alpha \left( \mathcal{L}_{Z^{\gamma}}(F) \right) \right|(t,x)+\left| \rho \left( \mathcal{L}_{Z^{\gamma}}(F) \right) \right|(t,x)+\left| \sigma \left( \mathcal{L}_{Z^{\gamma}}(F) \right) \right|(t,x) \hspace{0.5mm} \lesssim \hspace{0.5mm} \frac{\sqrt{\epsilon}}{\tau_+^{\frac{3}{2}} }, \qquad \left| \underline{\alpha} \left( \mathcal{L}_{Z^{\gamma}}(F) \right) \right|(t,x) \hspace{0.5mm} \lesssim \hspace{0.5mm} \frac{\sqrt{\epsilon}}{\tau_+ \tau_-^{\frac{1}{2}}}.
\end{equation*}
If $Z^{\gamma}$ contains at least one translation, i.e. if $\gamma_T \geq 1$, we have the following improved decay estimates. For all $(t,x) \in V_b(T)$ and $|\gamma| \leq N-3$,
\begin{align*}
 \left| \alpha \left( \mathcal{L}_{Z^{\gamma}}(F) \right) \right|(t,x)+\left| \sigma \left( \mathcal{L}_{Z^{\gamma}}(F) \right) \right|(t,x) \hspace{0.5mm} & \lesssim \hspace{0.5mm} \frac{\sqrt{\epsilon}}{\tau_+^{2}\tau_-^{\frac{1}{2}} }, \qquad  \qquad \left| \underline{\alpha} \left( \mathcal{L}_{Z^{\gamma}}(F) \right) \right|(t,x) \hspace{0.5mm} \lesssim \hspace{0.5mm} \frac{\sqrt{\epsilon}}{\tau_+ \tau_-^{\frac{3}{2}}} \\
 \left| \rho \left( \mathcal{L}_{Z^{\gamma}}(F) \right) \right|(t,x) \hspace{0.5mm} & \lesssim \hspace{0.5mm} \frac{\sqrt{\epsilon}}{\tau_+^{2-\delta} \tau_-^{\frac{1}{2}+\delta}} .
\end{align*}
\end{Pro}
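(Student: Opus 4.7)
The plan is to deduce each pointwise bound by applying, to the Lie-derived $2$-form $\mathcal{L}_{Z^{\gamma}}(F)$, the functional estimates of Propositions \ref{decayMaxellext}, \ref{decayrhoalter} and \ref{extradecayderiv}. The $L^2$-type norms feeding these propositions are controlled by the bootstrap assumption \eqref{bootF1}, while the source currents are controlled through the commutation formula of Proposition \ref{Maxcom} combined with the pointwise estimate \eqref{decayf2}.

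For the first (unimproved) set of estimates, I would begin with a direct application of Proposition \ref{decayMaxellext} to $G := \mathcal{L}_{Z^{\gamma}}(F)$. Since $|\gamma|+2 \leq N-1$, we have $\mathcal{E}^b_2[\mathcal{L}_{Z^{\gamma}}(F)] \leq \mathcal{E}^b_N[F] \leq 4\epsilon$, which immediately yields the stated bounds on $\alpha$, $\sigma$, $\underline{\alpha}$ together with the weaker estimate $|\rho| \lesssim \sqrt{\epsilon}\,\tau_+^{-1}\tau_-^{-1/2}$. To upgrade $\rho$ to the sharp $\tau_+^{-3/2}$ rate I would invoke Proposition \ref{decayrhoalter}. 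The first hypothesis on $\sum_{|\kappa|\leq 1}|\mathcal{L}_{Z^{\kappa}}(G)|$ is itself supplied by Proposition \ref{decayMaxellext}, since $\mathcal{L}_{Z^{\kappa}}\mathcal{L}_{Z^{\gamma}}(F)$ is a linear combination of $\mathcal{L}_{Z^{\gamma'}}(F)$ with $|\gamma'| \leq |\gamma|+1 \leq N-2$. For the source hypothesis, Proposition \ref{Maxcom} expresses the current driving $G$ as a sum of $J(\widehat{Z}^{\xi} f)$ with $|\xi| \leq |\gamma| \leq N-3$, and the trivial bound $|v^L/v^0| \leq 1$ combined with \eqref{decayf2} gives $|J^L| \lesssim \epsilon\,\tau_+^{-2+\delta}\tau_-^{-1} \lesssim \sqrt{\epsilon}\,\tau_+^{-7/4}\tau_-^{-3/4}$ for $\epsilon$ small enough.

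For the improved estimates under the hypothesis $\gamma_T \geq 1$, the idea is to extract a translation from the outermost slot of the differential operator. Since $[\partial_{\mu}, Z] \in \mathbb{P}$ for every $Z \in \mathbb{K}$, a standard commutator argument expresses $\mathcal{L}_{Z^{\gamma}}(F)$ as a finite linear combination of terms of the form $\mathcal{L}_{\partial_{\mu}}\mathcal{L}_{Z^{\tilde{\gamma}}}(F)$ with $|\tilde{\gamma}| = |\gamma|-1$. I would then apply Proposition \ref{extradecayderiv} to $G := \mathcal{L}_{Z^{\tilde{\gamma}}}(F)$. For $\alpha$, $\sigma$ and $\underline{\alpha}$, the bound $\sum_{|\kappa|\leq 1}|\mathcal{L}_{Z^{\kappa}}(G)| \lesssim \sqrt{\epsilon}\,\tau_+^{-1}\tau_-^{-1/2}$, supplied by the first-set estimates already proved (which apply since $|\tilde{\gamma}|+|\kappa| \leq |\gamma| \leq N-3$), yields at once the claimed rates. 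For $\rho$, the extra term $\tau_+|J^L|$ is handled using the same $|J^L| \lesssim \epsilon\,\tau_+^{-2+\delta}\tau_-^{-1}$ estimate: both $\tau_+|J^L|$ and $\sqrt{\epsilon}\,\tau_+^{-1}\tau_-^{-1/2}$ are majorized by $\sqrt{\epsilon}\,\tau_+^{-1+\delta}\tau_-^{-1/2-\delta}$ (using $\tau_- \leq \tau_+$ and $\delta < 1/2$), so that dividing by $\tau_+$ gives the announced rate $\sqrt{\epsilon}\,\tau_+^{-2+\delta}\tau_-^{-1/2-\delta}$.

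The main technical obstacle is really the $\rho$ component: the standard energy-based estimate only gives the non-optimal $\tau_+^{-1}\tau_-^{-1/2}$ decay, and the source-driven correction involves the bad contraction $J^L \sim \int (v^L/v^0) f\, dv$, which does not gain from $v^{\underline{L}}/v^0$. Everything therefore hinges on the fact that the pointwise control on $\int |\widehat{Z}^{\xi} f|\, dv$ provided by \eqref{decayf2}, although only at the rate $\tau_+^{-2+\delta}\tau_-^{-1}$, is by itself strong enough to leave a factor $\sqrt{\epsilon}$ of room when matched against the $\tau_+^{-7/4}\tau_-^{-3/4}$ threshold of Proposition \ref{decayrhoalter} and the $\tau_+^{-1+\delta}\tau_-^{-1/2-\delta}$ threshold needed for the improved set; the slightly restricted range $|\gamma|\leq N-3$ is what allows one to afford losing up to three extra derivatives in the chain $N-3 \leadsto N-2 \leadsto N$.
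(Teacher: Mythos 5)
Your argument reproduces the paper's own proof: the unimproved bounds follow from Proposition \ref{decayMaxellext} and \eqref{bootF1} (giving \eqref{tobeusedhere}), the upgrade of $\rho$ from Proposition \ref{decayrhoalter} with the source $J^L$ controlled via Proposition \ref{Maxcom} and \eqref{decayf2}, and the improved bounds in the case $\gamma_T\geq 1$ from commuting a translation to the front and invoking Proposition \ref{extradecayderiv} applied to $\mathcal{L}_{Z^{\gamma_0}}(F)$. The only minor imprecisions — that $[\partial_\mu,Z]$ is in fact a linear combination of translations rather than an arbitrary element of $\mathbb{P}$, and that the reduction to $\mathcal{L}_{\partial_\mu}\mathcal{L}_{Z^{\tilde\gamma}}(F)$ may produce $|\tilde\gamma|<|\gamma|-1$ — do not affect the conclusion, since smaller $|\tilde\gamma|$ only gives stronger estimates.
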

\begin{proof}
According to Proposition \ref{decayMaxellext} and since \eqref{bootF1} holds, we have for all $(t,x) \in V_b(T)$ and $|\gamma| \leq N-2$,
\begin{equation}\label{tobeusedhere}
 \left| \alpha \left( \mathcal{L}_{Z^{\gamma}}(F) \right) \right|(t,x)+\left| \sigma \left( \mathcal{L}_{Z^{\gamma}}(F) \right) \right|(t,x) \hspace{0.5mm} \lesssim \hspace{0.5mm} \frac{\sqrt{\epsilon}}{\tau_+^{\frac{3}{2}} }, \qquad \left| \rho \left( \mathcal{L}_{Z^{\gamma}}(F) \right) \right|(t,x)+\left| \underline{\alpha} \left( \mathcal{L}_{Z^{\gamma}}(F) \right) \right|(t,x) \hspace{0.5mm} \lesssim \hspace{0.5mm} \frac{\sqrt{\epsilon}}{\tau_+ \tau_-^{\frac{1}{2}}}.
\end{equation}
This gives the expected estimates for the components $\alpha$, $\sigma$ and $\underline{\alpha}$. The improved decay estimates on $\rho$, for $|\gamma| \leq N-3$, can then be obtained using Proposition \ref{decayrhoalter}, \eqref{tobeusedhere} and \eqref{decayf2}.

If $\gamma_T \geq 1$, then, as $[Z,\partial_{\mu}] \in \{0 \} \cup \{ \pm \partial_{\lambda} \hspace{1mm} / \hspace{1mm} 0 \leq \lambda \leq 3 \}$ for $0 \leq \mu \leq 3$, we can assume without loss of generality that $Z^{\gamma} = \partial_{\mu} Z^{\gamma_0}$. The result then follows from Proposition \ref{extradecayderiv} applied to $\mathcal{L}_{Z^{\gamma_0}}(F)$, \eqref{decayf2} and the pointwise decay estimates proved previously.
\end{proof}
The proof is organized as follows:
\begin{itemize}
\item We start by improving the bootstrap assumptions \eqref{bootf1} and \eqref{bootf2} by several applications of the approximate conservation law of Proposition \ref{energyf}. Exploiting the null structure of the non linearity $\mathcal{L}_{Z^{\gamma}}(F)(v,\nabla_v \widehat{Z}^{\beta} f )$ is then fundamental in order to bound the spacetime integrals arising from the energy estimates.
\item Then, we improve the bound on the energy norm of the electromagnetic field \eqref{bootF1}. For this, we use the energy estimate of Proposition \ref{energyMax1} and we make crucial use of the null structure of the source terms of the Maxwell equations.
\item The last step consists in proving an estimate on $ \|\int_v | z\widehat{Z}^{\beta} f | dv \|_{L^2(\Si^b_t)}$ for $|\beta| \geq N-2$. We then rewrite all Vlasov equations as an inhomogeneous system of transport equations. We deal with the homogenous part by taking advantage of the smallness assumption on the $N+3$ derivatives of $f$ at $t=0$ as well as the pointwise decay estimates of Proposition\ref{decayMax}. We will decompose the inhomogeneous part as a product $KY$ where $|K|^2 Y \in L^1_v L^1(\Si^b_t)$ and $\int_v |Y| dv$ is a decaying function.
\end{itemize}
\begin{Rq}\label{rqdecayforintro}
We could control $\E^{N+13,\eta}_{N-2}[f]$ and save one derivative by using the full null structure of the system given by\footnote{Note that the strategy used in Section \ref{sec9} would then have to be adapted in order to keep of the null structure of the Vlasov equation. See for instance \cite{massless}.} \eqref{calculGintro} and the estimate $|\rho(\mathcal{L}_{Z^{\gamma}}(F))| \lesssim \sqrt{\epsilon} \min ( \tau_+^{-1} \tau_-^{-\frac{1}{2}}, \tau_+^{-\frac{3}{2}+\delta})$, for $|\gamma| \leq N-2$, which could be obtained from Proposition \ref{decayMaxellext} and Remark \ref{rhodecayagain}.

Let us also mention that a certain number of weights $z$ could be saved in a few steps of the proof (e.g. in \eqref{decayf}-\eqref{decayf2}, Subsection \ref{secFL2}, Section \ref{sec8}). More precisely, we could propagate the weaker energy norms $\E^{N+A_{\eta}, \eta}_N[f]$ and $\E^{N+B_{\eta}, \eta}_{N-3}[f]$, where $A_{\eta} = (N+5)\eta+\frac{5}{2}$ and $B_{\eta}=A_{\eta}+\frac{5}{4}+2\eta$, allowing us to save almost $\frac{15}{4}$ power of $x$ in the condition on the initial norm of the Vlasov field. For the readibility of the proof and since the initial decay on $f$ in $x$ would not be optimal either, we prefer to work with more convenient energy norms.
\end{Rq}
\section{Improvement of the energy bound on the particle density}\label{sec7}

The aim of this section is to prove that, for $\epsilon$ small enough, $\E^{N+9,\eta}_N[f] \leq 3 \epsilon (1+t)^{\delta}$ for all $t \in [0,T[$ (we will sketch the improvement of the estimate on $\E^{N+13,\eta}_{N-3}[f]$ as it is very similar). For this, recall that $\E_N^{N+9,\eta}[f] (0) \leq \epsilon$ and let us prove that
$$ \forall \hspace{0.5mm} |\kappa| \leq N, \hspace{2mm} \forall \hspace{0.5mm} t \in [0,T[, \hspace{1cm} \E[ \sqrt{z}^{\hspace{0.5mm} N+9-(1+2\eta) \kappa_P} \widehat{Z}^{\kappa} f ](t) - 2 \E[ \sqrt{z}^{\hspace{0.5mm} N+9-(1+2\eta) \kappa_P} \widehat{Z}^{\kappa} f ](0) \hspace{2mm} \lesssim \hspace{2mm} \epsilon^{\frac{3}{2}} (1+t)^{ \delta}.$$
We then fix $|\kappa| \leq N$ and we denote $\frac{1}{2}( N+9-(1+2\eta) \kappa_P)$ by $a$. Note, by Lemma \ref{weights}, that
\begin{equation}\label{eq:forestimates}
  \TT_F(z^a \widehat{Z}^{\kappa} f )=F(v,\nabla_v z^a )\widehat{Z}^{\kappa} f + z^a\TT_F( \widehat{Z}^{\kappa} f ).
\end{equation}
Thus, in view of the energy estimate of Proposition \ref{energyf} and commutation formula of Proposition \ref{Maxcom}, it suffices to prove that
\begin{equation}\label{weight}
\int_0^t \int_{\Si^b_s} \int_v \left| z^{a-1} F \left( v,\nabla_v z \right) \widehat{Z}^{\kappa} f  \right| \frac{dv}{v^0} dx ds \hspace{1mm} \lesssim \hspace{1mm} \epsilon^{\frac{3}{2}} (1+t)^{\delta}
\end{equation}
and that the following proposition holds, where $[\gamma] := \max(0,1-\gamma_T)$.
\begin{Pro}\label{estif}
Let $\gamma$ and $\beta$ be such that $|\gamma|+|\beta| \leq |\kappa|$, $|\beta| \leq |\kappa|-1$ and $\beta_P+[\gamma] \leq \kappa_P$. Then,
$$ \int_0^t \int_{\Si^b_s} \int_v \left| z^a \mathcal{L}_{Z^{\gamma}}(F) \left( v,\nabla_v \widehat{Z}^{\beta} f \right) \right| \frac{dv}{v^0} dx ds \hspace{1mm} \lesssim \hspace{1mm} \epsilon^{\frac{3}{2}} (1+t)^{\delta}.$$
\end{Pro}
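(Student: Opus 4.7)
The plan is to combine the null expansion of the Lorentz force from Lemma~\ref{calculF} with two families of pointwise decay estimates. Since $N\ge 9$, $|\gamma|+|\beta|\le N$ and $|\beta|\le N-1$, either $|\gamma|\le N-3$, in which case Proposition~\ref{decayMax} furnishes pointwise bounds on the null components of $\mathcal{L}_{Z^\gamma}(F)$, or $|\gamma|\ge N-2$, which forces $|\beta|+1\le N-6$ and allows us to use the Klainerman--Sobolev estimates~\eqref{decayf}--\eqref{decayf2} on $\widehat{Z}^{\beta'}f$ for $|\beta'|\le|\beta|+1$, together with Cauchy--Schwarz and the bootstrap bound~\eqref{bootF1} on $\mathcal{L}_{Z^\gamma}(F)$ in $L^2$. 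In both regimes, Lemma~\ref{calculF} yields the schematic bound
\[
\bigl|\mathcal{L}_{Z^\gamma}(F)(v,\nabla_v\widehat{Z}^\beta f)\bigr|\lesssim \Bigl(|\alpha|+|\rho|+|\sigma|+\tfrac{v^{\underline L}+|v^A|}{v^0}|\underline{\alpha}|\Bigr)\Bigl(\tau_+|\nabla_{t,x}\widehat{Z}^\beta f|+\sum_{\widehat{Z}\in\K}|\widehat{Z}\widehat{Z}^\beta f|\Bigr),
\]
and each term on the right can be rewritten in terms of $\widehat{Z}^{\beta'}f$ with $|\beta'|\le |\beta|+1\le|\kappa|$ and $(\beta')_P\in\{\beta_P,\beta_P+1\}$.

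The central mechanism for closing the estimate is the interplay between the hypothesis $\beta_P+[\gamma]\le \kappa_P$ and Lemma~\ref{weights1}. First, I would use $1\lesssim \sqrt{v^0 v^{\underline{L}}}$ to exchange a factor $1/v^0$ for $\sqrt{v^{\underline{L}}/v^0}\lesssim \sqrt{z/(1+r)}$; this converts the weight $z^a/v^0$ into $z^{a+\frac12}/\sqrt{\tau_+}$, which is exactly the combination one needs for the ``worst'' contribution $\tau_+|\alpha|\,|\nabla_{t,x}\widehat{Z}^\beta f|$. When $\gamma_T=0$ one has $|\alpha|,|\rho|,|\sigma|\lesssim\sqrt{\epsilon}\tau_+^{-3/2}$, so the net spatial weight of the integrand is $\sqrt{\epsilon}\,z^{a+1/2}/\tau_+$. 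Since $[\gamma]=1$ forces $\beta_P\le \kappa_P-1$, the exponent $2(a+\tfrac12)=N+10-(1+2\eta)\kappa_P$ is dominated by $N+9-(1+2\eta)\beta_P$, so the reinforced weight $z^{a+1/2}|\widehat{Z}^{\beta'}f|$ is controlled by $\mathcal{E}^{N+9,\eta}_N[f]\lesssim \epsilon(1+s)^\delta$, and integrating $\tau_+^{-1}$ in $s$ yields the required $\epsilon^{3/2}(1+t)^\delta$. When instead $\gamma_T\ge 1$, Proposition~\ref{decayMax} improves $|\alpha|,|\sigma|$ to $\sqrt\epsilon\,\tau_+^{-2}\tau_-^{-1/2}$ (and $|\rho|$ to $\sqrt\epsilon\,\tau_+^{-2+\delta}\tau_-^{-1/2-\delta}$), so the loss of one unit of $\beta_P$-margin in the energy norm is more than compensated by an integrable $\tau_+^{-1}\tau_-^{-1/2-\delta}$ spacetime weight.

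For the term carrying $\underline{\alpha}$, the crucial gain comes from $(v^{\underline{L}}+|v^A|)/v^0\lesssim z/(1+r)$: this absorbs the only bad factor, replacing $|\underline{\alpha}|\lesssim\sqrt\epsilon\,\tau_+^{-1}\tau_-^{-1/2}$ by an effective weight $\sqrt\epsilon\,\tau_+^{-2}\tau_-^{-1/2}z$, and the time integration in the $(C_u(t))_{u<b}$ foliation of Lemma~\ref{foliationexpli} then gives $\int_{-\infty}^b\tau_-^{-1-\eta}du<+\infty$, provided the corresponding power of $z$ is again absorbed by the hierarchy $\beta_P+[\gamma]\le\kappa_P$ (and by raising $a$ to $a+1$, which the $(1+2\eta)$-gap between consecutive levels of the energy norm permits with room $\sqrt{z}^{2\eta}$ left for $\tau_-^{\eta}$). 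For $|\gamma|\ge N-2$ I would instead dominate the Vlasov factor in $L^\infty$ via $\int_v z^{a}|\widehat{Z}^\beta f|\frac{dv}{v^0}\lesssim\epsilon(1+s)^\delta\tau_+^{-2}\tau_-^{-1}$ from~\eqref{decayf2} and use Cauchy--Schwarz together with $\mathcal{E}_N[F]\lesssim\epsilon$ on the field side; the resulting $s$-integral is again bounded by $\epsilon^{3/2}(1+t)^\delta$.

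I expect the main obstacle to be the term $\tau_+|\alpha(\mathcal{L}_{Z^\gamma}(F))|\,|\nabla_{t,x}\widehat{Z}^\beta f|$ in the regime $\gamma_T=0$: neither the pointwise field decay nor the energy norm of the Vlasov field alone suffices (the naive bound $\tau_+\cdot \tau_+^{-3/2}=\tau_+^{-1/2}$ is non-integrable in time, which is exactly the first obstruction discussed in the introduction), and only the \emph{simultaneous} use of Lemma~\ref{weights1} and the strict inequality $\beta_P\le\kappa_P-1$ coming from $[\gamma]=1$ closes the estimate; this is precisely the mechanism foreshadowed in Remarks~\ref{hierar}--\ref{hierar2} and~\ref{Rqintronull}, and once it is implemented, \eqref{weight} and the remaining sub-cases follow by routine applications of Proposition~\ref{energyf} and~\eqref{eq:forestimates}.
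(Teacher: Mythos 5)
Your proposal follows the same strategy as the paper: split on $|\gamma|\le N-3$ versus $|\gamma|\ge N-2$, expand the Lorentz force in null components via Lemma~\ref{calculF}, exploit the hierarchy $\beta_P+[\gamma]\le\kappa_P$ together with $1\lesssim\sqrt{v^0v^{\underline L}}$ and $v^{\underline L}+|v^A|\lesssim v^0z/\tau_+$ from Lemma~\ref{weights1}, use the improved decay of Proposition~\ref{decayMax} when $\gamma_T\ge 1$, split the spacetime integrals along $(C_u(t))_{u<b}$, and in the high-$|\gamma|$ regime trade the field to $L^2$ via Cauchy--Schwarz against the pointwise bounds~\eqref{decayf}--\eqref{decayf2}. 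The only imprecision is the passing remark about ``raising $a$ to $a+1$'': the $(1+2\eta)$-gap in the norm $\sqrt{z}^{\,q-(1+2\eta)\beta_P}$ allows raising $a$ to $a+\tfrac12+\eta$ (not $a+1$), which is exactly what the paper does and what leaves the $\tau_-^{-1-\eta}$ integrability; this does not affect the correctness of your argument.
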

The pointwise decay estimates given in Theorem \ref{theorem} on the Vlasov field can then be obtained from Proposition \ref{Cordecayopti}, \eqref{weight} and the previous Proposition.

The remainder of the section is divided in four parts. The first two ones are devoted to the proof of \eqref{weight} and Proposition \ref{estif}. Then, we explain briefly how to improve the bound on $\E^{N+13,\eta}_{N-3}[f]$. In the fourth subsection, we prove an $L^2$ estimate on $\int_v z |\widehat{Z}^{\beta} f |dv$ which will be useful for Section \ref{sec8}. Finally, we briefly explain how, under slightly stronger decay assumptions, we could prove a uniform bound on $\E^{N+9,\eta}_N[f]$.
\subsection{Proof of inequality \eqref{weight}}\label{secweight}
Note first that we have $\left| \nabla_{t,x} z \right| \leq 1$ and, using Lemma \ref{weights}, $ \left| \widehat{Z} (z) \right| \lesssim z$. Applying Lemma \ref{calculF} to $(G,g)=(F,z)$, we can then observe that it suffices to prove that
$$  I \hspace{1mm} := \hspace{1mm} \int_0^t \int_{\Si^b_s} \int_v (\tau_++z) \left( |\rho(F)|+ |\alpha(F)|+|\sigma(F)|+\frac{v^{\underline{L}}+|v^A|}{v^0} |\underline{\alpha}(F)| \right) \left| z^{a-1} \widehat{Z}^{\kappa} f \right| \frac{dv}{v^0}dx ds \hspace{1mm} \lesssim \hspace{1mm} \epsilon^{\frac{3}{2}} (1+t)^{\delta}.$$
According to the pointwise decay estimates of Proposition \ref{decayMax} as well as the inequalities $1 \lesssim \sqrt{v^0 v^{\underline{L}}}$ and $v^{\underline{L}} \lesssim \tau_+^{-1} v^0 z$, which come from Lemma \ref{weights1}, we have
$$ |\rho(F)|+ |\alpha(F)|+|\sigma(F)| \lesssim \frac{\sqrt{\epsilon}}{\tau_+^{\frac{3}{2}}}, \qquad 1 \lesssim \sqrt{v^0 v^{\underline{L}}} \lesssim  \frac{v^0 \sqrt{z}}{\sqrt{\tau_+}} ,$$
so that, since $1 \leq z$ (cf \eqref{boundz}),
$$ (\tau_++z)(|\rho(F)|+ |\alpha(F)|+|\sigma(F)|) \hspace{1mm} \lesssim \hspace{1mm} \sqrt{\epsilon} \frac{v^0}{\tau_+}z.$$
Now, using the pointwise decay estimate on $\underline{\alpha}(F)$ given by Proposition \ref{decayMax} and the inequality $v^{\underline{L}} +|v^A| \lesssim \tau_+^{-1} v^0 z$ (see Lemma \ref{weights1}),
$$ (\tau_++z)\frac{v^{\underline{L}}+|v^A|}{v^0} |\underline{\alpha}(F)| \hspace{1mm} \lesssim \hspace{1mm} \frac{\sqrt{\epsilon}}{\tau_+}z.$$
Consequently, using the bootstrap assumption \eqref{bootf2},
\begin{eqnarray}
\nonumber I & \lesssim  &  \int_0^t \int_{\Si^b_s} \frac{\sqrt{\epsilon}}{\tau_+} \int_v   \left| z^a \widehat{Z}^{\kappa} f \right| dv dx ds \hspace{2mm} \lesssim \hspace{2mm} \sqrt{\epsilon} \int_0^t \frac{\E[z^a \widehat{Z}^{\kappa} f](s)}{1+s}ds \\ \nonumber
& \lesssim & \sqrt{\epsilon} \int_0^t \frac{\E^{N+9,\eta}_N[ f](s)}{1+s}ds \hspace{2mm} \lesssim \hspace{2mm} \epsilon^{\frac{3}{2}} \int_0^t \frac{(1+s)^{\delta}}{1+s}ds  \hspace{2mm} \lesssim \hspace{2mm} \epsilon^{\frac{3}{2}} (1+t)^{\delta}.
\end{eqnarray}

\subsection{Proof of Proposition \ref{estif}}\label{secpointF}

Let $\gamma$ and $\beta$ satisfying $|\beta|+|\gamma| \leq |\kappa|$, $|\beta| \leq |\kappa|-1$ and $\beta_P+[\gamma] \leq \kappa_P$. We will estimate the electromagnetic field pointwise if $|\gamma| \leq N-3$ and in $L^2$ otherwise. For simplify the presentation, we will use $\zeta$ in order to denote one of the good null components $\alpha$, $\rho$ or $\sigma$. According to Lemma \ref{calculF}, we need to bound by $\epsilon^{\frac{3}{2}} (1+t)^{\delta}$, for all $\widehat{\Gamma} \in \K$, the following integrals,
\begin{eqnarray}
\nonumber J^{a}_{\zeta} & := & \int_0^t \int_{\Si^b_s}  \left|  \zeta \left( \mathcal{L}_{Z^{\gamma}}(F) \right) \right| \int_v  \left| z^a \widehat{\Gamma} \widehat{Z}^{\beta} f \right|  \frac{dv}{v^0} dx ds, \qquad \qquad \zeta \in \{ \alpha, \rho , \sigma \} \\
\nonumber I^{a}_{\zeta} & := & \int_0^t \int_{\Si^b_s} \tau_+\left| \zeta \left( \mathcal{L}_{Z^{\gamma}}(F) \right) \right| \int_v \left| z^a \nabla_{t,x} \widehat{Z}^{\beta} f \right|  \frac{dv}{v^0} dx ds, \qquad \zeta \in \{ \alpha, \rho , \sigma \} \\ \nonumber
 J^{a}_{\underline{\alpha}} & := & \int_0^t \int_{\Si^b_s}   \left| \underline{\alpha} \left( \mathcal{L}_{Z^{\gamma}}(F) \right) \right|  \int_v  \frac{|v^A|+v^{\underline{L}}}{v^0}\left| z^a \widehat{\Gamma} \widehat{Z}^{\beta} f \right|  \frac{dv}{v^0} dx ds, \\ \nonumber
 I^{a}_{\underline{\alpha}} & := & \int_0^t \int_{\Si^b_s} \tau_+  \left| \underline{\alpha} \left( \mathcal{L}_{Z^{\gamma}}(F) \right) \right|  \int_v  \frac{|v^A|+v^{\underline{L}}}{v^0}\left| z^a \nabla_{t,x} \widehat{Z}^{\beta} f \right|  \frac{dv}{v^0} dx ds.
\end{eqnarray}
In order to close the energy estimates, we will have to pay attention to the hierarchies discussed in Remark \ref{hierar}. Indeed, if, say, $|\gamma| \leq N-3$ the pointwise decay estimates on the electromagnetic field given by Proposition \ref{decayMax} are not strong enough to compensate the weight $\tau_+$ in $I^a_{\zeta}$ and $I^a_{\underline{\alpha}}$. Consequently, we use the condition $\beta_P+[\gamma] \leq \kappa_P$, so that
\begin{itemize}
\item either $\beta_P < \kappa_P$ and we can gain decay through the inequality $\tau_- \lesssim z$ since we control in that case $\E[z^{a+\frac{1}{2}+2\eta} \nabla_{t,x} \widehat{Z}^{\beta} f]$.
\item Or $\gamma_T \geq 1$ and the electromagnetic field decay much faster in that case (see Proposition \ref{decayMax}).
\end{itemize} We fix, for the remainder of this subsection, $\widehat{\Gamma} \in \K$. The proof is divided in two parts. First, we treat the case where the electromagnetic field can be estimated pointwise ($|\gamma| \leq N-3$). Otherwise we necessarily have $|\beta| \leq 2 \leq N-7$ and we can use the estimate \eqref{decayf} on the Vlasov field.
\subsubsection{If $|\gamma| \leq N-3$}\label{secFpoint}

Suppose first that $\beta_P < \kappa_P$, which implies $a+\frac{1}{2}+\eta \leq \frac{1}{2}(N+9-(1+2\eta)\beta_P)$. Note also that if $\xi$ and $\theta$ are such that $\widehat{Z}^{\xi}=\widehat{\Gamma} \widehat{Z}^{\beta}$ and $\widehat{Z}^{\theta}=\partial_{\mu} \widehat{Z}^{\beta}$, we have $\xi^P \leq \beta_P+1$ and $\theta_P=\beta_P$. Consequently, the bootstrap assumption \eqref{bootf2} and Remark \ref{Rq32} give
\begin{equation}\label{eq:two}
 \E[ z^a \widehat{\Gamma} \widehat{Z}^{\beta} f ](t) + \E[ z^{a+\frac{1}{2}+\eta} \nabla_{t,x} \widehat{Z}^{\beta} f ](t) \hspace{2mm} \lesssim \hspace{2mm} \E^{N+9,\eta}_N[f](t) \hspace{2mm} \lesssim \hspace{2mm}  \epsilon (1+t)^{\delta}.
\end{equation}
According to the pointwise decay estimates of Proposition \ref{decayMax}, we have $\left| \mathcal{L}_{Z^{\gamma}}(F)  \right| \lesssim \sqrt{\epsilon} \tau_+^{-1}$, so
\begin{align*}
J^{a}_{\alpha}+J^{a}_{\rho}+J^{a}_{\sigma}+J^{a}_{\underline{\alpha}} \hspace{1mm} & \lesssim \hspace{1mm} \int_0^t \frac{\sqrt{\epsilon}}{1+s} \int_{\Si^b_s} \int_v \left| z^a \widehat{\Gamma} \widehat{Z}^{\beta} f \right| dv dx ds \hspace{1mm} \lesssim \hspace{1mm} \sqrt{\epsilon} \int_0^t \frac{\E[z^a \widehat{\Gamma} \widehat{Z}^{\beta} f](s) }{1+s} ds \\
& \lesssim \hspace{1mm} \epsilon^{\frac{3}{2}} \int_0^t \frac{(1+s)^{\delta}}{1+s} ds \hspace{1mm} \lesssim \hspace{1mm} \epsilon^{\frac{3}{2}} (1+t)^{\delta} .
\end{align*}
For the remaining integrals, we need to use that we control the $L^1$ norm of $z^{a+\frac{1}{2}+\eta}\nabla_{t,x} \widehat{Z}^{\beta} f$ in order to gain decay through the weight $z^{\frac{1}{2}+\eta}$. Recall from Proposition \ref{decayMax} that, for $\zeta \in \{ \alpha , \rho , \sigma \}$ and since $|\gamma| \leq N-3$, $\left|\zeta \left( \mathcal{L}_{Z^{\gamma}}(F) \right) \right| \lesssim \sqrt{\epsilon} \tau_+^{-\frac{3}{2}}$. Hence, using $1 \lesssim \sqrt{v^0 v^{\underline{L}}}$ and $1 \lesssim \tau_-^{-\frac{1}{2}-\eta} z^{\frac{1}{2}+\eta}$ (see Lemma \ref{weights1}), we get
\begin{eqnarray}
\nonumber \tau_+\left| \zeta \left( \mathcal{L}_{Z^{\gamma}}(F) \right) \right| \int_v \left| z^a \nabla_{t,x} \widehat{Z}^{\beta} f \right|  \frac{dv}{v^0}  & \lesssim &  \int_v \sqrt{\frac{\epsilon v^{\underline{L}}}{\tau_+\tau_-^{1+2\eta}v^0}} \left| z^{a+\frac{1}{2}+\eta} \nabla_{t,x} \widehat{Z}^{\beta} f \right|  dv \\ \nonumber
& \lesssim &   \int_v \left( \frac{\sqrt{\epsilon}}{\tau_+}+\frac{\sqrt{\epsilon} v^{\underline{L}}}{\tau_-^{1+2\eta}v^0} \right) \left| z^{a+\frac{1}{2}+\eta} \nabla_{t,x} \widehat{Z}^{\beta} f \right|  dv, 
\end{eqnarray}
Now use again Lemma \ref{weights1} in order to get $|v^A| \lesssim v^0 v^{\underline{L}}$ and $1 \lesssim \tau_-^{-\frac{1}{2}-\eta} z^{\frac{1}{2}+\eta}$, so that, as $|\underline{\alpha} \left( \mathcal{L}_{Z^{\gamma}}(F) \right)| \lesssim \sqrt{\epsilon}\tau_+^{-1} \tau_-^{-\frac{1}{2}}$ by Proposition \ref{decayMax},
$$\tau_+ \left| \underline{\alpha} \left( \mathcal{L}_{Z^{\gamma}}(F) \right) \right|  \int_v  \frac{|v^A|+v^{\underline{L}}}{v^0}\left| z^a \nabla_{t,x} \widehat{Z}^{\beta} f \right|  \frac{dv}{v^0}  \hspace{2mm} \lesssim \hspace{2mm}  \int_v \frac{\sqrt{\epsilon} v^{\underline{L}}}{\tau_-^{1+\eta}v^0} \left| z^{a+\frac{1}{2}+\eta} \nabla_{t,x} \widehat{Z}^{\beta} f \right|  dv.$$
This leads to
$$ I^a_{\alpha}+I^a_{\rho}+I^a_{\sigma}+I^a_{ \underline{\alpha}} \hspace{1mm} \lesssim \hspace{1mm} \int_0^t \frac{\sqrt{\epsilon}}{1+s} \int_{\Si^b_s} \int_v \left| z^{a+\frac{1}{2}+\eta} \nabla_{t,x} \widehat{Z}^{\beta} f \right| dv dx ds+\int_0^t  \int_{\Si^b_s} \frac{\sqrt{\epsilon}}{\tau_-^{1+\eta}} \int_v \frac{v^{\underline{L}}}{v^0} \left| z^{a+\frac{1}{2}+\eta} \nabla_{t,x} \widehat{Z}^{\beta} f \right|dv dx ds .$$
The first integral can be bounded using \eqref{eq:two},
$$ \int_0^t \frac{\sqrt{\epsilon}}{1+s} \int_{\Si^b_s} \int_v \left| z^{a+\frac{1}{2}+\eta} \nabla_{t,x} \widehat{Z}^{\beta} f \right| dv dx ds \lesssim \sqrt{\epsilon}\int_0^t \frac{\E[ z^{a+\frac{1}{2}+\eta} \nabla_{t,x} \widehat{Z}^{\beta} f ](s)}{1+s} ds \lesssim \epsilon^{\frac{3}{2}} (1+t)^{\delta}.$$
To deal with the second one, we need to use the null foliation $(C_u(t))_{u < b}$ (c.f. Lemma \ref{foliationexpli}) and we recall that $\tau_-=\sqrt{1+u^2}$ is contant on $C_u(t)$. Hence, using \eqref{eq:two}, one has
\begin{eqnarray*}
\int_0^t  \int_{\Si^b_s} \frac{\sqrt{\epsilon}}{\tau_-^{1+\eta}} \int_v \frac{v^{\underline{L}}}{v^0} \left| z^{a+\frac{1}{2}+\eta} \nabla_{t,x} \widehat{Z}^{\beta} f \right|dv dx ds & = & \int_{u=-\infty}^b \int_{C_u(t)} \frac{\sqrt{\epsilon}}{\tau_-^{1+\eta}} \int_v \frac{v^{\underline{L}}}{v^0} \left| z^{a+\frac{1}{2}+\eta} \nabla_{t,x} \widehat{Z}^{\beta} f \right|dv dC_u(t) \frac{du}{\sqrt{2}} \\
& = &  \int_{u=-\infty}^b \frac{\sqrt{\epsilon}}{\tau_-^{1+\eta}} \int_{C_u(t)}  \int_v \frac{v^{\underline{L}}}{v^0} \left| z^{a+\frac{1}{2}+\eta} \nabla_{t,x} \widehat{Z}^{\beta} f \right|dv dC_u(t) \frac{du}{\sqrt{2}} \\ 
\\
& \leq & \sqrt{\epsilon} \int_{u=-\infty}^b \frac{1}{\tau_-^{1+\eta}} \E[ z^{a+\frac{1}{2}+\eta} \nabla_{t,x} \widehat{Z}^{\beta} f ](t) du \\
& \lesssim & \epsilon^{\frac{3}{2}} (1+t)^{\delta} \int_{u=-\infty}^b \frac{du}{(1+|u|)^{1+\eta}} \hspace{2mm} \lesssim \hspace{2mm} \epsilon^{\frac{3}{2}} (1+t)^{\delta}.
\end{eqnarray*}
We suppose now that $\beta_P = \kappa_P$, so that $\gamma_T \geq 1$. Note that \eqref{eq:two} does not hold in that case. The bootstrap assumption \eqref{bootf2} merely gives us, as $a= \frac{1}{2}(N+9-(1+2\eta)\beta_P)$,
\begin{equation}\label{eq:three}
 \E[ z^{a-\frac{1}{2}-\eta} \widehat{\Gamma} \widehat{Z}^{\beta} f ](t)+ \E[ z^a \nabla_{t,x} \widehat{Z}^{\beta} f ](t) \hspace{2mm} \lesssim \hspace{2mm} \E^{N+9,\eta}_N[f](t) \hspace{2mm} \lesssim \hspace{2mm} \epsilon (1+t)^{\delta}.
\end{equation}
Applying successively the inequality $|v^A| \lesssim  v^0 v^{\underline{L}}$, which comes from Lemma \ref{weights1}, and Proposition \ref{decayMax} ($\gamma_T \geq 1$), we have
$$ \left| \underline{\alpha} \left( \mathcal{L}_{Z^{\gamma}}(F) \right) \right| \frac{|v^A|+v^{\underline{L}}}{v^0} \hspace{2mm} \lesssim \hspace{2mm} \left| \underline{\alpha} \left( \mathcal{L}_{ Z^{\gamma}}(F) \right) \right| v^{\underline{L}} \hspace{2mm} \lesssim \hspace{2mm} \sqrt{\epsilon} \frac{v^{\underline{L}}}{\tau_+\tau_-^{\frac{3}{2}}}.$$
Recall that $\zeta \in \{ \alpha , \rho , \sigma \}$. Using successively Proposition \ref{decayMax} ($\gamma_T \geq 1$) and the inequality $1 \lesssim \sqrt{v^0 v^{\underline{L}}}$ of Lemma \ref{weights1}, we get
$$ \left| \zeta \left( \mathcal{L}_{Z^{\gamma}}(F) \right) \right|  \hspace{2mm} \lesssim \hspace{2mm}  \frac{\sqrt{\epsilon}}{\tau_+^{2-\delta}\tau_-^{\frac{1}{2}+\delta}} \hspace{2mm} \lesssim \hspace{2mm}  \sqrt{\epsilon} \frac{\sqrt{v^0 v^{\underline{L}}}}{\tau_+^{2-\delta}\tau_-^{\frac{1}{2}+\delta}} \hspace{2mm} \lesssim \hspace{2mm} \sqrt{\epsilon} \frac{v^0 }{\tau_+^{\frac{5}{2}}}+\sqrt{\epsilon} \frac{v^{\underline{L}} }{\tau_+ \tau_-^{\frac{3}{2}}}.$$
Consequently, we obtain
$$I^a_{\alpha}+I^a_{\rho}+I^a_{\sigma}+I^a_{\underline{\alpha}} \hspace{2mm} \lesssim \hspace{2mm} \int_0^t \int_{\Si^b_s} \frac{\sqrt{\epsilon}}{\tau_+^{\frac{3}{2}}} \int_v \left| z^a \nabla_{t,x} \widehat{Z}^{\beta} f \right|  dv dx ds+\int_0^t \int_{\Si^b_s} \frac{\sqrt{\epsilon}}{\tau_-^{\frac{3}{2}}} \int_v \frac{v^{\underline{L}}}{v^0} \left| z^a \nabla_{t,x} \widehat{Z}^{\beta} f \right|  dv dx ds$$
and, using $z^{\frac{1}{2}+\eta}\lesssim \tau_+^{\frac{1}{2}+\eta} \leq \tau_+ $ (see \eqref{boundz}),
\begin{align*}
J^a_{\alpha}+J^a_{\rho}+J^a_{\sigma}+J^a_{\underline{\alpha}} \hspace{2mm} & \lesssim \hspace{2mm} \int_0^t \int_{\Si^b_s} \frac{\sqrt{\epsilon}}{\tau_+^{\frac{5}{2}}} \int_v \left| z^a \widehat{\Gamma} \widehat{Z}^{\beta} f \right|  dv dx ds+\int_0^t \int_{\Si^b_s} \frac{\sqrt{\epsilon}}{\tau_+\tau_-^{\frac{3}{2}}} \int_v \frac{v^{\underline{L}}}{v^0} \left| z^a \widehat{\Gamma} \widehat{Z}^{\beta} f \right|  dv dx ds \\
& \lesssim \hspace{2mm} \int_0^t \int_{\Si^b_s} \frac{\sqrt{\epsilon}}{\tau_+^{\frac{3}{2}}} \int_v \left| z^{a-\frac{1}{2}-\eta} \widehat{\Gamma} \widehat{Z}^{\beta} f \right|  dv dx ds+\int_0^t \int_{\Si^b_s} \frac{\sqrt{\epsilon}}{\tau_-^{\frac{3}{2}}} \int_v \frac{v^{\underline{L}}}{v^0} \left| z^{a-\frac{1}{2}-\eta} \widehat{\Gamma} \widehat{Z}^{\beta} f \right|  dv dx ds.
\end{align*}
It then remains us to bound four integrals. We introduce $g \in \{ z^{a-\frac{1}{2}-\eta} \widehat{\Gamma} \widehat{Z}^{\beta} f, z^{a} \nabla_{t,x} \widehat{Z}^{\beta} f\}$ in order to unify their studies and we remark that $\E[g](t) \lesssim \epsilon (1+t)^{\delta}$ by \eqref{eq:three}. Note first that, since $\delta < \frac{1}{4}$,
$$\int_0^t \int_{\Si^b_s} \frac{\sqrt{\epsilon}}{\tau_+^{\frac{3}{2}}} \int_v \left| g \right|  dv dx ds \hspace{1mm}  \lesssim \hspace{1mm} \sqrt{\epsilon}\int_0^t \frac{\E[g ](s)}{(1+s)^{\frac{3}{2}}} ds \hspace{1mm} \lesssim \hspace{1mm} \epsilon^{\frac{3}{2}} \int_0^t \frac{ds}{(1+s)^{\frac{3}{2}-\delta}} ds \hspace{1mm} \lesssim \hspace{1mm} \epsilon^{\frac{3}{2}}.$$
 Applying Lemma \ref{foliationexpli} in order to take advantage of the null foliation $(C_u(t))_{u < b}$, we have
\begin{eqnarray*}
\int_0^t \int_{\Si^b_s} \frac{\sqrt{\epsilon}}{\tau_-^{\frac{3}{2}}} \int_v \frac{v^{\underline{L}}}{v^0} \left|g\right|  dv dx ds & \leq & \int_{u=-\infty}^b \int_{C_u(t)} \frac{\sqrt{\epsilon}}{\tau_-^{\frac{3}{2}}} \int_v \frac{v^{\underline{L}}}{v^0} |g| dv d C_u(t) du \\
& = & \int_{u=-\infty}^b  \frac{\sqrt{\epsilon}}{\tau_-^{\frac{3}{2}}}  \int_{C_u(t)} \frac{\sqrt{\epsilon}}{\tau_-^{\frac{3}{2}}} \int_v \frac{v^{\underline{L}}}{v^0} |g| dv d C_u(t) du \\ 
& \leq & \sqrt{\epsilon} \E[g ](t)  \int_{u=-\infty}^b  \frac{du}{\tau_-^{\frac{3}{2}}} \hspace{2mm} \lesssim \hspace{2mm} \epsilon^{\frac{3}{2}} (1+t)^{\delta}.
\end{eqnarray*}
This concludes the proof of Proposition \ref{estif} in the case $|\gamma| \leq N-3$.

\subsubsection{When $|\gamma| \geq N-2$}\label{secFL2}

In that case, we have $|\beta| \leq 2 \leq N-7$. The strategy consists in separate the Maxwell field to the Vlasov field through a Cauchy-Schwarz inequality in $(t,x)$ and then use the null foliation $(C_u(t))_{u < b}$ in order to control the particle density, which can be estimated pointwise. Additional features of the null structure of the system will also be used for the most problematic terms. 

According to Lemma \ref{weights1}, one has $1 \lesssim \frac{z}{\tau_-}$ and $\tau_+\frac{|v^A|+v^{\underline{L}}}{v^0} \lesssim z$, so that, by the Cauchy-Schwarz inequality in $(t,x)$,
\begin{align*}
I^a_{\zeta} \hspace{1mm} & \lesssim \hspace{1mm} \left| \int_0^t \int_{\Si^b_s} \frac{|\zeta(\mathcal{L}_{Z^{\gamma}}(F))|^2}{\tau_-^{\frac{3}{2}}} dx ds \int_0^t \int_{\Si^b_s} \frac{\tau_+^2}{\tau_-^{\frac{1}{2}}}  \left| \int_v \left| z^{a+1} \nabla_{t,x} \widehat{Z}^{\beta} f \right| \frac{dv}{v^0} \right|^2 dx ds \right|^{\frac{1}{2}}, \\
I^a_{\underline{\alpha}} \hspace{1mm} & \lesssim \hspace{1mm} \left| \int_0^t \int_{\Si^b_s} \frac{|\underline{\alpha}(\mathcal{L}_{Z^{\gamma}}(F))|^2}{(1+s)^{\frac{3}{2}}} dx ds \int_0^t \int_{\Si^b_s}(1+s)^{\frac{3}{2}}  \left| \int_v \left| z^{a+1} \nabla_{t,x} \widehat{Z}^{\beta} f \right| \frac{dv}{v^0} \right|^2 dx ds \right|^{\frac{1}{2}}.
\end{align*}
By the bootstrap assumption \eqref{bootF1} and since $\zeta \in \{ \alpha , \rho , \sigma \}$, we have 
$$\forall \hspace{0.5mm} t \in [0,T[, \qquad \sup_{u<b}\int_{C_u(t)} |\zeta(\mathcal{L}_{Z^{\gamma}}(F))|^2 d C_u(t) + \int_{\Si^b_t} |\underline{\alpha}(\mathcal{L}_{Z^{\gamma}}(F))|^2 dx \hspace{1mm} \lesssim \hspace{1mm} \epsilon.$$ This yields
\begin{align*}
\int_0^t \int_{\Si^b_s} \frac{|\zeta(\mathcal{L}_{Z^{\gamma}}(F))|^2}{\tau_-^{\frac{3}{2}}} dx ds \hspace{1mm} & = \hspace{1mm} \int_{u=-\infty}^b \frac{1}{\tau_-^{\frac{3}{2}}} \int_{C_u(t)} |\zeta(\mathcal{L}_{Z^{\gamma}}(F))|^2 d C_u(t) du \hspace{1mm} \lesssim \hspace{1mm} \epsilon  \int_{u=-\infty}^b \frac{1}{\tau_-^{\frac{3}{2}}} du \hspace{1mm} \lesssim \hspace{1mm} \epsilon, \\
\int_0^t \int_{\Si^b_s} \frac{|\underline{\alpha}(\mathcal{L}_{Z^{\gamma}}(F))|^2}{(1+s)^{\frac{3}{2}}} dx ds \hspace{1mm} & \lesssim \hspace{1mm} \int_0^t \frac{\epsilon}{(1+s)^{\frac{3}{2}}} ds \hspace{1mm} \lesssim \hspace{1mm} \epsilon.
\end{align*}
Hence, as $(1+s)^{\frac{3}{2}}  \leq \frac{\tau_+^2}{\tau_-^{\frac{1}{2}}}$, it remains us to bound sufficiently well
$$ \mathfrak{I} \hspace{1mm} := \hspace{1mm} \int_0^t \int_{\Si^b_s} \frac{\tau_+^2}{\tau_-^{\frac{1}{2}}}  \left| \int_v \left| z^{a+1} \nabla_{t,x} \widehat{Z}^{\beta} f \right| \frac{dv}{v^0} \right|^2 dx ds .$$
Using Lemma \ref{foliationexpli} in order to use a null foliation, the inequality $\frac{1}{v^0} \lesssim \sqrt{\frac{v^{\underline{L}}}{v^0}}$ coming from Lemma \ref{weights1} and the Cauchy-Schwarz inequality in the variable $v$, we obtain
$$ \mathfrak{I} \hspace{1mm} \lesssim \hspace{1mm} \int_{u=-\infty}^b \int_{C_u(t)} \frac{\tau_+^2}{\tau_-^{\frac{1}{2}}} \int_v \left| z^a \nabla_{t,x} \widehat{Z}^{\beta} f \right| dv \int_v \frac{v^{\underline{L}}}{v^0} \left| z^{a+2} \nabla_{t,x} \widehat{Z}^{\beta} f \right| dv d C_u(t) du. $$
Now notice that by assumption, $\beta_P \leq \kappa_P$, so that $a \leq \frac{1}{2}(N+9-(1+2\eta)\beta_P)$. Using Remark \ref{Rq32}, we get according to the bootstrap assumption \eqref{bootf1} as well as \eqref{decayf},
$$ \sup_{u<b} \int_{C_u(t)}  \int_v \frac{v^{\underline{L}}}{v^0} \left| z^{a+2} \nabla_{t,x} \widehat{Z}^{\beta} f \right| dv d C_u(t) \lesssim \E_{N-3}^{N+13,\eta}[f](t) \lesssim \epsilon (1+t)^{\delta}, \qquad \int_v \left| z^a \nabla_{t,x} \widehat{Z}^{\beta} f \right| dv \lesssim \epsilon \frac{(1+t)^{\delta}}{\tau_+^2\tau_-}.
$$ 
We then get
$$  \mathfrak{I} \hspace{1mm} \lesssim \hspace{1mm} \int_{u=-\infty}^b \left\| \frac{\tau_+^2}{\tau_-^{\frac{1}{2}}} \cdot \epsilon \frac{(1+t)^{\delta}}{\tau_+^2\tau_-} \right\|_{L^{\infty}(C_u(t)} \cdot \epsilon (1+t)^{\delta} du \hspace{1mm} \lesssim \hspace{1mm} \epsilon^2 (1+t)^{2\delta} \int_{u=-\infty}^b \frac{du}{\tau_-^{\frac{3}{2}}} \hspace{1mm} \lesssim \hspace{1mm} \epsilon^2 (1+t)^{2\delta},$$
which implies $I^a_{\alpha}+I^a_{\alpha}+I^a_{\alpha}+I^a_{\alpha} \lesssim \epsilon^{\frac{3}{2}} (1+t)^{\delta}$. For the remaining terms, we obtain similarly that
\begin{align*}
 J^a_{\alpha}+J^a_{\rho}+J^a_{\sigma}+J^a_{\underline{\alpha}} \hspace{1mm} & \lesssim \hspace{1mm} \left| \int_0^t \frac{\left\| \mathcal{L}_{Z^{\gamma}}(F) \right\|_{L^2(\Si^b_s)}^2}{(1+s)^{\frac{3}{2}}}ds \int_0^t \int_{\Si^b_s} (1+s)^{\frac{3}{2}}  \left| \int_v \left| z^a \widehat{\Gamma} \widehat{Z}^{\beta} f \right| \frac{dv}{v^0} \right|^2 dx ds \right|^{\frac{1}{2}} \\
 & \lesssim \hspace{1mm} \sqrt{\epsilon} \left| \int_{u=-\infty}^b \int_{C_u(t)} \frac{\tau_+^2}{\tau_-^{\frac{1}{2}}} \int_v \left| z^{a-1} \widehat{\Gamma} \widehat{Z}^{\beta} f \right| dv \int_v \frac{v^{\underline{L}}}{v^0} \left| z^{a+1} \widehat{\Gamma}  \widehat{Z}^{\beta} f \right| dv d C_u(t) du \right|^{\frac{1}{2}}.
 \end{align*}
Since $a-1 \leq \frac{1}{2}(N+9-(1+2\eta)(\beta_P+1))$, the bootstrap assumption \eqref{bootf1} and \eqref{decayf} give
$$ \sup_{u<b} \int_{C_u(t)}  \int_v \frac{v^{\underline{L}}}{v^0} \left| z^{a+1} \widehat{\Gamma} \widehat{Z}^{\beta} f \right| dv d C_u(t) \lesssim \E_{N-3}^{N+13,\eta}[f](t) \lesssim \epsilon (1+t)^{\delta}, \qquad \int_v \left| z^{a-1} \widehat{\Gamma} \widehat{Z}^{\beta} f \right| dv \lesssim \epsilon \frac{(1+t)^{\delta}}{\tau_+^2\tau_-},
$$ 
so that $J^a_{\alpha}+J^a_{\rho}+J^a_{\sigma}+J^a_{\underline{\alpha}}  \lesssim \epsilon^{\frac{3}{2}}(1+t)^{\delta}$.
\subsection{The remaining energy norm}

For the improvement of $\E^{N+13,\eta}_{N-3}[f] \leq 4 \epsilon (1+t)^{\delta}$ we have, in view of \eqref{eq:forestimates} as well as Propositions \ref{energyf} and \ref{Maxcom}, to prove similar estimates than \eqref{weight} and those of Proposition \ref{estif}. More precisely, $\E^{N+13,\eta}_{N-3}[f] \leq 3 \epsilon (1+t)^{\delta}$ on $[0,T[$ ensues, for $\epsilon$ small enough, from the following proposition.
\begin{Pro}
Let $|\kappa| \leq N-3$, $\gamma$ and $\beta$ be such that $|\gamma|+|\beta| \leq |\kappa|$, $|\beta| \leq |\kappa|-1$ and $\beta_P+[\gamma] \leq \kappa_P$. Then,
\begin{eqnarray}
\nonumber \int_0^t \int_{\Si^b_s} \int_v z^{\frac{N+13}{2}-(\frac{1}{2}+\eta)\kappa_P-1} \left| F \left( v,\nabla_v z \right) \widehat{Z}^{\kappa} f  \right| \frac{dv}{v^0} dx ds & \lesssim & \epsilon^{\frac{3}{2}} (1+t)^{\delta}. \\ \nonumber
 \int_0^t \int_{\Si^b_s} \int_v \left| z^{\frac{N+13}{2}-(\frac{1}{2}+\eta)\kappa_P} \mathcal{L}_{Z^{\gamma}}(F) \left( v,\nabla_v \widehat{Z}^{\beta} f \right) \right| \frac{dv}{v^0} dx ds & \lesssim & \epsilon^{\frac{3}{2}} (1+t)^{\delta}.
 \end{eqnarray}
\end{Pro}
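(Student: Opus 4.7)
The proof will be a direct adaptation of the argument carried out in Subsections \ref{secweight} and \ref{secpointF} for $\E^{N+9,\eta}_N[f]$, with the weight exponent $N+9$ replaced by $N+13$ and the maximal multi-index order reduced to $N-3$. The key structural simplification is that, since $|\gamma| \leq |\kappa| \leq N-3$, the electromagnetic field and all its Lie derivatives appearing in the commuted equation can be estimated pointwise via Proposition \ref{decayMax}; consequently the $L^2$ argument of Subsection \ref{secFL2}, which required Cauchy--Schwarz in spacetime and the flux bound on $C_u(t)$, is not needed here.

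For the first inequality, I would apply Lemma \ref{calculF} to $(G,g)=(F,z)$ and use $|\nabla_{t,x}z|\leq 1$ together with $|\widehat{Z}(z)|\lesssim z$ from Lemma \ref{weights}, reducing the integrand to one of the form $(\tau_++z)\bigl(|\rho(F)|+|\alpha(F)|+|\sigma(F)|+\tfrac{v^{\underline{L}}+|v^A|}{v^0}|\underline{\alpha}(F)|\bigr)z^{a-1}|\widehat{Z}^\kappa f|/v^0$, with $a=\tfrac12(N+13-(1+2\eta)\kappa_P)$. Proposition \ref{decayMax} together with the estimates $1\lesssim\sqrt{v^0v^{\underline{L}}}$, $v^{\underline{L}}+|v^A|\lesssim\tau_+^{-1}v^0z$ from Lemma \ref{weights1} bounds the bracket by $\sqrt{\epsilon}\,v^0 z/\tau_+$, and the conclusion follows from bootstrap assumption \eqref{bootf1} exactly as in Subsection \ref{secweight}.

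For the second inequality I would split, as in Subsection \ref{secpointF}, into the two cases $\beta_P<\kappa_P$ and $\beta_P=\kappa_P$ (in which $[\gamma]\leq 1$ forces $\gamma_T\geq 1$). In the first case the index relations $a+\tfrac12+\eta\leq\tfrac12(N+13-(1+2\eta)\beta_P)$ and $a\leq\tfrac12(N+13-(1+2\eta)\xi_P)$ for $\widehat{Z}^\xi=\widehat{\Gamma}\widehat{Z}^\beta$ are satisfied, so \eqref{bootf1} yields $\E[z^a\widehat{\Gamma}\widehat{Z}^\beta f]+\E[z^{a+\frac12+\eta}\nabla_{t,x}\widehat{Z}^\beta f]\lesssim\epsilon(1+t)^\delta$; combining this with the pointwise bound $|\mathcal{L}_{Z^\gamma}(F)|\lesssim\sqrt{\epsilon}/\tau_+$ for the $J$-integrals, the sharper $|\zeta(\mathcal{L}_{Z^\gamma}(F))|\lesssim\sqrt{\epsilon}\tau_+^{-3/2}$ (with $|\underline{\alpha}|\lesssim\sqrt{\epsilon}\tau_+^{-1}\tau_-^{-1/2}$) for the $I$-integrals, and absorbing the factor $\tau_+$ into $\tau_-^{-1/2-\eta}z^{1/2+\eta}$ via Lemma \ref{weights1} reproduces the estimates of Subsection \ref{secFpoint}. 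In the second case, $\gamma_T\geq 1$ gives the improved decay $|\zeta(\mathcal{L}_{Z^\gamma}(F))|\lesssim\sqrt{\epsilon}\tau_+^{-2+\delta}\tau_-^{-1/2-\delta}$ and $|\underline\alpha(\mathcal{L}_{Z^\gamma}(F))|\lesssim\sqrt\epsilon\tau_+^{-1}\tau_-^{-3/2}$, so that Proposition \ref{decayMax} and Lemma \ref{weights1} reduce each $I^a$- and $J^a$-integral to one controlled by $\E[z^{a-\frac12-\eta}\widehat{\Gamma}\widehat{Z}^\beta f]+\E[z^a\nabla_{t,x}\widehat{Z}^\beta f]\lesssim\E^{N+13,\eta}_{N-3}[f]\lesssim\epsilon(1+t)^\delta$, the required index inequalities again holding with equality. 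In both cases, the integrals carrying a factor $v^{\underline{L}}/v^0$ are handled using the null foliation $(C_u(t))_{u<b}$ of Lemma \ref{foliationexpli} to convert $\tau_-^{-1-\eta}$ into an integrable $u$-weight. As the argument is verbatim that of Subsection \ref{secpointF}, the only thing to verify is the bookkeeping of the weight exponents, which is the mildest step of the whole proof.
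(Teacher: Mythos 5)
Your proposal is correct and takes essentially the same approach as the paper, whose proof is a one-sentence referral to Subsections~\ref{secweight} and~\ref{secFpoint} with the bootstrap assumption~\eqref{bootf1} substituted for~\eqref{bootf2}. You have spelled out that substitution carefully, including the key simplification that $|\gamma| \leq |\kappa| \leq N-3$ means the $L^2$ branch of Subsection~\ref{secFL2} is never needed, and your bookkeeping of the weight exponents in both cases $\beta_P < \kappa_P$ and $\beta_P = \kappa_P$ is accurate.
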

\begin{proof}
One only has to follow Subsections \ref{secweight} and, as $|\gamma| \leq N-3$, \ref{secFpoint} and to use the bootstrap assumption \eqref{bootf1} instead of \eqref{bootf2}.
\end{proof}

\subsection{$L^2$ estimates on velocity averages}

The following result will allow us to improve our estimate on $\mathcal{E}_N[F]$.

\begin{Pro}\label{L2estimates}
We have, for all $t \in [0,T[$,
$$\sum_{|\beta| \leq N} \left\|  \int_v z |\widehat{Z}^{\beta} f| dv \right\|_{L^2(\Si^b_t)} \hspace{2mm} \lesssim \hspace{2mm} \frac{\epsilon}{(1+t)^{\frac{3}{4}}}.$$
\end{Pro}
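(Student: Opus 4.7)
Following the roadmap in Section \ref{sec6}, we decompose, for each $|\beta|\le N$, $\widehat{Z}^{\beta}f=h_{\beta}+g_{\beta}$, where $h_{\beta}$ is the homogeneous solution
\[\TT_{F}(h_{\beta})=0,\qquad h_{\beta}|_{t=0}=\widehat{Z}^{\beta}f|_{t=0},\]
and $g_{\beta}:=\widehat{Z}^{\beta}f-h_{\beta}$ satisfies $\TT_{F}(g_{\beta})=[\TT_{F},\widehat{Z}^{\beta}](f)$ with $g_{\beta}|_{t=0}=0$. By Proposition \ref{Maxcom}, the source of the equation for $g_{\beta}$ is a sum of terms $\mathcal{L}_{Z^{\gamma}}(F)(v,\nabla_{v}\widehat{Z}^{\xi}f)$ with $|\xi|<|\beta|$ and $|\gamma|+|\xi|\le|\beta|$.

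For the homogeneous piece, commuting $\TT_{F}(h_{\beta})=0$ with $\widehat{Z}^{\xi}$, $|\xi|\le 3$, produces source terms of the form $\mathcal{L}_{Z^{\gamma}}(F)(v,\nabla_{v}\widehat{Z}^{\xi'}h_{\beta})$ with $|\gamma|\le|\xi|\le 3\le N-3$, on which the pointwise decay of Proposition \ref{decayMax} applies. The energy inequality of Proposition \ref{energyf}, applied to the weighted moments $\sqrt{z}^{\,a}\widehat{Z}^{\xi}h_{\beta}$ and combined with the analysis of Subsections \ref{secweight}--\ref{secpointF}, then yields uniform control of the $L^{1}_{x,v}(\Si^{b}_{t})$ norms of $\sqrt{z}^{\,a}\widehat{Z}^{\xi}h_{\beta}$ for $|\xi|\le 3$ and appropriate $a\ge 2$. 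This invokes initial data only up to order $|\xi|+|\beta|\le N+3$, in line with the hypotheses of Theorem \ref{theorem}. Applying the Klainerman-Sobolev inequality of Proposition \ref{KS1} we deduce
\[\int_{v}z\,|h_{\beta}|\,dv \hspace{1mm}\lesssim\hspace{1mm} \frac{\epsilon\,(1+t)^{\delta}}{\tau_{+}^{2}\tau_{-}}\qquad\text{on}\quad V_{b}(T),\]
and since $\int_{\Si^{b}_{t}}\tau_{+}^{-4}\tau_{-}^{-2}\,dx\lesssim(1+t)^{-2}$, we conclude $\|\int_{v}z\,|h_{\beta}|\,dv\|_{L^{2}(\Si^{b}_{t})}\lesssim\epsilon(1+t)^{-1+\delta}$, strictly stronger than the target rate.

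For the inhomogeneous piece, a direct Klainerman-Sobolev argument is unavailable, as commuting $\widehat{Z}^{\xi}$ through $g_{\beta}$ would involve derivatives of $f$ of order $|\xi|+|\beta|\ge N+1$. Following Section \ref{sec6}, we write $g_{\beta}=K\cdot Y$ as a pointwise product via the Duhamel formula along the characteristics of $\TT_{F}$, with $Y$ a nonnegative solution of $\TT_{F}(Y)=0$ whose initial data majorises the $v$-moments of the various $\widehat{Z}^{\xi}f|_{t=0}$, $|\xi|<|\beta|$, entering the commutator. Then $\int_{v}Y\,dv$ inherits the $\tau_{+}^{-2}\tau_{-}^{-1}$ pointwise decay obtained for the homogeneous piece (again with at most $N+3$ derivatives of $f_{0}$), and the Cauchy-Schwarz inequality in $v$,
\[\left(\int_{v}z\,|g_{\beta}|\,dv\right)^{2}\hspace{1mm}\le\hspace{1mm}\left(\int_{v}Y\,dv\right)\cdot\left(\int_{v}z^{2}|K|^{2}\,Y\,dv\right),\]
reduces the $L^{2}(\Si^{b}_{t})$ estimate to $\|\int_{v}Y\,dv\|_{L^{\infty}(\Si^{b}_{t})}\cdot\|\int_{v}z^{2}|K|^{2}Y\,dv\|_{L^{1}(\Si^{b}_{t})}$. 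The first factor is $\lesssim\epsilon(1+t)^{\delta-2}$ from the analysis of the homogeneous piece, while the second, a spacetime integral of the source terms of the commutator, is controlled exactly as in the proof of Proposition \ref{estif}: pointwise bounds on $\mathcal{L}_{Z^{\gamma}}(F)$ from Proposition \ref{decayMax} when $|\gamma|\le N-3$, and a Cauchy-Schwarz in spacetime along the null foliation $(C_{u}(t))_{u<b}$ combined with the bootstrap bound $\mathcal{E}_{N}[F]\le 4\epsilon$ when $|\gamma|\ge N-2$, retaining only $|\xi|\le 2\le N-7$ on the Vlasov side so that \eqref{decayf} supplies the required pointwise decay. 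The main obstacle is precisely this top-order case, which is the very reason Proposition \ref{L2estimates} is stated; its proof must avoid any circular use of the proposition itself and rely only on bounds already established at this stage of the bootstrap argument.
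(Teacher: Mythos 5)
The decomposition you start with --- $\widehat{Z}^{\beta}f = h_{\beta}+g_{\beta}$ with $\TT_F(h_\beta)=0$ for each fixed $\beta$ --- does not break the coupling it needs to break, and this is the crux of the gap. By Proposition \ref{Maxcom}, the source of $g_\beta$ for $|\beta|=N$ contains terms $\mathcal{L}_{Z^\gamma}(F)(v,\nabla_v\widehat{Z}^\xi f)$ with $|\gamma|\leq 1$ and $|\xi|=N-1$; expanding $v^0\partial_{v^i}=\widehat{\Omega}_{0i}-t\partial_i-x^i\partial_t$ shows that this produces $\widehat{Z}^\kappa f$ with $|\kappa|$ as large as $N$. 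So the source of $g_\beta$ contains exactly the top-order quantities whose $L^2$ norms you are trying to estimate, and a free-transport $Y$ built from initial data cannot majorize them at later times because $\widehat{Z}^\kappa f$ is not a free-transport solution. You flag this in your last sentence as ``the main obstacle,'' but the decomposition you chose offers no mechanism to get around it; as written, the argument for $g_\beta$ is circular.

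The paper avoids this at the level of the splitting itself. Instead of imposing $\TT_F(h_\beta)=0$ component by component, it groups the derivatives with $|\beta|\in\{N-2,N-1,N\}$ into a vector $R$ and writes the \emph{coupled linear system} $\TT_F(R)+AR=BW$ (Lemma \ref{L2bilan}), where the entries of $A$ involve only $\mathcal{L}_{Z^\gamma}(F)$ with $|\gamma|\leq N-6$ --- hence pointwise decaying --- while the top-order derivatives of $F$ are confined to $B$, which acts only on the low-order vector $W$ (derivatives of $f$ of order $\leq N-3$). The homogeneous piece $H$ then solves the full perturbed system $\TT_F(H)+AH=0$ rather than free transport; this absorbs the top-order self-coupling into the homogeneous part, so that $G=R-H$ has source $BW$ involving only low-order Vlasov factors. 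That is precisely what makes the factorization $L=KY$, with $K$ solving the matrix Riccati-type equation $\TT_F(K)+\overline{A}K+K\overline{D}=\overline{B}$, $K(0)=0$, close without circularity (Subsection \ref{subsecG}). Your treatment of the homogeneous piece --- commuting up to three times, using pointwise decay on the low-order $F$-coefficients via Proposition \ref{decayMax}, Klainerman-Sobolev, and the $(1+t)^{-2}$ bound on $\int_{\Si^b_t}\tau_+^{-4}\tau_-^{-2}dx$ --- is essentially sound and transfers directly to the paper's vector $H$; it is the choice of homogeneous problem, and with it the identification of what the matrix $K$ must satisfy, that needs to change.
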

\begin{proof}
We start by considering $|\beta| \leq N-3$. Using successively the Cauchy-Schwarz inequality in $v$, the pointwise decay estimate \eqref{decayf2} and the bootstrap assumption \eqref{bootf2}, we get
\begin{eqnarray}
\nonumber \left\|  \int_v z |\widehat{Z}^{\beta} f| dv \right\|^2_{L^2(\Si^b_t)} \hspace{-1.1mm} & \lesssim & \hspace{-0.5mm} \left\|  \int_v  |\widehat{Z}^{\beta} f| dv \int_v  z^2 |\widehat{Z}^{\beta} f| dv \right\|_{L^1(\Si^b_t)} \hspace{1.4mm} \lesssim \hspace{1.4mm} \left\|  \int_v |\widehat{Z}^{\beta} f| dv \right\|_{L^{\infty}(\Si^b_t)} \left\|  \int_v z^2 |\widehat{Z}^{\beta} f| dv \right\|_{L^1(\Si^b_t)} \\ \nonumber 
& \lesssim & \hspace{-0.5mm} \left\|  \frac{\epsilon}{\tau_+^{2-\delta}\tau_-} \right\|_{L^{\infty}(\Si^b_t)}  \E^{N+9,\eta}_{N}[f](t) \hspace{1.4mm} \lesssim \hspace{1.4mm} \frac{\epsilon^2}{(1+t)^{2-2\delta}} \hspace{1.4mm} \lesssim \hspace{1.4mm} \frac{\epsilon^2}{(1+t)^{\frac{3}{2}}} .
\end{eqnarray}
The cases $N-2 \leq |\beta| \leq N$ are the purpose of Section \ref{sec9}.
\end{proof}

\subsection{Improvements under stronger assumptions on the initial data}\label{subsecuniformbound}

The only terms preventing us to take $\delta=0$ are those which are estimated by $\int_0^t \frac{\E^{N+9,\eta}_{N}[f](s)}{1+s}ds$, i.e. \eqref{weight}, $I^a_{\zeta}$ when $|\gamma| \leq N-3$ and $J^a_{\zeta}+J^a_{\underline{\alpha}}$ when $\gamma_T =0$ and $|\gamma| \leq N-3$. One can check that $J^a_{\zeta}+J^a_{\underline{\alpha}}$ and the term containing $\underline{\alpha}$ in \eqref{weight} could be bounded by $\epsilon^{\frac{3}{2}}$. For the other ones, which only contain the null components\footnote{Note that we could deal with the terms containing $\rho$ or $\sigma$ as a factor by using \eqref{calculGintro} instead of Lemma \ref{calculF}.} $\alpha$, $\rho$ and $\sigma$, the problem comes from a small lack of decay in $t+r$. We present two different ways for solving this issue. Assuming one of the following two stronger decay hypotheses 
$$ \int_{\Si^b_0} r^{2\delta} |\mathcal{L}_{Z^{\gamma}}(F)|^2 dx \hspace{1mm} \leq \hspace{1mm} \epsilon \qquad \text{or} \qquad \E^{N+13+2\delta,\eta}_{N+3}[|v^0|^{4\delta}f](0) \hspace{1mm} \leq \hspace{1mm} \epsilon,$$
we could prove that $\E^{N+9,\eta}_{N}[f](t) \leq 3\epsilon$ for all $t \in [0,T[$. Indeed, in the first case we could\footnote{The proof would be similar to the one of Proposition \ref{decayrhoalter}.} prove using \eqref{nullrho}-\eqref{nullalpha} that $|\alpha|+|\rho|+|\sigma| \lesssim \sqrt{\epsilon}\tau_+^{-\frac{3}{2}-\delta}$. In the second case, we could obtain that $\E^{N+9+2\delta,\eta}_{N}[|v^0|^{4\delta}f](t) \lesssim \epsilon (1+t)^{\delta}$ and then use this bound combined with the inequality $1 \lesssim |v^0 v^{\underline{L}}|^{2 \delta} \lesssim |v^0|^{4 \delta}\tau_+^{-2\delta}z^{2\delta}$ in order to get $I^a_{\zeta} \lesssim \epsilon^{\frac{3}{2}}$.

\section{The energy bound on the electromagnetic field}\label{sec8}

According to the energy estimate of Proposition \ref{energyMax1}, the commutation formula of Proposition \ref{Maxcom} and since $\mathcal{E}_N[F](0) \leq \epsilon$, we would obtain $\mathcal{E}_N[F] \leq 3 \epsilon$ on $[0,T[$ for $\epsilon$ small enough if we could prove
$$\sum_{|\gamma| \leq N} \sum_{|\beta| \leq N} \int_0^t \int_{\Si^b_s} \left| \mathcal{L}_{Z^{\gamma}}(F)_{0 \nu} \int_v \frac{v^{\nu}}{v^0} \widehat{Z}^{\beta} f dv \right| dx ds  \hspace{2mm} \lesssim \hspace{2mm} \epsilon^{\frac{3}{2}} .$$
We then fix $|\beta| \leq N$, $|\gamma| \leq N$ and we denote by $(\alpha, \underline{\alpha}, \rho, \sigma)$ the null decomposition of $\mathcal{L}_{Z^{\gamma}}(F)$. Expanding $\mathcal{L}_{Z^{\gamma}}(F)_{0 \nu} \int_v \frac{v^{\nu}}{v^0} \widehat{Z}^{\beta} f $ in null coordinates, we can observe that it suffices to prove that
$$ I_{\rho} := \int_0^t \int_{\Si^b_s} |\rho| \int_v  \left| \widehat{Z}^{\beta} f \right| dv dx ds \lesssim \epsilon^{\frac{3}{2}} \hspace{8mm} \text{and} \hspace{0.8cm} I_{\alpha, \underline{\alpha}} := \int_0^t \int_{\Si^b_s} \left( |\alpha|+|\underline{\alpha}| \right) \int_v \frac{|v^{A}|}{v^0} \left| \widehat{Z}^{\beta} f \right| dv dx ds \lesssim \epsilon^{\frac{3}{2}}. $$
Using succesively the Cauchy-Schwarz inequality in $(s,x)$, the inequality $\tau_+ |v^A| \lesssim v^0 z$ which comes from Lemma \ref{weights1} and then the bootstrap assumption \eqref{bootF1} as well as Proposition \ref{L2estimates}, we have
\begin{eqnarray}
\nonumber \left| I_{\alpha,\underline{\alpha}} \right|^2 & \lesssim & \int_{0}^t  \frac{\left\| |\alpha|+ |\underline{\alpha}| \right\|^2_{L^2(\Si^b_s)}}{(1+s)^2} ds \int_0^t (1+s)^2 \left\|  \int_v \frac{|v^A|}{v^0} \left| \widehat{Z}^{\beta} f \right| dv \right\|_{L^2(\Si^b_s)}^2 ds \\ \nonumber
& \lesssim & \int_{0}^t \frac{\mathcal{E}_N[F](s)}{(1+s)^2} ds  \int_0^t  \left\|  \int_v z \left| \widehat{Z}^{\beta} f \right| dv \right\|_{L^2(\Si^b_s)}^2 ds \hspace{2mm} \lesssim \hspace{2mm} \int_0^{+ \infty} \frac{\epsilon}{(1+s)^2} ds \int_0^{+\infty} \frac{\epsilon^2}{(1+s)^{\frac{3}{2}}}ds \hspace{2mm} \lesssim \hspace{2mm} \epsilon^3.
\end{eqnarray}
Similarly, using $\tau_- \lesssim z$ instead of $\tau_+ |v^A| \lesssim v^0 z$ (see also Lemma \ref{weights1}) and Lemma \ref{foliationexpli}, one gets
\begin{eqnarray}
\nonumber \left| I_{\rho} \right|^2  & \lesssim & \int_{0}^t \int_{\Si^b_s}  \frac{|\rho|^2}{\tau_-^2} dx ds  \int_0^t \int_{\Si^b_s} \tau_-^2 \left|  \int_v  \left| \widehat{Z}^{\beta} f \right| dv \right|^2 dx ds \\ \nonumber
& \lesssim & \int_{u=-\infty}^b  \int_{C_u(t)} \frac{|\rho|^2}{\tau_-^2} d C_u(t) du \int_0^t \int_{\Si^b_s}  \left|  \int_v z \left| \widehat{Z}^{\beta} f \right| dv \right|^2 dx ds  \\ \nonumber
& \lesssim &  \int_{u=-\infty}^b \frac{\mathcal{E}_N[F](t)}{\tau_-^2} du \int_0^t \left\|  \int_v z \left| \widehat{Z}^{\beta} f \right| dv \right\|_{L^2(\Si^b_s)}^2 ds  \hspace{2mm} \lesssim \hspace{2mm} \int_{u=-\infty}^0 \frac{\epsilon}{\tau_-^2} du \int_0^{+\infty} \frac{\epsilon^2}{(1+s)^{\frac{3}{2}}}ds \hspace{2mm} \lesssim \hspace{2mm} \epsilon^3.
\end{eqnarray}
This concludes the improvement of the bootstrap assumption \eqref{bootF1}.

\section{$L^2$ estimates for the higher order derivatives of the Vlasov field}\label{sec9}

In this last section, we complete the proof of Proposition \ref{L2estimates}. For this purpose, we follow the strategy used in Section $4.5.7$ of \cite{FJS}. The first step of the proof consists in rewriting all transport equations as a hierarchised system. Let $I$ and $\I$ be the following two ordered sets,
\begin{eqnarray}
\nonumber I & := & \{ \beta \hspace{2mm} \text{multi-index} \hspace{1mm} / \hspace{1mm} N-2 \leq |\beta| \leq N \}  \hspace{2mm} = \hspace{2mm} \{ \beta^{1},...,\beta^{|I|} \},  \\ \nonumber
 \mathfrak{I} & := & \{ \xi \hspace{2mm} \text{multi-index} \hspace{1mm} / \hspace{1mm}  |\xi| \leq N-3 \} \hspace{2mm} = \hspace{2mm} \{ \xi^1,...,\xi^{|\I|} \} .
\end{eqnarray}

We also consider two vector valued fields $R$ and $W$ of respective length $|I|$ and $|\I|$ such that
$$ R_i= \widehat{Z}^{\beta^i}f \hspace{10mm} \text{and} \hspace{10mm} W_i = \widehat{Z}^{\xi^i}f.$$
We denote by $\Vv$ the vector space of functions $\{ h \hspace{1mm} / \hspace{1mm} h : V_b(T) \times \R^3_v \rightarrow \R \}$ and we recall that $[\gamma]:= \max(0,1- \gamma_T)$. Let us now rewrite the Vlasov equation satisfied by the vector $R$.
\begin{Lem}\label{L2bilan}
There exists three matrices valued functions $A : V_b(T) \times \R^3_v \rightarrow \mathfrak M_{|I|}(\Vv)$, $B : V_b(T)  \times \R^3_v \rightarrow  \mathfrak M_{|I|,|\I|}(\Vv)$ and $D : V_b(T) \times \R^3_v \rightarrow \mathfrak M_{|\I|}(\Vv)$ such that
$$\TT_F(R)+AR=B W, \qquad \qquad \TT_F(W)=DW  .$$
The matrices $A$ and $B$ are such that $\TT_F(R_i)$, for $1 \leq i \leq |I|$, is a linear combination of  the following terms, where $(\theta, \nu) \in \llbracket 0,3 \rrbracket^2$,
\begin{flalign*}
& \hspace{1.2cm} \frac{v^{\mu}}{v^0}\mathcal{L}_{Z^{\gamma}}(F)_{\mu \nu}  R_j , \hspace{8.5mm} \text{with} \hspace{5mm} |\gamma| \leq N-6 \hspace{8mm} \text{and} \hspace{10.5mm} \beta^j_P + [\gamma]  \leq  \beta^i_P+1, & \\
& \hspace{1.2cm} x^{\theta} \frac{v^{\mu}}{v^0}\mathcal{L}_{Z^{\gamma}}(F)_{\mu \nu}  R_j , \hspace{5.5mm} \text{with} \hspace{5mm} |\gamma| \leq N-6 \hspace{8mm} \text{and} \hspace{10.5mm} \beta^j_P + [\gamma]  \leq  \beta^i_P, & \\
& \hspace{1.2cm} \frac{v^{\mu}}{v^0}\mathcal{L}_{Z^{\gamma}}(F)_{\mu \nu}  W_q , \hspace{8.5mm} \text{with} \hspace{5mm} |\gamma| \leq N \hspace{8mm} \text{and} \hspace{10.5mm} \xi^q_P + [\gamma]  \leq  \beta^i_P+1, & \\
& \hspace{1.2cm} x^{\theta} \frac{v^{\mu}}{v^0}\mathcal{L}_{Z^{\gamma}}(F)_{\mu \nu}  W_q , \hspace{5.5mm} \text{with} \hspace{5mm} |\gamma| \leq N \hspace{8mm} \text{and} \hspace{10.5mm} \xi^q_P + [\gamma]  \leq  \beta^i_P. & \\
\end{flalign*}
Similarly, $D$ is such that $\TT_F(W_i)$, for $1 \leq i \leq |\I|$, can be written as a linear combination of  the following terms, where $(\theta, \nu) \in \llbracket 0,3 \rrbracket^2$,
\begin{flalign*}
& \hspace{1.2cm} \frac{v^{\mu}}{v^0}\mathcal{L}_{Z^{\gamma}}(F)_{\mu \nu}  W_q , \hspace{8.5mm} \text{with} \hspace{5mm} |\gamma| \leq N-3 \hspace{8mm} \text{and} \hspace{10.5mm} \xi^q_P + [\gamma]  \leq  \beta^i_P+1, & \\
& \hspace{1.2cm} x^{\theta} \frac{v^{\mu}}{v^0}\mathcal{L}_{Z^{\gamma}}(F)_{\mu \nu}  W_q , \hspace{5.5mm} \text{with} \hspace{5mm} |\gamma| \leq N-3 \hspace{8mm} \text{and} \hspace{10.5mm} \xi^q_P + [\gamma]  \leq  \beta^i_P. & \\
\end{flalign*}
\end{Lem}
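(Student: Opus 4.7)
The plan is to apply the commutation formula of Proposition \ref{Maxcom} to each component of $R$ and $W$, and then to convert the velocity derivatives $\nabla_v$ that appear in the source terms into a combination of Killing-lift derivatives $\widehat{Z}$ and translations multiplied by $x^{\theta}$-weights, using the identity
\[ v^0 \partial_{v^j} \;=\; \widehat{\Omega}_{0j} \,-\, t\partial_j \,-\, x^j \partial_t, \qquad 1 \leq j \leq 3, \]
which follows from the definition of the complete lift of a Lorentz boost. Combined with the expansion $\mathcal{L}_{Z^{\gamma}}(F)(v,\nabla_v g)=\tfrac{v^{\mu}}{v^0}{\mathcal{L}_{Z^{\gamma}}(F)_{\mu}}^{\,j}\,v^0\partial_{v^j}g$, this will produce precisely the four schematic types of terms listed in the statement: one with a $\widehat{Z}$ acting on a Vlasov field (no $x^{\theta}$-weight) and one with a translation and an $x^0$ or $x^j$-weight.

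For the transport equation satisfied by $W_i=\widehat{Z}^{\xi^i}f$ with $|\xi^i|\leq N-3$, Proposition \ref{Maxcom} gives $|\gamma|+|\beta|\leq|\xi^i|\leq N-3$ with $|\beta|\leq|\xi^i|-1$, so that after the expansion above the resulting $\widehat{Z}^{\xi'}f$ has order $|\xi'|\leq|\beta|+1\leq|\xi^i|\leq N-3$, hence belongs to $W$. This yields the matrix $D$ with $|\gamma|\leq N-3$. The constraints on the homogeneous counts $\xi^{q}_P+[\gamma]\leq \xi^i_P+1$ (respectively $\leq \xi^i_P$ in the $x^{\theta}$-weighted term) come directly from the dichotomy in Proposition \ref{Maxcom}: either $\beta_P<\xi^i_P$, or $\beta_P=\xi^i_P$ and $\gamma_T\geq 1$, i.e.\ $[\gamma]=0$. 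The $\widehat{\Omega}_{0j}$-term increases $\beta_P$ by one, whereas the $t\partial_j$ and $x^j\partial_t$ terms leave $\beta_P$ unchanged, which accounts for the shift of $+1$ in the first case.

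For $R_i=\widehat{Z}^{\beta^i}f$ with $N-2\leq|\beta^i|\leq N$, we apply the same procedure. The source terms of the form $\mathcal{L}_{Z^{\gamma}}(F)(v,\nabla_v \widehat{Z}^{\beta}f)$ produce, after the expansion of $\nabla_v$, quantities $\widehat{Z}^{\beta'}f$ of order $|\beta'|\leq|\beta|+1\leq|\beta^i|\leq N$. We split these according to whether $\widehat{Z}^{\beta'}f\in R$ (i.e.\ $|\beta'|\geq N-2$) or $\widehat{Z}^{\beta'}f\in W$ (i.e.\ $|\beta'|\leq N-3$); the former contribute to $AR$ and the latter to $BW$. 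For the $A$ contribution, $|\beta'|\geq N-2$ forces $|\beta|\geq N-3$, hence $|\gamma|\leq|\beta^i|-|\beta|\leq N-(N-3)=3$, which is bounded by $N-6$ since $N\geq 9$; this gives the claimed regularity threshold. For $B$, no constraint beyond $|\gamma|\leq|\beta^i|\leq N$ is needed. The bookkeeping on $\beta^j_P$ (respectively $\xi^q_P$) is again inherited from the dichotomy in Proposition \ref{Maxcom} together with the fact that $\widehat{\Omega}_{0j}$ shifts $\beta_P$ by $+1$ while translations do not.

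The main obstacle is the bookkeeping on the homogeneous counts: one must track carefully how the decomposition of $\nabla_v$ into $\widehat{\Omega}_{0j}$ and $(t\partial_j,x^j\partial_t)$ interacts with the $\beta_P$ index in the dichotomy from Proposition \ref{Maxcom}, in order to obtain the two distinct sets of inequalities (those with and without the $x^{\theta}$-weight). Once this accounting is done, the matrices $A$, $B$, $D$ are simply read off by grouping source terms according to whether the $\widehat{Z}^{\beta'}f$ produced lies in $R$ or in $W$, and the Lemma follows.
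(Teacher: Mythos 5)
Your proposal follows the paper's proof essentially step for step: you apply Proposition \ref{Maxcom} to each component, expand $\nabla_v$ via $v^0\partial_{v^j}=\widehat{\Omega}_{0j}-t\partial_j-x^j\partial_t$, split the resulting $\widehat{Z}^{\beta'}f$ according to whether $|\beta'|\geq N-2$ (contributing to $AR$) or $|\beta'|\leq N-3$ (contributing to $BW$), and obtain the derivative-count threshold $|\gamma|\leq 3\leq N-6$ for $A$ from $|\gamma|\leq|\beta^i|-|\beta|\leq N-(N-3)$. The bookkeeping on $\beta_P$ (compactly, $\beta_P+[\gamma]\leq\beta^i_P$ from Proposition \ref{Maxcom}, with $\widehat{\Omega}_{0j}$ adding one to the homogeneous count and translations adding zero) is also the same, so this is a correct reconstruction of the paper's argument.
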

\begin{Rq}
Note that pointwise decay estimates can be obtained on the derivatives of the matrix $A$ up to order $3$. This directly follows from the description of the components of $A$ and from Proposition \ref{decayMax}.
\end{Rq}
\begin{proof}
Applying the commutation formula of Proposition \ref{Maxcom} to $R_i=\widehat{Z}^{\beta^i} f$, we know that $\TT_F(R_i)$ can be written as a linear combination of terms of the form
$$ \mathcal{L}_{Z^{\gamma}}(F)(v,\nabla_v \widehat{Z}^{\beta} f), \qquad |\gamma| + |\beta| \leq |\beta^i|, \quad |\beta| \leq |\beta^i|-1, \quad \beta_P + [\gamma]  \leq  \beta^i_P.$$
Writing $v^0\partial_{v^i}=\widehat{\Omega}_{0i}-t\partial_i-x^i\partial_t$, we obtain
\begin{equation}\label{L2decomp}
\frac{v^{\mu}}{v^0}{\mathcal{L}_{Z^{\gamma}}(F)_{\mu}}^i \widehat{\Omega}_{0i}\widehat{Z}^{\beta} f- t\frac{v^{\mu}}{v^0}{\mathcal{L}_{Z^{\gamma}}(F)_{\mu}}^i \partial_i \widehat{Z}^{\beta} f-x_i\frac{v^{\mu}}{v^0}{\mathcal{L}_{Z^{\gamma}}(F)_{\mu}}^i \partial_t\widehat{Z}^{\beta} f.
\end{equation}
In order to lighten the notations, we fix now $1 \leq i \leq 3$ and $0 \leq \lambda \leq 3$.
\begin{itemize}
\item If $N-3 \leq |\beta| \leq N-1$, there exists $\beta^j, \beta^k \in I$ such that $\widehat{\Omega}_{0i}\widehat{Z}^{\beta} f = R_j$ and $\partial_{\lambda}\widehat{Z}^{\beta} f = R_k$. Since $\beta^j_P=\beta_P+1$ and $\beta^k_P=\beta_P$, we obtain $\beta^j_P + [\gamma]  \leq  \beta^i_P+1$ and $\beta^k_P + [\gamma]  \leq  \beta^i_P$. Moreover, $|\gamma| + |\beta| \leq |\beta^i| \leq N$ implies in that case that $|\gamma| \leq 3 \leq N-6$, so that all the terms in \eqref{L2decomp} have the requested form.
\item Otherwise, $|\beta| \leq N-4$ and there exists $\xi^q, \xi^k \in I$ such that $\widehat{\Omega}_{0i}\widehat{Z}^{\beta} f = W_q$ and $\partial_{\lambda}\widehat{Z}^{\beta} f = W_k$. As before, we obtain $\xi^q_P + [\gamma]  \leq  \beta^i_P+1$ and $\xi^k_P + [\gamma]  \leq  \beta^i_P$. Since $|\gamma|  \leq |\beta^i| \leq N$, all the terms in \eqref{L2decomp} have also the requested form in that case.
\end{itemize} 
The construction of the matrix $D$ is easier and follows from an application of Proposition \ref{Maxcom} to $W_i=\widehat{Z}^{\xi^i}f$. Since $|\xi^i| \leq N-3$, all the error terms obtained contained derivatives of the electromagnetic field of order $|\gamma| \leq N-3$.
\end{proof}

In order to establish an $L^2$ estimate on the velocity average of $R$, we split it in $R:=H+G$, where
$$\left\{
    \begin{array}{ll}
         \TT_F(H)+AH=0 \hspace{1mm}, \hspace{6mm} H(0,\cdot,\cdot)=R(0,\cdot,\cdot),\\
        \TT_F(G)+AG=BW \hspace{1mm}, \hspace{2mm} G(0,\cdot,\cdot)=0
    \end{array}
\right.$$
and then prove $L^2$ estimates on $\int_v |H|dv$ and $\int_v |G| dv$. For the homogeneous part $H$, we will commute the transport equation and take advantage of the decaying properties of the matrix $A$ in order to obtain boundedness on a certain $L^1$ norm as for $f$ in Section \ref{sec7}. The $L^2$ estimate will then follow from a Klainerman-Sobolev inequality and the bound obtained on $\E[H]$. The inhomogeneous part will be schematically decomposed as $G=KW$, with $K$ a matrix such that $\E[|K|^2 |W|](t) \leq \epsilon (1+t)^{\frac{1}{4}}$. The expected decay rate on $\| \int_v |G| dv \|_{L^2_x}$ will then be obtained using the pointwise decay estimates satisfied by the components of $W$.

\subsection{The homogeneous system}\label{subsecH}

With the aim of obtaining an $L^{\infty}$ estimate on $\int_v |H| dv$, we will have to commute at least three times the transport equation satisfied by each component of $H$. In order to take advantage of similar hierarchies as those considered in Subsection \ref{sec7}, we introduce the following energy norm
$$ \E_H(t) := \sum_{1 \leq i \leq |I|}  \sum_{ |\beta| \leq 3}  \hspace{0.5mm} \E \left[ \sqrt{z}^{N+5-(1+2\eta)(\beta_P+\beta^i_P)}\widehat{Z}^{\beta} H_i \right] (t).$$
We start with a technical result similar to Lemma \ref{comumax1}.
\begin{Lem}\label{petitrsult}
Let $G$ be a $2$-form defined on $V_b(T)$, $g:V_b(T) \times \R^3_v \rightarrow \R$ be a sufficiently regular function, $(\theta, \nu) \in \llbracket 0, 3 \rrbracket$ and $Z \in \mathbb{K} $. Then\footnote{Recall that if $Z$ is a Killing vector field, then $\widehat{Z}$ denotes its complete lift and if $Z=S$ then $\widehat{Z}=S$.}, $\widehat{Z} \left( \frac{v^{\mu}}{v^0} G_{\mu \nu} g\right)$ can be written as a linear combination of 
$$ \frac{v^{\mu}}{v^0} \mathcal{L}_Z(G)_{\mu \nu} g, \qquad \frac{v^{\mu}}{v^0} G_{\mu \nu} \widehat{Z}(g), \qquad \frac{v^{\kappa}}{v^0}\frac{v^{\mu}}{v^0} G_{\mu \xi} g, \quad 0 \leq \kappa, \xi \leq 3.$$
Similarly, $\widehat{Z} \left(x^{\theta} \frac{v^{\mu}}{v^0} G_{\mu \nu} g\right)$ can be written as a linear combination of
$$ x^{\theta}\frac{v^{\mu}}{v^0} \mathcal{L}_Z(G)_{\mu \nu} g, \qquad x^{\theta}\frac{v^{\mu}}{v^0} G_{\mu \nu} \widehat{Z}(g), \qquad \frac{v^{\mu}}{v^0} G_{\mu \nu} g, \qquad x^{\lambda} \frac{v^{\kappa}}{v^0}\frac{v^{\mu}}{v^0} G_{\mu \xi} g, \quad 0 \leq \kappa, \lambda, \xi \leq 3.$$
\end{Lem}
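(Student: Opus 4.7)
The plan is a direct computation based on the Leibniz rule applied to the product $\frac{v^{\mu}}{v^0} G_{\mu \nu} g$ (and its multiplication by $x^{\theta}$), together with explicit identities describing how $\widehat{Z}$ acts on each factor. Expanding by Leibniz,
\begin{equation*}
\widehat{Z}\!\left(\frac{v^{\mu}}{v^0} G_{\mu \nu}\, g\right) \; = \; \widehat{Z}\!\left(\frac{v^{\mu}}{v^0}\right) G_{\mu \nu}\, g \; + \; \frac{v^{\mu}}{v^0}\, \widehat{Z}(G_{\mu \nu})\, g \; + \; \frac{v^{\mu}}{v^0} G_{\mu \nu}\, \widehat{Z}(g),
\end{equation*}
and the last term already has the required form. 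It thus remains to treat the two other terms.

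First, for the action of $\widehat{Z}$ on $\frac{v^{\mu}}{v^0}$: since neither $G$ nor $v^{\mu}/v^0$ depend on $x$, the translations $\partial_{\mu}$ and the scaling $S = x^{\alpha} \partial_{\alpha}$ annihilate $v^{\mu}/v^0$. For a rotation $Z = \Omega_{ij}$, using $\widehat{\Omega}_{ij} = \Omega_{ij} + v^i \partial_{v^j} - v^j \partial_{v^i}$ and $\partial_{v^k}(v^0) = v^k/v^0$ one checks that
\begin{equation*}
\widehat{\Omega}_{ij}\!\left(\frac{v^k}{v^0}\right) \; = \; \frac{v^i \delta^k_j - v^j \delta^k_i}{v^0} \; \in \; \mathrm{span}\!\left\{\frac{v^{\kappa}}{v^0}\right\},
\end{equation*}
and for a Lorentz boost $Z = \Omega_{0k}$, $\widehat{\Omega}_{0k} = \Omega_{0k} + v^0 \partial_{v^k}$ gives $\widehat{\Omega}_{0k}(v^j/v^0) = \delta^j_k - v^j v^k/|v^0|^2$, a linear combination of $\frac{v^{\kappa}}{v^0} \frac{v^{\xi}}{v^0}$. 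Multiplying back by $G_{\mu \nu} g$, the contribution of $\widehat{Z}(v^{\mu}/v^0) G_{\mu \nu} g$ is accounted for by terms of the form $\frac{v^{\kappa}}{v^0} \frac{v^{\mu}}{v^0} G_{\mu \xi} g$ (using $1 = \frac{v^0}{v^0}$ to reabsorb factors when necessary).

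Secondly, since $G$ is independent of $v$, $\widehat{Z}(G_{\mu \nu}) = Z(G_{\mu \nu})$, and by definition of the Lie derivative of a $2$-form,
\begin{equation*}
Z(G_{\mu \nu}) \; = \; \mathcal{L}_Z(G)_{\mu \nu} \; - \; \partial_{\mu} Z^{\alpha}\, G_{\alpha \nu} \; - \; \partial_{\nu} Z^{\alpha}\, G_{\mu \alpha}.
\end{equation*}
As each $Z \in \mathbb{K}$ has coefficients $Z^{\alpha}$ that are affine in $x$, the derivatives $\partial_{\mu} Z^{\alpha}$ are constants, so $\frac{v^{\mu}}{v^0} Z(G_{\mu \nu}) g$ is a linear combination of $\frac{v^{\mu}}{v^0} \mathcal{L}_Z(G)_{\mu \nu} g$ and of terms $\frac{v^{\mu}}{v^0} G_{\mu' \nu'} g$; the latter are themselves of the form $\frac{v^{\kappa}}{v^0} \frac{v^{\mu}}{v^0} G_{\mu \xi} g$ after relabelling and multiplying by $1 = v^0/v^0$. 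This proves the first claim.

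For the weighted version, write $\widehat{Z}(x^{\theta}\, \frac{v^{\mu}}{v^0} G_{\mu \nu} g) = \widehat{Z}(x^{\theta})\, \frac{v^{\mu}}{v^0} G_{\mu \nu} g + x^{\theta}\, \widehat{Z}(\frac{v^{\mu}}{v^0} G_{\mu \nu} g)$ and note that $\widehat{Z}(x^{\theta}) = Z(x^{\theta})$ is either a constant (case $Z = \partial_{\mu}$, yielding a term of type $\frac{v^{\mu}}{v^0} G_{\mu \nu} g$) or a linear combination of coordinates $x^{\lambda}$ (cases $Z \in \mathbb{O} \cup \{\Omega_{0k}\} \cup \{S\}$, yielding terms of type $x^{\lambda} \frac{v^{\mu}}{v^0} G_{\mu \nu} g$). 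Applying the first part of the lemma to the second summand, one obtains the desired decomposition. No step presents any real obstacle; the only point that requires attention is keeping track, in the boost case, of the quadratic factor $\frac{v^{\kappa}}{v^0} \frac{v^{\mu}}{v^0}$ produced by $v^0 \partial_{v^k}(v^j/v^0)$, which is precisely why that type of term appears in the statement.
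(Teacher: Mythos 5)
Your overall strategy — Leibniz rule, the Lie-derivative identity for $Z(G_{\mu\nu})$, and explicit formulas for $\widehat{Z}(v^{\mu}/v^0)$ — is viable, and the treatment of the weighted version via $\widehat{Z}(x^{\theta})$ matches the paper's. But there is a gap in the unweighted case for rotations and boosts. You assert that (a) the term $\widehat{Z}(v^{\mu}/v^0)\, G_{\mu\nu}\, g$ and (b) the correction terms $-\frac{v^{\mu}}{v^0}(\partial_{\mu} Z^{\alpha})\, G_{\alpha\nu}\, g - \frac{v^{\mu}}{v^0}(\partial_{\nu} Z^{\alpha})\, G_{\mu\alpha}\, g$ are each, in isolation, expressible in the allowed form $\frac{v^{\kappa}}{v^0}\frac{v^{\mu}}{v^0}G_{\mu\xi}\,g$. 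That is not true. For $Z = \Omega_{0k}$ a direct computation gives
\begin{equation*}
\widehat{Z}\!\left(\frac{v^{\mu}}{v^0}\right) G_{\mu\nu}\, g \;=\; G_{k\nu}\, g + \frac{v^k}{v^0}\, G_{0\nu}\, g - \frac{v^k}{v^0}\frac{v^{\mu}}{v^0}\,G_{\mu\nu}\, g,
\end{equation*}
and the first correction in (b) equals $-G_{k\nu}\, g - \frac{v^k}{v^0}G_{0\nu}\, g$; the pieces $G_{k\nu}\,g$ and $\frac{v^k}{v^0}G_{0\nu}\,g$ carry an \emph{uncontracted} $G$-index and cannot be written as linear combinations of the contracted $\frac{v^{\kappa}}{v^0}\frac{v^{\mu}}{v^0}G_{\mu\xi}\,g$, no matter how many factors $v^0/v^0$ you insert. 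The same happens for $Z=\Omega_{ij}$, where both (a) and the first correction in (b) produce $\pm\bigl(\frac{v^i}{v^0}G_{j\nu}-\frac{v^j}{v^0}G_{i\nu}\bigr)g$.

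The decomposition you propose does eventually work, because these uncontracted pieces from (a) and (b) cancel identically — but your write-up never observes this cancellation, and the claim you make about each piece separately is false as stated. The paper sidesteps the issue by splitting the product differently, as $\frac{1}{v^0}\cdot(v^{\mu}G_{\mu\nu})\cdot g$, and applying the Lie-derivative identity to the already-contracted scalar $v^{\mu}G_{\mu\nu}=G(v,\partial_{\nu})$: with $Z_v:=\widehat{Z}-Z$ one gets $\widehat{Z}(v^{\mu}G_{\mu\nu})=\mathcal{L}_Z(G)(v,\partial_{\nu})+G(v,[Z,\partial_{\nu}])+G([Z,v]+Z_v(v),\partial_{\nu})$, every term of which is contracted from the start, and the last two collapse cleanly via $[Z,\partial_{\nu}]\in\{0\}\cup\{\pm\partial_{\mu}\}$ and $[Z,v]+Z_v(v)=-\delta_{Z}^S v$. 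You should either switch to that splitting, or explicitly exhibit the cancellation of the uncontracted terms between (a) and (b).
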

\begin{proof}
The second relation can be obtained from the first one by Leibniz formula as $\widehat{Z}(x^{\theta})=0$ or $\widehat{Z}(x^{\theta})=x^{\lambda}$ if $\widehat{Z}$ is an homogeneous vector field and $\partial_{\mu}(x^{\theta})=\delta_{\mu}^{\theta}$. Note now that by Leibniz formula,
$$ \widehat{Z} \left( \frac{v^{\mu}}{v^0} G_{\mu \nu} g\right) = \widehat{Z} \left( \frac{1}{v^0} \right) v^{\mu} G_{\mu \nu} g + \frac{v^{\mu}}{v^0} G_{\mu \nu} \widehat{Z}(g)+\frac{1}{v^0} \widehat{Z} \left( v^{\mu} G_{\mu \nu} \right) g .$$
As $\widehat{Z} \left( \frac{1}{v^0}\right)=-\frac{v^i}{|v^0|^2}$ if $\widehat{Z}=\widehat{\Omega}_{0i}$  and $\widehat{Z} \left( \frac{1}{v^0} \right) =0$ otherwise, the first two terms on the right hand side of the previous equality have the requested form. In order to study the last one, we introduce $Z_v:=\widehat{Z}-Z$ and we remark that $v^{\mu} G_{\mu \nu}=G(v,\partial_{\nu})$. Hence,
$$ \widehat{Z} \left( v^{\mu} G_{\mu \nu} \right) = \mathcal{L}_Z(G)(v,\partial_{\nu})+G(v,[Z,\partial_{\nu}])+G([Z,v],\partial_{\nu})+G(Z_v(v),\partial_{\nu}).$$
To conclude the proof, it remains to notice that $[Z,\partial_{\nu}] \in \{0\} \cup \{ \pm \partial_{\mu} \hspace{1mm} / \hspace{1mm} 0 \leq \mu \leq 3 \}$ and $[Z,v]+Z_v(v)=-\delta_{Z}^Sv$.
\end{proof}
We have the following commutation formula.
\begin{Lem}\label{comL2hom}
Let $|\beta| \leq 3$ and $\beta^i \in I$. Then, $\TT_F( \widehat{Z}^{\beta} H_i)$ can be written as a linear combination with polynomial coefficients in $\{ \frac{v^{\lambda}}{v^0} \hspace{1mm} / \hspace{1mm} 0 \leq \lambda \leq 3 \}$ of terms of the following two families, where $(\theta, \nu) \in \llbracket 0, 3 \rrbracket$ and $|\gamma| \leq N-3$.
\begin{align*}
& \frac{v^{\mu}}{v^0}\mathcal{L}_{Z^{\gamma}}(F)_{\mu \nu}  \widehat{Z}^{\kappa} H_j, \qquad \beta^j_P+\kappa_P +[\gamma] \leq \beta^i_P+\beta_P+1, \\
&  x^{\theta}\frac{v^{\mu}}{v^0}\mathcal{L}_{Z^{\gamma}}(F)_{\mu \nu}  \widehat{Z}^{\kappa} H_j, \qquad \beta^j_P+\kappa_P+ [\gamma] \leq \beta^i_P+\beta_P .
\end{align*}
$\TT_F( \widehat{Z}^{\beta} H_i)$ can then be bounded by a linear combination of terms of the form
$$  \left( |\alpha (\mathcal{L}_{Z^{\gamma}}(F))|+|\rho (\mathcal{L}_{Z^{\gamma}}(F))|+|\sigma (\mathcal{L}_{Z^{\gamma}}(F))|+\frac{|v^A|+v^{\underline{L}}}{v^0}|\underline{\alpha} (\mathcal{L}_{Z^{\gamma}}(F))| \right) |  \widehat{Z}^{\kappa} H_j|,  \qquad |\gamma| \leq N-3, $$
where
\begin{itemize}
\item either $\beta^j_P+\kappa_P \leq \beta^i_P+\beta_P$ 
\item or $\beta^j_P+\kappa_P = \beta^i_P+\beta_P+1$ and $\gamma_T \geq 1$.
\end{itemize}
$$  \tau_+\left( |\alpha (\mathcal{L}_{Z^{\gamma}}(F))|+|\rho (\mathcal{L}_{Z^{\gamma}}(F))|+|\sigma (\mathcal{L}_{Z^{\gamma}}(F))|+\frac{|v^A|+v^{\underline{L}}}{v^0}|\underline{\alpha} (\mathcal{L}_{Z^{\gamma}}(F))| \right) |  \widehat{Z}^{\kappa} H_j|,  \qquad |\gamma| \leq N-3, $$
where
\begin{itemize}
\item either $\beta^j_P+\kappa_P < \beta^i_P+\beta_P$ 
\item or $\beta^j_P+\kappa_P = \beta^i_P+\beta_P$ and $\gamma_T \geq 1$.
\end{itemize}
\end{Lem}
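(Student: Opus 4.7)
The plan is to proceed by induction on $|\beta|$, the base case $|\beta|=0$ being given directly by Lemma \ref{L2bilan} applied to the homogeneous equation $\TT_F(H_i)=-(AH)_i$: since the $W$-terms (those coming from $BW$) are absent, $\TT_F(H_i)$ is exactly a sum of terms of the two families appearing in the statement with $\kappa=0$ and the constraints $\beta^j_P+[\gamma]\leq\beta^i_P+1$ (first family) and $\beta^j_P+[\gamma]\leq\beta^i_P$ (second family). For the inductive step, I would write $\TT_F(\widehat{Z}\,\widehat{Z}^{\beta} H_i) = \widehat{Z}(\TT_F \widehat{Z}^{\beta} H_i) + [\TT_F,\widehat{Z}](\widehat{Z}^{\beta} H_i)$: Lemma \ref{petitrsult} handles the first piece by giving the action of $\widehat{Z}$ on each summand of the inductive decomposition (which either raises $|\gamma|$ by one, raises $|\kappa|$ by one, introduces or removes an $x^\theta$ factor, or produces an extra polynomial-in-$v^\lambda/v^0$ coefficient absorbed by the statement), while the commutator is handled via Lemma \ref{comumax1}, producing an expression of the form $\mathcal{L}_Z(F)(v,\nabla_v \widehat{Z}^{\beta} H_i)$ that, after expanding $v^0\partial_{v^i}=\widehat{\Omega}_{0i}-t\partial_i-x^i\partial_t$, falls into the same two families.

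The book-keeping then hinges on tracking $\beta^j_P+\kappa_P+[\gamma]$: a homogeneous $Z$ increases $\beta_P$ on the right-hand side by one while it raises $\kappa_P$ (or leaves $\gamma$ alone) by one; a translation leaves $\beta_P$ fixed but, when landing on $F$, drops $[\gamma]$ from one to zero, which compensates. The same bookkeeping shows $|\gamma|\leq (N-6)+|\beta|\leq N-3$ after at most three commutations. The final constraint $\beta^j_P+\kappa_P+[\gamma]\leq\beta^i_P+\beta_P+1$ for the first family (resp.\ $\leq\beta^i_P+\beta_P$ for the second) then splits into the dichotomy of the lemma by distinguishing $[\gamma]=1$ (i.e.\ $\gamma_T=0$, which gives $\beta^j_P+\kappa_P\leq\beta^i_P+\beta_P$, resp.\ $<$) from $[\gamma]=0$ (i.e.\ $\gamma_T\geq 1$, which allows the saturated case).

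Once the structural decomposition is established, the pointwise bound follows from a direct null-frame expansion: writing $v=v^L L+v^{\underline{L}}\underline{L}+v^A e_A$ and reading off the null components $\alpha,\underline{\alpha},\rho,\sigma$ of $\mathcal{L}_{Z^\gamma}(F)$, the identities $v^\mu G_{\mu L}=-2v^{\underline{L}}\rho-v^A\alpha_A$, $v^\mu G_{\mu \underline{L}}=2v^L\rho-v^A\underline{\alpha}_A$ and $v^\mu G_{\mu A}=v^L\alpha_A+v^{\underline{L}}\underline{\alpha}_A-v^B\sigma\varepsilon_{AB}$ (obtained exactly as in the proof of Lemma \ref{calculF}), together with the elementary estimates $v^L/v^0,\,v^{\underline{L}}/v^0,\,|v^A|/v^0\leq 1$, yield
\[
\left|\frac{v^\mu}{v^0}\mathcal{L}_{Z^\gamma}(F)_{\mu \nu}\right| \lesssim |\alpha|+|\rho|+|\sigma|+\frac{|v^A|+v^{\underline{L}}}{v^0}|\underline{\alpha}|,
\]
each null component being evaluated on $\mathcal{L}_{Z^\gamma}(F)$; the $x^\theta$ prefactor in the second family is absorbed through $|x^\theta|\leq r\lesssim \tau_+$ on $V_b(T)$. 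The only real obstacle in the argument is the delicate induction bookkeeping just described: one must ensure that whenever the saturated case $\beta^j_P+\kappa_P=\beta^i_P+\beta_P+1$ actually occurs, at least one translation must have hit $F$, so that $\gamma_T\geq 1$. This is precisely the mechanism that later unlocks the improved pointwise decay of Proposition \ref{decayMax} and makes the $L^2$ argument of Subsection \ref{subsecH} close.
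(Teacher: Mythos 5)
Your proof is correct and takes essentially the same approach as the paper: the paper splits $\TT_F(\widehat{Z}^{\beta}H_i)=[\TT_F,\widehat{Z}^{\beta}](H_i)+\widehat{Z}^{\beta}(\TT_F(H_i))$ directly, treating the first piece via Proposition \ref{Maxcom} and the second by applying Lemma \ref{petitrsult} exactly $|\beta|$ times to the decomposition of Lemma \ref{L2bilan}, which is precisely what your induction on $|\beta|$ does one layer at a time. The bookkeeping of $\beta^j_P+\kappa_P+[\gamma]$, the derivative count $|\gamma|\leq N-6+|\beta|\leq N-3$, and the null-frame bound \eqref{eq:calculmachin} all match the paper's argument.
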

\begin{proof}
The second part of the Lemma follows directly from the first part and
\begin{equation}\label{eq:calculmachin}
\left| \frac{v^{\mu}}{v^0}\mathcal{L}_{Z^{\gamma}}(F)_{\mu \nu} \right| \lesssim \left( |\alpha (\mathcal{L}_{Z^{\gamma}}(F))|+|\rho (\mathcal{L}_{Z^{\gamma}}(F))|+|\sigma (\mathcal{L}_{Z^{\gamma}}(F))|+\frac{|v^A|+v^{\underline{L}}}{v^0}|\underline{\alpha} (\mathcal{L}_{Z^{\gamma}}(F))| \right) ,
\end{equation}
which ensues from
$$\frac{v^{\mu}}{v^0}\mathcal{L}_{Z^{\gamma}}(F)_{\mu \nu} =  \frac{v^{L}}{v^0}\mathcal{L}_{Z^{\gamma}}(F)_{L \nu}+\frac{v^{\underline{L}}}{v^0}\mathcal{L}_{Z^{\gamma}}(F)_{\underline{L} \nu}+\frac{v^{A}}{v^0}\mathcal{L}_{Z^{\gamma}}(F)_{A \nu}, \quad \left| \mathcal{L}_{Z^{\gamma}}(F)_{L \nu} \right| \lesssim |\alpha ( \mathcal{L}_{Z^{\gamma}}(F))|+|\rho ( \mathcal{L}_{Z^{\gamma}}(F))| .$$
It then remains us to prove the first assertion. The starting point is the relation $$ \TT( \widehat{Z}^{\beta} H_i) \hspace{1mm} = \hspace{1mm} [\TT_F,\widehat{Z}^{\beta}](H_i)+\widehat{Z}^{\beta} \left( \TT_F \left(H_i \right) \right).$$ 
Applying the commutation formula of Proposition \ref{Maxcom}, $[\TT_F,\widehat{Z}^{\beta}](H_i)$ gives, using $v^0\partial_{v^i}=\widehat{\Omega}_{0i}-t\partial_i-x^i\partial_t$ as in \eqref{L2decomp}, terms described in this lemma with $j=i$ and $|\gamma| \leq 3 \leq N-3$. The other ones arise from $\widehat{Z}^{\beta} \left( \TT_F \left(H_i \right) \right)$. Using Lemma \ref{L2bilan}, which describes the source terms of $\TT_F \left(H_i \right)$, and applying $|\beta|$ times Lemma \ref{petitrsult}, we obtain terms described in the Lemma with $\kappa_P \leq \beta_P$ and $|\gamma| \leq N-6+|\beta| \leq N-3$.
\end{proof}
Hence, as $R(0,.,.)=H(0,.,.)$, there exists $C_0>0$ such that $\E_H(0) \leq C_0 \epsilon$. Following the proof of \eqref{weight} and Proposition \ref{estif} (for the cases where $|\gamma| \leq N-3$), one can prove, if $\epsilon$ small enough, that $\E_H (t) \leq 3C_0 \epsilon (1+t)^{\delta}$ for all $t \in [0,T[$. By Proposition \ref{KS1}, we then obtain
\begin{equation}\label{decayH}
\hspace{-1mm} \forall \hspace{0.5mm} (t,x) \in V_b(T), \hspace{3mm} 1 \leq j \leq |I|,  \hspace{9mm} \int_v | H_j| dv \hspace{1mm} \lesssim \hspace{1mm} \int_v \sqrt{z}^{N+5-(1+2\eta)(\beta^j_P+3)}| H_j| dv \hspace{1mm} \lesssim \hspace{1mm} \epsilon \frac{(1+t)^{\delta}}{\tau_+^2 \tau_-}.
 \end{equation}

\subsection{The inhomogenous system}\label{subsecG}

The purpose of this subsection is to prove an $L^2$ estimate on $\int_v |G| dv$. We cannot proceed by commuting $\TT_F(G)+AG=BW$ since $B$ contains top order derivatives of $F$ and we then follow the strategy exposed earlier in this section. It will be convenient, in order to take advantage of the hierarchies used in Section \ref{sec7}, to work with a slightly different vector than $G$
\begin{Def}
Let $L$ and $Y$ be two vector valued fields of respective length $|I|$ and $|\mathfrak{I}|$ such that, for $i \in \llbracket 1, |I| \rrbracket$ and $k \in \llbracket 1, |\mathfrak{I}| \rrbracket$,
$$L_i = \sqrt{z}^{N+1-(1+2\eta)\beta^i_P} G_i , \qquad Y_k = \sqrt{z}^{N+3-(1+2\eta)\xi^k_P} W_k.$$
\end{Def}
The aim of the next lemma is to describe in details the transport equation satisfied by $L$.
\begin{Lem}\label{bilaninho}
There exist three matrices valued functions $\overline{A} : V_b(T) \times \R^3_v  \rightarrow \mathfrak M_{|I|}(\mathbb{V})$, $\overline{B} : V_b(T) \times \R^3_v  \rightarrow \mathfrak M_{|I|,|\mathfrak{I}|}(\mathbb{V})$ and $\overline{D} : V_b(T) \times \R^3_v \rightarrow \mathfrak M_{|\mathfrak{I}|}(\mathbb{V})$ such that
$$\TT_F(L)+\overline{A}L= \overline{B} Y, \hspace{10mm} \TT_F(Y)= \overline{D} Y \hspace{8mm} \text{and} \hspace{8mm} \forall \hspace{0.5mm} (t,x) \in V_b(T), \hspace{5mm}  \int_v  z^2|Y|(t,x,v) dv \lesssim \epsilon \frac{(1+t)^{\delta}}{\tau_+^2 \tau_-}.$$
The matrices $\overline{A}$ and $\overline{B}$ are such that $\TT_F(L_i)$ can be bounded, for $1 \leq i \leq |I|$, by a linear combination of the following terms,
\begin{eqnarray}
\nonumber & \bullet &  \frac{\tau_+}{\tau_-}\left( \left| \alpha \left( \mathcal{L}_{Z^{\gamma}}(F) \right) \right|+ \left| \rho \left( \mathcal{L}_{Z^{\gamma}}(F) \right) \right|+\left| \sigma \left( \mathcal{L}_{Z^{\gamma}}(F) \right) \right| \right)|Y|, \qquad  \left| \underline{\alpha} \left( \mathcal{L}_{Z^{\gamma}}(F) \right) \right|  |Y|, \hspace{1cm} \text{with} \hspace{3mm} |\gamma| \leq N. \\ \nonumber
& \bullet & \sqrt{\epsilon}\frac{v^0}{\tau_+}|L|, \hspace{1cm} \sqrt{\epsilon}\frac{v^{\underline{L}}}{\tau_-^{1+\eta}}|L| .
\end{eqnarray}
Similarly, $\overline{D}$ is such that
$$\forall \hspace{0.5mm} 1 \leq i \leq |\mathfrak{I}|, \qquad \left| \TT_F(Y_i) \right| \hspace{1mm} \lesssim \hspace{1mm} \sqrt{\epsilon}\frac{v^0}{\tau_+}|Y|+ \sqrt{\epsilon}\frac{v^{\underline{L}}}{\tau_-^{1+\eta}}|Y| . $$
\end{Lem}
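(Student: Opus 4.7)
The plan is to derive the equations for $L$ and $Y$ from those of $R$ and $W$ by the product rule, read off the three matrices, and bound their entries using the null decomposition of $F$ and the pointwise decay estimates already established. Setting $a^{(i)} := N+1-(1+2\eta)\beta^i_P$ and $b^{(k)} := N+3-(1+2\eta)\xi^k_P$, I use $\TT(z)=0$ from Lemma \ref{weights} to compute
\begin{equation*}
\TT_F(L_i) \hspace{1mm} = \hspace{1mm} F(v,\nabla_v \sqrt{z}^{a^{(i)}})G_i + \sqrt{z}^{a^{(i)}}\bigl( -A_{ij}G_j + B_{iq}W_q \bigr),
\end{equation*}
and analogously for $Y$. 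Re-expressing $G_j = \sqrt{z}^{-a^{(j)}}L_j$ and $W_q = \sqrt{z}^{-b^{(q)}}Y_q$ identifies $\overline{A}_{ij} = \sqrt{z}^{a^{(i)}-a^{(j)}}A_{ij}$, $\overline{B}_{iq} = \sqrt{z}^{a^{(i)}-b^{(q)}}B_{iq}$, $\overline{D}_{kj} = \sqrt{z}^{b^{(k)}-b^{(j)}}D_{kj}$, modulo the weight-derivative pieces $F(v,\nabla_v \sqrt{z}^{a^{(i)}})G_i$ and $F(v,\nabla_v \sqrt{z}^{b^{(k)}})W_k$. These I would treat exactly as in Subsection \ref{secweight}: Lemma \ref{calculF} applied to $F(v,\nabla_v z)$ combined with Proposition \ref{decayMax} and Lemma \ref{weights1} yields the envelope $\sqrt{\epsilon}v^0/\tau_+|L| + \sqrt{\epsilon}v^{\underline{L}}/\tau_-^{1+\eta}|L|$.

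For $\overline{B}$, Lemma \ref{L2bilan} gives $\xi^q_P + [\gamma] \leq \beta^i_P + 1$ (without $x^\theta$) and $\xi^q_P + [\gamma] \leq \beta^i_P$ (with $x^\theta$); the exponent $a^{(i)}-b^{(q)} = -2 + (1+2\eta)(\xi^q_P-\beta^i_P)$ is therefore bounded respectively by $-1+2\eta$ and $-2$, so $\sqrt{z}^{a^{(i)}-b^{(q)}}$ is at most $1$ or $z^{-1}$. Applying \eqref{eq:calculmachin}, the inequality $|x^\theta|\leq\tau_+$, and the Lemma \ref{weights1} estimates $\tau_-\lesssim z$ and $(v^{\underline{L}}+|v^A|)/v^0\lesssim z/\tau_+$, I recover the $\tfrac{\tau_+}{\tau_-}(|\alpha|+|\rho|+|\sigma|)|Y|$ contribution directly and obtain $|\underline{\alpha}||Y|$ after the crucial cancellation $(\tau_+/z)(z/\tau_+)=1$ on the $\underline{\alpha}$ piece. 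For $\overline{A}$ and $\overline{D}$ the entries satisfy $|\gamma|\leq N-6$ and $|\gamma|\leq N-3$ respectively, so Proposition \ref{decayMax} provides a pointwise $\sqrt{\epsilon}$; the easy cases $\beta^j_P\leq\beta^i_P$ yield $\sqrt{z}^{a^{(i)}-a^{(j)}}\leq 1$ and close the envelope after invoking $1\lesssim \sqrt{v^0 v^{\underline{L}}}$ and an AM-GM split, while the delicate case $\beta^j_P=\beta^i_P+1$ (which forces $\gamma_T\geq 1$) requires absorbing the weight growth $\sqrt{z}^{1+2\eta}\leq\tau_+^{1/2+\eta}$ into the improved decay rates of Proposition \ref{decayMax}; entries carrying an $x^\theta$ factor behave similarly with the constraints shifted by one.

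Finally, for the pointwise estimate on $\int_v z^2|Y|dv$, I rewrite $z^2|Y_q|=\sqrt{z}^{N+7-(1+2\eta)\xi^q_P}|\widehat{Z}^{\xi^q}f|$ with $|\xi^q|\leq N-3$ and bound it by $\sqrt{z}^{N+9-(1+2\eta)\xi^q_P}|\widehat{Z}^{\xi^q}f|$ (since $\sqrt{z}\geq 1$), so that \eqref{decayf} gives the claim for $|\xi^q|\leq N-6$; the upper range follows from a direct application of Proposition \ref{KS1} combined with the bootstrap assumption \eqref{bootf1}, exactly as in the derivation of \eqref{decayf2}. The main obstacle will be the boundary case $\beta^j_P=\beta^i_P+1$ in $\overline{A}$, where one must precisely orchestrate the weight growth $\tau_+^{1/2+\eta}$, the improved decay estimates of Proposition \ref{decayMax} (which provide the extra $\tau_+^{-1/2}$ and, for $\underline{\alpha}$, $\tau_-^{-1}$ factors coming from $\gamma_T\geq 1$) and the AM-GM splitting via $1\lesssim \sqrt{v^0 v^{\underline{L}}}$ to land squarely in the envelope $\sqrt{\epsilon}(v^0/\tau_+ + v^{\underline{L}}/\tau_-^{1+\eta})|L|$.
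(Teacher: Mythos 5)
Your proposal follows the same approach as the paper. You derive the $L$ and $Y$ equations by the product rule using $\TT(z)=0$, identify $\overline{A}$, $\overline{B}$, $\overline{D}$ by dressing $A$, $B$, $D$ with powers of $\sqrt{z}$, handle the extra $F(v,\nabla_v \sqrt{z}^{a^{(i)}})G_i$ term as in Subsection \ref{secweight}, and bound the matrix entries via the null expansion \eqref{eq:calculmachin}, the weight inequalities of Lemma \ref{weights1}, and the pointwise estimates of Proposition \ref{decayMax}. Your exponent computations ($a^{(i)}-b^{(q)}\leq -1+2\eta$ without $x^\theta$, $\leq -2$ with) match the paper, and you correctly single out the boundary case $\beta^j_P = \beta^i_P + 1$ (which forces $\gamma_T \geq 1$) as the one where the improved decay in Proposition \ref{decayMax} must absorb the weight growth $\sqrt{z}^{1+2\eta}\leq\tau_+^{1/2+\eta}$.

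One small issue: your claim that the pointwise estimate on $\int_v z^2|Y|\,dv$ in the range $N-6 < |\xi^q| \leq N-3$ follows from Proposition \ref{KS1} applied with \eqref{bootf1} cannot work as stated, since \eqref{bootf1} bounds derivatives only up to order $N-3$ while the Klainerman--Sobolev inequality costs three more; that combination only reaches $|\xi^q|\leq N-6$, exactly the range already covered by \eqref{decayf}. To be fair, the paper's own citation of \eqref{decayf2} here is equally loose: $z^2 Y_q$ carries the weight $\sqrt{z}^{\,N+7-(1+2\eta)\xi^q_P}$, whereas \eqref{decayf2} only gives $\sqrt{z}^{\,N+5-(1+2\eta)\xi^q_P}$, and applying Proposition \ref{KS1} with \eqref{bootf2} and $\kappa_P=3$ loses $3(1+2\eta)>2$ powers. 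So this is a weight-bookkeeping detail that the paper glosses over as well; the rest of your outline is faithful to the argument.
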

\begin{proof}
Remark first that since $Y_j=\sqrt{z}^{N+3-(1+2\eta)\xi^j_P}\widehat{Z}^{\xi^j}f$ and $|\xi^j| \leq N-3$, the pointwise decay estimates on $\int_v z^2 |Y| dv$ are given by \eqref{decayf2}. Fix $i \in \llbracket 1 , |I| \rrbracket$ and note that
$$ \TT_F(L_i) \hspace{1mm} = \hspace{1mm} \TT_F(\sqrt{z}^{N+1-(1+2\eta)\beta^i_P}G_i) \hspace{1mm} = \hspace{1mm} \sqrt{z}^{N-1-(1+2\eta)\beta^i_P-2} F(v, \nabla_v z)G_i+\sqrt{z}^{N+1-(1+2\eta)\beta^i_P} \TT_F(G_i).$$
Following the computations of Subsection \ref{secweight}, we have
$$ \left|  F(v, \nabla_v z)  \sqrt{z}^{N-1-(1+2\eta)\beta^i_P} G_i \right| \hspace{1mm} \lesssim \hspace{1mm} \sqrt{\epsilon} \frac{v^0}{\tau_+} z  \sqrt{z}^{N-1-(1+2\eta)\beta^i_P} | G_i| \hspace{1mm} \lesssim \hspace{1mm} \sqrt{\epsilon} \frac{v^0}{\tau_+} |L_i| .$$ 
By Lemma \ref{L2bilan}, $ \sqrt{z}^{N+1-(1+2\eta)\beta^i_P} \TT_F(G_i)$ can be written as a linear combination of the following terms. \newline
$\bullet$ Those coming from $BW$,
\begin{align*}
 & \sqrt{z}^{N+1-(1+2\eta)\beta^i_P} \frac{v^{\mu}}{v^0} \mathcal{L}_{Z^{\gamma}}(F)_{\mu \nu}  W_j , \hspace{5mm} \text{with} \hspace{5mm}  \xi^j_P  \leq \beta^i_P+1, \\
 & \sqrt{z}^{N+1-(1+2\eta)\beta^i_P} x^{\theta}\frac{v^{\mu}}{v^0} \mathcal{L}_{Z^{\gamma}}(F)_{\mu \nu}  W_k , \hspace{5mm} \text{with} \hspace{5mm} \xi^k_P  \leq \beta^i_P
 \end{align*}
and where $|\gamma| \leq N$. Let $(\alpha, \underline{\alpha}, \rho, \sigma)$ be the null decomposition of $\mathcal{L}_{Z^{\gamma}}(F)$ and recall that for all $1 \leq q \leq |\mathfrak{I}|$, $Y_q=\sqrt{z}^{N+3-(1+2\eta)\xi^q_P} W_q$. Then, as $2 \geq 1+2\eta$ and $\xi^j_P  \leq \beta^i_P+1$,
$$ \left| \sqrt{z}^{N+1-(1+2\eta)\beta^i_P} \frac{v^{\mu}}{v^0} \mathcal{L}_{Z^{\gamma}}(F)_{\mu \nu}  W_j \right| \hspace{1mm} \lesssim \hspace{1mm} \left| \mathcal{L}_{Z^{\gamma}}(F) \right| |\sqrt{z}^{N+3-(1+2\eta)\xi^j_P}  W_j| \hspace{1mm} \lesssim \hspace{1mm} \left( |\alpha|+|\rho|+|\sigma|+|\underline{\alpha}| \right) |Y|.$$
Using first $|x^{\theta}| \leq \tau_+$, \eqref{eq:calculmachin} as well as $\xi^k_P \leq \beta^i_P$ and then $\tau_-+\tau_+\frac{|v^A|+v^{\underline{L}}}{v^0} \lesssim z$ (see Lemma \ref{weights1}), we get
\begin{align*}
 \left| \sqrt{z}^{N+1-(1+2\eta)\beta^i_P} x^{\theta}\frac{v^{\mu}}{v^0} \mathcal{L}_{Z^{\gamma}}(F)_{\mu \nu}  W_k  \right| \hspace{1mm} & \lesssim \hspace{1mm} \tau_+ \left( |\alpha|+|\rho|+|\sigma|+\frac{|v^A|+v^{\underline{L}}}{v^0}|\underline{\alpha}| \right)  \frac{\sqrt{z}^{N+3-(1+2\eta)\xi^k_P}}{z} |W_k| \\
 & \lesssim \hspace{1mm} \frac{\tau_+}{\tau_-} \left( |\alpha|+|\rho|+|\sigma| \right) |Y_k|+|\underline{\alpha}| |Y_k|.
\end{align*}
This concludes the construction of the matrix $\overline{B}$.

$\bullet$ Those coming from $AG$, 
\begin{align*}
 & \mathfrak{Q}_1 \hspace{1mm} := \hspace{1mm} \sqrt{z}^{N+1-(1+2\eta)\beta^i_P} \frac{v^{\mu}}{v^0} \mathcal{L}_{Z^{\gamma}}(F)_{\mu \nu}  G_j , \hspace{5mm} \text{with} \hspace{5mm}  \beta^j_P+[\gamma]  \leq \beta^i_P+1, \\
 & \mathfrak{Q}_2 \hspace{1mm} := \hspace{1mm} \sqrt{z}^{N+1-(1+2\eta)\beta^i_P} x^{\theta}\frac{v^{\mu}}{v^0} \mathcal{L}_{Z^{\gamma}}(F)_{\mu \nu}  G_k , \hspace{5mm} \text{with} \hspace{5mm} \beta^k_P+[\gamma]  \leq \beta^i_P
 \end{align*}
and $|\gamma| \leq N-6$, so that the electromagnetic field can be estimated pointwise. For simplicity, let us denote again the null decomposition of $\mathcal{L}_{Z^{\gamma}}(F)$ by $(\alpha, \underline{\alpha}, \rho, \sigma)$. Recall from Proposition \ref{decayMax} that, for all $(t,x) \in V_b(T)$,
\begin{align}
|\alpha|(t,x)+|\rho|(t,x)+|\sigma|(t,x) & \lesssim \frac{\sqrt{\epsilon}}{\tau_+^{\frac{3}{2}}}, \qquad \qquad |\underline{\alpha}|(t,x) \lesssim \frac{\sqrt{\epsilon}}{\tau_+\tau_-^{\frac{1}{2}}}, \label{decaysta} \\  |\alpha|(t,x)+|\rho|(t,x)+|\sigma|(t,x) & \lesssim \frac{\sqrt{\epsilon}}{\tau_+^{2-\delta} \tau_-^{\frac{1}{2}}}, \qquad |\underline{\alpha}|(t,x) \lesssim \frac{\sqrt{\epsilon}}{\tau_+\tau_-^{\frac{3}{2}}}, \qquad \text{if $\gamma_T \geq 1$}. \label{decayextra}
\end{align} 
We also recall that for any $1 \leq q \leq |I|$, $L_q = \sqrt{z}^{N+1-(1+2\eta)\beta^q_P} G_q$. We start by treating $\mathfrak{Q_1}$. If $\beta^j_P \leq \beta^i_P$, then, using \eqref{decaysta},
$$|\mathfrak{Q}_1| \hspace{1mm} \lesssim \hspace{1mm} \left| \mathcal{L}_{Z^{\gamma}}(F) \right| \sqrt{z}^{N+1-(1+2\eta)\beta^j_P} |G_j| \hspace{1mm} \lesssim \hspace{1mm} \frac{\sqrt{\epsilon}}{\tau_+} |L_j|.$$
Otherwise, $\beta^j_P = \beta^i_P+1$ so that $\gamma_T \geq 1$. Consequently, using \eqref{eq:calculmachin} and $z \leq \tau_+$, we get
$$ |\mathfrak{Q}_1| \hspace{1mm} \lesssim \hspace{1mm} \left( |\alpha|+|\rho|+|\sigma|+\frac{|v^A|+v^{\underline{L}}}{v^0}|\underline{\alpha}| \right) \tau_+^{\frac{1}{2}+\eta} \sqrt{z}^{N+1-(1+2\eta)\beta^j_P} |G_j|.$$
Then, as $\delta+\eta \leq \frac{1}{2}$, the pointwise decay estimates \eqref{decayextra} and the inequality $|v^A|  \lesssim v^0 v^{\underline{L}}$, coming from Lemma \ref{weights1}, give
$$ |\mathfrak{Q}_1| \hspace{1mm} \lesssim \hspace{1mm} \frac{\sqrt{\epsilon}}{\tau_+^{\frac{3}{2}-\delta-\eta}} |L_j|+\sqrt{\epsilon}\frac{v^{\underline{L}}}{\tau_+^{\frac{1}{2}-\eta}\tau_-^{\frac{3}{2}}} |L_j| \hspace{1mm} \lesssim \hspace{1mm} \frac{\sqrt{\epsilon}}{\tau_+} |L_j|+\sqrt{\epsilon}\frac{v^{\underline{L}}}{\tau_-^{1+\eta}}|L_j|.$$
We now turn on $\mathfrak{Q}_2$. Remark first, using $|x^{\theta} | \leq \tau_+$ and \eqref{eq:calculmachin} that
$$ |\mathfrak{Q}_2| \hspace{1mm} \lesssim \hspace{1mm} \tau_+\left( |\alpha|+|\rho|+|\sigma|+\frac{|v^A|+v^{\underline{L}}}{v^0}|\underline{\alpha}| \right) \sqrt{z}^{N+1-(1+2\eta)\beta^i_P} |G_k|.$$
If $\beta^k_P \leq \beta^i_P-1$, the inequalities $1 \lesssim \sqrt{v^0 v^{\underline{L}}}$, $|v^A| \lesssim v^0 v^{\underline{L}}$ and $\tau_-^{\frac{1}{2}+\eta} \lesssim z^{\frac{1}{2}+\eta}$ lead, using also \eqref{decaysta}, to
\begin{align*}
|\mathfrak{Q}_2| \hspace{1mm} & \lesssim \hspace{1mm} \tau_+\sqrt{\epsilon} \left( \frac{\sqrt{v^0v^{\underline{L}}}}{\tau_+^{\frac{3}{2}}}+\frac{v^{\underline{L}}}{\tau_+\tau_-^{\frac{1}{2}}} \right) \frac{\sqrt{z}^{N+1-(1+2\eta)\beta^k_P}}{\tau_-^{\frac{1}{2}+\eta}} |G_k| \\
 & \lesssim \hspace{1mm} \sqrt{\epsilon} \left( \frac{\sqrt{v^0v^{\underline{L}}}}{\tau_+^{\frac{1}{2}}\tau_-^{\frac{1}{2}+\eta}}+\frac{v^{\underline{L}}}{\tau_-^{1+\eta}} \right)  |L_k| \hspace{1mm} \lesssim \hspace{1mm} \sqrt{\epsilon} \frac{v^0}{\tau_+} |L_k|+\sqrt{\epsilon}\frac{v^{\underline{L}}}{\tau_-^{1+\eta}}|L_k| .
 \end{align*}
 Finally, if $\beta^k_P=\beta^i_P$, we have $\gamma_T \geq 1$ and we obtain, using \eqref{decayextra} instead of \eqref{decaysta},
 \begin{align*}
|\mathfrak{Q}_2| \hspace{1mm} & \lesssim \hspace{1mm} \tau_+\sqrt{\epsilon} \left( \frac{\sqrt{v^0v^{\underline{L}}}}{\tau_+^{2-\delta} \tau_-^{\frac{1}{2}}}+\frac{v^{\underline{L}}}{\tau_+\tau_-^{\frac{3}{2}}} \right) \sqrt{z}^{N+1-(1+2\eta)\beta^k_P} |G_k| \\
 & \lesssim \hspace{1mm} \sqrt{\epsilon} \left( \frac{\sqrt{v^0v^{\underline{L}}}}{\tau_+^{1-\delta}\tau_-^{\frac{1}{2}}}+\frac{v^{\underline{L}}}{\tau_-^{\frac{3}{2}}} \right)  |L_k| \hspace{1mm} \lesssim \hspace{1mm} \sqrt{\epsilon} \frac{v^0}{\tau_+^{\frac{3}{2}-2\delta}} |L_k|+\sqrt{\epsilon}\frac{v^{\underline{L}}}{\tau_-^{\frac{3}{2}}}|L_k| .
 \end{align*}
It remains to notice that $\delta \leq \frac{1}{4}$ and $\eta \leq \frac{1}{2}$. The matrix $\overline{D}$ can be constructed from $D$ exactly as $\overline{A}$ has been constructed from $A$. Indeed, the components of the matrix $D$ contain derivatives of the electromagnetic field up to order $N-3$ and can then be estimated pointwise (see Lemma \ref{L2bilan}). This concludes the proof.
\end{proof}
Let $K$ be the solution of $\TT_F(K)+\overline{A}K+K \overline{D}=\overline{B}$ satisfying $K(0,\cdot,\cdot)=0$. Note that $KY$ satisfies $\TT_F(KY)+\overline{A}KY=\overline{B}Y$ and initially vanishes, so that $KY=L$. The goal now is to prove a sufficiently good estimate on the energy norm
$$ \E_G(t) := \sum_{i=0}^{|I|} \sum_{j=0}^{|\mathfrak{I}|} \sum_{q=0}^{|\mathfrak{I}|}   \E \left[ \left| K_i^j \right|^2 Y_q \right](t).$$
In order to apply Proposition \ref{energyf}, remark that
\begin{equation}\label{sourceinho}
\TT_F\left( |K^j_i|^2 Y_q\right) = |K^j_i |^2 \overline{D}^r_q Y_r-2\left(\overline{A}^r_i K^j_r +K^r_i \overline{D}^j_r \right) K^j_i Y_q+2 \overline{B}^j_iK^j_iY_q.
\end{equation}

\begin{Pro}\label{inhoL1}
If $\epsilon$ is small enough, we have $\E_G(t) \lesssim \epsilon (1+t)^{\delta}$ for all $t \in [0,T[$.
\end{Pro}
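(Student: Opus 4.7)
The plan is to apply the approximate conservation law of Proposition~\ref{energyf} to each component $|K^j_i|^2 Y_q$, using the evolution identity \eqref{sourceinho}, and to close the resulting integral inequality by a continuity argument on $\E_G$. Since $K(0,\cdot,\cdot)=0$, the initial-data term vanishes, so the task reduces to bounding the spacetime integrals of the three families of terms appearing in \eqref{sourceinho}: (a) $|K|^2\,\overline{D}\,Y$, (b) the bilinear-in-$K$ terms $(\overline{A}K)K Y$ and $K(\overline{D}K)Y$, and (c) $\overline{B}\,K\,Y$.

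Families (a) and (b) will be handled by the pointwise bounds on $\overline{A}$ and $\overline{D}$ supplied by Lemma~\ref{bilaninho}. The $\sqrt{\epsilon}\,v^0/\tau_+$ part produces a contribution of the form $\sqrt{\epsilon}\int_0^t \E_G(s)/(1+s)\,ds$, while the $\sqrt{\epsilon}\,v^{\underline{L}}/\tau_-^{1+\eta}$ part, after passing to the null foliation of Lemma~\ref{foliationexpli}, is bounded by $\sqrt{\epsilon}\,\E_G(t)\int_{-\infty}^b \tau_-^{-1-\eta}du \lesssim \sqrt{\epsilon}\,\E_G(t)$; here the cone integral in the definition of $\E_G$ provides $L^1$ control against the weight $v^{\underline{L}}/v^0$.

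The main obstacle is family (c), because $\overline{B}$ involves derivatives of $F$ of top order $|\gamma|\leq N$, for which only $L^2$ control is available. The null structure displayed in Lemma~\ref{bilaninho}, namely $|\overline{B}|\lesssim (\tau_+/\tau_-)(|\alpha|+|\rho|+|\sigma|)+|\underline{\alpha}|$ with $(\alpha,\rho,\sigma,\underline{\alpha})$ denoting the null decomposition of $\mathcal{L}_{Z^\gamma}F$, is essential here. We shall apply the elementary inequality
\[
\frac{|\overline{B}|\,|K|\,|Y|}{v^0} \;\leq\; \frac{\lambda}{2}\,\frac{v^{\underline{L}}}{v^0\tau_-^{3/2}}\,|K|^2|Y| \;+\; \frac{1}{2\lambda}\,\frac{\tau_-^{3/2}}{v^0 v^{\underline{L}}}\,|\overline{B}|^2|Y|,
\]
with $\lambda=\sqrt{\epsilon}$. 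The first summand integrates via the null foliation to $\lesssim \sqrt{\epsilon}\,\E_G(t)$. The second is handled using $1\lesssim v^0 v^{\underline{L}}$ (Lemma~\ref{weights1}) together with the pointwise bound $\int_v |Y|\,dv \lesssim \epsilon(1+s)^\delta/(\tau_+^2\tau_-)$ from Lemma~\ref{bilaninho} (itself a consequence of \eqref{decayf2}), and by treating the two pieces of $\overline{B}$ separately: for the $\alpha$, $\rho$, $\sigma$ part, one uses the null foliation, trading the resulting $\tau_+^2/\tau_-^{3/2}$ weight against the $L^2$ control on cones provided by the bootstrap \eqref{bootF1}; for the $\underline{\alpha}$ part, one uses the $\overline{\Sigma}^b_s$-foliation and the bootstrap $\|\underline{\alpha}\|^2_{L^2(\overline{\Sigma}^b_s)}\leq 4\epsilon$ together with the pointwise control of $\|\int_v|Y|dv\|_{L^\infty(\overline{\Sigma}^b_s)}$. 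Summing yields a contribution $\lesssim \epsilon^{3/2}(1+t)^\delta$ from (c).

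Putting everything together leads to
\[
\E_G(t) \;\leq\; C\sqrt{\epsilon}\int_0^t \frac{\E_G(s)}{1+s}\,ds \;+\; C\sqrt{\epsilon}\,\E_G(t) \;+\; C\epsilon^{3/2}(1+t)^\delta.
\]
Absorbing the second term on the left-hand side for $\epsilon$ small, a standard continuity argument—bootstrapping the inequality $\E_G(s)\leq C_0\epsilon(1+s)^\delta$ on a maximal subinterval and plugging it into the integral to obtain an improved constant for $\epsilon$ small—produces the bound $\E_G(t)\lesssim \epsilon(1+t)^\delta$ on $[0,T[$, as claimed.
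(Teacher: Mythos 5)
Your proposal is correct, and its overall skeleton matches the paper's: a continuity/bootstrap argument on $\E_G$, the energy estimate of Proposition~\ref{energyf} applied via \eqref{sourceinho}, pointwise control of $\overline{A}$ and $\overline{D}$ from Lemma~\ref{bilaninho}, and the null structure of $\overline{B}$ combined with the $L^2$ control of the electromagnetic field on cones and slices. Your treatment of families (a) and (b) (the $\overline{A}$/$\overline{D}$ terms) is essentially identical to the paper's bound for $I_{\overline{A}}$, with only a cosmetic difference in whether the cone-foliation contribution $\sqrt{\epsilon}\,\E_G(t)$ is absorbed into the left-hand side or closed via the bootstrap hypothesis. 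The genuinely different step is family (c). The paper applies Cauchy--Schwarz in $v$ to write $\int_v|\overline{B}^j_iK^j_iY_q|\frac{dv}{v^0}\lesssim|\overline{B}^j_i|\bigl(\int_v|Y_q|dv\bigr)^{1/2}\bigl(\int_v|K^j_i|^2|Y_q|\frac{dv}{(v^0)^2}\bigr)^{1/2}$, then uses $(v^0)^{-2}\lesssim v^{\underline{L}}/v^0$, the pointwise bound on $\int_v|Y|dv$, and a second Cauchy--Schwarz in $x$ (for the $\underline{\alpha}$ contribution) or over the cones $C_u$ (for $\alpha,\rho,\sigma$), retaining a factor $\sqrt{\E_G}$ that is controlled by the bootstrap. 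You instead apply Young's inequality with parameter $\sqrt{\epsilon}$ pointwise in $(t,x,v)$, with the weight $v^{\underline{L}}\tau_-^{-3/2}$, splitting $|\overline{B}||K||Y|/v^0$ into a quadratic-in-$K$ piece that is absorbed after cone-foliation integration and a $K$-free piece $\epsilon^{-1/2}\tau_-^{3/2}(v^0v^{\underline{L}})^{-1}|\overline{B}|^2|Y|$, which you then integrate using $1\lesssim v^0v^{\underline{L}}$, the pointwise bound on $\int_v|Y|dv$, and the $L^2$ control of $F$ (on cones for $\alpha,\rho,\sigma$, on slices for $\underline{\alpha}$). Both routes invoke exactly the same ingredients and close in the same way; yours has the mild virtue of exhibiting the absorbable term explicitly, at the cost of an extra algebraic manipulation that the paper's Cauchy--Schwarz organization avoids.
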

\begin{proof}
We use again the continuity method. Let $T_0 \in ]0,T]$ be the largest time such that $\E_G(t) \lesssim \epsilon (1+t)^{\delta}$ for all $t \in [0,T_0[$. Fix $i \in \llbracket 1, |I| \rrbracket$ and $(j,q) \in \llbracket 1, |\mathfrak{I}| \rrbracket^2$. According to the energy estimate of Proposition \ref{energyf} and \eqref{sourceinho}, we would improve the bootstrap assumption, for $\epsilon$ small enough, if we could prove that
\begin{eqnarray}\label{Aterms}
I_{\overline{A}} & := & \int_0^t \int_{\Si^b_s} \int_v \left| |K^j_i |^2 \overline{D}^r_q Y_r-2\left(\overline{A}^k_i K^j_k +K^r_i \overline{D}^j_r \right) K^j_i Y_q \right| \frac{dv}{v^0}dx ds \hspace{2mm} \lesssim \hspace{2mm} \epsilon^{\frac{3}{2}} (1+t)^{\delta}, \\
I_{\overline{B}} & := & \int_0^t \int_{\Si^b_s} \int_v \left| \overline{B}^j_iK^j_iY_q \right| \frac{dv}{v^0}dx ds \hspace{2mm} \lesssim \hspace{2mm} \epsilon^{\frac{3}{2}} (1+t)^{\delta}. \label{Bterms}
\end{eqnarray}
Let us start with \eqref{Aterms}. In view of the description of the matrices $\overline{A}$ and $\overline{D}$ (see Lemma \ref{bilaninho}), we have, applying Lemma \ref{foliationexpli},
\begin{align*}
I_{\overline{A}} \hspace{1mm} & \lesssim \hspace{1mm} \int_0^t \int_{\Si^b_s} \frac{\sqrt{\epsilon}}{\tau_+}\int_v |K|^2 |Y| dvdx ds+\int_{u=-\infty}^b \int_{C_u(t)} \frac{\sqrt{\epsilon}}{\tau_-^{1+\eta}}\int_v \frac{v^{\underline{L}}}{v^0} |K|^2 |Y| dvdC_u(t) du \\
& \lesssim \hspace{1mm} \sqrt{\epsilon} \int_0^t \frac{\E [|K|^2 |Y|](s)}{1+s}ds + \sqrt{\epsilon} \int_{u=-\infty}^b  \frac{du}{\tau_-^{1+\eta}} \sup_{u<b}\int_{C_u(t)}\int_v \frac{v^{\underline{L}}}{v^0} |K|^2|Y| dvdC_u(t) .
\end{align*} 
The estimate \eqref{Aterms} then follows from $\E [|K|^2 |Y|](s)+\sup_{u<b}\int_{C_u(s)}\int_v \frac{v^{\underline{L}}}{v^0} |K|^2|Y| dvdC_u(s) \leq \E_G(s)$ and the bootstrap assumption. Let us focus now on \eqref{Bterms}. According to Lemma \ref{bilaninho}, we have
$$\left| \overline{B}_i^j \right| \hspace{1mm} \lesssim \hspace{1mm}  \frac{\tau_+}{\tau_-}\left( \left| \alpha \left( \mathcal{L}_{Z^{\gamma}}(F) \right) \right| + \left| \rho \left( \mathcal{L}_{Z^{\gamma}}(F) \right) \right|+\left| \sigma \left( \mathcal{L}_{Z^{\gamma}}(F) \right) \right|\right) +  \left| \underline{\alpha} \left( \mathcal{L}_{Z^{\gamma}}(F) \right) \right|   , \hspace{0.8cm} \text{where} \hspace{8mm} |\gamma| \leq N.$$ 
By the bootstrap assumption \eqref{bootF1} and dropping the dependence of $\alpha$, $\underline{\alpha}$, $\rho$ and $\sigma$ in $\mathcal{L}_{Z^{\gamma}}(F)$, we have
$$ \forall \hspace{0.5mm} s \in [0,T[, \hspace{2mm} \forall \hspace{0.5mm} u < b, \hspace{1.5cm} \| |\underline{\alpha}| \|^2_{L^2(\Si^b_s)}+\int_{C_u(s)}( |\alpha|^2+|\rho|^2+|\sigma|^2) dC_u(s) \hspace{1mm} \leq \hspace{1mm} \mathcal{E}_N[F](s) \hspace{1mm} \leq \hspace{1mm} 4\epsilon.$$
Consequently, using the Cauchy-Schwarz inequality in $v$, $\int_v |Y| dv \lesssim \epsilon (1+s)^{\delta} \tau_+^{-2} \tau_-^{-1}$ and $1 \lesssim v^0 v^{\underline{L}}$,
\begin{eqnarray}
\nonumber I_{\overline{B}} & \lesssim &  \int_0^t \int_{\Si^b_s} \left(\frac{\tau_+}{\tau_-} |\alpha|+ \frac{\tau_+}{\tau_-}|\rho|+\frac{\tau_+}{\tau_-}|\sigma|+|\underline{\alpha}| \right) \left| \int_v  |Y_q|  dv \int_v \left| K^j_i \right|^2 |Y_q| \frac{dv}{(v^0)^2}  \right|^{\frac{1}{2}} dx ds \\ 
& \lesssim & (1+t)^{\frac{\delta}{2}} \int_0^t \int_{\Si^b_s} \left(\frac{\tau_+}{\tau_-} |\alpha|+ \frac{\tau_+}{\tau_-}|\rho|+\frac{\tau_+}{\tau_-}|\sigma|+|\underline{\alpha}| \right) \frac{\sqrt{\epsilon}}{\tau_+ \tau_-^{\frac{1}{2}}}\left|  \int_v \frac{v^{\underline{L}}}{v^0} \left| K^j_i \right|^2 |Y_q| dv \right|^{\frac{1}{2}} dx ds. \label{eq:inho1}
\end{eqnarray}
By the Cauchy-Schwarz inequality in $x$ and $\E_G(s) \lesssim \epsilon (1+t)^{\delta}$, we get
\begin{eqnarray}
\nonumber \int_0^t  \int_{\Si^b_s}|\underline{\alpha}| \frac{\sqrt{\epsilon}}{\tau_+ \tau_-^{\frac{1}{2}}}\left|  \int_v \frac{v^{\underline{L}}}{v^0} | K^j_i |^2 |Y_q| dv \right|^{\frac{1}{2}}  dx ds  & \lesssim &  \int_0^t \frac{\sqrt{\epsilon}}{1+s}\| |\underline{\alpha}| \|_{L^2(\Si^b_s)} \left| \E \left[| K_i^j |^2 Y_q \right] \hspace{-0.3mm} (s) \right|^{\frac{1}{2}}  ds  \\ 
& \lesssim &  \epsilon \int_0^t \frac{\left| \E_G(s) \right|^{\frac{1}{2}}}{1+s}ds \hspace{2mm} \lesssim \hspace{2mm} \epsilon^{\frac{3}{2}} (1+t)^{\frac{\delta}{2}}. \label{eq:inho2}
\end{eqnarray}
Using the null foliation $(C_u(t))_{u < b}$ of $V_b(t)$ and the Cauchy-Schwarz inequality in $(\underline{u},\omega)$, one has, for any $\zeta \in \{ \alpha , \rho , \sigma \}$,
\begin{eqnarray}
\nonumber \int_0^t  \int_{\Si^b_s}|\zeta| \frac{\sqrt{\epsilon}}{ \tau_-^{\frac{3}{2}}}\left|  \int_v \frac{v^{\underline{L}}}{v^0} | K^j_i |^2 |Y_q| dv \right|^{\frac{1}{2}}  dx ds  & \lesssim &   \int_{- \infty}^b \frac{\sqrt{\epsilon}}{\tau_-^{\frac{3}{2}}} \left| \int_{C_u(t)} |\zeta|^2 d C_u(t) \int_{C_u(t)} \int_v \frac{v^{\underline{L}}}{v^0} | K^j_i |^2 |Y_q| dv d C_u(t) \right|^{\frac{1}{2}} du  \\ \nonumber
& \lesssim &  \sqrt{\epsilon} \left| \mathcal{E}_N[F](t) \E \left[| K_i^j |^2 Y_q \right](t) \right|^{\frac{1}{2}} \int_{u=-\infty}^0  \frac{du}{\tau_-^{\frac{3}{2}}} \\ 
& \lesssim &  \epsilon^{\frac{3}{2}}  (1+t)^{\frac{\delta}{2} }.  \label{eq:inho3}
\end{eqnarray}
From \eqref{eq:inho1}, \eqref{eq:inho2} and \eqref{eq:inho3}, we finally obtain $I_{\overline{B}} \lesssim \epsilon^{\frac{3}{2}}(1+t)^{\delta}$. This concludes the improvement of the bootstrap assumption and then the proof.
\end{proof}
\subsection{End of the proof of Proposition \ref{L2estimates}}

Let $i \in I$. Using the Cauchy-Schwarz inequality in $v$, $\E_H(t) \lesssim \epsilon (1+t)^{\delta}$ and the pointwise decay estimates \eqref{decayH}, we have
$$ \left\|  \int_v z |H_i| dv \right\|^2_{L^2(\Si^b_t)} \hspace{1mm} \lesssim \hspace{1mm} \left\|  \int_v |H_i| dv \right\|_{L^{\infty}(\Si^b_t)} \left\|  \int_v z^2 |H_i| dv \right\|_{L^1(\Si^b_t)}
\hspace{1mm} \lesssim \hspace{1mm} \left\|  \frac{\epsilon}{\tau_+^{2-\delta}\tau_-} \right\|_{L^{\infty}(\Si^b_t)}  \hspace{0.5mm} \E_H(t) \hspace{1mm} \lesssim \hspace{1mm} \frac{\epsilon^2}{(1+t)^{2-2\delta}}  .$$
As $L_i =K_i^j Y_j$, the Cauchy-Schwarz inequality in $v$, $\E_G(t) \lesssim \epsilon (1+t)^{\delta}$ and $\int_v z^2 |Y| dv \lesssim \epsilon \tau_+^{-2+\delta}$ gives
\begin{eqnarray}
\nonumber \left\|  \int_v z|L_i| dv \right\|^2_{L^2(\Si^b_t)} & \lesssim & \left\|  \int_v z^2|Y| dv   \int_v \left| K_i^j \right|^2 |Y_j| dv \right\|_{L^1(\Si^b_t)}  \hspace{2mm} \lesssim \hspace{2mm} \left\|  \int_v z^2|Y| dv \right\|_{L^{\infty}(\Si^b_t)} \left\| \int_v \left| K_i^j \right|^2 |Y_j| dv \right\|_{L^1(\Si^b_t)} \\ \nonumber 
& \lesssim & \left\|  \frac{\epsilon}{\tau_+^{2-\delta}} \right\|_{L^{\infty}(\Si^b_t)} \E_G(t) \hspace{2mm} \lesssim \hspace{2mm} \frac{\epsilon^2}{(1+t)^{2-2\delta}} .
\end{eqnarray}
To conclude the proof of Proposition \ref{L2estimates}, use $2\delta \leq \frac{1}{2}$ and notice that for all $N-2 \leq |\beta| \leq N$, there exists $i \in I$ verifying $ \widehat{Z}^{\beta} f = H_i+G_i$ and that $|G_i| \leq |L_i|$.

\renewcommand{\refname}{References}
\bibliographystyle{abbrv}
\bibliography{biblio}

\end{document}